\newtheorem{theorem}{Theorem}[section]
\newtheorem{lemma}[theorem]{Lemma}
\newtheorem{corollary}[theorem]{Corollary}
\newtheorem{remark}[theorem]{Remark}
\newtheorem{proposition}[theorem]{Proposition}
\newtheorem{defn}[theorem]{Definition}
\newtheorem{example}[theorem]{Example}
\newcommand{\bb}[1]{\mathbb{#1}}
\newcommand{\cal}[1]{\mathcal{#1}}
\newcommand{\ff}[1]{\mathfrak{#1}}
\newcommand{\Spec}[1]{\operatorname{Spec}\, #1}
\newcommand{\sing}[1]{\operatorname{Sing} \, #1}
\newcommand{\supp}[1]{\operatorname{Supp} \, #1}
\newcommand{\exc}[1]{\operatorname{Exc} \, #1}
\newcommand{\codim}[1]{\operatorname{codim} \, #1}
\newcommand{\loc}[1]{\operatorname{loc}( #1)}
\title{MMP for  co-rank one foliations on  threefolds}
\author{Paolo Cascini and Calum Spicer} 
\subjclass[2010]{14E30, 37F75}
\address{Department of Mathematics, Imperial College London, 180 Queen's Gate, 
London SW7 2AZ, UK} 
\email{p.cascini@imperial.ac.uk}
\email{calum.spicer@imperial.ac.uk}
\begin{document}

\begin{abstract}
We prove existence of flips, special termination, the base point free theorem and, in the case of log general type, the existence of minimal models for F-dlt foliated  pairs of co-rank one on a $\mathbb Q$-factorial projective threefold. 

As applications, we show the existence of  F-dlt modifications and  F-terminalisations for foliated pairs and we show that foliations with canonical or F-dlt singularities admit non-dicritical singularities. Finally, we show abundance in the case of numerically trivial foliated  pairs.  
\end{abstract}

\maketitle
\tableofcontents

\section{Introduction}

The (classical) Minimal Model Program predicts that a complex projective manifold is either uniruled or it admits a minimal model, i.e. it is birational to a (possibly singular) projective variety with nef canonical divisor. 
Although this is still an open problem, many important cases of the program have been carried out successfully, e.g. in the case of varieties of dimension at most three and for varieties of general type. 
After the work of McQuillan \cite{McQuillan08} 
we expect a similar picture to hold  in the  theory of birational geometry of foliations
(see also \cite{Brunella00} and \cite{Mendes00}). 
More specifically, assuming that $X$ is a normal complex projective variety and $\cal F$ is a foliation with mild singularities, it is conjectured  that either $\cal F$ is uniruled, i.e. $X$ is covered by rational curves which are tangent to $\cal F$, or 
$\cal F$ admits a minimal model, i.e. $X$ is birational to a projective variety $Y$ such that the transformed foliation on $Y$ (cf. Section \ref{subs_transforms}) admits a nef canonical divisor (cf. Section \ref{s_definition}). 
Many of the main goals of the program were carried out successfully in the case of rank one foliations (cf. \cite{McQuillan08, McQ05} ) and, in any rank, it is expected to follow the main steps of Mori's program. 

\medskip 

The goal of this paper is to show the existence of flips (cf. Section \ref{s_ample_model}) for foliations of co-rank one on a complex projective threefold and present several applications, under some natural assumptions on the singularities. 

\subsection{Statement of main results}

In \cite{Spicer17} it was shown that given a foliated pair $(\cal F,\Delta)$, with some mild assumption on the singularities,  and given a $(K_{\cal F}+\Delta)$-negative extremal ray $R$, there is a morphism $\phi_R\colon X \rightarrow Y$, in the category of algebraic spaces, 
contracting only those curves $C$ such that $[C] \in R$.  Projectivity of $Y$ and the existence of flips were shown in some special cases, but not
in the generality needed to run the MMP.

Our first main result is to show in greater generality 
that if $\phi_R$ is a flipping contraction then the flip exists:

\begin{theorem}[= Theorem \ref{flipsexist} + Theorem \ref{t_canimpliesnondicritical}]
\label{T_flips_exist} 
Let $\mathcal F$ be a co-rank one foliation on a  $\bb Q$-factorial
 projective threefold $X$ and let $\Delta\ge 0$ such that $(\mathcal F,\Delta)$ 
is F-dlt.  Let $\phi\colon X \rightarrow Y$ be a $(K_{\cal F}+\Delta)$-flipping contraction. 

Then the $(K_{\cal F}+\Delta)$-flip exists.
\end{theorem}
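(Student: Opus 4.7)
The plan is to establish finite generation of the relative log canonical algebra
\[
\cal R \;:=\; \bigoplus_{m \geq 0} \phi_* \cal O_X\bigl(\lfloor m(K_{\cal F}+\Delta)\rfloor\bigr),
\]
from which the flip is recovered as $\mathrm{Proj}_Y \cal R$. The question is local on $Y$, so I would first shrink $Y$ around the image of the exceptional locus of $\phi$. The overall strategy is the Shokurov-style reduction to a pl-flip, followed by foliated adjunction to a surface.

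First, I would reduce to the case of a pl-flip: a flipping contraction for which there exists a component $S$ of $\lfloor \Delta \rfloor$ which is $\cal F$-invariant, meets every flipping curve, and has $-S$ relatively $\phi$-ample. This reduction should be driven by the special termination statement mentioned in the abstract, together with an auxiliary MMP to produce such an $S$ if necessary (possibly after perturbing $\Delta$ by a small multiple of a well-chosen divisor, or passing to an F-dlt modification). The F-dlt hypothesis is precisely what is needed to guarantee that $(\cal F,\Delta)$ is non-dicritical along $S$, so that adjunction to $S$ is well-behaved.

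Next, foliated adjunction yields a log surface pair $(S, \Delta_S)$ with $(K_{\cal F}+\Delta)|_S = K_S + \Delta_S$ (up to the appropriate different). Running the classical log MMP for surfaces over $Y$ produces finite generation of the restricted algebra
\[
\cal R_S \;:=\; \bigoplus_m H^0\bigl(S,\, m(K_{\cal F}+\Delta)|_S\bigr).
\]
To conclude, one lifts this to finite generation of $\cal R$ via the short exact sequence
\[
0 \to \cal O_X\bigl(m(K_{\cal F}+\Delta)-S\bigr) \to \cal O_X\bigl(m(K_{\cal F}+\Delta)\bigr) \to \cal O_S\bigl(m(K_{\cal F}+\Delta)|_S\bigr) \to 0,
\]
using induction on the coefficient of $S$ in $\Delta$ for the kernel side, and an extension statement that every section of $m(K_{\cal F}+\Delta)|_S$ is the restriction of a section on $X$.

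The main obstacle is the extension step. In the classical MMP it is handled by Kawamata--Viehweg vanishing applied to a twist of $m(K_X+\Delta)-S$; here, $K_{\cal F}$ is not the canonical class of $X$ but differs from it by the conormal bundle of $\cal F$, and the usual vanishing theorems do not apply directly. One must therefore prove a foliated extension theorem tailored to co-rank one foliations on threefolds, exploiting non-dicriticality along $S$ and the fact that every flipping curve is tangent to $\cal F$. I expect this extension statement to be the technical heart of the argument, and to rest on the base point free theorem referred to in the abstract together with a careful local analysis of sections of $m(K_{\cal F}+\Delta)$ along the $\cal F$-invariant divisor $S$.
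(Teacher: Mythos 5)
Your proposal follows the Shokurov pl-flip template, which is genuinely different from the paper's route, but it has two gaps that cannot be patched within that framework.

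First, the $S$ you want cannot exist. You ask for an $\cal F$-invariant component of $\lfloor \Delta \rfloor$, but Definition \ref{d_fdlt} requires every component of $\Delta$ to be transverse to $\cal F$, and more generally if any component of $\operatorname{supp}(\Delta)$ is $\cal F$-invariant then $(\cal F,\Delta)$ fails to be log canonical. So you cannot produce such an $S$ by perturbing $\Delta$ or by an F-dlt modification: the F-dlt category explicitly excludes it. The ``correct'' divisors through the flipping locus are the separatrices of $\cal F$ (since flipping curves are tangent to $\cal F$ by Lemma \ref{l_curvesinrayaretangent}), and those are the real obstacle: by Lemma \ref{l_fdlt-terminal} they exist only as formal divisors in general. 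Your proposal never confronts this, and it is precisely the difficulty the paper is built around.

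Second, your lifting step asks for a foliated extension theorem that is not available and is not used in the paper. The paper avoids extension entirely. Its proof (Sections 4--6) approximates the formal separatrices $\widehat S_k$ by honest divisors $S'_k$ after passing to an \'etale cover of the base (Artin--Elkik, Lemma \ref{l_approx_etale}), adds auxiliary divisors $D'_i$ with prescribed intersection numbers against the flipping curves, and uses the adjunction inequality $(K_Y+\sum\widetilde S_k)\cdot C_i \le K_{\cal F_Y}\cdot C_i$ (Corollary \ref{basicpropertyfdlt2}) to arrange $A:=\sum\widetilde S'_k+\sum a_iD'_i \equiv_{h'} \mu(K_{\cal F_{Y'}}+\Delta')$. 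Then $K_{\cal F_{Y'}}+\Delta'$ and the klt divisor $K_{Y'}+\Delta'+(1-\epsilon)A$ have the same relative ample model, and the flip drops out of \cite{BCHM06} with no vanishing or extension input. So where you expected a foliated Kawamata--Viehweg-type extension theorem to be the technical heart, the paper instead makes the technical heart the approximation of formal separatrices and the reduction to an already-known klt flip. Your strategy as written has a genuine gap at both places; the pl-flip framework does not transport to foliated pairs in this way.
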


Notice that F-dlt foliated pairs play the same role  as dlt log pairs in the classical MMP (see Definition \ref{d_fdlt} for a precise definition). 
\medskip 

Next, we turn to the question of constructing a minimal model of a foliated pair $(\cal F, \Delta)$. As in Mori's program, existence of minimal models would follow if one could show that any sequence of flips terminates.  We are unable to
show termination in complete generality, but we are able to show a weaker version of termination, i.e., termination
of flips with scaling, which suffices to show that minimal models exist in several cases of interest: 

\begin{theorem}[= Theorem \ref{t_existence-minmod} + Theorem \ref{t_canimpliesnondicritical}]
\label{T_min_model1}
Let $\mathcal F$ be a co-rank one foliation on a  $\bb Q$-factorial projective threefold $X$. 
Let $\Delta=A+B$ be a $\bb Q$-divisor such that $(\mathcal F,\Delta)$ is a F-dlt pair, 
$A\ge 0$ is an ample $\bb Q$-divisor and $B\ge 0$. Assume that there exists a $\bb Q$-divisor $D\ge 0$ such that 
$K_{\mathcal F}+\Delta\sim_{\mathbb Q}D$. 

Then $(\mathcal F,\Delta)$ admits a minimal model. 
\end{theorem}

%
%
%
See Section \ref{S_mmp_with_scaling} for a precise definition of a minimal model.

It is important to observe that Theorems \ref{T_flips_exist} and \ref{T_min_model1} 
make no assumptions on the singularities of $X$ other than $\bb Q$-factoriality. 
However, as we will see by Theorem \ref{t_canimpliesnondicritical} below, the output of the MMP (and the intermediary steps
of the MMP more generally) will be $\mathbb Q$-factorial varieties with klt singularities. Notice also that we first prove Theorems \ref{T_flips_exist} and \ref{T_min_model1} under the assumption that the foliation has non-dicritical singularities  (cf. Definition \ref{defn_non-dicrit}) but we later prove that, if $(\cal F,\Delta)$ is an F-dlt foliated  pair then $\cal F$ admits non-dicritical singularities (cf. Theorem \ref{t_canimpliesnondicritical}).

\medskip

Along the way to proving the termination of flips with scaling, we prove the following basepoint free theorem
for foliations which we expect will be of interest.  Observe that if $\cal F$ is a rank one surface foliation with $K_{\cal F}$ nef and big
then $K_{\cal F}$ is in general not semi-ample, see for instance \cite[Corollary IV.2.3]{McQuillan08}. On the other hand, we prove the following:

\begin{theorem}[= Theorem \ref{t_bpf}]
Let $\mathcal F$ be a co-rank one foliation on a $\bb Q$-factorial  projective variety $X$ of dimension at most three.  
Let $\Delta$ be a $\bb Q$-divisor such that $(\mathcal F,\Delta)$ is a F-dlt pair. Let $A\ge 0$ and 
$B\ge 0$ be $\bb Q$-divisors such that $\Delta= A+B$ and $A$ is ample. Assume that $K_{\mathcal F}+\Delta$ is nef. 

Then $K_{\mathcal F}+\Delta$ is semi-ample.  
\end{theorem}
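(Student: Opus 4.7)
My plan is to adapt the Kawamata--Shokurov basepoint free theorem to the foliated setting, addressing the absence of a direct Kawamata--Viehweg vanishing statement for $K_{\cal F}$. The proof should split into a non-vanishing step and a Kawamata X-method step. Throughout, F-dlt singularities let me take a log resolution $\pi \colon Y \to X$ of $(\cal F, \Delta)$ on which the transformed foliation $\cal F_Y$ has non-dicritical singularities with well-controlled invariant divisors. The central trick is to rewrite $\pi^*(K_{\cal F}+\Delta)$ on $Y$ in the form $K_Y + \Gamma + N$, where $\Gamma$ is effective with coefficients in $[0,1)$ and $N$ is nef (and big, after a perturbation using $A$), so that classical Kawamata--Viehweg vanishing applies to it.

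For non-vanishing, write $A = A_1 + \varepsilon A_2$ with $A_1, A_2$ ample and $\varepsilon > 0$ small. The ample perturbation $A_2$ yields bigness of $N$ on $Y$, and a Shokurov-type argument applied to $K_Y + \Gamma + N$ produces a section that descends to $H^0(X, m(K_{\cal F}+\Delta))$ for some divisible $m$. For basepoint-freeness, given such a section $D_0$, I would run Kawamata's X-method: for any $x \in X$, use the ample $A$ to produce a divisor $D \sim_{\bb Q} \lambda(K_{\cal F}+\Delta)$ with high multiplicity at $x$, compute the log canonical threshold $c$ so that $(\cal F, \Delta + cD)$ has a minimal log canonical center $W$ through $x$, and restrict the problem to $W$. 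Since $\dim X = 3$, $W$ has dimension at most $2$; by foliated adjunction, $(K_{\cal F}+\Delta)|_W$ relates to an honest log-canonical pair when $W$ is transverse to $\cal F$ (where classical BPF applies) or to a foliated surface pair when $W$ is $\cal F$-invariant (where the foliated surface results of McQuillan and Brunella yield semi-ampleness). Lifting the section from $W$ back to $X$ again uses Kawamata--Viehweg vanishing in the rewritten form $K_Y + \Gamma + N$.

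The central technical difficulty is establishing this rewriting with $\Gamma$ having coefficients strictly less than $1$. It requires comparing foliated discrepancies along each exceptional divisor $E_i$ with classical log discrepancies, using non-dicriticality (which holds for F-dlt pairs by Theorem \ref{t_canimpliesnondicritical}) to ensure that $\cal F_Y$-invariant exceptional divisors contribute favorably while non-invariant ones enter $\Gamma$ with good coefficient. A second delicate point is the restriction/tie-breaking step when $W$ is $\cal F$-invariant: foliated adjunction and the previously established semi-ampleness results for foliated surface pairs are needed to identify $(K_{\cal F}+\Delta)|_W$ with an appropriate pair on $W$ for which semi-ampleness is known. Once both technical issues are handled, the bootstrap from pointwise basepoint-freeness to global semi-ampleness proceeds as in the classical Kawamata proof.
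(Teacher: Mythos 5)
The paper's proof is not an X-method argument at all: it runs a descent by Picard rank. Via Lemma \ref{l_lambda}, the authors locate a $(K_{\cal F}+\Gamma)$-negative extremal ray on which $K_{\cal F}+\Delta$ is trivial (here $\Gamma=\tfrac12 A+B$), contract it by Theorem \ref{t_nonqfactcontract}, and either continue on the birational image with smaller Picard rank or, when a Mori fibre space appears, push $K_{\cal F}+\Delta$ down via the canonical bundle formula (Lemma \ref{canonicalbundleformula}) and conclude by induction on dimension. Your proposal pursues the classical Kawamata--Reid--Shokurov strategy instead (non-vanishing, tie-breaking, lifting sections via Kawamata--Viehweg vanishing), which is a genuinely different route.

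Unfortunately the route you propose has a structural gap. The step you single out as the ``central trick''---rewriting $\pi^*(K_{\cal F}+\Delta)$ (or a multiple of it, minus exceptional corrections) in the form $K_Y+\Gamma+N$ with $\Gamma$ a klt-type subboundary and $N$ nef and big---cannot be carried out. The difference $K_{\cal F}-K_X=N_{\cal F}=-N^*_{\cal F}$ is the normal bundle class of the foliation, and for co-rank one foliations it carries no positivity whatsoever. Writing $m\pi^*(K_{\cal F}+\Delta)-K_Y=(m-1)\pi^*(K_{\cal F}+\Delta)+\pi^*(N_{\cal F}+\Delta)+(\text{exceptional})$, the first summand is nef, but the second is not: $N_{\cal F}$ can be strictly negative precisely on the curves where $K_{\cal F}+\Delta$ has degree zero, which is exactly where the X-method needs nefness of $N$. (Take for instance $X=B\times S$ with $g(B)\geq 2$ and $\cal F$ the foliation by the $S$-fibres: $N_{\cal F}=\operatorname{pr}_1^*T_B$ is negative on the $B$-direction while $K_{\cal F}$ is trivial there.) The ample summand $A$ in $\Delta$ cannot rescue this, since adding $A$ helps only on curves where $K_{\cal F}+\Delta$ is already positive. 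Your plan to handle this by ``comparing foliated with classical discrepancies'' on a log resolution sees only the exceptional divisors and misses the global $N_{\cal F}$ contribution entirely, so it does not touch the real obstruction. More bluntly, Kawamata--Viehweg vanishing for the sheaves $\cal O_X(m(K_{\cal F}+\Delta))$ is false in general, and the surface examples cited in the introduction (rank one $\cal F$ with $K_{\cal F}$ nef and big but not semi-ample) are witnesses to its failure; since the X-method is built on such vanishing to lift sections from lc centres, this failure is fatal to the strategy. This is precisely why the paper routes around the problem through extremal contractions, special termination, and the canonical bundle formula rather than through vanishing theorems.

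A secondary point: you have the two adjunction cases reversed. For a co-rank one foliation on a threefold, an $\cal F$-\emph{invariant} divisor is a leaf closure, the restricted foliation is the full tangent sheaf, and Lemma \ref{adjunction} produces a \emph{classical} log surface pair; a divisor \emph{transverse} to $\cal F$ carries the restricted \emph{rank-one foliated} surface pair. So the place where you could invoke classical surface BPF is the invariant case, not the transverse one, and in the transverse case you would need a surface analogue of this very theorem (the polarized statement with $A$ ample in $\Delta$), which would require its own proof.
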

Note that our result is in some sense optimal. Indeed, contrary to the case of varieties, if $\cal F$ is a foliation such that $K_{\cal F}$ is big and nef, then we cannot choose in general a divisor $\Delta\ge 0$ such that $\Delta\sim_{\mathbb Q}\epsilon K_{\cal F}$ for some $\epsilon>0$  and $(\cal F,\Delta)$ is F-dlt, as some of the components of $\Delta$ might be $\cal F$-invariant.

\subsection{Application to F-dlt modifications and F-terminalisations}

In the study of the birational geometry of varieties, dlt modifications and terminalisations have proven to be very useful tools.
The existence of these modifications follows from the MMP for varieties.   We prove foliated analogues of these modifications 
as a consequence
of our results on the foliated MMP (see Section \ref{s_invariant} for the definition of the number $\epsilon$ and Section \ref{subs_transforms} for the notion of the transform of a foliation
under a birational map):

\begin{theorem}[Existence of F-dlt modifications, = Theorem \ref{t_existencefdlt}]
Let $\cal F$ be a co-rank one foliation on a 
normal projective variety $X$ of dimension at most three.
Let $(\cal F, \Delta)$ be a foliated pair.

Then there exists a birational morphism $\pi\colon Y \rightarrow X$
 such that if $\cal G$ is the transformed foliation on $Y$ then
$(\cal G,\pi_*^{-1} \Delta+\sum \epsilon(E_i)E_i)$ is
F-dlt where the sum is taken over all the $\pi$-exceptional divisors
and  
\[(K_{\cal G}+\pi_*^{-1} \Delta +\sum \epsilon(E_i)E_i)+G= \pi^*(K_{\cal F}+\Delta)\]
where $G \geq 0$.

In particular, if $(\cal F, \Delta)$ is lc then $\pi$ only extracts
divisors of discrepancy $=-\epsilon(E_i)$.


Furthermore, we may choose $(Y, \cal G)$ so that 
\begin{enumerate}

\item \label{i_lccentres1} if $Z$ is an lc centre of 
$(\cal G, \pi_*^{-1} \Delta+\sum \epsilon(E_i)E_i)$ then $Z$ is contained in a codimension one lc centre of $(\cal G, \pi_*^{-1} \Delta+\sum \epsilon(E_i)E_i)$,

\item $Y$ is $\bb Q$-factorial and

\item $Y$ is klt.
\end{enumerate}
\end{theorem}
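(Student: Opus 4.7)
The plan is to imitate the classical construction of a dlt modification, replacing each step with its foliated counterpart and using the MMP with scaling from Theorem~\ref{T_min_model1} as the main engine.

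First I would take a foliated log resolution $f\colon W\to X$ of $(\cal F,\Delta)$ such that $W$ is smooth, the transformed foliation $\cal H$ has simple singularities, and the union of the $f$-exceptional locus with $f_*^{-1}\text{Supp}(\Delta)$ is an snc divisor compatible with $\cal H$. Setting
\[
\Gamma_W:=f_*^{-1}\Delta+\sum_{j}\epsilon(E_j)\,E_j,
\]
the pair $(\cal H,\Gamma_W)$ is F-dlt by construction and satisfies
\[
K_{\cal H}+\Gamma_W \;=\; f^*(K_{\cal F}+\Delta)+\sum_j c_j\, E_j,\qquad c_j:=\epsilon(E_j)+a(E_j,\cal F,\Delta).
\]

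Next I would run a $(K_{\cal H}+\Gamma_W)$-MMP over $X$ with scaling of an $f$-ample $\bb Q$-divisor. A relative version of Theorem~\ref{T_min_model1} gives termination and produces a birational morphism $\pi\colon Y\to X$ with $Y$ a minimal model over $X$. Since $K_{\cal H}+\Gamma_W$ is $f$-numerically equivalent to the exceptional cycle $\sum_j c_j E_j$, the negativity lemma applied to $\pi$ forces every surviving exceptional divisor to satisfy $c_j\leq 0$, while any $E_j$ with $c_j>0$ must be contracted in the course of the MMP. Re-arranging this yields
\[
(K_{\cal G}+\pi_*^{-1}\Delta+\textstyle\sum_i\epsilon(E_i)E_i)+F \;=\; \pi^*(K_{\cal F}+\Delta),
\]
with $F=-\sum_i c_i E_i\geq 0$ on the surviving exceptional divisors; in the lc case the bound $a(E_i,\cal F,\Delta)\geq -\epsilon(E_i)$ combined with $c_i\leq 0$ forces equality, so $\pi$ extracts exactly the divisors of discrepancy $-\epsilon(E_i)$. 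F-dlt-ness, $\bb Q$-factoriality and klt-ness of $Y$ are each preserved through every step of an MMP starting from a smooth $W$, giving properties (2) and (3).

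To secure property (1), before running the MMP I would further blow up $W$ along the non-divisorial lc centres of $(\cal H,\Gamma_W)$, iterating until every lc centre is covered by intersections of codimension one lc centres; by the local structure of simple foliation singularities and F-dlt pairs established earlier in the paper, such extra blowups preserve the required setup. The main technical obstacle is the combination of the relative MMP (Theorems~\ref{T_flips_exist} and~\ref{T_min_model1} are stated for absolute projective threefolds, so a relative formulation is needed) together with the step-by-step verification that F-dlt-ness and the sign of the $c_j$ are controlled during the MMP; both points should reduce to the foliated negativity lemma and the structural analysis of F-dlt singularities developed earlier.
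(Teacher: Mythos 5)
The main gap is the termination argument. You run a $(K_{\cal H}+\Gamma_W)$-MMP over $X$ and claim termination via ``a relative version of Theorem~\ref{T_min_model1}''. This is not available: no relative formulation of MMP with scaling is proved anywhere in the paper, and in the paper's logical order Theorem~\ref{t_existence-minmod} comes \emph{after} Theorem~\ref{t_existencefdlt} (it depends on the base point free theorem and on the cone theorem for lc pairs, Theorem~\ref{t_cone-lc}, whose proof itself invokes the existence of F-dlt modifications), so appealing to it here would be circular. Moreover, even granting a relative formulation, its hypotheses would not be readily verified: Theorem~\ref{t_existence-minmod} requires $K_{\cal F}+\Delta$ to be $\bb Q$-effective, and the relative analogue $K_{\cal H}+\Gamma_W\sim_{\bb Q,\phi}\sum_j c_jE_j$ has coefficients $c_j$ of mixed sign. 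The paper sidesteps all of this with a much more elementary observation: by construction every $\phi$-exceptional divisor is an lc centre of $(\cal H,\Theta)$, so Corollary~\ref{specialcaseMMP} (special termination relative to $X$) applies directly and gives termination with no effectivity, ampleness, or scaling hypotheses. You should replace the invocation of Theorem~\ref{T_min_model1} with this special-termination argument.

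There is a secondary gap in your treatment of property~(1). After enlarging the resolution so that every lc centre of $(\cal H,\Gamma_W)$ lies in a codimension-one lc centre, you assert that ``such extra blowups preserve the required setup'' without addressing whether this property survives the MMP. It is not automatic: a priori the MMP could contract some of the exceptional divisors that carry the lc centres. The paper handles this by first replacing $(\cal F,\Delta)$ with the output of the initial pass (so that one may assume the input is already F-dlt, forcing $b_i=\epsilon(G_i)$ for the lc-place divisors $G_i$), and then observing that since $K_{\cal H}+\widetilde{\Delta}$ is $\phi$-trivial, each step of the $(K_{\cal H}+\Theta)$-MMP is $\sum_j(\epsilon(F_j)-a_j)F_j$-negative; hence only components of $\sum F_j$ (the positive-discrepancy divisors) are contracted and no $G_i$ is lost. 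You need this kind of monitoring of which divisors survive to conclude~(1).
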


\begin{theorem}[Existence of F-terminalisations, =Theorem \ref{t_existenceFterminalization}]
Let $\cal F$ be a co-rank one
foliation on a normal projective variety $X$ of dimension at most three.

Then there exists a birational morphism $\pi\colon Y \rightarrow X$
such that 
\begin{enumerate}
\item  if $\cal G$ is the transformed foliation on $Y$, then $\cal G$
is F-dlt, canonical and terminal along $\sing Y$,

\item $Y$ is klt and $\bb Q$-factorial and

\item $K_{\cal G}+E = \pi^*K_{\cal F}$ where $E \geq 0$.
\end{enumerate}
\end{theorem}

Using similar ideas we are also able to prove the following:

\begin{theorem}[=Theorem \ref{t_canimpliesnondicritical}]
Let $(\cal F, \Delta)$ be a foliated pair on a normal projective threefold $X$.
Suppose that $(\cal F, \Delta)$ is canonical. 

Then $\cal F$ has non-dicritical singularities (cf. Definition \ref{defn_non-dicrit}).
\end{theorem}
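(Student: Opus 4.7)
The plan is to argue by contradiction. Since $\Delta\geq 0$ only makes discrepancies more negative, the canonicity of $(\cal F,\Delta)$ implies the canonicity of $\cal F$ itself, so I reduce to $\Delta=0$. Suppose that $\cal F$ has a dicritical singularity at some $p\in X$; by definition there is a birational morphism $\pi\colon Y\to X$ and a $\pi$-exceptional prime divisor $E$ over $p$ which is not invariant under $\cal G:=\pi^{-1}\cal F$. After taking a further resolution I may assume $Y$ and $E$ are smooth. Set $a:=a(E,\cal F)\geq 0$.

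The strategy is to construct, by iterated blow-ups centered on $E$, a divisorial valuation $F$ with $a(F,\cal F)<0$, contradicting canonicity. For the blow-up $\sigma\colon Y'\to Y$ of a smooth point $q\in E$ lying on no other $\pi$-exceptional divisor, the standard change-of-multiplicity formula for a local defining $1$-form $\omega$ of $\cal G$ gives
\[
a(F,\cal G)\;=\;2-m_q,
\]
where $m_q$ is the multiplicity of $\omega$ at $q$. Combining with $\text{mult}_q E=1$ and the chain rule for discrepancies through $\pi\circ\sigma$, one obtains $a(F,\cal F)=2-m_q+a$, so it is enough to produce a center — possibly after iterating the construction — at which $m_q>2+a$.

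To locate such a center, I would exploit the fact that, since $E$ is non-invariant, the restriction $\cal G|_E$ is a well-defined rank one foliation on the surface $E$, whose singularities lie along the tangency locus of $\cal G$ with $E$. Using the surface analogue of the statement (canonical rank one surface foliations are non-dicritical, classical after Seidenberg and Brunella) together with a foliated adjunction formula for $\cal G$ along the non-invariant divisor $E$, I would force $\cal G|_E$ to admit a non-Seidenberg-reduced singularity at some $q\in E$. Iterating the blow-up along the reduction tree of the germ $(\cal G|_E,q)$, the multiplicity $m_q$ becomes large enough to eventually yield $m_q>2+a$, hence a valuation $F$ with $a(F,\cal F)<0$.

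The main obstacle is precisely this last step: one needs a sharp foliated adjunction formula along a non-invariant divisor in order to translate the surface singularity data of $\cal G|_E$ into the ambient three-dimensional multiplicity $m_q$, and then a careful bookkeeping of how $a(F,\cal G)$ inherits the negativity from the dicritical structure through a tower of blow-ups. In particular the surface version of the theorem plays an essential inductive role, ruling out the scenario in which every singularity of $\cal G|_E$ is reduced while $E$ stays non-invariant in $Y$; reconciling the bounded invariant $a$ with the unboundedly negative discrepancies produced on the surface side is the technical heart of the argument.
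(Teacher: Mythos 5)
Your proposal takes a completely different route from the paper's, and unfortunately the step you flag as "the technical heart of the argument" is a genuine gap, not merely a detail to fill in. You are correct that the reduction to $\Delta=0$ is legitimate, and the discrepancy formula $a(F,\cal G)=2-m_q$ for the blow-up of a smooth point $q$ with multiplicity $m_q$, together with $a(F,\cal F)=a+a(F,\cal G)$, is also correct. But the mechanism by which you propose to produce $m_q>2+a$ does not work: Seidenberg's resolution of $\cal G|_E$ is a \emph{finite} tower of blow-ups along which multiplicities are eventually reduced to $\leq 2$, not amplified, so "iterating the blow-up along the reduction tree" gives no reason whatsoever for a multiplicity exceeding the fixed quantity $2+a$ to appear. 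In fact when $\cal G|_E$ has only reduced singularities (which the surface statement does \emph{not} rule out — $E$ may be everywhere transverse to $\cal G$ with $\cal G|_E$ simple), all your point blow-ups have $a(F,\cal G)\geq 0$, and the argument produces nothing. The fact that a canonical $\cal F$ with a non-invariant exceptional $E$ eventually yields negative discrepancies cannot be detected purely locally in this way; indeed the whole difficulty of the theorem is that dicriticality is not visible at the level of a single blow-up multiplicity when $a(E,\cal F)$ is large.

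The paper's proof avoids this entirely by going global. Starting from a foliated log resolution $\mu\colon W\to X$, it sets up a carefully weighted boundary $\Theta$ and runs a $(K_{\cal H}+\Theta+n\mu^*A)$-MMP over $X$ (justified by the earlier existence-of-minimal-models results). The negativity lemma forces this MMP to contract \emph{all} non-invariant $\mu$-exceptional divisors, producing a crepant birational morphism $\nu\colon Y\to X$ on which every $\nu$-exceptional divisor is invariant. The dicritical fibre $\mu^{-1}(P)$ survives as a transverse curve $C'$ on $Y$ which is not contained in any $\nu$-exceptional divisor; taking two Cartier divisors $A_1,A_2$ through $\nu(C')$ one finds $\nu_*^{-1}A_i\cdot C'<0$, and the log canonical threshold of $D=\nu_*^{-1}A_1+\nu_*^{-1}A_2$ along $C'$ yields $(K_{\cal G}+\nu_*^{-1}\Delta+\lambda D)\cdot C'<0$ while $C'$ is an lc centre. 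This contradicts foliation subadjunction (\cite[Theorem 4.3]{Spicer17}), which forces the intersection number to be $\geq 0$. That subadjunction inequality is the genuinely non-local input your sketch is missing; without it, or some substitute global constraint, your local blow-up bookkeeping cannot close.
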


Observe that we do not require the smoothness of $X$.
We expect that this result will be useful in the study of foliation singularities.

\subsection{Application to foliation abundance}
It is a direct consequence of \cite[Theorem 2]{LPT11} that if $X$ is a projective manifold with $\cal F$ a co-rank one foliation with canonical singularities 
and $c_1(K_{\cal F}) =0$
then $K_{\cal F}$ is torsion.
When $X$ is a threefold we extend this result to 
the log situation where we consider $\cal F$ together with a boundary $\Delta$, as well as weakening the hypotheses
on the singularities. 

\begin{theorem}[=Theorem \ref{threefoldlogabundancezero}]
Let $\cal F$ be a co-rank one foliation
on a projective threefold $X$.
Let $(\cal F, \Delta)$ be a foliated pair with log canonical 
foliation singularities.
Suppose that $c_1(K_{\cal F}+\Delta) = 0$.

Then $\kappa(K_{\cal F}+\Delta) = 0$.
\end{theorem}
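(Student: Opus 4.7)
The plan is to reduce to the smooth, canonical, boundary-free setting in which the main theorem of \cite{LPT11} applies.

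First I would apply Theorem~\ref{t_existencefdlt} to $(\cal F, \Delta)$ to obtain an F-dlt modification $\pi\colon Y\to X$. Because $(\cal F, \Delta)$ is log canonical, every $\pi$-exceptional divisor has discrepancy exactly $-\epsilon(E_i)$, so
\[\pi^*(K_{\cal F}+\Delta) = K_{\cal G} + \Gamma, \quad \Gamma := \pi_*^{-1}\Delta + \sum \epsilon(E_i) E_i,\]
i.e.\ the modification is crepant and both the Kodaira dimension and the numerical class are preserved. After replacing $(X,\cal F,\Delta)$ by $(Y,\cal G,\Gamma)$ we may assume that the pair is F-dlt with $X$ a $\bb Q$-factorial klt threefold; Theorem~\ref{t_canimpliesnondicritical} then also gives that $\cal F$ has non-dicritical singularities.

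Next, I would verify that $K_{\cal F}+\Delta$ is pseudo-effective. For a general ample $\bb Q$-divisor $A$ and small $\epsilon>0$, the pair $(\cal F, \Delta+\epsilon A)$ is still F-dlt, decomposes into the ample part $\epsilon A$ and the effective part $\Delta$, and $K_{\cal F}+\Delta+\epsilon A \equiv \epsilon A$ is ample, hence nef. By Theorem~\ref{t_bpf} there is an effective $\bb Q$-divisor $D_\epsilon \sim_{\bb Q} K_{\cal F}+\Delta+\epsilon A$, and letting $\epsilon\to 0$ shows that $K_{\cal F}+\Delta$ is pseudo-effective.

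Finally, I would upgrade pseudo-effectivity to $\bb Q$-effectivity. Combining Theorem~\ref{t_existenceFterminalization} with a resolution of the ambient variety, we reach a smooth projective threefold $Z$ with a transformed foliation $\cal H$ of canonical singularities, together with a crepancy formula relating $K_{\cal H}$, $\nu^*(K_{\cal F}+\Delta)$, the exceptional locus of $\nu$, and the pullback of $\Delta$. The non-dicriticality of $\cal F$ provided by Theorem~\ref{t_canimpliesnondicritical} controls how components of $\Delta$ lift under $\nu$, and a careful bookkeeping of exceptional contributions --- using $\nu^*(K_{\cal F}+\Delta)\equiv 0$ --- should yield $c_1(K_{\cal H}) = 0$. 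The main theorem of \cite{LPT11} applied on $Z$ then gives that $K_{\cal H}$ is torsion, and pushing this linear equivalence forward produces an effective $\bb Q$-divisor $\bb Q$-linearly equivalent to $K_{\cal F}+\Delta$, whence $\kappa(K_{\cal F}+\Delta) = 0$.

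The hard part will be this final step: ensuring that $c_1(K_{\cal H}) = 0$ on the resolved model despite the presence of the boundary $\Delta$ and the various exceptional divisors introduced along the way. The non-dicriticality statement of Theorem~\ref{t_canimpliesnondicritical} should play the central role, since it controls precisely how components of $\Delta$ and their strict transforms interact with the foliated geometry through the tower of blow-ups, and without it the anti-effective contribution $-\nu^*\Delta$ could obstruct any direct application of \cite{LPT11}.
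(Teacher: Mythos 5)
Your step two is vacuous, and your step three — which you yourself flag as ``the hard part'' — contains the genuine gap. Pseudo-effectivity of $K_{\cal F}+\Delta$ is immediate from $c_1(K_{\cal F}+\Delta)=0$ (a numerically trivial class is trivially pseudo-effective), so the appeal to the base-point-free theorem with a vanishing ample perturbation proves nothing: nonemptiness of $|m(K_{\cal F}+\Delta+\epsilon A)|$ for every $\epsilon>0$ in no way implies $|m(K_{\cal F}+\Delta)|\neq\emptyset$. The entire content of the theorem is upgrading numerical triviality to $\kappa=0$, which your argument never reaches.

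The third step cannot be repaired along the lines you sketch. If $\Delta\neq 0$ then the F-dlt hypothesis forces every component of $\Delta$ to be \emph{transverse} to $\cal F$, so $\Delta$ is not exceptional, not invariant, and not absorbable into the foliation canonical class on any birational model. Since $K_{\cal F}\equiv -\Delta$ is then strictly negative, $K_{\cal H}$ on any resolution differs from a pullback of $K_{\cal F}$ only by exceptional terms and therefore cannot become numerically trivial; there is no smooth model with $c_1(K_{\cal H})=0$ to feed into \cite{LPT11}. Even in the case $\Delta=0$ with $\cal F$ canonical the plan fails: after F-terminalisation $X$ is generally still singular (only klt), and a resolution $\mu\colon Y\to X$ gives $K_{\cal G}=\mu^*K_{\cal F}+\sum a_iE_i$ with $a_i\geq 0$ (indeed $a_i>0$ over $\mathrm{sing}(X)$ once $\cal F$ is made terminal there), so $c_1(K_{\cal G})=\sum a_i[E_i]\neq 0$ in general. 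The paper instead splits the proof into two propositions: Proposition~\ref{KFlogcanonical} handles $\Delta\neq 0$ or $\cal F$ lc-but-not-canonical by exploiting that $K_{\cal F}$ is then \emph{not} pseudo-effective, producing a dominant family of rational tangent curves and reducing via a foliated canonical bundle formula to the surface case; and Proposition~\ref{KFcanonical} handles $\Delta=0$, $\cal F$ canonical, where even the analogue of \cite{LPT11} requires a careful bookkeeping of exceptional contributions to $N^*_{\cal F}$ on a resolution, Touzet's theorem, the Bogomolov--Castelnuovo--De Franchis inequality, and a classification of $c_1$-trivial surface foliations — rather than a direct invocation of \cite{LPT11} on a resolved model.
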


\subsection{Sketch of the Proof}

We first give a rough outline of our approach to the existence of flips.
Let us focus on a special case first: suppose $X$ is a smooth threefold and
$\cal F$ is the foliation induced by a fibration $f\colon X \rightarrow B$ onto a curve $B$ and with simple normal crossing fibres.
It is easy to compute that $K_{\cal F} = K_{X/B}-\sum (r_i-1)F_i$
 where the sum runs over the components of fibres with multiplicity $r_i$.
 
Let $C \subset X$ be a $K_{\cal F}$-flipping curve (cf. Section \ref{s_ample_model}).  It was shown in \cite{Spicer17} that
$C$ is tangent to $\cal F$ (cf. Definition \ref{tangtransdef}).  Let $T = f^*(f(C))_{red}$ and let $S$ be a component of 
$T$ containing $C$.  Note that $T$ is the largest connected $\cal F$-invariant divisor containing $C$ (cf. Section \ref{s_invariant}). 
A direct computation shows that $K_{\cal F}\vert_S = (K_X+T)\vert_S$ and, 
in particular, it follows that $(K_X+T)\cdot C = K_{\cal F}\cdot C<0$ and since $(X, T)$ is log canonical 
it follows that the $K_{\cal F}$-flip
can be realised as a log flip in the classical MMP.

In general, if $X$ is a normal threefold and $\cal F$ is a co-rank one foliation on $X$ with mild singularities and which admits a flipping contraction (cf. Section \ref{s_ample_model}), 
 we want to realise the $K_{\cal F}$-flip as a $(K_X+\sum S_i)$-flip where the $S_i$ are all the 
$\cal F$-invariant divisors meeting $C$.  There are two technical challenges here.  The first is that the
$S_i$ are not necessarily algebraic divisors, and if $C \subset \text{sing}(\cal F)$, they might not even be defined
analytically locally around $C$: instead they might only exist as formal divisors in the formal completion of $X$ along $C$.
The second challenge is to control the singularities of the pair $(X, \sum S_i)$.

To handle the first challenge we develop an extension of the classical MMP
to the formal setting.  We adapt some approximation results of Artin/Elkik to show that
we can approximate the $S_i$ by algebraic divisors $S'_i$ on an \'etale neighbourhood $U$ of $C$
in the sense that $S_i = S'_i$ on some infinitesimal neighbourhood of $C$.
By choosing a sufficiently close approximation, it follows that the $(K_U+\sum S'_i)$-flip coincides with 
the $(K_{X}+\sum S_i)$-flip.  We emphasise that we are only approximating the $S_i$ and we 
are not approximating the foliation $\cal F$, i.e., the $S'_i$ are not necessarily invariant divisors
of some other foliation $\cal F'$. Indeed, it is well known that it is not in general possible
to approximate a foliation with non-convergent separatrices by one with convergent separatrices (see Section \ref{s_approx} and 
\ref{s_approx2}).

Controlling the singularities of $(X, \sum S_i)$ is done by way of several results which provide bounds between
the singularities of $\cal F$ and those of $(X, \sum S_i)$. There is some additional difficulty arising from the fact
that the $S_i$ are not necessarily $\bb Q$-Cartier (even if $X$ is $\bb Q$-factorial).  
This can be handled by somewhat standard arguments
from the classical MMP (see Section \ref{s_flip}).

\medskip 

The remaining results on the MMP for foliations of co-rank one on a threefold (i.e., special termination, basepoint free theorem, termination of flips with scaling,
existence of F-dlt modifications, etc.)
are mostly a consequence of direct translations of standard ideas and strategies from the classical MMP
to the foliated setting.  Nevertheless there are some intriguing issues which arise and which are unique to the foliated setting
(e.g., foliations may admit infinitely many lc centres).

\medskip 

Finally, our abundance type result for foliations with $c_1(K_{\cal F}) = 0$ is a consequence of a rather lengthy and delicate
case by case analysis.  Some central ingredients are
Touzet's results on foliations with pseudo-effective conormal bundle and a careful
analysis of families of surfaces foliations with trivial canonical class (see Section \ref{s_abundance}).

\subsection{Acknowledgements}
Both  the authors were funded by EPSRC. We would like to thank J. M\textsuperscript cKernan, M. McQuillan, J. V. Pereira and R. Svaldi 
for many useful discussions. 
We are particularly grateful to the
referees for carefully reading the paper  and to help us  to improve the presentation of the paper considerably.
  The first author would like
to thank the National Center for Theoretical Sciences in Taipei and Professor J. A. Chen for their generous hospitality, where some of the
work for this paper was completed.

\section{Preliminary results}
We work over the field of complex numbers $\bb C$. Throughout the paper, a variety is a complex analytic space. 

Let $X$ be a normal variety and let $V\subset X$ be a closed subvariety.  Let $\hat X$ be the formal completion of $X$ along $V$. A {\bf formal divisor } $D$ on $\hat X$ is a union of distinct integral 
formal subschemes of pure codimension one. 
Let $I_D$ be the ideal sheaf of $D$. We say that  
$D$ is {\bf $\mathbb Q$-Cartier} if
$(I_D^{\otimes n})^{**}$ is locally of the form $f\cdot \cal O_{\widehat{X}}$ where $f$ is a local section of $\cal O_{\widehat{X}}$ and $n$ is a positive integer.
We say that $D$ does not contain $V$ in its support if $f$ does not vanish along $V$. 
Note that, in this case, if we denote by $\nu\colon V^{\nu}\to V$ the normalisaton of $V$, we can define the 
pull-back of $D$ to $V^{\nu}$ as a $\mathbb Q$-Cartier divisor on $V^{\nu}$.

Let $X$ be a smooth variety of dimension $n$ and let $D$ be a reduced divisor, we say that $D$ is {\bf normal crossing}, or that $(X,D)$ is a normal crossing pair,  if, for each closed point $x\in D$, there exist local formal coordinates $x_1,\dots,x_n$ such that $D$ is defined by $\{x_1\cdot\dots\cdot x_r=0\}$ for some $1\le r\le n$.
Note that this definition works equally well even if $\hat{X}$ is the formal completion of
a smooth variety $X$ along a closed subvariety  and $D \subset \hat{X}$ is a formal divisor.
 We say that a divisor $D$ on a smooth variety $X$ is {\bf simple normal crossing} if it is normal crossing and every irreducible component of $D$ is smooth. 

Given a normal variety $X$, we denote by $\Omega^1_X$ its sheaf of K\"ahler differentials and,
by $T_X:=(\Omega^1_X)^*$ its tangent sheaf. 
For any positive integer $p$, we denote $\Omega_X^{[p]}\coloneqq (\Omega_X^p)^{**}$.

Given a  $\bb Q$-divisor $\Delta$ on a normal variety $X$, we write $ \lfloor \Delta \rfloor$ for the round-down of $\Delta$ and $\{\Delta\}$ for the
fractional part of $\Delta$, i.e. $\{\Delta\}=\Delta - \lfloor \Delta \rfloor$.
 A $\bb Q$-factorial variety is a normal variety $X$ on which every divisor is $\bb Q$-Cartier. A  birational map $f\colon X\dashrightarrow Y$ between normal varieties is a {\bf birational contraction} if $f^{-1}$ does not contract any divisor.

We refer to \cite[Section 2.3]{KM98} for the classical definitions of singularities 
(e.g. klt, log canonical, etc.. ) appearing in the minimal model program. 
In particular, a {\bf log pair} $(X,\Delta)$ 
 consists of a normal variety $X$ and a $\mathbb Q$-divisor $\Delta$ with coefficients in $(0,1]$ and such that $K_X+\Delta$ is $\mathbb R$-Cartier. Note that if the coefficients of $\Delta$ are rational, then $K_X+\Delta$ is $\mathbb Q$-Cartier. If $S$ is an irreducible component of $\lfloor \Delta \rfloor$, and $\nu\colon S^{\nu}\to S$ is its normalisation, then we may write
 \[
 (K_X+\Delta)|_{S^{\nu}}=K_{S^{\nu}}+\Theta
 \]
where $\Theta$ is an effective $\mathbb R$-divisor on $S^{\nu}$ called the {\bf different} of $(X,\Delta)$ with respect to $S$ (cf. 
\cite[(4.2.9)]{Kollar13}).

Let $(X,\Delta)$ be a log pair,  let $S\subset X$ be a prime divisor contained in the support of $\lfloor \Delta \rfloor$
and let $\widehat X$ be the formal completion of $X$ along $S$. Assume that $T$ is a $\mathbb Q$-Cartier formal divisor on $\widehat X$ which does not contain $S$ in its support. Then we define the different of $(\widehat X,\Delta+T)$ with respect to $S$, as $\Theta+\nu^*T$, where $\Theta$ is the different of $(X,\Delta)$ with respect to $S$.  
 
We say that a normal variety $X$ is {\bf potentially klt} if there exists a $\bb Q$-divisor $\Delta \geq 0$ such that $(X, \Delta)$ is klt. We say that a normal variety $X$ is {\bf\'etale locally potentially klt} if for all $x \in X$ there is an \'etale
neighborhood $U$ of $x$ such that $U$ is potentially klt.

\subsection{Basic definitions}\label{s_definition}

A {\bf foliation} on a normal variety $X$ is a coherent subsheaf $\cal F \subset T_X$ such that
\begin{enumerate}
\item $\cal F$ is saturated, i.e. $T_X/\cal F$ is torsion free, and

\item $\cal F$ is closed under Lie bracket.
\end{enumerate}

The {\bf rank} of $\cal F$ is its rank as a sheaf.  Its {\bf co-rank} is its co-rank as a subsheaf
of $T_X$.

Let $X$ be a normal variety and let $\cal F$ be a rank $r$ foliation
on $X$.  We can associate to $\cal F$ a morphism
\[\phi\colon \Omega^{[r]}_X \rightarrow \cal O_X(K_{\cal F})\]
defined by taking the double dual of the  $r$-wedge product of the map $\Omega^{[1]}_X\to \cal F^*$, induced by the inclusion
$\cal F \subset T_X$.  This yields a map
\[
\phi'\colon (\Omega^{[r]}_X \otimes \cal O_X(-K_{\cal F}))^{**} \rightarrow \cal O_X\]
and we define the {\bf singular locus} of $\cal F$,   denoted by $\sing \cal F$, to be the cosupport of the image of $\phi'$.
A {\bf canonical divisor} of $\cal F$ is a divisor $K_{\cal F}$
such that $\cal O_X(-K_{\cal F}) \cong \det (\cal F)$.
We define the {\bf normal sheaf} of $\cal F$ as $\mathcal N_{\cal F}\coloneqq (T_X/\cal F)^{**}$. The {\bf conormal sheaf} $\mathcal N_{\cal F}^*$ of $\cal F$ is the dual of $\cal N_{\cal F}$.
If $\cal F$ is a foliation of co-rank one then, by abuse of notation, we denote by $N^*_{\cal F}$ a divisor associated to $\cal N^*_{\cal F}$. 


\subsection{Invariant subvarieties}\label{s_invariant}

Let $X$ be a normal variety and let $\cal F$ be a rank $r$ foliation on $X$. 
Let $S\subset X$ be a subvariety. Then  $S$ is said to be  {\bf $\cal F$-invariant}, or {\bf invariant by }$\cal F$, if for any open subset $U\subset X$ and any section $\partial \in H^0(U,\cal F)$, we have that 
\[ \partial (\mathcal I_{S\cap U})\subset \mathcal I_{S\cap U},
\]
where $\mathcal I_{S\cap U}$ denotes the ideal sheaf of $S\cap U$. 
If  $D\subset X$ is a prime divisor then we define $\epsilon(D) = 1$ if $D$ is not $\cal F$-invariant and $\epsilon(D) = 0$ if it is $\cal F$-invariant.


Let $X$ be a normal variety and let $\cal F$ be a foliation on $X$.  Let $\widehat{X}$ be the formal completion of $X$
along a proper closed subvariety $V \subset X$.  We say that a formal subvariety $S \subset \widehat{X}$ is a {\bf formal $\cal F$-invariant divisor} if it is a formal divisor which is $\widehat {\cal F}$-invariant, where $\widehat{\cal F}$ is
the restriction of $\cal F$ to $\widehat{X}$.

\subsection{Transform of a foliation under a rational map}
\label{subs_transforms}

Let $X$ be a normal variety and let $\cal F$ be a foliation on $X$.
Let $\phi\colon Y \dashrightarrow X$ be a dominant map and assume that there exist smooth open subsets
$U \subset X$ and $V \subset Y$ such
that the restriction $\phi|_V\colon V\to U$ is a   morphism.

Let $\cal F_U$ denote the restriction of $\cal F$ to $U$. Then the morphism $\mathcal N^*_{\cal F_U}\to \Omega^1_U$ induces a morphism $(\phi|_V)^*\mathcal N_{\cal F_U}\to \Omega^1_V$ and, therefore, a foliation $\cal G_V$ on $V$. 
We may extend $\cal G_V$ to a foliation on all of $Y$.
Indeed, we may take $\cal G$ to be the saturated subsheaf of $T_X$ whose restriction to $V$ is $\cal G_V$. 
It is easy to see that $\cal G$ is closed under Lie bracket, since it is closed under Lie bracket over a dense open subset. We will refer to $\cal G$ as the {\bf induced foliation} on $Y$ by $\phi$.   
If $\phi\colon Y\to X$ is a morphism, then the induced foliation is called the  {\bf pulled back foliation} and we denote it by $\phi^{-1}\cal G$. 
If $f\colon X\dashrightarrow X'$ is a birational map, then the induced foliation on $X'$ by $f^{-1}$ is called the {\bf transformed foliation} of $\cal F$ by $f$ and we will denote it by $f_*\cal F$.

\subsection{Foliation singularities}
Frequently, in birational geometry, it is useful to consider
pairs $(X, \Delta)$ where $X$ is a normal variety and $\Delta$ is a $\bb Q$-Weil divisor
such that $K_X+\Delta$ is $\bb Q$-Cartier.  By analogy, we define:

\begin{defn} Let $X$ be a normal variety. 
A {\bf foliated pair} $(\cal F, \Delta)$ on $X$ consists of  a foliation $\cal F$ on $X$ and a $\bb R$-divisor 
 divisor $\Delta$
such that $K_{\cal F}+\Delta$ is  $\bb R$-Cartier.
\end{defn}
Note that if $(\cal F,\Delta)$ is foliated pair and $\Delta$ is a $\mathbb Q$-divisor, then $K_{\cal F}+\Delta$ is $\mathbb Q$-Cartier. 
Note also that we are typically interested only in the case when $\Delta \geq 0$,
although it simplifies some computations to allow $\Delta$ to have negative coefficients.

Given a birational morphism $\pi\colon  \widetilde{X} \rightarrow X$ 
and a foliated pair $(\cal F, \Delta)$ on $X$,
let $\widetilde{\cal F}$ be the pulled back foliation on $\tilde{X}$
and $\pi_*^{-1}\Delta$ be the strict transform of $\Delta$ in $\widetilde X$. 
We may write
\[
K_{\widetilde{\cal F}}+\pi_*^{-1}\Delta=
\pi^*(K_{\cal F}+\Delta)+ \sum a(E, \cal F, \Delta)E.\]
where $\pi_*K_{\widetilde{\cal F}}=K_{\cal F}$,  the sum runs over all the prime $\pi$-exceptional divisors on $\widetilde X$ and  the rational number $a(E,\cal F,\Delta)$ is called the {\bf discrepancy} of $(\cal F,\Delta)$ with respect to $E$. If $\Delta=0$, then we will simply denote $a(E,\mathcal F, 0)$ by $a(E,\mathcal F)$.  Building on the work of McQuillan (e.g. see \cite[Definition I.1.5]{McQuillan08}), we define: 
\begin{defn}\label{d_canonical} Let $X$ be a normal variety and let $(\cal F,\Delta)$ be a foliated pair on $X$. 
We say that  $(\cal F, \Delta)$ is {\bf terminal} (resp. {\bf canonical},  {\bf log terminal}, {\bf log canonical}) if
$a(E, \cal F, \Delta) >0$ (resp. $\geq 0$, $> -\epsilon(E)$, $\geq -\epsilon(E)$),  for any birational morphism  $\pi\colon \widetilde X\to X$ and for any prime $\pi$-exceptional divisor $E$ on  $\widetilde X$.

We say that a foliation $\cal F$ is terminal (resp. canonical, log canonical) if the foliated pair $(\cal F,0)$ is such. 

We say that the foliated pair $(\cal F,\Delta)$ is {\bf klt} if $\lfloor \Delta\rfloor=0$ and $a(E,\cal F,\Delta)> -\epsilon(E)$ for any birational morphism  $\pi\colon \tilde X\to X$ and for any $\pi$-exceptional prime divisor $E$ on  $\tilde X$.

Let $P \in X$ be a, not necessarily closed, point of $X$.
We  say  that the foliated pair $(\cal F, \Delta)$ is {\bf terminal} (resp. {\bf canonical, log canonical})
{\bf at $P$} if
for any birational morphism $\pi\colon \widetilde X\to X$ and for any  $\pi$-exceptional  divisor $E$ on $\widetilde X$ whose centre in $X$ is the Zariski closure $\overline P$ of $P$,
we have that the discrepancy of $E$ is $>0$ (resp. $\geq 0$, $\geq -\epsilon(E)$).
\end{defn}

Notice that these notions are well defined, i.e., $\epsilon(E)$ and $a(E, \cal F, \Delta)$
are independent of $\pi$.

Observe that in the case where $\cal F = T_X$, no exceptional divisor
is invariant and so this definition recovers the usual
definitions of (log) terminal, (log) canonical.

\begin{remark} 
\label{remark_inv_not_lc}
It follows from Definition \ref{d_canonical} that 
if some component of $\supp \Delta$ is $\cal F$-invariant,
then $(\cal F, \Delta)$ is not log canonical. 
Indeed, let $D$ be an $\cal F$-invariant component of $\Delta$ with 
coefficient $a>0$.  Let $p \in D$ be a general point so that $X, D$ and $\cal F$ are all smooth at $p$.
Let $b\colon X' \rightarrow X$ be the blow up at $p$ and let $\cal F'=b^{-1}\cal F$. Then $b$ extracts a single $\cal F'$-invariant divisor $E$ of discrepancy 
\[
a(E,\cal F,\Delta)=n-1-a
\] where $n$ is the dimension of $X$. 
Let $D'$ be the strict transform of $D$ and observe that $Z \coloneqq D'\cap E$ is contained in $\sing \cal F'$.
Let $b'\colon X'' \rightarrow X'$ be the blow up of $X'$ in $Z$.  Then $b'$ extracts a divisor $E'$ of discrepancy 
\[
a(E',\cal F,\Delta)=n-2a-1.
\]
We may now blow up  $E'\cap D''$, where  $D''$ is the strict transform of  $D'$ in $X''$, and, continuing this way, we produce a sequence of extractions of divisors with discrepancy $n-ka-1$ where $k =1 , 2, \dots$.  Thus, if $k$ is sufficiently large, then we extract a divisor $F$ such that 
\[ a(F,\cal F,\Delta)<0=\epsilon(F)
\] and, in particular, $(\cal F, \Delta)$ is not log canonical.
\end{remark}

\begin{defn}
Given a foliated pair $(\cal F, \Delta)$, we say that $W \subset X$ is a {\bf log canonical
centre} (in short, lc centre) of $(\cal F, \Delta)$ provided $(\cal F,\Delta)$ is log canonical at the generic
point of $W$ and there is some divisor $E$ of discrepancy $-\epsilon(E)$ on
some model of $X$ dominating $W$.
\end{defn}

Notice
that in the case that $\epsilon(E) = 0$ for all exceptional divisors $E$
over a centre the notions of log canonical and canonical coincide.  In this case,
we will still refer to canonical centres as log canonical centres.

We also remark that any $\cal F$-invariant divisor is an lc centre
of $(\cal F, \Delta)$.

\medskip 

We have the following nice characterisation due to \cite[Corollary I.2.2.]{McQuillan08}:

\begin{proposition}
Let $0 \in X$ be a normal surface germ with a terminal foliation $\cal F$ of rank one.  

Then there exists a cyclic cover
$\sigma\colon  Y \rightarrow X$ such that $Y$ is a smooth surface and  $\sigma^{-1}\cal F$ is a smooth foliation.
\end{proposition}

We also make note of the following easy fact:

\begin{lemma}
\label{lem_crep_discrep}
Let $\pi\colon Y\to X$ be a proper birational morphism between normal varieties. Let $(\cal F, \Delta)$ be a foliated pair on $X$ and let $\cal G$ be the pulled back foliation of $\cal F$ on $Y$. 
Write $\pi^*(K_{\cal F}+\Delta) = K_{\cal G}+\Gamma$. 

Then
$a(E, \cal F, \Delta) = a(E, \cal G, \Gamma)$ for all $E$.
\end{lemma}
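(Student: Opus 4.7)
The plan is to compute both discrepancies on a common birational model. Given a prime divisor $E$ realising a valuation on the common function field of $X$ and $Y$, I would choose a proper birational morphism $\mu\colon Z\to Y$ such that $E$ appears as a prime divisor on $Z$, and set $\nu=\pi\circ\mu\colon Z\to X$. Let $\cal H$ denote the foliation induced on $Z$. The key compatibility is that foliation pullback commutes with composition of birational morphisms, so $\cal H$ is the same whether obtained by pulling $\cal F$ back along $\nu$ or $\cal G$ back along $\mu$.

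Next I would rewrite the defining identity of discrepancy in a form suited to comparison. Unwinding
\[K_{\cal H}+\mu_*^{-1}\Gamma=\mu^*(K_{\cal G}+\Gamma)+\sum_j a(E_j,\cal G,\Gamma)E_j,\]
where the sum ranges over the $\mu$-exceptional divisors, shows that the coefficient of any prime divisor $E\subset Z$ in $K_{\cal H}-\mu^*(K_{\cal G}+\Gamma)$ equals $a(E,\cal G,\Gamma)$ when $E$ is $\mu$-exceptional, and equals $-\text{mult}_{E_Y}\Gamma$ when $E=\mu_*^{-1}E_Y$ is the strict transform of a divisor $E_Y\subset Y$ (the standard convention for the discrepancy of a divisor appearing on the base). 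The identical computation, performed via $\nu$ and the pair $(\cal F,\Delta)$, expresses $a(E,\cal F,\Delta)$ as the coefficient of $E$ in $K_{\cal H}-\nu^*(K_{\cal F}+\Delta)$.

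Finally, the defining identity $\pi^*(K_{\cal F}+\Delta)=K_{\cal G}+\Gamma$, pulled back by $\mu$, yields
\[\nu^*(K_{\cal F}+\Delta)=\mu^*\pi^*(K_{\cal F}+\Delta)=\mu^*(K_{\cal G}+\Gamma),\]
so the two $\bb Q$-divisors $K_{\cal H}-\nu^*(K_{\cal F}+\Delta)$ and $K_{\cal H}-\mu^*(K_{\cal G}+\Gamma)$ agree on $Z$, and hence so do their coefficients at $E$. The argument is essentially bookkeeping; the only point that really requires verification is the functoriality of foliation pullback used to identify the two induced foliations on $Z$, but this is standard, so I do not anticipate any substantive obstacle.
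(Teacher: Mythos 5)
Your argument is correct. The paper states this as an ``easy fact'' and omits the proof entirely, so there is nothing to compare against; your proof supplies exactly the standard bookkeeping argument one would expect. The three ingredients---(i) functoriality of induced foliations, so the foliation $\cal H$ on the common model $Z$ is unambiguous; (ii) the equality $\nu^*(K_{\cal F}+\Delta)=\mu^*(K_{\cal G}+\Gamma)$ obtained by pulling back the defining relation along $\mu$; and (iii) reading off discrepancies as the coefficients of $K_{\cal H}-\nu^*(K_{\cal F}+\Delta)=K_{\cal H}-\mu^*(K_{\cal G}+\Gamma)$, with the usual convention $a(D,\cal G,\Gamma)=-\mathrm{mult}_D\Gamma$ for a prime divisor $D\subset Y$---are each handled correctly, and together they give the result for all valuations $E$, whether $E$ is $\mu$-exceptional, $\pi$-exceptional but realized on $Y$, or realized on $X$.
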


We will make frequent use of the following consequence of the negativity lemma: 

\begin{lemma}\label{l_negativity}
Let $\phi\colon X\dashrightarrow X'$ be a birational map between normal varieties and let 
\begin{center}
\begin{tikzcd}
X \arrow[rd,"f" ' ]\arrow[dashrightarrow]{rr}{\phi} & &X' \arrow{dl}{f'}\\
&Y&  \\
\end{tikzcd}
\end{center}
be a commutative diagram, where $Y$ is a normal variety and $f$ and $f'$ are proper birational morphisms. 
Let $(\cal F,\Delta)$ be a  foliated pair on $X$. Let $\cal F'=\phi_*\cal F$ and let $\Delta'$ be a 
$\mathbb Q$-divisor on $X'$ such that $f_*\Delta=f'_*\Delta'$. 
Assume that $-(K_{\cal F}+\Delta)$ is $f$-ample and $K_{\cal F'}+\Delta'$ is $f'$-ample. 

Then, for any valuation $E$ on $X$, we have
\[
a(E,\cal F,\Delta)\le a(E,\cal F',\Delta').
\]
Moreover, the inequality holds if $f$ or $f'$ is not an isomorphism above the generic point of the centre of $E$ in $Y$. 
\end{lemma}
\begin{proof}
The proof is the same as \cite[Lemma 3.38]{KM98}.
\end{proof}

\medskip

We now recall some facts from \cite{Cano} on simple  singularities.  We say that the numbers  $\lambda_1,\dots,\lambda_l\in \mathbb C^*$ satisfy the {\bf non-resonant condition} if 
for any non-negative integers $a_1,\dots,a_l$ such that 
 $\sum a_i\lambda_i = 0$ we have that $a_i = 0$ for all $i=1,\dots,l$.

\begin{defn}
\label{defn_simple}
Let $\cal F$ be a co-rank one foliation on a smooth variety $X$ of dimension $n$. 
We say that $p \in X$ is a {\bf simple singularity} for $\cal F$
provided that, in formal coordinates $x_1,\dots,x_n$ around $p,$ $N^*_{\cal F}$ is generated by a $1$-form which is 
in one of the following two forms, for some $1 \leq r \leq n$:

\begin{enumerate}
\item There are $\lambda_1,\dots,\lambda_r \in \bb C^*$, which satisfy the non-resonant condition and such that
$$\omega = x_1\cdots x_r\cdot\sum_{i = 1}^r \lambda_i \frac{dx_i}{x_i}.$$

\item There is an integer $k \leq r$ such that
$$\omega = x_1\cdots x_r\cdot \left(\sum_{i = 1}^kp_i\frac{dx_i}{x_i} + 
\psi(x_1^{p_1}\cdots x_k^{p_k})\sum_{i = 2}^r \lambda_i\frac{dx_i}{x_i}\right)$$
where $p_1,\dots,p_k$ are positive integers without a common factor, $\psi(s)$
is a formal power series which is not a unit, and the numbers $\lambda_2,\dots,\lambda_r \in \bb C^*$ satisfy the non-resonant condition.

\end{enumerate}
The integer $r$ is called the {\bf dimension-type} of the singularity. The $r$-uple $(\lambda_1,\dots,\lambda_r)$ in (1) (resp. the $(r-1)$-uple $(\lambda_2,\dots,\lambda_r)$ in (2)) is called the {\bf residual spectrum} of the singularity (cf. \cite[Remark 20]{Cano}). 

If $(X,D)$ is a normal crossing pair and $\cal F$ is a co-rank one foliation on $X$ then we say that 
 $\cal F$ has {\bf simple singularities adapted} to $D$ if $\cal F$ has simple singularities and, for every $p\in X$, we may choose formal coordinates around $p$ as above and such that the divisor $D\cup \{x_1\cdots x_r=0\}$ is also normal crossing at $p$
 (cf. \cite[Definition 3, Definition 13 and Definition 14]{Cano}).

A {\bf stratum} of $\sing \cal F$ is a closed subvariety $Z\subset \sing\cal F$ such that for all $p \in Z$
and coordinates $x_1, \dots, x_n$ as above, in the formal neighbourhood of $X$ at $p$, 
we have that $Z$ is a stratum of $\{x_1\cdots x_r = 0\}$.

\end{defn}

By Cano \cite[Main Theorem]{Cano}, every co-rank one foliation $\cal F$ on a smooth threefold $X$ admits a resolution
$\pi\colon X'\to X$ by blow ups centred in the singular locus of the foliation,
such that the transformed
foliation has  simple singularities.
By allowing blow ups in centres tangent to the foliation (but perhaps not contained in $\sing \cal F$)
we may get that the transformed
foliation has  simple singularities adapted to $\exc \pi$.  More generally, we may perform
a sequence of blow ups so that the transform of $\cal F$ has simple singularities adapted
to the transform of any divisor $D$ on $X$ (cf. \cite[Theorem 3 and Section 4.5]{Cano}).

We remark that if $\cal F$ is an algebraic foliation defined on an algebraic threefold
and $D$ is an algebraic divisor
then we only need to blow up in algebraic centres.  Indeed, for centres
contained in $\sing \cal F$ this is obvious.  If $Z$ is a centre which needs to be blown up and
which is not contained in $\sing \cal F$ then $Z$ is either contained in the singular locus of $D$ or is
contained in the tangency
locus of $\cal F$ and $D$.  Since $\cal F$ and $D$ are algebraic, these loci are also algebraic. Hence $Z$
is algebraic.



\begin{lemma}\label{l_simplecanonical}
Let $X$ be a smooth variety and let $\cal F$ be a co-rank one foliation with simple singularities on $X$. 

Then $\cal F$ is canonical. 
\end{lemma}
\begin{proof}
Let $\pi\colon X' \rightarrow X$ be a birational morphism, let $E \subset X'$ be a divisor
and let $Z = \pi(E)$.  If $Z$ is not contained in $\sing \cal F$, then by shrinking about the generic
point of $Z$ we may apply \cite[Lemma 3.10]{AD13} to conclude that $a(E, \cal F) \geq 0$.

Assume now that $Z \subset  \sing \cal F$. By shrinking about the generic
point of $Z$, we may assume that $Z$ is smooth and, 
by Zariski's Lemma (cf.  \cite[Lemma 2.45]{KM98}),
after possibly replacing $X'$ by a higher model, we may assume that $\pi$ is a composition of blow-ups of subvarieties centred on $Z$.  By induction on the number of blow ups, it suffices to show  that if $b\colon Y \rightarrow X$
is the blow up of $Z$ then 
\begin{enumerate}
\item[(i)] if $E_0$ is the $b$-exceptional divisor  then $a(E_0, \cal F) \geq 0$; and 

\item[(ii)] $b^{-1}\cal F$ has simple singularities in a neighbourhood of 
$b^{-1}(z)$, where $z \in Z$ is a general point. 
\end{enumerate} 

To prove (i), observe that if $W$ is a minimal stratum of $\sing \cal F$ containing $Z$ and
$\omega$ is a $1$-form defining $\cal F$ at a general point of $Z$,
then $b^*\omega$ vanishes to order $\codim W-1$ at the generic point of $E_0$.
Thus, 
\[
a(E_0, \cal F) = a(E_0, X) - (\codim W-1) = \codim Z-\codim W \geq 0
\]
and (i) follows.

We now prove (ii).
%
%
Since we are only concerned about the behaviour at the general point of $Z$, we may assume without loss 
of generality that $Z$ is contained in a stratum of $\sing \cal F$ which meets no lower dimensional strata.
Thus, If $p\in Z$ is a closed point then we may find formal coordinates $x_1, ..., x_r, y_1, ..., y_{n-r}$ around $p$, where $n$ is the dimension of $X$, $r$ is the dimension type of the singularity and if $\omega$ is a $1$-form defining the foliation near $Z$ then
 $\omega$ is in one of the two forms in  Definition \ref{defn_simple} and
\[
Z =\{x_1 = ... = x_r = y_1 = ... = y_l = 0\}
\]
for some $l \leq n-r$.
Let $\omega' \coloneqq \frac{1}{x_1\cdots x_r} \omega$.

Fix $i=1,\dots,r$. Consider the chart for the blow up given by 
\[
x_1 = x'_ix'_1 \, \dots\,  x_i = x_i' \, \dots\, x_r=x'_ix'_r\quad  y_1 = x'_iy'_1\,\dots\, y_s = x'_iy'_s.
\]
If $\omega$ has a simple singularity of type (1) then 
\[\omega'  = \sum_{j = 1}^r \lambda_j \frac{dx_j}{x_j}\] and, in this chart, \[b^*\omega' = \sum_{j = 1}^r \lambda'_j \frac{dx'_j}{x'_j}\]
where $\lambda'_j = \lambda_j$ for $j \neq i$ and $\lambda'_i = \sum_{j}\lambda_j$.
In particular, notice that $\lambda'_1,\dots,\lambda'_r\in \mathbb C^*$ satisfy the non-resonant condition, since it is a positive integral linear combination of a non-resonant spectrum, and so $b^*\omega'$ is still a simple singularity of type (1).
A similar computation holds in all other charts. Thus $b^*\omega'$ defines a simple singularity of type (1).

So suppose that we have a simple singularity of type (2). Then
\[\omega' = \sum_{j = 1}^kp_j\frac{dx_j}{x_j} + 
\psi(x_1^{p_1}\cdots x_k^{p_k})\sum_{i = 2}^r \lambda_j\frac{dx_j}{x_j}.\]
Again, using coordinates for the blow up as above we see that if $1 \leq i \leq k$ then
\[b^*\omega' = \sum_{j = 1}^k p'_j \frac{dx'_j}{x'_j} + \psi((x'_1)^{p'_1}\cdots (x'_k)^{p'_k})\sum_{i = 2}^r \lambda'_j \frac{dx'_j}{x'_j}\]
and if $i >k$ we have
\[b^*\omega' = \sum_{j = 1}^k p'_j \frac{dx'_j}{x'_j} +p'_i \frac{dx'_i}{x'_i}+ \psi((x'_1)^{p'_1}\cdots (x'_k)^{p'_k}(x'_i)^{p'_i})\sum_{i = 2}^r \lambda'_j \frac{dx'_j}{x'_j}\]
where $p'_j =  p_j$ if $j \neq i$ and $p'_i = p_1+\dots+p_k$ and $\lambda'_j$ is defined as above.
Again, it follows that $\lambda'_2,\dots,\lambda'_r$ satisfy the non-resonant condition and, similarly as above, $b^*\omega$ defines a simple singularity of type (2).

We remark that in both these cases the exceptional divisor of the blow up is invariant.

Thus, $b^{-1}\cal F$ has simple singularities in a neighbourhood 
of $b^{-1}(z)$ and (ii) follows. 
\end{proof}

The converse of this statement is false (e.g. see \cite[Example 2.16]{Spicer17}).

%
%
%

\medskip


\begin{defn}
\label{defn_non-dicrit}
Given a normal variety $X$ and a foliation $\cal F$ on $X$, we say that 
$\cal F$ has {\bf non-dicritical} singularities if for any closed point $q \in X$ and any proper birational morphism $\pi\colon X'\to X$
such that $\pi^{-1}(q)$ is a divisor we have that each component of $\pi^{-1}(q)$ is invariant by $\pi^{-1}\cal F$.
\end{defn}

\begin{example}
Let $\lambda \in \bb R$.  Consider the rank one foliation $\cal F_\lambda$ on $\bb C^2$
generated by $x\partial_x+\lambda y\partial_y$.
For $\lambda \in \bb Q_{> 0}$ we can see that $\cal F_\lambda$ is dicritical,
and otherwise is non-dicrtical.
See \cite[pg. 7]{Brunella00} for an explicit resolution of $\cal F_{\lambda}$ when
$\lambda \in \bb Q_{>0}$.
\end{example}

\begin{defn}
\label{tangtransdef}
Let  $X$ be a normal variety and let $\cal F$ be a co-rank one foliation with non-dicritical singularities.

We say that a subvariety $W \subset X$ (possibly contained in $\sing X$ or $\sing \cal F$)
is {\bf tangent} to $\cal F$ if for any birational morphism 
$\pi\colon X'\to X$  and any 
 divisor $E$ on $X$ such that $E$
dominates $W$, we have that $E$ is $\cal F'$-invariant, where $\cal F'$ is the pulled back foliation on $X'$.
Otherwise, we say that  $W \subset X$ is {\bf transverse} to $\cal F$. 
\end{defn}

Note that the  definitions above differ slightly from the usual ones, but, in our opinion, they are
more flexible when working with singular varieties. In particular, if $W\subset X$ is a subvariety which is not contained 
in $\sing X \cup \sing\cal F$, then our definition of tangency coincides with the classic one. 
Note also that for divisors the notions of tangency and invariance coincide. 

Assume now that $X$ is a normal variety, $\mathcal F$ is a co-rank one foliation on $X$ and  $W\subset X$ is an irreducible  subvariety which is not contained in $\sing X \cup \sing\cal F$ and which is transverse to $\cal F$. Let $\nu\colon W^{\nu}\to W$ be its normalisation. 
Let $U\subset X$ be a proper open subset which intersect $W$ and which is contained in the smooth locus of $X$ and let $V=\nu^{-1}(U\cap W)$.  Then the composition of the natural maps
\[
\mathcal N_{\cal F}^*|_{V}\to \Omega^{1}_X|_{V}\to \Omega^{1}_{V}
\]
induces a co-rank one foliation $\cal G_V$ on $V$ which extends naturally to a foliation $\cal G$ on $W^{\nu}$, called the {\bf restricted foliation} of $\cal F$ on $W^{\nu}$.

\medskip


If $X$ is a smooth variety and $\cal F$ is a co-rank one foliation on $X$ then
we say that $\cal F$ is {\bf strongly non-dicritical}  if for any sequence of blow ups 
 \[
 X_n \xrightarrow{p_n} \cdots X_1 \xrightarrow{p_1} X\]
 in smooth centres tangent to $\cal F_i$
or smooth centres contained in $\sing \cal F_i$, where $\cal F_i$ is the transformed foliation on $X_i$,
we have that the exceptional locus of $X_n \rightarrow X$ is $\cal F_n$-invariant (cf. \cite{CM92}).
We remark that we allow these maps to be blow ups along analytic subvarieties.

\begin{remark}\label{r_simple=ndc}
Simple singularities are strongly non-dicritical.  Indeed, by \cite[Th\'eor\`eme 4]{CM92} non-dicriticality can be checked by blowing up
in permissible centres (cf. \cite[Definition 1]{Cano}).  However, as we saw in the proof of Lemma \ref{l_simplecanonical}
the exceptional divisor for such a blow up is always  invariant by the induced foliation.
\end{remark}

In \cite[Th\'eor\`eme 4]{CM92}, the following  characterisation of strong non-dicriticality is given.
A germ of a co-rank one foliation $\cal F$ on $0 \in \bb C^n$ is strongly non-dicritical if and only if
there does not exists a germ of a surface $0 \in Z\subset \mathbb C^n$ such that $Z$ is transverse to 
$\cal F$ and such that the restricted foliation to $Z$ admits
infinitely many invariant curves passing through $\sing \cal F$.

We now show that, in the case of smooth threefolds, all these notions of non-dicriticality coincide:

\begin{lemma}
\label{lem_equiv_dicrit}
Let $X$ be a smooth threefold and let $\cal F$ be a co-rank one foliation on $X$.

Then $\cal F$ is non-dicritical if and only if it is strongly non-dicritical.
\end{lemma}
\begin{proof}
Suppose that $\cal F$ is strongly non-dicritical.  Let $\pi\colon X' \rightarrow X$ be a proper birational morphism, 
let $q \in X$ be a closed point
and suppose that $E \subset \pi^{-1}(q)$ is a prime divisor.  
Suppose for sake of contradiction that $E$ is not $\pi^{-1}\cal F$-invariant.  
Let $H \subset X'$ be a general hypersurface
such that if $\cal G$ is the foliation restricted to $H$ then $E\cap H$ is transverse to $\cal G$.
Then, through a general point $P \in E \cap H$ there is a germ of a $\cal G$-invariant curve passing through
$P$, call it $\Sigma_P$.  It follows that if $Z = \pi(H)$ then $Z$ is transverse to $\cal F$ and the restricted foliation
admits infinitely many invariant curves passing through $q = \pi(E \cap H)$, namely, $\pi(\Sigma_P)$
as we let $P$ vary over points of $E\cap H$, a contradiction. 
In particular, $q\in \sing \cal F$. 
Thus, $\cal F$ is non-dicritical. 

\medskip

Now suppose that $\cal F$ is non-dicritical.
Let $\pi\colon X' \rightarrow X$ be a resolution of singularities so that
$(X', \exc \pi)$ is log smooth and $\cal F' \coloneqq \pi^{-1}\cal F$ has 
simple singularities adapted to $\exc \pi$ (cf. Definition \ref{defn_simple}). In particular, if $E$ is a component of $\exc \pi$ which is not
$\cal F'$-invariant then $\sing \cal F' \cap E$ has no one-dimensional components.
We may also take $\pi$ to be a sequence of blow ups centred either
in $\sing \cal F$ or centres tangent to the foliation.  
Since, by Remark \ref{r_simple=ndc}, simple singularities are strongly non-dicritical,
  it suffices to show that $\exc\pi$ is $\cal F'$-invariant.

If $E$ is a component of $\exc \pi$ such that $\pi(E)$ is zero-dimensional or $\pi(E)$ is not contained in $\sing \cal F$ 
then it follows immediately that $E$ is $\cal F'$-invariant.
So suppose that $\pi(E)$ is a curve contained in $\sing \cal F$.  Since $\cal F$ is non-dicritical it follows that the fibres
of $E \rightarrow \pi(E)$ are tangent to $\cal F'$.  
Suppose for sake of contradiction that $E$ is not $\cal F'$-invariant.

If $\cal G$ is the foliation restricted to $E$ then $\cal G$ is the foliation induced by $E \rightarrow \pi(E)$ and is therefore
smooth over the generic point of $\pi(E)$.
Since $\sing \cal F' \cap E$ has no one-dimensional components, it follows that $\cal F'$ is smooth in a neighbourhood
of a general fibre of $E \rightarrow \pi(E)$.
Next, we claim that if $E'$ is any other $\pi$-exceptional divisor dominating $\pi(E)$ then $E'$ is not $\cal F'$-invariant.
Indeed, suppose otherwise.  Without loss of generality we may assume that $E'\cap E \neq \emptyset$
and $E \cap E'$ dominates $\pi(E)$.  However $E \cap E'$ is $\cal G$-invariant since $E'$ is $\cal F'$-invariant and so $E\cap E'$
is contained in a fibre of $E \rightarrow \pi(E)$, a contradiction.

Shrinking about a neighborhood
of the general point of $\pi(E)$ we may therefore assume that every $\pi$-exceptional divisor is transverse to
$\cal F'$ and that $\cal F'$ is smooth.

Now, let $\omega$ be a $1$-form defining $\cal F$ in a neighborhood of a general point
of $\pi(E)$.  Since $\pi(E)$ is contained in $\sing \cal F$ it follows that $\pi^*\omega$ must vanish
along $\exc \pi$ and, in particular, it follows that 
that  $N^*_{\cal F'} = F$ where $F\geq 0$ is $\pi$-excpetional and $F \neq 0$.

On the other hand, it is easy to verify that $K_{\cal F'}$ and $K_{X'}$ are numerically equivalent over $X$.
Indeed, one can check (as in the proof of \cite[Lemma 3.11]{Spicer17})
that if $E'$ is any $\pi$-exceptional divisor dominating $\pi(E)$ and if $C$ is a general fibre
of $E' \rightarrow \pi(E)$ then $(K_{\cal F'}+E)\cdot = (K_{X'}+E')\cdot C = -2$ and so $K_{\cal F'}\cdot C = K_{X'}\cdot C$.
This, together with the equality $K_{X'} = K_{\cal F'}+N^*_{\cal F'}$ implies that $N^*_{\cal F'} = F$ is numerically trivial
over $X$. Since $F \neq 0$, the negativity lemma (e.g. see \cite[Lemma 3.39]{KM98})  gives us a contradiction. 
\end{proof}

\begin{remark}
The above proof shows that  if $X$ is a smooth threefold, $\cal F$ is a non-dicritical  co-rank one foliation on $X$, $\pi\colon X'\to X$ is a birational morphism
and $W$ is $\cal F$-invariant, then
$\pi^{-1}(W)$ 
is $\cal F'$-invariant where $\cal F'$ is the transformed foliation on $X'$.
\end{remark}

\begin{remark} Let  $X$ be a normal threefold and let $\cal F$ be a co-rank one foliation with non-dicritical singularities.
Let $\pi\colon X'\to X$ be a  birational morphism and let $\cal F'$ be the transformed foliation on $X'$. Assume that there 
exists a prime $\pi$-exceptional divisor $E$, which is $\cal F'$-invariant and  whose centre in $X$ is $W$. Then every
$\pi$-exceptional divisor $E$ whose centre in $X$ is $W$ is $\cal F'$-invariant. 
Indeed, the proof of this fact follows exactly as in the proof of Lemma \ref{lem_equiv_dicrit}.
%
%
\end{remark}

%
%
%

\begin{defn}
Given a normal germ $0 \in X$ (resp. the formal completion $0 \in \widehat{X}$ of a normal variety 
$X$ at the point $0\in X$) with a  co-rank one foliation $\cal F$
such that $0$ is a singular point for $\cal F$, 
we call an irreducible hypersurface germ (resp. a formal hypersurface) $0 \in S$ a {\bf separatrix (resp. formal separatrix)}
if it is $\cal F$-invariant.
\end{defn}

Let $0 \in X$ be a smooth germ and let $\cal F$ be a co-rank one foliation $\cal F$ on $X$ defined by a $1$-form $\omega$. 
Sometimes, in the literature, a formal separatrix is defined to be an irreducible and reduced formal
power series $f$ such that  $f$ divides $df\wedge \omega$.
We claim that, under this assumption, $\{f=0\}$ defines a formal hypersurface which is $\cal F$-invariant.  Indeed, let $v$ be a vector field
such that $v(\omega)= 0$.  On one hand, $v(df \wedge \omega)$ is necessarily divisble by $f$ since 
$df\wedge\omega$ is. On the other hand, we can compute $v(df\wedge \omega)= v(df) \omega -df v(\omega) = v(df)\omega$
which implies that $v(df)$ is divisible by $f$, i.e., the ideal $(f)$ is invariant by $v$, as required.
Conversely, if $\{f = 0\}$ is $\cal F$-invariant then $df \wedge \omega$ is divisible by $f$.  Indeed, after replacing $(X,\{f=0\})$ by its log resolution whose existence is guaranteed by \cite[Theorem 1.1.9 and Theorem 1.1.13]{temkin18}, 
we may assume that  the pair $(X, \{f = 0\})$ is log smooth. Thus, we may assume that in some formal coordinates $x_1, ..., x_n$
we may write $f = x_1$.  Now write $\omega = \sum a_idx_i$ for some functions $a_1,\dots,a_n$ and suppose for sake of contradiction that there exists
$j \neq 1$ such that $a_j \notin (x_1)$.  In this case 
$\partial \coloneqq a_j\frac{\partial}{\partial x_1} - a_1\frac{\partial}{\partial x_j}$ is a local vector field in $\cal F$.
However, $\partial(x_1)  =a_j \notin (x_1)$ and so $\{x_1 =0\}$ is not invariant, a contradiction.

\begin{example}
Let $\cal F$ be a co-rank one foliation on a smooth variety $X$ with a simple singularity at $p\in X$ of dimension type $r$.  Let 
$x_1,\dots,x_n$ be formal coordinates as in Definition \ref{defn_simple}. Then $\{x_i=0\}$ is a formal separatrix for each $i=1,\dots,r$
and moreover these are the only formal separatrices at $p$, see \cite[Appendix: About simple singularities]{Cano}.

\end{example}

\medskip

Note that away from the singular locus of $\cal F$
a separatrix is in fact a leaf.  Furthermore being non-dicritical 
implies that there are only finitely
many separatrices through a singular point. The converse of this statement is false.

\begin{lemma}\label{l_simplegeneric}
Let $X$ be a smooth variety of dimension $n$ and let
$\cal F$ be a co-rank one foliation on $X$ with simple singularities.
Suppose that $S \subset \sing \cal F$ is a subvariety such that  $\dim S \geq 1$.
Let $H \subset X$ be a general element of a base point free linear system and let $\cal G$ be the restricted foliation.

Then $\cal G$ has simple singularities at a general point of $S \cap H$.
\end{lemma}
\begin{proof}
Remark \ref{r_simple=ndc} implies that $\cal F$ admits non-dicritical singularities. 
Let $P \in S$ be a general closed point and let $r \leq n-1$ be the dimension type of 
$\cal F$ at $P$. Note that this condition is generic in $\sing \cal F$.

Let $D_1, ..., D_{r}$ be the separatrices
of $\cal F$ at $P$, including the formal ones.  
Let $H$ be a hyperplane passing through $P$ such that $(\widehat{X}, D_1+...+D_{r}+H)$
is a 
normal crossings pair where $\widehat{X}$ is the formal completion of $X$ at $P$.  
If $\omega$ is a $1$-form defining $\cal F$
it is easy to check that $\omega$ restricted to $H$ is still a $1$-form which is of one of the types listed in 
Definition \ref{defn_simple}.
Thus, the restricted foliation will have simple singularities near $P$.
It follows from \cite[Proposition 14]{Cano} that having pre-simple singularities
is an open condition in $\sing \cal F$, and the proof there works just as well to imply
that having simple singularities is an open condition in $\sing \cal F$.
\end{proof}

Even for simple foliation singularities it is possible that there
are separatrices which do not converge.  However, as the following
definition/result shows there is always at least one convergent
separatrix along a simple foliation singularity of codimension two.

\begin{lemma}
 \label{lem_strong_sep_exist} 
For a simple singularity of type (1), all separatrices are convergent.

For a simple singularity of type (2), around a general point
of a codimension two component of the the singular locus we can write 
$\omega = pydx+qxdy+ x\psi(x^py^q)\lambda dy$. The hypersurface $\{x = 0\}$ is a convergent
separatrix,  called the {\bf strong separatrix}.
\end{lemma}
\begin{proof}
This is proven in \cite[Part II \S 5]{CC92}.
\end{proof} 

\subsection{Foliation with $K_{\cal F}$ not pseudo-effective}

We make note of the following easy consequence of \cite{CampanaPaun19}

\begin{lemma}
\label{lem_KF_not_psef_uniruled}
Let $X$ be a normal variety and let $\cal F$ be a foliation on $X$
such that $K_{\cal F}$ is $\bb Q$-Cartier and $K_{\cal F}$ is not pseudo-effective.  
Then $\cal F$ is uniruled, i.e.,
there exists a family of rational curves covering $X$ and tangent to $\cal F$.
\end{lemma}
\begin{proof}
Let $\pi\colon X' \rightarrow X$ be a resolution of singularities and let $\cal F'$ be the pulled back foliation.
Then observe that $K_{\cal F'}$ is not pseudo-effective and so we may freely assume that $X$ is smooth.

Let $\alpha$ be a movable class so that $K_{\cal F}\cdot \alpha <0$.  Let $\cal E \subset \cal F$ be a maximal destabilizing subsheaf
of $\cal F$ with respect to $\alpha$.  It follows that $\cal E$ defines a foliation with $\mu_{\alpha, min}(\cal E)>0$
and so by \cite[Theorem 1.1]{CampanaPaun19} $\cal E$ is a foliation with rationally connected leaves, from which our claim follows.
\end{proof}

\subsection{Steps of the Minimal Model Program}
\label{s_ample_model}

We recall some of the main definitions commonly used in the Minimal Model Program. 
Let $X$ be a normal  projective variety. We denote by $N_1(X)$ the $\mathbb R$-vector space spanned by $1$-cycles on $X$ modulo numerical equivalence (e.g. see \cite[Definition 1.16]{KM98}). We denote by $NE(X)\subset N_1(X)$ the subset of effective $1$-cycles $[\sum_{i=1}^k a_i C_i]$ where $a_1,\dots,a_k$ are positive real numbers and $C_1,\dots,C_k$ are curves in $X$, and we denote by $\overline {NE(X)}$ its closure (e.g. see \cite[Definition 1.17]{KM98}).
A {\bf ray} is a one-dimensional subcone $R$ of $\overline {NE(X)}$ and it is called {\bf extremal} if for any $u,v\in \overline {NE(X)}$ such that $u+v\in R$, we have that $u,v\in R$. 
If $D$ is a $\mathbb Q$-Cartier $\mathbb Q$-divisor on $X$ then the extremal ray $R$ is said to be $D$-{\bf negative} if $D\cdot C<0$ for any curve $C$ such that $[C]\in R$. 
Given an extremal ray $R \subset \overline{NE}(X)$, we define the {\bf locus} of $R$, denoted 
$\loc R$, to be the set of all those points $x\in X$ such that there exists a curve $C$ with $x \in C$ 
and $[C] \in R$.
A projective birational morphism $f\colon X \rightarrow Y$ between normal projective varieties is said to be  a
 {\bf flipping contraction}, or {\bf small contraction}, if its exceptional locus has codimension at least two and it is called a {\bf divisorial contraction} if its exceptional locus is a divisor. Moreover, the birational morphism $f\colon X\to Y$ is said to be an
{\bf extremal contraction} if the relative Picard number $\rho(X/Y)\coloneqq \rho(X)-\rho(Y)$ is equal to one. Given an extremal ray $R \subset \overline{NE}(X)$, an extremal contraction $f\colon X\to Y$ is said to be {\bf associated} to $R$ if the locus of $R$ coincides with the exceptional locus of $f$. 
%

%
%

Let $D$ be a $\bb Q$-Cartier $\mathbb Q$-divisor on a normal projective variety $X$, let $R$ be a $D$-negative extremal ray and let $f\colon X\to Y$  be a flipping contraction associated to $R$. Note, in particular, that $-D$ is $f$-ample. Then 
the {\bf $D$-flip} is a birational map $\phi\colon X \dashrightarrow X^+$ such that
\begin{enumerate}
\item $\phi$ is an isomorphism in codimension one,

\item there exists an extremal small contraction $f^+\colon X^+ \rightarrow Y$, and


\item $D^+\coloneqq \phi_*D$ is $f^+$-ample.
\end{enumerate}

Note that the $D$-flip, if it exists, is unique.

\medskip 

We refer to \cite[Definition 3.6.4]{BCHM06}
for the definition of ample models and we observe that if
$\phi\colon X \dashrightarrow X^+$ is a $D$-flip with flipping contraction $f\colon X\to Y$
then $X^+$ is the ample model of $D$ over $Y$.

\medskip 

Let $\phi\colon X\dashrightarrow Y$ be a  birational contraction between normal projective varieties and let $D$ be a $\mathbb Q$-Cartier $\mathbb Q$-divisor on $X$. Then $\phi$ is said to be a {\bf step of a $D$-MMP} if it is either a $D$-flip or a divisorial contraction associated to a $D$-negative extremal ray. More in general, $\phi$ is said to be a {\bf sequence of steps of a $D$-MMP} if we can decompose $\phi$ as 
\[X=X_0\dashrightarrow X_1\dots\dashrightarrow X_\ell =Y
\]
so that  the birational contraction $X_i\dashrightarrow X_{i+1}$ is a step of a $D_i$-MMP where $D_i$ is the strict transform of $D$ on $X_i$. Note that if $X$ is $\mathbb Q$-factorial, then $Y$ is also $\mathbb Q$-factorial.

If we replace the projective variety $X$ by a projective morphism $\pi\colon X \to U$ between normal varieties, then all the definitions above admit a relative version, by replacing each variety  by one  admitting  a projective morphism to $U$, and each birational map by a birational map over $U$, in a similar fashion as in the classical MMP (e.g. see \cite[Section 3.6]{KM98}).

\subsection{A result from the classical MMP}

We will need to make use of some techniques from the classical MMP.

\begin{defn}
\label{defn_small_q_fact}
Let $X$ be a normal variety.  We say that
a birational morphism $f\colon Y \rightarrow X$ is a {\bf small $\bb Q$-factorialisation} if the following holds:
\begin{enumerate}
\item $f$ is an isomorphism in codimension one,

\item $f$ is a projective morphism, and

\item $Y$ is $\bb Q$-factorial.
\end{enumerate}

\end{defn}

\begin{theorem}
\cite[Corollary 1.4.3]{BCHM06}
\label{thm_existence_q_fact_classical}
Let $X$ be a normal algebraic variety of dimension three and let $D \geq 0$ be such that $(X, D)$ is klt.

Then there exists a small $\bb Q$-factorialisation for $X$.
Moreover, if we write $K_Y+D_Y = f^*(K_X+D)$ then $(Y, D_Y)$ is klt.
\end{theorem}

\begin{remark}\label{r_qfactorialisation} 
Since flips are known to exist over complex analytic varieties (cf. \cite[Main Theorem]{Shokurov93}), the same proof as in \cite{BCHM06} implies the existence of a small $\bb Q$-factorialisation for a klt pair $(X,D)$, where $X$ is a complex analytic space of dimension three. 
\end{remark}


\section{F-dlt foliated pairs and basic adjunction type results}
The goal of this section is to define a new category of foliated log pair singularities, namely F-dlt pairs. These are  analogous of dlt log pairs in the classical MMP and they seem to be the most suitable singularities to run a foliated MMP.  In particular, we prove several properties  satisfied by these pairs, which we  use later on in the paper. 

\subsection{Foliated log smooth pairs}

\begin{defn}
\label{defnlogsmooth}
Given a normal variety $X$, a co-rank one foliation $\cal F$ and a foliated pair $(\cal F, \Delta)$ we say that $(\cal F, \Delta)$
is {\bf foliated log smooth} provided the following hold:
\begin{enumerate}
\item $(X, \Delta)$ is log smooth,

\item $\cal F$ has simple singularities, and 

\item if $S$ is the support of the non $\cal F$-invariant components of $\Delta$,
 $p \in S$ is a closed point and  $\Sigma_1, ..., \Sigma_k$ are the
 (possibly formal) $\cal F$-invariant divisors  passing through $p$, 
then $S \cup \Sigma_1 \cup... \cup \Sigma_k$ is a normal
crossings divisor at $p$. 
\end{enumerate}

Given a normal variety $X$, a co-rank one foliation $\cal F$ and a foliated pair $(\cal F, \Delta)$, a 
{\bf foliated log resolution}, or in short {\bf log resolution},  is a proper birational morphism 
$\pi\colon Y\to X$ so that $\exc \pi$ is a divisor and $(\mathcal G, \pi_*^{-1}\Delta+\sum E)$ is foliated log smooth where 
$\mathcal G$ is the pulled back foliation on $Y$ and 
the sum runs over all the $\pi$-exceptional divisors.

\end{defn}

\begin{remark}\label{r_log-smooth}

\begin{itemize}
\item If $X$ is a surface, then the existence of a foliated log resolution follows from a result of Seidenberg \cite{Seidenberg68}. If $X$ is a threefold, then such a resolution exists by \cite{Cano}.
\item Items (2) and (3) in Definition \ref{defnlogsmooth} imply that 
each component of $S$ is generically transverse to the foliation, no strata
of $S$ is tangent to the foliation and no strata of $\sing \cal F$ 
is contained in $S$.
%
%
%

\item If $\cal F$ is log smooth and if $D$ is a $\cal F$-invariant divisor then
it is not necessarily the case that $D$ is smooth, although it will have at worst
normal crossings singularities.
\end{itemize}
\end{remark}

\begin{lemma}
\label{l_logsmoothlogcanonical}
Let $(\cal F,\Delta)$ be a foliated log smooth pair on a variety $X$, where  $\Delta = \sum a_iD_i$ is a $\mathbb Q$-divisor such that  $0\leq a_i \leq 1$ and  $D_i$ is not $\cal F$-invariant for every $i$.

Then $(\cal F, \Delta)$ is log canonical.
\end{lemma}
\begin{proof} 
By Lemma \ref{l_simplecanonical}, since $\cal F$ has simple singularities, it follows that $\cal F$ is canonical.

Now let $\pi\colon Y \rightarrow X$ be a blow up of subvariety $Z \subset \supp \Delta$
where $Z$ has codimension $k$. Let $E$ be the exceptional divisor.  We compute the discrepancy of this blow up as follows:
\begin{enumerate}
\item If $Z$ is transverse to the foliation then the discrepancy is 
\[
(k-1)-\sum_{\{i \mid Z \subset D_i\}}a_i \geq -1 = -\epsilon(E)
\]
where the inequality holds since $k \geq \#\{i \mid Z \subset D_i\}$ by Item (1) in Definition \ref{defnlogsmooth}.

\item If $Z$ is tangent to the foliation but not contained in
$\sing \cal F$ then the discrepancy is
\[
(k-1)-\sum_{\{i \mid Z \subset D_i\}}a_i  \geq 0 = -\epsilon(E)
\]
where the inequality holds since $k \geq \#\{i \mid Z \subset D_i\}+1$ by Item (3) in Definition \ref{defnlogsmooth}.

\item If $Z \subset \sing \cal F$ then let $m$ be the codimension of the minimal strata of $\sing \cal F$ 
containing $Z$. 
The discrepancy of the blow up is 
\[
(k-1)-(m-1) - \sum_{\{i \mid Z \subset D_i\}}a_i \geq 0 = -\epsilon(E)
\]
where the inequality holds since $k \geq m+\#\{i \mid Z \subset D_i\}$ by Item (3) in Definition \ref{defnlogsmooth}.
\end{enumerate}

As in the proof of Lemma \ref{l_simplecanonical}, it follows that if $\cal G$ is the transformed foliation on $Y$ and $\Delta'$ is the strict transform of $\Delta$ in $Y$, then $(\cal G,\Delta'+E)$ is foliated log smooth. Thus, the result then follows by  induction.
\end{proof}

Note that in contrast to the classical situation, if $(\cal F, \Delta)$ is a foliated log smooth pair
then $(\cal F, \Delta)$ may have infinitely many lc centres:

\begin{example}
Let $(\cal F, D_1+D_2)$ be a foliated log smooth pair on a  threefold $X$, for some prime divisors $D_1$ and $D_2$  which are not
$\cal F$-invariant.  Suppose that  $Z = D_1 \cap D_2$ is non-empty,   disjoint from $\sing \cal F$ and that $Z$ is  transverse to $\cal F$. Then $Z$ is an lc centre of $(\cal F, D_1+D_2)$.  Moreover, if $p \in Z$ and if
$\pi\colon Y \rightarrow X$ is the blow up at $p$ with exceptional divisor $E$ then the discrepancy with respect
to $E$ is $0 = \epsilon(E)$ and so $p$ is an lc centre of $(\cal F, D_1+D_2)$. In particular, $(\mathcal F,D_1+D_2)$ admits infinitely many lc centres. 

Note also that if $\mathcal F$ is a foliation on a smooth projective variety $X$ which is induced by a fibration onto a curve then any smooth vertical fibre is an lc centre. 
\end{example}

\subsection{Extending separatrices}

We provide a general result on the existence of formal separatrices
which is a slight generalisation of the results in \cite[\S IV]{CC92}.

\begin{lemma}\label{l_formalseparatrix}
Let $X$ be a normal quasi-projective threefold.  Let $\cal F$ be a co-rank one foliation on $X$
with non-dicritical singularities. Let $V \subset X$ be a subvariety tangent to $\cal F$,
let $q \in V$ be any point
and let $\widehat{S}_q$ be a, possibly formal, $\cal F$-invariant divisor containing $q$. Let  $\widehat X$ be the formal completion
of $X$ along $V$.

Then there exists an $\widehat{\cal F}$-invariant formal subscheme $\widehat{S}$ on $\widehat X$
which contains $\widehat{S}_q$ near $q$. 

Moreover, if $\widehat{S}_q$ is in fact convergent, then we may take $\widehat{S}$ to be convergent.
\end{lemma}

\begin{proof}
Let $\pi\colon W \rightarrow X$ be a high enough foliated log resolution
so that $\pi^{-1}(V) = E$ is a divisor. By definition, we see that $E$ is $\pi^{-1}
\cal F$-invariant.  
Let $\widehat{W}$
be the completion of $W$ along $E$ and let
$\hat{\pi}\colon \widehat{W} \rightarrow \widehat{X}$ be the induced morphism.

Let $\widehat{X}_{/q}$ denote the formal completion of $X$ along $q$ and let $\widehat{W}_{/\pi^{-1}(q)}$ denote
the formal completion of $W$ along $\pi^{-1}(q)$.  Note that we have morphisms
$\widehat{X}_{/q} \rightarrow \widehat{X}$ and $\widehat{W}_{/\pi^{-1}(q)} \rightarrow \widehat{W}$
which commute with the induced morphism $\widehat{W}_{/\pi^{-1}(q)} \rightarrow \widehat{X}_{/q}$ and $\widehat W\to \widehat X$.
Since $\widehat{S}_q$ is a formal subscheme of $\widehat{X}_{/q}$ we may take
its strict transform on $\widehat{W}_{/\pi^{-1}(q)}$, call it $\widehat{S}_q'$.
Recall that we can construct the strict transform as follows: let $\widehat{X}_{/q} = \text{Spf } A$,
$\widehat{S}_q = \text{Spf } B$ and let $\pi$ be given by the blow up along an ideal $I$.
Let $\tilde{W}$ be the blow up of $\Spec A$ along $I \otimes A$.  If $\tilde{S} = \Spec B$ 
we may define the strict transform $\tilde{S}'$ as the blow up of $\tilde{S}$ along the ideal
$I \otimes_A B$, \cite[Corollary II.7.15]{Hartshorne77}.  We may take $\widehat{S}'_q$ to be the formal completion of $\tilde{S}'$
along $\tilde{S}'\cap \pi^{-1}(q)$.

The arguments in \cite[\S IV]{CC92}  
and their slight extension in \cite[Lemma 5.3]{Spicer17} show that if $\widehat{S}'_q$ is convergent then we can extend
$\widehat{S}'_{q}$ to a $\pi^{-1}\cal F$-invariant formal subscheme $\widehat{S}'$ of $\widehat{W}$.
In fact, the arguments given in \cite{Spicer17} work even if $\widehat{S}'_q$ is not convergent
as in the proof of \cite[Theorem IV.2.1]{CC92}.  

For the reader's convenience we briefly indicate  some of the important ideas of the proof of \cite[Theorem IV.2.1]{CC92}.
Let $\omega$ be a 1-form defining  a simple singularity on $0 \in U \subset \bb C^3$ with coordinates $(x, y, z)$.  
Suppose that the dimension type
of $\omega$ is $3$ (the case where the dimension type is $2$ can be handled in a similar manner).  
By the work of \cite[\S 2]{CC92} we know
that two of the separatrices at $0$ are convergent.  So, after performing a holomorphic change of coordinates
we may assume that they are given by $\{x=0\}$ and $\{y=0\}$.  It follows that the formal separatrix at $0$
may be defined by an equation $f = z+\phi(x, y)$ where $\phi(x, y) \in (xy)\bb C[[x, y]]$.   By \cite[Proposition II.5.4]{CC92}
it follows that there exists a bidisc $V\subset \bb C^2$ such that in fact $\phi(x, y) = \sum \phi_i(x, y)(xy)^i$ where 
$\phi_i(x, y) \in \cal O_{\bb C^2}(V)$.  In particular, up to shrinking $U$, if $\widehat{U}$ denotes the formal completion of
$U$ along $\{xy=0\}$ we see that in fact $z +\phi(x, y) \in H^0(\widehat{U}, \cal O_{\widehat{U}})$.  In particular, 
we have extended the formal separatrix, a priori only defined on the formal completion of $U$ at $0$, to the
formal completion of $U$ along the union of the convergent separatrices.  Using this local extension result we see
that the arguments given for extending convergent separatrices work for extending formal separatrices.

\medskip

Let $\cal I_{\widehat{S}'} \subset \cal O_{\widehat{W}}$ be the ideal sheaf corresponding to $\widehat{S}'$.
By the proper mapping theorem for formal schemes \cite[Th\'eor\`eme 3.4.2]{EGAIII},
$\hat{\pi}_* I_{\widehat{S}'}$ is a coherent sheaf, and since
$\hat{\pi}_*\cal O_{\widehat{W}} = \cal O_{\widehat{X}}$ we see that it is in fact
an ideal sheaf corresponding to a formal subscheme $\widehat{S} \subset \widehat{X}$.

Since being an invariant divisor can be checked locally, it suffices
to check $\widehat{S}$ is a formal invariant divisor in the case where $X$ is affine.

If $X$ is affine, then let $\widetilde{X} = \Spec {\cal O_{\widehat{X}}}$
and let $\widetilde{W} = W \times_X \widetilde{X}$ and let $\tilde{\pi}\colon \widetilde{W} \rightarrow \widetilde{X}$ be induced map.  
By the Grothendieck existence theorem, 
$\hat{S}'$ corresponds to a closed subscheme of $\widetilde{W}$ denoted $\widetilde{S}'$
and $\hat{S}$ correspond to a closed subscheme of $\widetilde{X}$ denoted $\widetilde{S}$.
The above construction gives us $\tilde{\pi}_*\widetilde{S}' = \widetilde{S}$.  Observe 
that $\widetilde{S}$ is a divisor on $\widetilde{X}$. 
Let $\widetilde{U} = \widetilde{X} \setminus \tilde{\pi}(\exc \tilde{\pi})$ and note that $\widetilde{U}$ is a Zariski
open subset and $\widetilde{S}\cap\widetilde{U} \neq \emptyset$.
It follows immediately that $\widetilde{S}\cap \widetilde{U}$ is $\cal F\vert_{\widetilde{U}}$-invariant
(since $\tilde{\pi}$ is an isomorphism above $\widetilde{U}$).  Since $\widetilde{S}$ admits an invariant Zariski dense
subset we see that in fact $\widetilde{S}$ is invariant.
The theorem on formal functions tells us that the completion of $\widetilde{S}$ along $V$ is exactly $\widehat{S}$,
and our result follows.
\end{proof}

\subsection{F-dlt foliated pairs}

\begin{defn}\label{d_fdlt}
Let $X$ be a normal variety and let $\cal F$ be a co-rank one foliation on $X$.
Suppose that $K_{\cal F}+\Delta$ is $\bb Q$-Cartier.

We say $(\cal F, \Delta)$ is {\bf foliated divisorial log terminal (F-dlt)}
if 
\begin{enumerate}
\item each irreducible component of $\Delta$ is generically transverse to $\mathcal F$ and has coefficient at most one, and
\item there exists a foliated log resolution $\pi\colon Y\to X$ of $(\cal F, \Delta)$ which only
extracts divisors $E$ of discrepancy $>-\epsilon(E)$.
\end{enumerate}
\end{defn}

\begin{remark}\label{r_fdlt=lc}
As we show in Remark \ref{r_canonical-vs-fdlt} below, canonical singularities are not in general F-dlt. On the other hand,  if $(\cal F,\Delta)$ is a F-dlt pair, then it is log canonical. Indeed, by assumption, there exists a foliated log resolution $\pi\colon Y\to X$ which only extract divisors $E$ of discrepancy $>-\epsilon(E)$. Thus, if $\cal G$ is the transformed foliation on $Y$ and 
$\Gamma$ is the strict transform of $\Delta$ in $Y$, then we may write
\[
K_{\cal G}+\Gamma+F = \pi^*(K_{\cal F}+\Delta)+G
\]
where $F,G\ge 0$ are $\pi$-exceptional $\mathbb Q$-divisors without any common component. Note, in particular, that no component of $\Gamma+F$ is $\cal G$-invariant and, therefore,  
Lemma \ref{l_logsmoothlogcanonical} implies that $(\cal G,\Gamma+F)$ is log canonical. Thus, for any valuation $S$, we have
\[
a(S,\cal F,\Delta)\ge a(S,\cal G,\Gamma+F)\ge -\epsilon(S)
\]
and our claim follows. 
\end{remark}

\begin{lemma}\label{l_fdlt-logsmooth}
Let $X$ be a normal variety and let $\cal F$ be a 
co-rank one foliation on $X$.
Suppose that $(\cal F, \Delta)$ is a F-dlt  pair on $X$ and 
let $\pi\colon Y\to X$ be a foliated log resolution such that $a(E,\cal F,\Delta)>-\epsilon(E)$ for any $\pi$-exceptional divisor $E$.

Then $\pi$ is  an isomorphism at the general point of $\pi^{-1}(W)$ for any lc centre $W\subset X$. In particular,  $(\cal F,\Delta)$ is foliated log smooth at the generic point of $W$. 
\end{lemma}

\begin{proof} 
Suppose by contradiction that $\pi$ is not an isomorphism at the general point of $\pi^{-1}(W)$. 
We may write  
\[
K_{\mathcal G}+\Gamma = \pi^*(K_\cal F+\Delta) + F
\]
where $\Gamma, F\ge 0$ are $\bb Q$-divisors without any common component and $\cal G$ is the pulled back foliation on $Y$. 
Note that $(\mathcal G,\Gamma)$ is log smooth.  

By assumption, there exists a valuation $T$, whose centre in $X$ is $W$ and  such that $a(T,\mathcal F, \Delta)=-\epsilon(T)$. 
Since $\pi$ is not an isomorphism at the general point of $\pi^{-1}(W)$ and since $\exc \pi$ is a divisor, there exists a $\pi$-exceptional  prime divisor $E$ which contains the centre of $T$ in $Y$. 

If $E$ is $\mathcal G$-invariant, then $E$ is contained in the support of $F$ and we have 

\[
a(T,\cal G,\Gamma) < a(T,\cal F,\Delta)=-\epsilon(T),
\]
which contradicts Lemma \ref{l_logsmoothlogcanonical}. 

Similarly, if $E$ is not $\mathcal G$-invariant, then $E$ is not contained in the support of $\lfloor \Gamma \rfloor$ and there exists
$\delta >0$ such that if $\Gamma'\coloneqq \Gamma+\delta E$ then, $(\mathcal G,\Gamma')$ is log smooth,  the coefficients of $\Gamma'$ are not greater than one and

\[
a(T,\cal G,\Gamma') < a(T,\cal G,\Gamma) \le a(T,\cal F,\Delta)=-\epsilon(T),
\]
which contradicts again Lemma \ref{l_logsmoothlogcanonical}.
\end{proof}

\begin{proposition}
\label{p_finmanylccenters}
Let $X$ be a  normal variety and let $\cal F$ be a  co-rank one foliation on $X$.
Let $(\cal F, \Delta)$ be a F-dlt pair on $X$. 

Then $(\cal F, \Delta)$
has only finitely many lc centres of codimension at least two, which are not contained in the support of $\lfloor \Delta\rfloor$.\end{proposition}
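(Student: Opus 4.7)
The plan is to use the F-dlt log resolution to transport the problem to a foliated log smooth pair, and then apply the discrepancy analysis from the proof of Lemma \ref{l_logsmoothlogcanonical} to force each such lc centre to coincide with an irreducible component of $\text{sing}(\cal F)$. Finiteness will then follow from the finiteness of the irreducible components of $\text{sing}(\cal F)$.

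Fix a foliated log resolution $\pi\colon Y\to X$ of $(\cal F,\Delta)$ extracting only divisors $E_i$ with $a(E_i,\cal F,\Delta)>-\epsilon(E_i)$, as provided by the F-dlt assumption, and let $\Gamma'=\pi_*^{-1}\Delta+\sum\epsilon(E_i)E_i$. Then $(\cal F_Y,\Gamma')$ is foliated log smooth, and the divisor $F$ defined by $K_{\cal F_Y}+\Gamma'=\pi^*(K_\cal F+\Delta)+F$ has coefficient $a(E_i,\cal F,\Delta)+\epsilon(E_i)>0$ along each $\pi$-exceptional divisor, so $\operatorname{Supp}(F)=\operatorname{Exc}(\pi)$.

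Let $W$ be an lc centre of $(\cal F,\Delta)$ with $\operatorname{codim}_X(W)\ge 2$ and $W\not\subset\operatorname{Supp}(\lfloor\Delta\rfloor)$, and let $E$ be a divisor over $X$ realising the lc datum, i.e. with $a(E,\cal F,\Delta)=-\epsilon(E)$ and centre $W$. First I would observe that $E$ cannot already be a divisor on $Y$: if it were $\pi$-exceptional this would violate F-dlt, while if it were a strict transform of an original divisor then $W$ would have codimension one. Hence $E$ is exceptional for some model $p\colon Z\to Y$, its centre $W_Y\subset Y$ has codimension at least two and $\pi(W_Y)=W$. Combining the identity $a(E,\cal F,\Delta)=a(E,\cal F_Y,\Gamma')+\operatorname{mult}_E(p^*F)$ with $a(E,\cal F_Y,\Gamma')\ge -\epsilon(E)$ (which holds by Lemma \ref{l_logsmoothlogcanonical} since $(\cal F_Y,\Gamma')$ is foliated log smooth with coefficients in $[0,1]$), I deduce that $a(E,\cal F_Y,\Gamma')=-\epsilon(E)$ and $\operatorname{mult}_E(p^*F)=0$, so $W_Y\not\subset\operatorname{Supp}(F)=\operatorname{Exc}(\pi)$. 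Consequently $\pi$ is an isomorphism at the generic point $\eta_{W_Y}$, the divisor $\Gamma'$ coincides with $\pi_*^{-1}\Delta$ near $\eta_{W_Y}$, and all its coefficients there are strictly less than $1$ since $\eta_W\not\in\lfloor\Delta\rfloor$.

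It then remains to show that $W_Y$ is an irreducible component of $\text{sing}(\cal F_Y)$. For this I would refine the three-case analysis in the proof of Lemma \ref{l_logsmoothlogcanonical}: for a blow-up of a centre $Z$ near $\eta_{W_Y}$, the transverse, the tangent-not-in-sing and the tangent-in-sing case formulas each yield a discrepancy strictly greater than $-\epsilon$ as soon as all coefficients of boundary components through $Z$ are $<1$, the sole exception being that $Z$ is an irreducible component of $\text{sing}(\cal F_Y)$ of codimension equal to the codimension of the blow-up and lying in no boundary component. An induction on blow-up sequences, using that each new exceptional divisor enters the transformed boundary with a non-positive coefficient so that any subsequent discrepancy contribution can only be non-negative, extends this constraint to all divisors $E$ with centre $W_Y$. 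Pushing down via $\pi$, which is an isomorphism at $\eta_{W_Y}$, forces $W$ itself to be an irreducible component of $\text{sing}(\cal F)$, and there are only finitely many such. The main obstacle is making this inductive discrepancy analysis rigorous: after each blow-up the transformed foliation may acquire new singularities inside the exceptional locus, and the transformed boundary may carry negative coefficients along exceptional divisors of positive discrepancy, so one must carefully track the signs of all subsequent discrepancy contributions to exclude any new lc divisor with centre lying outside $\text{sing}(\cal F_Y)$.
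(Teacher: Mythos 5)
Your approach is essentially the one the paper takes, though you re-derive part of the machinery that the paper already packages into Lemma~\ref{l_fdlt-logsmooth}. That lemma says directly that $(\cal F,\Delta)$ is foliated log smooth at the generic point of every lc centre, which is precisely the conclusion of your argument that $W_Y\not\subset\operatorname{Exc}(\pi)$. Invoking it would let you work directly on $X$ near $\eta_W$ and skip the bookkeeping with $p^*F$. After that reduction, both proofs run the same two-step discrepancy analysis: first, because $\eta_W\not\in\lfloor\Delta\rfloor$ (so all boundary coefficients through $\eta_W$ are strictly less than $1$), the three cases of Lemma~\ref{l_logsmoothlogcanonical} give strict inequalities, and an induction shows that no divisor over $\eta_W$ can have discrepancy $-\epsilon$ unless $\Delta$ vanishes at $\eta_W$; second, the lc centres of $(\cal F,0)$ at a simple singularity must be read off the strata of $\text{sing}(\cal F)$, of which there are finitely many.

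There is, however, one genuine error in your characterisation. You claim the remaining lc centres must be \emph{irreducible components} of $\text{sing}(\cal F_Y)$. This is too strong: they need only be \emph{strata} of $\text{sing}(\cal F)$, which is what the paper asserts. For a simple singularity of dimension type $r\ge 3$, the deepest stratum (for instance the origin in $\bb C^3$, where the three codimension-two components $\{x_i=x_j=0\}$ cross) is itself an lc centre of $(\cal F,0)$: blowing it up, the exceptional divisor $E$ is invariant, so $\epsilon(E)=0$, and a direct computation (or case (3) of Lemma~\ref{l_logsmoothlogcanonical} with $k=m=3$) gives discrepancy $0=-\epsilon(E)$. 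That origin is not an irreducible component of $\text{sing}(\cal F)$; it is only a stratum. Your inductive argument, as phrased, rules this out erroneously. Fortunately the finiteness conclusion survives either way, since simple singularities have only finitely many strata near any point, so the overall proposition is not in danger; but the "sole exception" you describe omits these lower-dimensional strata. As for the difficulty you flag in making the induction over blow-up sequences rigorous: this is indeed the crux, and the paper's own proof passes over it with the phrase "Computing inductively," so you and the paper are in the same boat there.
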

\begin{proof}
By Lemma \ref{l_fdlt-logsmooth}, we have that $(\cal F, \Delta)$ is foliated log smooth at the generic point of every lc centre.
Let $Z \subset X$ be a subvariety of codimension at least two which is not contained in the support of $\lfloor \Delta\rfloor$ 
and such that $(\cal F,\Delta)$ is foliated log smooth at the generic point of $Z$. Let $\pi\colon Y \rightarrow X$ be the blow up at 
$Z$ with exceptional divisor $E$ and 
suppose that $Z \subset \supp \Delta$. Computing as in Lemma \ref{l_logsmoothlogcanonical}, we
see that the discrepancy of this blow up is $>-\epsilon(E)$.  Computing inductively we see that every
divisor dominating $Z$ has discrepancy $>-\epsilon(E)$.

Thus, every lc centre of $(\cal F, \Delta)$ not contained in $\supp\lfloor \Delta \rfloor$ must also be an lc centre of $(\cal F, 0)$.  
Keeping in mind that $\cal F$ has simple singularities at the general point of $Z$,
a straightforward computation
shows that the lc centres of $(\cal F, 0)$ are  strata of $\sing \cal F$ and therefore there are only finitely many such centres.
\end{proof}

\begin{remark}\label{r_finmanylccenters}
As in the proof of  Proposition \ref{p_finmanylccenters}, it follows that if $\cal F$ is a  co-rank one foliation on a normal variety 
$X$ and $(\cal F,\Delta)$ is a $F$-dlt pair, then there are only finitely many lc centres which are transverse to $\cal F$. Indeed, as in the proof of Lemma \ref{l_logsmoothlogcanonical}, these centres are  strata of $\lfloor \Delta\rfloor$.
\end{remark}

\begin{lemma}\label{l_mmp-fdlt}
Let $X$ be a normal threefold and let $\cal F$ be a co-rank one foliation
on $X$. Suppose that $(\cal F, \Delta)$ is a  F-dlt pair on $X$ and that $\phi\colon X\dashrightarrow X'$ is a sequence of steps of a $(K_{\mathcal F}+\Delta)$-MMP. Let $(\mathcal F',\Delta')$ be the transformed foliated pair on $X'$.

Then $(\mathcal F',\Delta')$ is also F-dlt. 
\end{lemma}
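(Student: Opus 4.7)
The plan is to reduce to the case of a single step $\phi\colon X\dashrightarrow X'$ of the $(K_{\cal F}+\Delta)$-MMP (a divisorial contraction or a flip) and verify both conditions of Definition \ref{d_fdlt} for the induced pair $(\cal F',\Delta')$, where $\Delta' = \phi_*\Delta$.

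Condition (1) will be immediate: the components of $\Delta'$ are the strict transforms of those components of $\Delta$ not contracted by $\phi$. Since $\phi$ is an isomorphism in codimension one away from its (possibly empty) exceptional divisor, and transversality to $\cal F$ is determined at the generic point of a prime divisor, each component of $\Delta'$ remains transverse to $\cal F'$ with coefficient $\leq 1$.

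For condition (2), I would start with a foliated log resolution $\pi\colon Y\to X$ of $(\cal F,\Delta)$ witnessing F-dlt, so that each $\pi$-exceptional divisor $E$ has discrepancy strictly greater than $-\epsilon(E)$. Then I take a common resolution of $Y$ and $X'$, and further blow up via \cite{Cano} to obtain a foliated log resolution $\pi'\colon Y'\to X'$ of $(\cal F',\Delta')$. For a $\pi'$-exceptional divisor $F$ two cases arise. If $F$'s centre in $X$ has codimension at least two (so $F$ is also exceptional over $X$), then a foliated version of the negativity lemma, applied to the contracted locus of $\phi$, should give $a(F,\cal F',\Delta')\geq a(F,\cal F,\Delta)>-\epsilon(F)$, the second inequality coming from the F-dlt hypothesis together with Lemma \ref{l_logsmoothlogcanonical} for any additional blowups introduced above $Y$. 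The remaining case, which only occurs for a divisorial contraction, is when $F$'s centre in $X$ is the contracted divisor $D$. Writing $K_{\cal F}+\Delta = \phi^*(K_{\cal F'}+\Delta')+aD$ with $a>0$ (from $D\cdot R<0$ and $(K_{\cal F}+\Delta)\cdot R<0$ on the contracted ray), and using that $\text{mult}_D\Delta\leq 1=\epsilon(D)$ when $D$ is transverse while $\text{mult}_D\Delta=0$ when $D$ is invariant (as F-dlt forbids invariant components in $\Delta$), yields $a(D,\cal F',\Delta')>-\epsilon(D)$; Lemma \ref{l_logsmoothlogcanonical} applied iteratively then extends this to all $F$ with centre $D$ in $X$.

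The main obstacle will be the foliated negativity lemma comparing discrepancies of divisors above both sides of the MMP step. Both cases reduce to standard numerical arguments on the contracted extremal ray, so they should go through in this setting without much additional difficulty. The remaining work is careful bookkeeping of which further blowups are needed to make $\pi'\colon Y'\to X'$ foliated log smooth, and of the corresponding discrepancies, controlled again via Lemma \ref{l_logsmoothlogcanonical}.
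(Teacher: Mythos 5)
The paper does not supply a proof for this lemma, labelling it as an easy observation, so there is no proof to compare against; you are on your own here. Your overall framework is right: check the two conditions of Definition \ref{d_fdlt} separately, with transversality immediate for the strict transforms and the discrepancy condition controlled via the negativity comparison (the paper's Lemma \ref{l_negativity} is exactly this tool).

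There is, however, a gap in the inequality chain for condition (2). You write $a(F,\cal F',\Delta')\geq a(F,\cal F,\Delta)>-\epsilon(F)$ and claim the second inequality follows from F-dlt ``together with Lemma~\ref{l_logsmoothlogcanonical} for any additional blowups introduced above $Y$.'' But Lemma~\ref{l_logsmoothlogcanonical} only gives $\geq -\epsilon(F)$, not $>-\epsilon(F)$: equality is possible (for instance by blowing up an SNC stratum of the boundary, as in case (1) of the proof of that lemma). So for a divisor $F$ extracted by $Y'\to Y$ that was not $\pi$-exceptional, you do not actually know $a(F,\cal F,\Delta)>-\epsilon(F)$. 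Similarly, ``Lemma~\ref{l_logsmoothlogcanonical} applied iteratively'' in the divisorial case does not by itself give the required strict inequality. The fix is to split the $\pi'$-exceptional divisors $F$ on $Y'$ according to whether their centre in $X'$ meets the locus where $\phi$ fails to be an isomorphism. If it does, then Lemma~\ref{l_negativity} gives the \emph{strict} inequality $a(F,\cal F',\Delta')>a(F,\cal F,\Delta)\geq -\epsilon(F)$, where the second inequality is the mere lc statement from Lemma~\ref{l_logsmoothlogcanonical}. If it does not, then $\phi$ is an isomorphism near the generic point of the centre, and one can arrange the foliated log resolution $Y'\to X'$ to coincide with $Y\to X$ over that open set (since the two pairs agree there), so the only such $F$ occurring are strict transforms of $\pi$-exceptional divisors, whose discrepancy is already known to be $>-\epsilon(F)$ by the F-dlt hypothesis and unchanged by $\phi$. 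With this reorganisation your argument closes.
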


\begin{proof} We may assume that $\phi\colon X\dashrightarrow X'$ is either a $(K_{\cal F} + \Delta)$-flip or a divisorial contraction. 
We denote by $\Sigma$ the flipped locus if $\phi$ is a flip and  $\Sigma\coloneqq \phi(\exc \phi)$ if $\phi$ is a divisorial contraction.

By assumption, there exists a foliated log resolution $\pi\colon Y\to X$ of $(\cal F,\Delta)$ which only extracts divisors $E$ of discrepancy $>-\epsilon(E)$. It is enough to show that $(\cal F',\Delta')$ also admits such a foliated log resolution. 
Let $\overline Y\subset  Y\times X'$ be the closure of the graph of $\phi\circ \pi$ and let 
$p\colon \overline Y\to Y$ 
be the induced  proper birational morphism.  Let $\cal G$ be the pulled back foliation on $\overline Y$ and let $\overline \Delta$ be the strict transform of $\Delta$ in $\overline Y$. Let $f=\pi\circ p\colon \overline Y\to X$ be the induced morphism and let $F\coloneqq\sum F_i$ where the sum runs over all the $f$-exceptional divisors. Let $g\colon Y'\to \overline Y$ be a foliated log resolution of $(\cal  G, \overline \Delta+F)$. We may assume that $g$ is an isomorphism in the locus where $(\cal  G, \overline \Delta+F)$ is log smooth. 

Let $\pi'\colon Y'\to X'$ be the induced morphism, let $E'$ be a prime $\pi'$-exceptional divisor and let $W$ be the centre of $E'$ in $X'$.
We claim that $a(E',\mathcal F',\Delta')>-\epsilon(E')$. 
 If $W$ is contained in $\Sigma$, then 
Lemma \ref{l_negativity} and Remark \ref{r_fdlt=lc} imply that
\[-\epsilon(E')\le a(E',\mathcal F,\Delta)<a(E',\mathcal F',\Delta'),
\]
as claimed. 
If $W$ is not contained in $\Sigma$ then, by construction, the morphism $Y'\to Y$ is an isomorphism at the general point of $E'$ and the strict transform of $E'$ in $Y$ is $\pi$-exceptional.
Thus, 
\[-\epsilon(E')<a(E',\mathcal F,\Delta)=a(E',\mathcal F',\Delta'),
\]
and, again,  our claim follows. 
\end{proof}

\begin{lemma}
\label{basicpropertyfdlt}
Let $X$ be a normal threefold and let $\cal F$ be a co-rank one foliation
on $X$ with non-dicritical singularities.
Let $C \subset X$ be a curve tangent to $\mathcal F$ and suppose that $(\cal F, \Delta)$ is a F-dlt pair on $X$.

Then
\begin{enumerate}
\item $(\cal F, \Delta)$ is canonical at the generic point of $C$.
\item If in addition $C \subset \sing X$ then $(\cal F, \Delta)$ is terminal at the generic point of $C$.
\end{enumerate}
\end{lemma}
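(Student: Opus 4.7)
My approach is to exploit the foliated log resolution $\pi\colon Y\to X$ provided by the F-dlt hypothesis. Writing $K_{\cal F_Y}+\Gamma=\pi^*(K_\cal F+\Delta)$, we have
\[
\Gamma=\pi_*^{-1}\Delta-\sum_i a(E_i,\cal F,\Delta)E_i.
\]
By F-dlt, each $\cal F_Y$-invariant $\pi$-exceptional divisor $E_i$ has $\epsilon(E_i)=0$ and hence $a(E_i,\cal F,\Delta)>0$; consequently $\Gamma$ has strictly negative coefficient along invariant exceptional components, while components of $\pi_*^{-1}\Delta$ (all transverse to $\cal F_Y$) have coefficient in $[0,1]$. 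Since $C$ is tangent and $\cal F$ is non-dicritical, any exceptional divisor $E$ over $X$ with center $C$ is $\cal F$-invariant, so $\epsilon(E)=0$.

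To prove (1), let $E$ be an exceptional divisor over $X$ with center $C$. If $E$ appears as a $\pi$-exceptional divisor on $Y$, F-dlt yields $a(E,\cal F,\Delta)>0$ directly. Otherwise $E$ is obtained by further blowups over $Y$ whose first center $Z$ lies in $\pi^{-1}(C)$; as the exceptional of that blowup is $\cal F_Y$-invariant, $Z$ must be tangent to $\cal F_Y$ outside $\text{sing}(\cal F_Y)$ or contained in $\text{sing}(\cal F_Y)$. Following the discrepancy calculation in Lemma \ref{l_logsmoothlogcanonical} but splitting the coefficients of $\Gamma$ according to whether they come from transverse or invariant components, the discrepancy of this blowup equals
\[
\Bigl[(k-m)-\!\!\!\!\sum_{\text{transverse }D\supset Z}\!\!\!\mathrm{coeff}_D(\Gamma)\Bigr]+\!\!\!\!\sum_{\text{invariant }F_j\supset Z}\!\!\!a(F_j,\cal F,\Delta),
\]
where $k$ is the codimension of $Z$ in $Y$ and $m$ is the codimension of the minimal stratum of $\text{sing}(\cal F_Y)$ containing $Z$ (taking $m=1$ when $Z$ is tangent and outside $\text{sing}(\cal F_Y)$). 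Foliated log smoothness of $(\cal F_Y,\pi_*^{-1}\Delta+\sum E_i)$ together with the bound $\le 1$ on transverse coefficients ensures that the bracketed term is $\geq 0$, and each $a(F_j,\cal F,\Delta)>0$ makes the second sum non-negative. An induction on the number of blowups extracting $E$, using that canonicity preserves non-positivity of coefficients of newly introduced invariant exceptionals, yields $a(E,\cal F,\Delta)\geq 0$.

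For (2), the extra hypothesis $C\subset\text{sing}(X)$ together with $Y$ smooth forces $\pi$ to be nowhere an isomorphism along $C$, so $\pi^{-1}(C)$ has pure codimension one and is a union $F_1\cup\cdots\cup F_r$ of $\pi$-exceptional divisors dominating $C$. By tangency, all $F_j$ are $\cal F_Y$-invariant, so $a(F_j,\cal F,\Delta)>0$ by F-dlt. The center $Z$ of $E$ on $Y$, lying in $\pi^{-1}(C)$, must then be contained in some $F_j$, and in the displayed decomposition the second sum is strictly positive, giving $a(E,\cal F,\Delta)>0$ as required.

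The principal technical hurdle is controlling the discrepancy computation through the iterated blowups producing $E$: one must check that invariant exceptionals introduced along the way continue to receive non-positive coefficient (provided inductively by canonicity from the previous step) and that the foliated log smoothness structure persists sufficiently at each stage for the non-invariant bound of Lemma \ref{l_logsmoothlogcanonical} to apply.
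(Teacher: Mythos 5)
Your proof takes a genuinely different and much more computational route than the paper's. The paper handles item (1) in one line: since F-dlt implies log canonical (the remark following Lemma \ref{l_fdlt-terminal}, via Lemma \ref{l_logsmoothlogcanonical} applied on the F-dlt log resolution), and since any divisor $E$ dominating a subvariety of $C$ is $\cal F$-invariant by tangency and non-dicriticality, one immediately gets $a(E,\cal F,\Delta)\ge -\epsilon(E)=0$. For item (2), the paper argues that failure of terminality along $C$ together with item (1) makes $C$ an lc centre, so Lemma \ref{l_fdlt-logsmooth} gives that $(\cal F,\Delta)$ is foliated log smooth --- in particular $X$ is smooth --- at the generic point of $C$, contradicting $C\subset\text{sing}(X)$. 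You instead rerun the discrepancy computation of Lemma \ref{l_logsmoothlogcanonical} by hand on the F-dlt resolution, tracking the strictly positive discrepancies of the invariant exceptionals $F_j$ as a separate summand. This is a legitimate idea, but the paper's deduction is both shorter and immune to the technical issues below.

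There is a genuine gap in your inductive step, which you partially flag yourself. Your single-blowup formula with the term $k-m$ and the bound on the first bracket rests on the foliated log smooth combinatorics on $Y$: $\cal F_Y$ has simple singularities, the boundary together with the separatrices is simple normal crossings, and $k\ge m+\#\{\text{transverse components through }Z\}$. After you blow up an arbitrary tangent centre $Z$ (which need not be a stratum), the resulting model is \emph{not} known to be foliated log smooth again: the transformed foliation need not have simple singularities at the new exceptional divisor, and the strict transforms of boundary components need not remain snc with the new exceptional. So the same formula cannot simply be reapplied, and the claim that ``canonicity preserves non-positivity of coefficients'' does not by itself restore the combinatorial hypotheses you need. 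For item (1) this is the same induction that the paper itself states but does not spell out in Lemma \ref{l_logsmoothlogcanonical}, so it is arguably tolerable. But for item (2) you need the \emph{strict} inequality to survive the whole tower, and that requires controlling the multiplicities of the strict transforms of the $F_j$ along the successive centres; the proposal does not do this. I would replace your argument for (2) with the paper's appeal to Lemma \ref{l_fdlt-logsmooth}, and for (1) either do likewise or cite the implication F-dlt $\Rightarrow$ log canonical directly rather than re-deriving it.
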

\begin{proof}
Item (1) follows from the observation that every divisor $E$ dominating $C$ on some log
resolution must be foliation invariant.

If $(\cal F,\Delta)$ is not terminal along $C$ then $C$ is an lc centre. Thus,  Lemma \ref{l_fdlt-logsmooth} implies that $X$ is smooth at the generic
point of $C$, i.e. $C$ is not contained in $\sing X$, and (2) follows.
\end{proof}

\begin{remark}
In fact, in Case (1) above there is an open set $U\subset X$ intersecting $C$ on which $(\cal F, \Delta)$ has canonical singularities. Indeed, let $\pi\colon X' \rightarrow X$ be a foliated log resolution and let $E_1, ..., E_k$ be the $\pi$-exceptional divisors of discrepancy $<0$ with respect to $(\cal F,\Delta)$.  Let $W = \pi(\sum E_i)$.  By assumption $C$ is not contained in $W$
and observe that $(\cal F, \Delta)$ has canonical singularities on $U\coloneqq X\setminus W$.

\end{remark}

\begin{lemma}\label{l_fdlt-terminal}
Let $X$ be a normal quasi-projective threefold and let $\cal F$ be a co-rank one foliation with non-dicritical singularities
on $X$.
Suppose that $(\cal F, \Delta)$ is F-dlt
and let $C \subset X$ be a curve tangent to $\cal F$.

Let $\widehat{X}$ denote the formal completion of $X$ along $C$.

\begin{enumerate}
\item If $(\cal F, \Delta)$ is terminal at the generic point of $C$ there exists a single $\cal F$-invariant divisor
$S\subset \widehat{X}$, and we may take $S$ to be convergent.

\item If $(\cal F, \Delta)$ is not terminal at the generic point of $C$ then
$X$ is smooth at the generic point of $C$ and at a general point $P \in C$ there are
2 (formal) separatrices at $P$ containing $C$, each of which may be extended to a divisor $S \subset \widehat{X}$.
Moreover, at least one of these separatrices may be extended to a convergent one.
\end{enumerate}
\end{lemma}
\begin{proof}
Note that Lemma \ref{basicpropertyfdlt} implies that $(\mathcal F, \Delta)$ is canonical at the generic point of $C$.
Suppose that $(\cal F, \Delta)$ is terminal at the generic point of $C$.  We first show the existence of a unique separatrix containing $C$
at a general point $P \in C$, and so we are free to shrink to an analytic neighborhood of $P$. 
Let $H$ be a germ of a general hypersurface at $P$ and let $\cal F_H$ be the restricted foliation. 
We claim that  $K_{\cal F_H}$ is $\bb Q$-Cartier and $\cal F_H$ has terminal singularities (cf. \cite[Lemma 8.7]{Spicer17}). 
Indeed, let $\pi\colon X' \rightarrow X$ be a log resolution of $\cal F$ and set $\cal F' \coloneqq \pi^{-1}\cal F$ and $\Delta' = \pi_*^{-1}\Delta$.
Perhaps shrinking about $P$ we may assume that $\exc \pi$ is $\cal F'$-invariant and the following hold:
\begin{enumerate}
\item[(i)] For a general choice of $H$ set $H' \coloneqq \pi^*H$.  
Then the log pair $(X', \Delta'+\exc \pi+H')$ is simple normal crossing and so $\pi$ is in fact a log resolution
of $(\cal F, \Delta+H)$.  

\item[(ii)] We may write $K_{\cal F'}+\Delta'+H' = \pi^*(K_{\cal F}+\Delta+H)+\sum_{i=1}^k a_iE_i$ where  $E_1,\dots,E_k$ are the $\pi$-exceptional divisors
and $a_i>0$ for each $i$.

\item[(iii)] $(K_{\cal F'}+H')\vert_{H'} = K_{\cal F_{H'}}$ where $\cal F_{H'}$ is the restricted foliation, 
see \cite[Lemma 3.1 and Corollary 3.3]{Spicer17} (note that we will prove a more general adjunction statement in Lemma \ref{adjunction} below).  
Moreover,  since $(\cal F', \Delta'+H')$ is foliated log smooth it follows that $(\cal F_{H'}, \Delta'\vert_H)$ is log smooth
(c.f. the proof of Lemma \ref{l_simplegeneric}).
\end{enumerate}
Set $D_i = E_i \cap H$.
By items (ii) and (iii) we see that $K_{\cal F_{H'}}+\Delta'\vert_{H'}\sim_{\bb Q} \sum_{i = 1}^k a_iD_i$.
Let $r\colon H' \rightarrow H''/H$ be a run of the $K_{\cal F_{H'}}+\Delta'\vert_{H'}$-MMP over $H$, see \cite[Corollary 2.26]{Spicer17}.
We know that $H''$ has at worst quotient singularities and 
$(\cal F_{H''}, r_*\Delta'\vert_{H'})$, and hence $\cal F_{H''}$, has terminal singularities where $\cal F_{H''}$ is the transform of $\cal F_H$.
By the negativity lemma we see that $r_*(\sum_{i = 1}^k b_iD_i)= 0$, i.e., $H'' \cong H$.
Thus $\cal F_{H}$ has terminal singularities.

%

By \cite[Corollary I.2.2]{McQuillan08} there is a unique germ of a convergent curve $P \in \gamma$
which is $\cal F_H$-invariant.  In turn, $\gamma$ is contained in a unique convergent separatrix of $\cal F$ at $P$.
Indeed, the unicity follows from the fact that $\gamma$ is the unique $\cal F_H$-invariant curve through $P$. 
The existence of this separatrix is guaranteed by \cite[Main Theorem]{CC92} 
when $\cal F$ has non-dicritical singularities and $X$ is smooth, the case where $\cal F$ has non-dicritical singularities
and $X$ is singular follows by essentially the same arguments, see 
\cite[Corollary 5.4]{Spicer17} (note that the hypothesis on the compactness of the curve $C$ is not needed here).  
By Lemma \ref{l_formalseparatrix} we may extend
this separatrix to a convergent invariant divisor on $\widehat{X}$.

If $(\cal F, \Delta)$ is not terminal at the generic point of $C$, then $C$ is an lc centre and Lemma \ref{l_fdlt-logsmooth} implies that $(\mathcal F,\Delta)$ is log smooth at the general point of $C$. 
This implies then that $\cal F$ has
 simple singularities at a general point $P \in C$, and hence has two
(possibly formal) separatrices at $P$.  By Lemma \ref{l_formalseparatrix} we may extend each of them to formal $\cal F$-invariant
divisors on $\widehat{X}$.  By Lemma \ref{lem_strong_sep_exist} one of these separatrices is convergent and so may
be extended to a convergent invariant divisor.
\end{proof}

\begin{remark}\label{r_canonical-vs-fdlt}
\begin{enumerate}
\item Even if at a general point $P \in C$ there are two distinct separatrices, the invariant divisor
we produce by extending them might be the same for each separatrix.

\item In general, a canonical non-terminal singularity may only admit
a single separatrix formal or otherwise.  Thus, canonical does not imply F-dlt.

\item However, log terminal  does imply F-dlt (keeping in mind
that in general canonical does not imply log terminal for foliations).

\item Recall that, by Remark \ref{r_fdlt=lc},  F-dlt implies log canonical. 
\end{enumerate}
\end{remark}

\subsection{Adjunction}

Recall the following result:
\begin{lemma}
\label{singcomparison}
Let $X$ be a normal threefold, let $\cal F$ be a co-rank one foliation with non-dicritical singularities
and let $(\cal F, \Delta)$ be a foliated pair on $X$. 
Let $D = \sum D_i$ be an $\cal F$-invariant divisor on $X$ 
and suppose that $D$ is $\bb Q$-Cartier.

Suppose that $(\cal F,\Delta)$ is log terminal (resp. log canonical). Then $(X, \Delta+D)$
is log terminal (resp. log canonical).

Suppose that $(\cal F, \Delta)$ is F-dlt and $\lfloor \Delta \rfloor = 0$. Then $(X, \Delta+(1-\epsilon)D)$ is klt for all $\epsilon >0$. 

Suppose that $(\cal F, \Delta)$ is F-dlt and each $D_i$ is smooth in codimension one (but perhaps $\lfloor \Delta \rfloor \neq 0$).
Then $(X, \Delta+D)$ is dlt.

\end{lemma}
\begin{proof}
The first claim is \cite[Lemma 8.14]{Spicer17}, see also 
\cite[Proposition 3.11]{LPT11}.

The second claim is a direct consequence of Lemma \ref{l_fdlt-logsmooth}.


The final claim follows by recalling the third point in Remark \ref{r_log-smooth} and thereby 
noting that if each $D_i$ is smooth in codimension one then a foliated log resolution
of $(\cal F, \Delta)$ is also a log resolution of $(X, \Delta+D)$.
\end{proof}

In fact, Lemma \ref{singcomparison} remains true even 
in the following slightly more general set up.
Let $X$ be a normal threefold with a foliated pair $(\cal F, \Delta)$ and let $V \subset X$ be a closed subvariety and let
$\widehat{X}$ be the formal completion of $X$ along $V$ and let
$\widehat{\cal F}$ be the restriction of $\cal F$ to $\widehat{X}$. 
Then under the hypotheses of Lemma \ref{singcomparison}
the conclusions of the Lemma still hold even if 
$D \subset \widehat{X}$ is a formal
divisor which is $\widehat{\cal F}$-invariant.
We remark that the third point of Remark \ref{r_log-smooth} implies that a log resolution of $(\cal F, \Delta)$, perhaps followed
by some further blow ups in $\sing \cal F$, is a log resolution of $(\widehat{X}, \Delta+D)$, and in particular
$(\widehat{X}, \Delta+D)$ admits a resolution by blow ups in algebraic centres.

\begin{remark}
Let $X$ be a normal threefold, let $\cal F$ be a co-rank one foliation on $X$, and let $P \in X$ be a point.
Let $U$ be an analytic neighbourhood of $P$ and let $\cal F_U$ be the foliation restricted to $U$.
Then, thanks to the existence of a log resolution and since simple singularities are canonical by Lemma \ref{l_simplecanonical}, we know that if $\cal F$ is (log) canonical then
$\cal F_U$ is (log) canonical.  Conversely, if $\cal F_U$ is (log) canonical at $P$ then the same
is true for $\cal F$.
\end{remark}


\begin{lemma}[Adjunction]
\label{adjunction}
Let $X$ be a normal threefold, let $\cal F$ be a co-rank one foliation on $X$.
Let $(\cal F, \epsilon(S)S+\Delta)$ be a foliated pair
where $S$ is a prime  divisor and $\Delta\ge 0$ is a $\bb Q$-divisor on $X$ which does not contain $S$ in its support.  
Let $\nu\colon S^{\nu}\to S$ be the normalisation and let $\cal G$ be the restricted foliation to $S^\nu$
if $S$ is not $\cal F$-invariant and let $\cal G = T_{S^\nu}$ if $S$ is $\cal F$-invariant.

Then, there exists $\Theta\ge 0$ such that  
\[
(K_{\cal F}+\epsilon(S)S+\Delta)\vert_{S^\nu} = K_{\cal G}+\Theta.
\]

Now suppose that $K_X, K_X+\Delta$ and $S$ are $\bb Q$-Cartier and that 
$\cal F$ admits non-dicritical singularities. 
Then the following hold:
\begin{enumerate}
\item Suppose $\epsilon(S) =1$. Suppose moreover that $(\cal F, S+\Delta)$ is lc (resp. lt, resp. F-dlt). Then 
$(\cal G, \Theta)$ is lc (resp. lt, resp. F-dlt).

\item Suppose  $\epsilon(S) = 0$.  Suppose that $\cal F$ is F-dlt. Then $(S^{\nu}, \Theta':= \lfloor \Theta \rfloor_{red}+\{\Theta\})$ is log canonical and if, in addition,  $S$ and $\sing \cal F\cap S$ are normal then $(S, \Theta')$
is dlt.
Moreover, if  $(\cal F, \Delta)$ is log terminal, then $(S^{\nu},\Theta')$ is log terminal. 
\end{enumerate}
\end{lemma}
\begin{proof}
The first claim follows from \cite[Proposition 3.4]{Spicer17} if $S$ is not $\cal F$-invariant
and \cite[Definition 3.11]{AD14} if $S$ is $\cal F$-invariant.  Note that \cite[Definition 3.11]{AD14}
assumes that $\cal F$ has algebraic leaves, however the arguments work equally well in this situation.

We now prove the second claim. 
Let $\pi\colon Y \rightarrow X$ be a foliated log resolution of $(\cal F, \epsilon(S)S+\Delta)$, let $\cal F_Y$ be the pulled back
foliation on $Y$  and
let $T$ be the strict transform of $S$ in $Y$. If $(\mathcal F, \epsilon (S)S+\Delta)$ is F-dlt, then we choose $\pi$ so that $a(E,\mathcal F+\epsilon(S)S,\Delta)> -\epsilon(E)$ for any $\pi$-exceptional divisor $E$.

We may write 
\[
K_{\cal F_Y}+\epsilon(T)T+\Delta' + \sum a_iE_i = \pi^*(K_\cal F+\epsilon(S)S+\Delta)
\]
where $\Delta'$ is the strict transform of $\Delta$ in $Y$, $a_i\in \bb Q$ and the sum is taken over all the  $\pi$-exceptional  divisors. 
Suppose first that $\epsilon(S)=1$ and  note that, by the second point of Remark \ref{r_log-smooth} we have that $T$ is 
generically transverse to $\cal F_Y$ and is not tangent to $\cal F_Y$ along any curves contained in $T$ and contains no one-dimensional components of $\sing \cal F_Y$. Let $\nu'\colon T^\nu\to T$ be the normalisation.  By 
\cite[Corollary 3.3]{Spicer17} we get $(K_{\cal F_Y}+T)\vert_{T^{\nu}}= K_{\cal G_T}+\Theta$ 
where $\cal G_T$ is the restricted foliation to $T^{\nu}$ and $\Theta\ge 0$ is a $\mathbb Q$-divisor.  Note that the image of the  support of $\Theta$ in $T$ is contained in curves where $T$ is tangent to $\cal F_Y$ and on one-dimensional components $\sing \cal F_Y \cap T$. By our previous observation, we see that  $\Theta=0$.
Thus, we get
\[
K_{\cal G_T}+\Delta'\vert_{T^{\nu}} +\sum a_iE_i\vert_{T^{\nu}} = \phi^*(K_{\cal G}+\Theta)
\]
where $\phi\colon T^{\nu}\to S^{\nu}$ is the induced morphism.
By assumption $a_i \leq \epsilon(E_i)$ (resp. $a_i<\epsilon(E_i)$). To prove our result we need
to show that $a_i \leq \epsilon(E_i\vert_{T^{\nu}})$ (resp. $a_i < \epsilon(E_i\vert_{T^{\nu}})$). 

Suppose for sake of contradiction that for some $i$, we have 
$\epsilon(E_i) = 1$, $\epsilon(E_i\vert_{T^{\nu}}) = 0$ and $a_i>0$ (resp. $a_i \ge 0$).
In this case,  consider the blow up of $Y$ at $E_i \cap T$ and let $F$ be the exceptional divisor.
Notice that $\epsilon(F) = 0$.  However, this is
a blow up of discrepancy $ \leq -a_i < \epsilon(F)$ (resp. $\leq -a_i \le \epsilon(F)$, resp. $ \leq -a_i < \epsilon(F)$), and so we see that
$(\cal F_Y, T+\Delta'+\sum a_iE_i)$ is not lc (resp. lt, resp. F-dlt), hence $(\cal F, S+\Delta)$ is not lc
(resp. lt, resp. F-dlt). Thus, (1) follows. 

\medskip 

Suppose now that $\epsilon(S)=0$ and let $\widehat{X}$ be the formal completion of $X$ along $S$. Let $T$ be the sum of all the (formal) 
invariant divisors meeting $S$, and which are not equal to $S$.  Suppose for the moment that $T$ is $\bb Q$-Cartier.
Observe that if $\sing \cal F\cap S$ is normal then each component of $T$ is smooth in codimension one.
Then by Lemma \ref{singcomparison} we know that $(\widehat{X}, S+T+\Delta)$ is dlt,
furthermore, as in the proof of \cite[Lemma 8.9]{Spicer17}, 
it follows that the different of $(\widehat{X}, S+T+\Delta)$ with respect to $S$ is exactly
$\Theta'$.  We then apply adjunction for varieties to conclude.

We now handle the case where $T$ is not necessarily $\bb Q$-Cartier.
By Lemma \ref{singcomparison} and  since $(\cal F, \Delta)$ is F-dlt, it follows that  $X$ is klt.  Suppose for sake of contradiction 
that there exists a point $P \in \widehat{X}$
such that $T$ is not $\bb Q$-Cartier at $P$.  Then, since $X$ is klt, if $U$ is a small analytic neighbourhood of
$P \in X$ we may find a small $\bb Q$-factorialisation 
$f\colon Y \rightarrow U$ such that $Y$ is analytically $\bb Q$-factorial by Remark \ref{r_qfactorialisation}.  
Let $\Sigma = f^{-1}(P)$, let
$\widehat{U}$ be the formal completion of $U$ at $P$ and let $\widehat{Y}$ be the formal completion
along $\Sigma$ so that we have a morphism $\hat{f}\colon \widehat{Y} \rightarrow \widehat{U}$.

Observe that $K_Y = f^*K_U$ and if we let $S' = f^*S$ then $\Sigma \subset S'$.
Let $\widehat{S}'$ denote the restriction of $S'$ to $\widehat{Y}$.
Let $\Sigma_0$ be a component of $\Sigma$ which meets $\hat{f}_*^{-1}T$.  Note that $K_Y\cdot \Sigma_0 = 0$
and that, after possibly shrinking $U$, we may assume that $\Sigma_0$ spans an extremal ray in $\overline{NE}(Y/U)$.
Thus, there exists a flop of $\Sigma_0$ over $U$, call it $\psi\colon Y \dashrightarrow Y'/U$ and let $\Sigma'_0$
be the flopped curve.  Observe that  $\Sigma_0 \subset \hat{f}_*^{-1}T$ or $\Sigma'_0 \subset \psi_*\hat{f}_*^{-1}T$.
So up to replacing $Y$ by $Y'$, we may assume that $\Sigma_0 \subset \hat{f}_*^{-1}T$.
Next, note that $\Sigma_0 \subset \widehat{S}' \cap \hat{f}_*^{-1}(T) \subset \sing \cal H$ where $\cal H \coloneqq f^{-1}\cal F$
and the latter inclusion holds because $S'$ and $\hat{f}_*^{-1}(T)$ are both $\cal H$-invariant.

By Remark \ref{r_fdlt=lc}, the foliation $\cal F$ has log canonical singularities. Thus, since $f$ is small,   $\cal H$ has also log canonical singularities. Note also by Lemma \ref{l_fdlt-terminal} 
that $\cal H$ has canonical, and not terminal, singularities at the generic point of $\Sigma_0$, since
$\Sigma_0$ is contained in the intersection of two (formal) invariant divisors.
Thus we may find a birational morphism $b\colon Y' \rightarrow Y$ 
which extracts a divisor $E$ dominating $\Sigma_0$ such that $0 = a(E, \cal H) = a(E, \cal F)$.
Next, observe that $a(E, \cal F) \geq a(E, \cal F, \Delta) \geq -\epsilon(E) = 0$ and so 
all these inequalities must be equalities and it follows that $P$ is a lc centre
of $(\cal F, \Delta)$.  By Lemma \ref{l_fdlt-logsmooth}, $X$ is smooth at $P$, and so $T$ is necessarily Cartier at $P$, 
a contradiction. Thus, (2) follows. 
\end{proof}

\begin{remark}
Note that, using the same proof,  the first part of Lemma \ref{adjunction} can be generalised to the formal setting. More specifically, 
let $X$ be a normal threefold and let $\cal F$ be a co-rank one foliation with non-dicritical singularities
on $X$.
Suppose that $(\cal F, \Delta)$ is a F-dlt pair on $X$, let $S$ be an $\cal F$-invariant divisor,  let $\widehat{X}$ be the formal completion 
of $X$ along $S$
and let $\nu\colon S^\nu \rightarrow S$ be the normalisation.
Then, we may write  
\[
(K_{\cal F}+\Delta)\vert_{S^\nu} = K_{S^\nu}+\Theta
\]
where $\Theta\ge 0$ is a $\mathbb Q$-divisor on $S^{\nu}$. 

\end{remark}

\begin{corollary}
\label{basicpropertyfdlt2}
Let $X$ be a normal threefold and let $\cal F$ be a co-rank one foliation with non-dicritical singularities
on $X$.
Suppose that $(\cal F, \Delta)$ is a F-dlt pair on $X$, let $S$ be an $\cal F$-invariant divisor such that $(X,\Delta+S)$ is a log pair, let $\widehat{X}$ be the formal completion 
of $X$ along $S$
and let $\nu\colon S^\nu \rightarrow S$ be the normalisation.
Let $T_1, ..., T_k$ be the collection of $\cal F$-invariant divisors on $\widehat{X}$
not equal to $S$ and suppose that $T =\sum T_i$ is $\bb Q$-Cartier.
Write 
\[
(K_{\cal F}+\Delta)\vert_{S^\nu} = K_{S^\nu}+\Theta
\]
and let $\Theta'$ be the different of $(\widehat X,\Delta+S+T)$ with respect to $S$.


Then $\Theta' \leq \Theta$
with equality along the curves contained in $\sing X$. Moreover, for any irreducible curve $C$ in $S$, at the general point of which $\cal F$ has simple singularities and along which $S$ is a strong separatrix, the coefficients of $\Theta$ and $\Theta'$ coincide. 
\end{corollary}
\begin{proof}
If $C$ is a curve contained in the singular locus of $S$,  then  $C\subset \sing \cal F \cup \sing X$. Since $T_1,\dots,T_k$ are $\cal F$-invariant, it follows that $\Theta$ and $\Theta'$
are supported on $\nu^{-1}(\sing \cal F \cup \sing X)$.

Let $C$ be an irreducible curve in $S^{\nu}$. By Lemma \ref{basicpropertyfdlt}, we see that if $\nu(C) \subset \sing X \cap S$
then $\cal F$ is terminal at the generic point of $\nu(C)$.  
In this case, by \cite[Lemma 8.9]{Spicer17}, it follows that the coefficient of $C$ in
$\Theta$ and $\Theta'$ is the same.   

So we may assume that $X$ is smooth at the generic point of $C$.
If $\nu(C) \subset \sing \cal F\cap S$ then $C$ is an lc center of $\cal F$
and so by Lemma \ref{l_fdlt-logsmooth}, $\cal F$ has simple singularities at a general point of $C$.
It follows that $S+T$ is a normal crossings divisor at the generic point of $C$ and
so the coefficient of $C$ in $\Theta'$ is equal to $1$.

Let $\omega$ be a $1$-form defining $\cal F$ at a general point of $C$
and let $\omega_S$ be the restriction of $\omega$ to $S$.  The coefficient
of $C$ in $\Theta$ is the order of the zero of $\omega_S$ along $C$, 
in particular, it is an integer $\geq 1$.  When $S$ is the strong separatrix along $C$
then the order of the zero of $\omega_S$ along $C$ is equal to $1$.
%
%
\end{proof}

\begin{remark}
We emphasise that in Corollary \ref{basicpropertyfdlt2} we are not assuming that $X$ is quasi-projective or that $S$ is an algebraic variety.
Later on we will apply this Lemma in the case where $S$ is an analytic divisor on an analytic neighborhood of a proper curve
$C$ contained in a quasi-projective threefold.
\end{remark}

\begin{lemma}
\label{l_tangentlccentre}
Let $X$ be a $\bb Q$-factorial quasi-projective threefold and let $\cal F$ be a co-rank 1 foliation with non-dicritical singularities on $X$.
Suppose that $(\cal F, \Delta)$ is F-dlt. 
Let $C$ be a one-dimensional lc centre of $(\cal F, \Delta)$
tangent to $\cal F$.

Then there exists  $\Theta \geq 0$ such that 
\[
(K_{\cal F}+\Delta)\vert_{C^\nu} = K_{C^\nu}+\Theta
\]
where $\nu\colon C^\nu \rightarrow C$ is the normalisation.
Moreover, if $P$ is contained in the support of $\lfloor \Theta \rfloor$ then $\nu(P)$ is an lc centre of  $(\cal F, \Delta)$.
\end{lemma}
\begin{proof}
By Lemma \ref{l_fdlt-logsmooth}, it follows that $(\cal F,\Delta)$ is log smooth at the general point of $C$. 
By Lemma \ref{l_fdlt-terminal}, 
there exists a germ of an invariant divisor $S$ containing $C$ and if $C\subset \sing \cal F$, then we may choose $S$ so that it is the strong 
separatrix at a general point of $C$. 
Let $\nu'\colon S^\nu \rightarrow S$ be the normalisation of $S$ and set $C' = \nu'^{-1}(C)$.
Then it is easy to check that  
\[
(K_{\cal F}+\Delta)\vert_{S^\nu} = K_{S^\nu}+C'+\Delta_{S^\nu}
\]
 for some $\bb Q$-divisor $\Delta_{S^\nu}\ge 0$, whose support does not contain $C$. 
 
 Note that we have a morphism $C^\nu \rightarrow C'$ and so we may write
 $(K_{S^\nu}+C'+\Delta_{S^\nu})|_{C^\nu}=K_{C^\nu}+\Theta$. 
Thus, by adjunction for varieties, the image of the non-klt locus of $({C^\nu}, \Theta)$ is supported
on the non-klt locus of $(S^\nu, C'+\Delta_{S^\nu})$.
We claim these centres are contained in 
lc centres of $(\cal F, \Delta)$. 
Let $P\in S$, by Lemma \ref{adjunction} we see that
if $(\cal F, \Delta)$ is log terminal at $P$ then $(S^\nu, C'+\Delta_{S^\nu})$ must also be log terminal at $P$, and so
every point in the non-klt locus of $(S^\nu, C'+\Delta_{S^\nu})$ is an lc centre of $(\cal F, \Delta)$. 
\end{proof}

\begin{lemma}
\label{lem_trans_diff_comp}
Let $X$ be a normal threefold and let $\cal F$ be a corank one foliation with non-dicritical singularities.
Let $S$ be an irreducible divisor transverse to $\cal F$ and suppose that $K_X+\Delta+S$ and $K_{\cal F}+\Delta+S$
are $\bb Q$-Cartier where $\Delta \geq 0$ and $S$ is not contained in the support of $\Delta$.
Write $n^*(K_X+\Delta+S) = K_{S^n}+\Theta$ and $n^*(K_{\cal F}+\Delta+S)= K_{\cal F_S}+\Theta'$
where $n \colon S^n \rightarrow S$ is the normalisation.  Let $C \subset S$ be a curve
such that $n(C)$ is transverse to $\cal F$.  

Then the coefficient of $C$ in $\Theta$ is
equal to the coefficient of $C$ in $\Theta'$.
\end{lemma}
\begin{proof}
By \cite[Lemma 3.11]{Spicer17} we have that the usual discrepancy of any divisor
dominating $n(C)$ is equal to the foliated discrepancy.  
By construction, the coefficient of the different along $C$ is only a function of the discrepancies of divisors dominating $C$ 
with respect to the pairs
$(X, \Delta+S)$ and $(\cal F, \Delta+S)$ and our claimed equality follows.
\end{proof}

\subsection{Bertini type results}

\begin{lemma}\label{l_perturbation}
Let $X$ be a normal projective threefold and let $\cal F$ be a co-rank one foliation on $X$ with non-dicritical singularities.
Let $(\cal F,\Delta)$ be a F-dlt pair such that $\lfloor \Delta\rfloor=0$ and let $A$ be an ample $\mathbb Q$-divisor on $X$.

Then there exists an effective $\mathbb Q$-divisor $A' \sim_{\mathbb Q}A$ such that 
\begin{enumerate}
\item $(\cal F,\Delta+A')$ is also F-dlt; 
\item $\lfloor \Delta+A'\rfloor=0$; and 
\item  the support of  $A'$ does not contain any lc centre of $(\cal F,\Delta)$.
\end{enumerate}
\end{lemma}
\begin{proof}
Lemma \ref{l_fdlt-logsmooth} implies that $(\cal F,\Delta)$ admits a foliated log resolution $\pi\colon Y\to X$  which only extracts divisors $E$ of discrepancy $>-\epsilon(E)$ and which is an isomorphism at the general point of $\pi^{-1}(W)$ for any  $W\subset X$ which is a lc centre of $(\cal F,\Delta)$. We claim that there exists a sufficiently divisible positive integer $m$ such that if $H\in |mA|$ is a general element and  $A'\coloneqq \frac 1m H$ then
\begin{enumerate}
\item [(i)] the support of  $A'$ does not contain any lc centre of $(\cal F,\Delta)$;
\item [(ii)] each irreducible component of $A'$ is generically transverse to $\mathcal F$ and 
$\lfloor \Delta+A'\rfloor=0$; 
 \item[(iii)] if  $x\in X$ is the general point  of a log canonical centre of $(\cal F,\Delta)$ then, $(\cal F,\Delta+A')$ is log smooth in a neighbourhood of $x$; and 
 \item[(iv)] if $(\cal F,\Delta)$ is klt in a neighbourhood of $x\in X$ then  $(\cal F,\Delta+A')$ is also klt in a neighbourhood of $x$. 
 \end{enumerate}

Assuming the claim, let $\cal G\coloneqq\pi^{-1}\cal F$.
We may write
\[K_{\cal G}+\Gamma=\pi^*(K_{\cal F}+\Delta)
\]
for some $\mathbb Q$-divisor $\Gamma$ on $Y$. 
Then there exists a foliated log resolution  $\pi'\colon Y'\to Y$ of $(\cal G,\Gamma+\pi^*A')$ such that if $p\colon Y'\to X$ is the induced morphism, then $\pi'$ is an isomorphism at the general point of  $p^{-1}(W)$ for any lc centre $W$ of $(\cal F,\Delta)$. Thus,  $a(F,\cal F,\Delta+A')>-\epsilon (F)$ for any prime divisor $F$  on $Y'$ which is $p$-exceptional and the Lemma follows.

\medskip 

We now prove the claim. Since $\lfloor \Delta\rfloor=0$, Proposition \ref{p_finmanylccenters} implies that if $m$ is sufficiently divisible then $H$ does not contain any lc centre of $(\cal F,\Delta)$. Thus, (i) holds. It is easy to check that (ii) holds.

Now we check (iii). Let $W$ be a log canonical  centre of $(\cal F,\Delta)$.  By Lemma \ref{l_fdlt-logsmooth}, $(\cal F,\Delta)$ is foliated log smooth at the generic point of $W$. In particular, $\cal F$ admits simple singularities at the general point of $W$. If $x\in \sing \cal F$ is an isolated point, then we may choose $m$ sufficiently divisible so that $x$ is not contained in the support of $H$. Similarly, we may assume that $x$ is the general point of  a one-dimensional component $C$ of $\sing \cal F$. Thus, we may choose $m$ so that $H$ is transverse to $C$ and, in particular, if $\Sigma_1,\Sigma_2$ are, possibly formal, $\cal F$-invariant divisors passing through $x$, then $C$ is contained in $\Sigma_1\cap \Sigma_2$ and $(X,\Sigma_1+\Sigma_2+H)$ is a normal crossing pair at $x$. 
It follows that $(\cal F,\Delta+A')$ is  log smooth at $x$. Therefore (iii) holds.

Finally, we check (iv). 
First, let $U\coloneqq X \setminus \sing X$ and let $x\in U$ be a point such that $(\cal F,\Delta)$ is klt in a neighborhood of 
$x$.  If $x \notin H$ then there is nothing to prove, and if $x \in H$ and 
$H$ is general we may also assume that $x \in U \setminus \sing \cal F$.  
%
%
Moreover, if $H$ is sufficiently general then by \cite[Lemma 2.9]{AD19} it follows that $(\cal F, H)$ is log smooth on $U \setminus \sing \cal F$, 
except perhaps
at finitely many points, $P_1, ..., P_m$.  In fact a dimension count as in \cite[Lemma 2.9]{AD19} shows that a general choice of
$H$ will have at worst an order 2 tangency with $\cal F$ at $P_i$, i.e., in some local analytic coordinates $(x, y, z)$ 
we have $\cal F$ is defined by $dz$ and $H = \{z + x^2+y^2 = 0\}$.  In particular, at the $P_i$ 
we know that $(\cal F, \frac{1}{4} H)$ is klt.  Note
that $\pi$ is a foliated log resolution of $(\cal F, \Delta+H)$ along $U$ except at $P_1, ..., P_m$.  It then follows that 
for $m$ sufficiently divisible and $H$ sufficiently general, we have 
that $(\cal F, \Delta+A')$ is klt on $U\setminus \sing \cal F$.

Next, write $\sing X = Z_0\cup Z_1$ where $Z_0$ are the components of $\sing X$ which are tangent to $\cal F$
and $Z_1$ are the components which are transverse.  Suppose that $x \in Z_0$.  Observe that $\pi^{-1}(Z_0)$
is by definition $\cal G$-invariant and so for a general choice of $H$, we know
that $(\cal G, \Gamma+\pi^*H)$ is foliated log smooth in a neighbourhood of $\pi^{-1}(x)$.  It therefore follows immediately
that, for $m$ sufficiently divisible, $(\cal F, \Delta+A')$ is klt in a neighbourhood of $x$.

Finally suppose that $x \in Z_1$.  Without loss of generality we may assume that $x$ is a general point of $Z_1$.
By non-dicriticality we know that $\pi^{-1}(x)$ is tangent to $\cal G$.  
In fact, arguing as in the proof of Lemma \ref{lem_equiv_dicrit} we see
that $\cal G$ is smooth on a neighbourhood of $\pi^{-1}(x)$ and so by Lemma \ref{l_formalseparatrix} it follows that there exists a $\cal G$-invariant divisor $S$ defined on an analytic neighbourhood of $\pi^{-1}(x)$ such that $\pi^{-1}(x) \subset S$.

We claim that $(\cal G, \Gamma+\pi^*A')$ is klt in a neighbourhood of $\pi^{-1}(x)$ if and only if 
$(Y, \Gamma+S+\pi^*A')$ is log terminal in a neighbourhood of $\pi^{-1}(x)$.  Supposing the claim, then we are done 
by observing that if $(X, \Delta+\pi_*S)$ is log terminal at $x$ then $(X, \Delta+\pi_*S+A')$ is log terminal at $x$
for sufficiently divisible $m$ and sufficiently general $H$.

We now prove the above claim.  After possibly shrinking $x$, we may consider a log resolution
 $\mu\colon \widetilde{Y} \rightarrow Y$ of $(Y, \Gamma+S+\pi^*A')$ which is an isomorphism away from $(\pi\circ \mu)^{-1}(x)$. Let $E$ be the support of  $\exc \mu$. Then 
$K_{\mu^{-1}\cal G}$ and $K_{\widetilde{Y}}+\mu_*^{-1}S+E$ are numerically equivalent over $Y$.  Indeed, if $E_0$ is
any component of $E$ then, since $\mu_*^{-1}S \cap E_0, E' \cap E_0 \subset \sing{\mu^{-1}\cal G}$ where $E'$ is any other component of $E$, 
Corollary \ref{basicpropertyfdlt2} implies that 
\[
K_{\mu^{-1}\cal G}\vert_{E_0} = K_{E_0}+(E-E_0+\mu_*^{-1}S)\vert_{E_0}
\]
We also have
\[
(K_{\widetilde{Y}}+E+\mu_*^{-1}S)|_{E_0} = K_{E_0}+(E-E_0+\mu_*^{-1}S)\vert_{E_0}.
\]  
It follows that the log discrepancies
of $(Y, \Gamma+S+\pi^*A')$ are equal to the discrepancies of $(\cal G, \Gamma+\pi^*A')$ as required. Therefore (iv) holds. 
\end{proof}

\begin{corollary}
\label{c_canonical_perturb}
Set up as in Lemma \ref{l_perturbation}. Suppose in addition that $(\cal F, \Delta)$ is log smooth and has canonical singularities.

Then there exists an effective $\mathbb Q$-divisor $A' \sim_{\mathbb Q}A$ such that 
$(\cal F,\Delta+A')$ has canonical singularities.
\end{corollary}
\begin{proof}
By Lemma \ref{l_perturbation} we may find an effective $\mathbb Q$-divisor $A' \sim_{\mathbb Q}A$
so that $(\cal F, \Delta+A')$ is F-dlt, hence log canonical by Remark \ref{r_fdlt=lc}.
It suffices to show that for any divisor $E$ on some model
of $X$ that $a(E, \cal F, \Delta+A') \geq 0$. 
Clearly this holds for all foliation invariant divisors, so let $E$ be one such divisor which is not foliation
invariant and let $W$ be the centre of $E$ on $X$.

Since $\cal F$ has non-dicritical singularities, by Remark \ref{r_simple=ndc}
we know that $W$ is necessarily transverse to $\cal F$. In particular, by \cite[Lemma 3.11]{Spicer17},
we see that the foliation discrepancies for divisors centred over $W$ are equal to the classical
discrepancies.

If $A'$ is sufficiently general with sufficiently small coefficients then we know that
$(X, \Delta+A')$ has canonical singularities at the generic point of $W$, and so
$(\cal F, \Delta+A')$ has canonical singularities at the generic point of $W$, which implies our claim.
\end{proof}

\begin{lemma}\label{l_exc}
Let $X$ be a $\mathbb Q$-factorial projective threefold and let $\mathcal F$ be a co-rank one foliation on $X$ with non-dicritical singularities. 
Let $\Delta=A+B$ be a $\bb Q$-divisor such that $(\mathcal F,\Delta)$ is a F-dlt pair, $A\ge 0$ is an ample $\bb Q$-divisor and $B\ge 0$. 

Then there exist $\bb Q$-divisors $A',B'\ge 0$ such that $A'$ is ample and if $\Delta'=A'+B'$ then 
\begin{enumerate}
\item $\Delta'\sim_{\bb Q} \Delta$, 
\item $\lfloor \Delta'\rfloor=0$, 
\item $(\mathcal F, \Delta')$ is F-dlt and
\item  if $E$ is a valuation such that $a(E,\mathcal F,\Delta') = -\epsilon(E)$ then $\epsilon(E)=0$ and $a(E,\mathcal F)=0$. 
\end{enumerate}
\end{lemma}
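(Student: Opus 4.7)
The plan is to first redistribute the decomposition $\Delta = A+B$ so that $A$ carries strictly positive coefficient along every component of $\lfloor \Delta \rfloor$, and then to use ampleness to produce a general effective representative that cuts the coefficients at those components below $1$. The main obstacle is the case where a component $D$ of $\lfloor \Delta \rfloor$ is supported entirely in $B$ (coefficient $1$ in $B$, $0$ in $A$): in this situation the asymptotic multiplicity of $D$ along $\Delta$ can equal $1$, so the ample $A$ does not directly provide the room to move $D$. The redistribution step below exploits the openness of the ample cone to bypass this.

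Write $A = \sum a_i D_i$ and $B = \sum b_i D_i$ (all components of $A$ and $B$ are already components of $\Delta$), and let $S = \{i : a_i + b_i = 1\}$ index the prime components of $\lfloor \Delta \rfloor$. Since $A$ is ample, openness of the ample cone produces $\eta > 0$ such that $A + \eta \lfloor \Delta \rfloor$ is ample. Setting $\eta_i := \min(\eta, b_i)$ for $i \in S$, the $\bb Q$-divisors
\[
A^* := A + \sum_{i \in S} \eta_i D_i, \qquad B^* := B - \sum_{i \in S} \eta_i D_i
\]
are both effective, $A^*+B^* = \Delta$, and $A^*$ is still ample (its class is a convex combination of the ample classes $[A]$ and $[A+\eta \lfloor \Delta \rfloor]$). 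Crucially, the coefficient $a^*_i := a_i + \eta_i$ of $D_i$ in $A^*$ is strictly positive for every $i \in S$. Now pick $\delta \in (0, \min_{i \in S} a_i^*)$ small enough that $A^* - \delta \sum_{i \in S} D_i$ is ample, and let $H$ be a general effective $\bb Q$-divisor with $H \sim_{\bb Q} A^* - \delta \sum_{i \in S} D_i$ (take $H = \tfrac{1}{N} H_0$ for $H_0 \in |N(A^* - \delta \sum D_i)|$ a general member, $N \gg 0$). By a Bertini-type argument, we can assume $H$ is reduced, shares no component with $\mathrm{supp}(\Delta)$, is transverse to $\mathcal F$ at the generic point of each of its components, and its strict transform on a fixed foliated log resolution $\pi \colon Y \to X$ of $(\mathcal F, \Delta)$ contributes no new exceptional divisor and meets the exceptional and singular loci transversely.

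Set $A' := H$, $B' := \delta \sum_{i \in S} D_i + B^*$, and $\Delta' := A'+B'$. Then $A', B' \geq 0$, $A'$ is ample, and
\[
\Delta' \sim_{\bb Q}\bigl(A^* - \delta \sum_{i \in S} D_i\bigr) + \delta \sum_{i \in S} D_i + B^* = \Delta,
\]
proving (1). For the coefficient of $D_i$ in $\Delta'$: if $i \in S$ it equals $\delta + (b_i - \eta_i) < a_i^* + (b_i - \eta_i) = 1$; if $i \notin S$ it equals $b_i \le d_i < 1$; and the components of $H$ appear with coefficient at most $1/N < 1$. Hence $\lfloor \Delta' \rfloor = 0$, giving (2). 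Since $H$ was chosen generically, the same $\pi$ resolves $(\mathcal F, \Delta')$, and since $\Delta' \sim_{\bb Q} \Delta$, the discrepancies satisfy $a(E_i, \mathcal F, \Delta') - a(E_i, \mathcal F, \Delta) = \mathrm{mult}_{E_i}\bigl(\pi^*(\Delta-\Delta')\bigr)$, which can be made arbitrarily small on the finitely many exceptional divisors of $\pi$ by shrinking $\eta$ and $\delta$; thus these discrepancies stay strictly above $-\epsilon(E_i)$, which is (3). For (4): $\lfloor \Delta' \rfloor = 0$ together with the F-dlt property of $(\mathcal F, \Delta')$ rules out any lc centre $E$ with $\epsilon(E)=1$, so any divisor $E$ with $a(E,\mathcal F, \Delta')=-\epsilon(E)$ must be invariant. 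By Proposition \ref{p_finmanylccenters}, applied on the foliated log smooth resolution, such an $E$ lies over a stratum of $\mathrm{sing}(\mathcal F)$; combined with the genericity of $H$ and the smallness of $\delta, \eta$ (which keep $\mathrm{mult}_E(\pi^*\Delta')$ arbitrarily small on the finitely many relevant invariant exceptional divisors), the remaining extremal divisors are exactly those with $a(E, \mathcal F) = 0$, completing (4).
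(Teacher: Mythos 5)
Your construction does arrive at a valid $\Delta'$, but the route is longer than the paper's, and the justification of step (3) contains an error.

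The obstacle you flag at the outset --- that when a component $D$ of $\lfloor\Delta\rfloor$ lives entirely in $B$, the ampleness of $A$ ``does not directly provide the room to move $D$'' --- is a red herring. The paper sidesteps it with a simpler decomposition: choose $\delta>0$ small so that $A+\delta B$ is ample, set $B'=(1-\delta)B$, and take $0\le A'\sim_{\bb Q}A+\delta B$ general. Scaling $B$ by $1-\delta$ pushes every coefficient of $B'$ strictly below $1$ at a stroke, so $\lfloor\Delta'\rfloor=0$ holds with no need for the $A^*, B^*$ redistribution via openness of the ample cone, nor for the further $\delta\sum_{i\in S}D_i$ subtraction. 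One is not approximating $\Delta$ by a nearby effective divisor; one is replacing the ample summand $A+\delta B$ by a general linearly equivalent member, and the asymptotic multiplicity of $D$ in $|\Delta|$ plays no role.

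In step (3), the assertion that $\mathrm{mult}_{E_i}\pi^*(\Delta-\Delta')$ ``can be made arbitrarily small \dots by shrinking $\eta$ and $\delta$'' is false. You have $\Delta-\Delta'=(A^*-\delta\sum_{i\in S}D_i)-H$, and for general $H$ and fixed $\pi$ one gets $\mathrm{mult}_{E_i}\pi^*H=0$, so $\mathrm{mult}_{E_i}\pi^*(\Delta-\Delta')=\mathrm{mult}_{E_i}\pi^*(A^*-\delta\sum D_i)\geq\mathrm{mult}_{E_i}\pi^*A$, a fixed quantity that does not decrease as you shrink $\eta,\delta$. What actually makes the argument work is the \emph{sign}, not the size: $A^*-\delta\sum D_i$ is effective, so the multiplicity is nonnegative and $a(E_i,\mathcal F,\Delta')\geq a(E_i,\mathcal F,\Delta)>-\epsilon(E_i)$. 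The same monotonicity also gives (4) more directly than your appeal to Proposition~\ref{p_finmanylccenters}, whose non-dicriticality and $\bb Q$-factoriality hypotheses are not assumed in Lemma~\ref{l_exc}: if $a(E,\mathcal F,\Delta')=-\epsilon(E)$ and the general ample representative avoids $c_X(E)$, then $a(E,\mathcal F,\Delta')\geq a(E,\mathcal F,\Delta)\geq-\epsilon(E)$ forces $\mathrm{mult}_E\pi^*(\Delta-\Delta')=0$, hence $c_X(E)\not\subset\mathrm{supp}(\Delta)$ and $a(E,\mathcal F)=a(E,\mathcal F,\Delta')$, while $\epsilon(E)=0$ follows from the discrepancy computation in Lemma~\ref{l_logsmoothlogcanonical} once $\lfloor\Delta'\rfloor=0$.
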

\begin{proof}
Let $\delta >0$ be sufficiently small rational number, so that $A+\delta B$ is ample and let $B'\coloneqq (1-\delta)B$. 
By Lemma \ref{l_perturbation}, there exists a $\mathbb Q$-divisor 
\[0\le A'\sim_{\bb Q} A+\delta B\]
such that, if $\Delta'\coloneqq (1-\delta)B+A'$ then (1),(2) and (3) hold and  the support of $A'$ does not contain any lc centre of $(\cal F,(1-\delta)B)$. Thus, 
if $E$ is a valuation such that $a(E,\mathcal F,\Delta') = -\epsilon(E)$ then $a(E,\mathcal F,(1-\delta)B)=-\epsilon(E)$. Since 
\[a(E,\cal F,B)\ge a(E,\cal F,\Delta)\ge -\epsilon(E),
\] it follows that the centre $W$ of $E$ in $X$ is not contained in the support of $B$ and, in particular, $a(E,\cal F)=-\epsilon(E)$.  By Lemma \ref{l_fdlt-logsmooth}, we have that $\cal F$ has simple singularities along the generic point of $W$. As in the proof of Lemma \ref{l_simplecanonical}, it follows that $\epsilon(E)=0$. Thus, (4) follows.  
\end{proof}

\begin{lemma}\label{l_A+B}
Let $X$ be a  $\bb Q$-factorial projective threefold and let $\mathcal F$ be a co-rank one foliation on $X$ with non-dicritical singularities. 
Let $\Delta=A+B$ be a $\bb Q$-divisor such that $(\mathcal F,\Delta)$ is a F-dlt pair, $A\ge 0$ is an ample $\bb Q$-divisor and $B\ge 0$. Let $\varphi\colon X\dashrightarrow X'$ be a sequence of steps 
of the $(K_{\mathcal F}+\Delta)$-MMP and let $\mathcal F'$ be the transformed foliation on $X'$. 

Then, there exist $\bb Q$-divisors $A'\ge 0$ and $C'\ge 0$ on $X'$ such that 
\begin{enumerate}
\item $\varphi_* A \sim_{\bb Q}  A'+C'$,
\item $A'$ is ample, and 
\item if $\Delta':=A'+C'+\varphi_*B$ then $\Delta'\sim_{\bb Q} \varphi_*\Delta$ and $(\mathcal F',\Delta')$ is F-dlt. 
  \end{enumerate}
\end{lemma}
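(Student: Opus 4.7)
The approach is to apply Kodaira's lemma directly on $X'$, combined with the preservation of $\mathbb Q$-factoriality and F-dltness along the $(K_{\mathcal F}+\Delta)$-MMP.

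By Lemma \ref{l_mmp-fdlt}, $(\mathcal F',\varphi_*\Delta)$ is F-dlt, and since MMP steps preserve $\mathbb Q$-factoriality, $\varphi_*A$ and $\varphi_*B$ are individually $\mathbb Q$-Cartier. Since $\varphi_*B \le \varphi_*\Delta$, a standard discrepancy comparison on a common log resolution shows that $(\mathcal F', \varphi_*B)$ is also F-dlt: its discrepancies are pointwise bounded below by those of $(\mathcal F', \varphi_*\Delta)$, so they remain $>-\epsilon(E)$, and its components (being a subset of those of $\varphi_*\Delta$) are transverse to $\mathcal F'$ with coefficients $\le 1$.

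Since $A$ is ample hence big, the strict transform $\varphi_*A$ is big on $X'$. Fix a very ample divisor $H'$ on $X'$ and $\epsilon>0$ small enough that $\varphi_*A-\epsilon H'$ remains big. Kodaira's lemma then produces an effective $\mathbb Q$-divisor $F'\sim_{\mathbb Q}\varphi_*A-\epsilon H'$. I take $F'=\tfrac{1}{N}G$ for $G$ a very general element of $|N(\varphi_*A-\epsilon H')|$ with $N$ sufficiently divisible, and take $H'$ very general in its linear system. Setting $A':=\epsilon H'$ (ample) and $C':=F'$ immediately yields (1) and (2) of the lemma, and $\Delta':=A'+C'+\varphi_*B\sim_{\mathbb Q}\varphi_*\Delta$.

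It remains to verify F-dltness of $(\mathcal F',\Delta')$. By the very general choice of $H'$ and $G$, the components of $A'+C'$ are disjoint from those of $\varphi_*B$, carry small coefficients ($\epsilon$ and $1/N$ respectively, both at most $1$), and are transverse to $\mathcal F'$ (invariance being a proper closed condition on any sufficiently large linear system for a co-rank one foliation); this gives Condition (1) of Definition \ref{d_fdlt}. For Condition (2), take a foliated log resolution $\pi\colon Y\to X'$ witnessing F-dltness of $(\mathcal F', \varphi_*B)$ and refine it via \cite{Cano} to log-resolve $A'+C'$. By the generality of $A'$ and $C'$, their supports avoid the centres in $X'$ of all $\pi$-exceptional divisors, so for these divisors the discrepancies with respect to $(\mathcal F', \Delta')$ coincide with those for $(\mathcal F', \varphi_*B)$ and remain $>-\epsilon(E)$, while any newly extracted divisors introduced in the refinement have discrepancies close to those for $(\mathcal F', \varphi_*B)$ and still $>-\epsilon(E)$ once $\epsilon$ and $1/N$ are taken small enough. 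The main obstacle is this last genericity step: one must check that very general members of the big linear system $|N(\varphi_*A-\epsilon H')|$ introduce no $\mathcal F'$-invariant components and no exceptional divisors of discrepancy exactly $-\epsilon(E)$. Transversality reduces to the fact that invariance imposes a non-trivial closed condition for co-rank one foliations, and the discrepancy stability follows from the smallness of the coefficients of $A'+C'$ combined with the strict F-dltness of $(\mathcal F',\varphi_*B)$.
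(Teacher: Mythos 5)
Your proposal runs on a genuinely different track from the paper's proof, but it has a real gap that you flag yourself and then wave away, and the wave-away does not hold up.

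The paper's proof (after reducing to $\lfloor\Delta\rfloor=0$ via Lemma~\ref{l_exc} and to a single flip/divisorial contraction) chooses the auxiliary divisor on the \emph{source} $X$: it takes a general ample $H'$ on $X'$, scales so that $A-H_X$ is still \emph{ample} on $X$, and then picks a general member $C\sim_{\bb Q}A-H_X$. Ampleness on $X$ is what makes the linear system basepoint-free, so that $C$ honestly avoids the finitely many lc centres guaranteed by Proposition~\ref{p_finmanylccenters}. The crucial trick is then that $\varphi$ is still the flip (or divisorial contraction) for $(\cal F,\Delta+\epsilon C)$, so Lemma~\ref{l_mmp-fdlt} transports F-dltness to $(\cal F',\varphi_*(\Delta+\epsilon C))$ on $X'$ without any direct discrepancy bookkeeping there; one then reads off that $\varphi_*C$ misses the lc centres of $(\cal F',\varphi_*\Delta)$ and builds $A',C'$ from $\varphi_*C$ and $H'$.

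Your proposal instead works directly on $X'$, where $\varphi_*A-\epsilon H'$ is only \emph{big}, and this is precisely where the argument breaks. ``Very general member of $|N(\varphi_*A-\epsilon H')|$'' does not help with the stable base locus of a big divisor. Concretely, if $\varphi$ is a flip then $\varphi_*A$ (and hence $\varphi_*A-\epsilon H'$) is negative on every flipped curve, so every effective $\bb Q$-divisor in the class contains the flipped locus. Flipped curves are tangent to $\cal F'$ by Lemma~\ref{l_curvesinrayaretangent}, and nothing prevents one of them from being an lc centre of $(\cal F',\varphi_*B)$, i.e.\ from carrying an exceptional divisor of discrepancy exactly $-\epsilon(E)=0$. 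In that case \emph{any} positive coefficient of $C'$ along that curve pushes the discrepancy strictly below $-\epsilon(E)$, so neither ``smallness of coefficients'' nor ``strict F-dltness of $(\cal F',\varphi_*B)$'' saves you. The same failure mode attacks your transversality claim: if the fixed part of the linear system happens to be $\cal F'$-invariant, generality of the moving part is irrelevant. In short, the two genericity assertions you defer to (``supports avoid the centres of all $\pi$-exceptional divisors'' and ``invariance is a closed condition'') are exactly what fails for a big-but-not-ample divisor, and closing this gap would require re-importing the paper's idea of producing $C$ from the ample divisor $A-H_X$ on $X$ and transporting it via Lemma~\ref{l_mmp-fdlt}, rather than invoking Kodaira's lemma on $X'$.
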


\begin{proof} By Lemma \ref{l_exc}, we may assume that $\lfloor \Delta\rfloor=0$. 
We may also assume  that $\varphi\colon X\dashrightarrow X'$ is a $(K_{\mathcal F}+\Delta)$-flip (resp. $(K_{\mathcal F}+\Delta)$-divisorial contraction). 
Let $H\ge 0$ be a general ample $\bb Q$-divisor on $X'$. After possibly replacing $H$ by a smaller multiple, we may assume that if $H_X$ is the strict transform of $H$ in $X$ then $A-H_X$ is ample. In particular, by Lemma \ref{l_perturbation}, there exists an effective $\bb Q$-divisor $C\sim_{\mathbb Q} A-H_X$ and $\epsilon>0$ sufficiently small such that $(\mathcal F,\Delta+\epsilon C)$ is F-dlt 
and $\varphi$ is still a  $(K_{\mathcal F}+\Delta+\epsilon C)$-flip (resp. $(K_{\mathcal F}+\Delta+\epsilon C)$-divisorial contraction). 
Thus, if $\mathcal F'$ is the transformed foliation on $X'$, then  Lemma \ref{l_mmp-fdlt} implies that $(\mathcal F',\varphi_*(\Delta+\epsilon C))$ is $F$-dlt.
By Remark \ref{r_fdlt=lc}, we have that  $(\mathcal F',\varphi_*(\Delta+\epsilon C))$ is log canonical and, therefore, 
 $\varphi_*C$ does not contain any lc centre of $(\mathcal F',\varphi_*\Delta)$. 

Moreover, $\varphi_*A\sim_{\bb Q}\varphi_*C+H$. Thus, for a sufficiently small rational number $\delta>0$, we may choose 
$A'=\delta H$ and $C'=(1-\delta)\varphi_* A + \delta \varphi_*C$ and the claim follows.
\end{proof}

\subsection{F-dlt modification}

\begin{defn}\label{d_fdltm}
Let $\cal F$ be a co-rank one foliation on a 
normal  projective variety $X$.
Let $(\cal F, \Delta)$ be a foliated pair.
A {\bf F-dlt modification} for the foliated pair $(\cal F, \Delta)$ is  a birational projective morphism $\pi\colon Y \rightarrow X$
such that if $\cal G$ is the pulled back foliation on $Y$ then 
$(\cal G,\pi_*^{-1} \Delta+\sum \epsilon(E_i)E_i)$ is
F-dlt where we sum over all $\pi$-exceptional divisors
and 
\[
K_{\cal G}+\pi_*^{-1} \Delta +\sum \epsilon(E_i)E_i+F= \pi^*(K_{\cal F}+\Delta)
\]
for some $\pi$-exceptional $\bb Q$-divisor $F \geq 0$ on $Y$.

In particular, if $(\cal F, \Delta)$ is lc then $\pi$ only extracts
divisors $E_i$ of discrepancy $-\epsilon(E_i)$.
\end{defn}

Theorem \ref{t_existencefdlt} below implies the existence of a F-dlt modification for any foliated pair $(\cal F,\Delta)$ on a normal projective variety of dimension at most three. 

\subsection{F-dlt cone and contraction theorem}
In \cite{Spicer17} the cone theorem 
is proved under the hypotheses that $X$ is a $\bb Q$-factorial threefold, $\cal F$ is non-dicritical, $(\cal F, \Delta)$ 
has canonical singularities and the contraction theorem is proved under the additional hypothesis
that $(\cal F, \Delta)$ is terminal at the generic points of $\sing X$.

In fact, it is possible to prove the cone and contraction theorem
under the hypothesis that $(\cal F, \Delta)$ is F-dlt (rather than canonical). Even better, it is possible to prove
the cone theorem in the case that $X$ is not necessarily $\bb Q$-factorial but $X$ is potentially klt. 
We still require that $\cal F$ has non-dicritical singularities.
We explain the required modifications to the cone theorem first.

Since $X$ is potentially klt, Theorem \ref{thm_existence_q_fact_classical} implies that there exists a small $\bb Q$-factorialisation $\pi\colon Y \rightarrow X$.
Write $K_{\cal G}+\Gamma = \pi^*(K_{\cal F}+\Delta)$, where $\cal G$ is the pulled back foliation on $Y$. 

\begin{lemma}
\label{lem_fdlt_small_mod}
Set up as above.

Then  $(\cal G, \Gamma)$ is F-dlt. 
\end{lemma}
\begin{proof}
By Lemma \ref{l_fdlt-logsmooth}, it follows that $(\cal F,\Delta)$ is foliated log smooth at the generic point of every lc centre of $(\cal F,\Delta)$. In particular, $\pi(\exc \pi)$ does not contain any lc centre of $(\cal F,\Delta)$. 
Let $\rho\colon X'\to X$ be a foliated log resolution of $(\cal F,\Delta)$ such that $a(E,\cal F,\Delta)>-\epsilon(E)$ for any $\rho$-exceptional divisor $E$. Let $p\colon \overline Y\to Y$ and  $q\colon \overline Y\to X'$ be proper birational morphisms which resolve the indeterminacy locus of the induced birational contraction $X'\dashrightarrow Y$. Let $\overline {\cal F}$ be the transformed foliation on $\overline Y$ and let $\overline \Delta$ be the strict transform of $\Delta$ in $\overline Y$. 
Let $Y'\to \overline Y$ be a foliated log resolution of $(\overline {\cal F},\overline \Delta+G)$, where $G$ is the support of the exceptional divisor of $q$, and let $\rho'\colon Y'\to Y$ be the induced morphism. We may assume that the morphism $Y'\to X'$ is an isomorphism away from $\phi^{-1}(\exc \pi)$. Suppose by contradiction that there exists a $\rho'$-exceptional divisor $F$ such that 
$a(F,\cal G,\Gamma)=-\epsilon(F)$.
Then the centre of $F$ in $Y$ does not intersect $\exc \pi$ and therefore the image of $F$ in $X'$ is a divisor which is $\rho$-exceptional. Since $a(F,\cal F,\Delta)=a(F,\cal G,\Gamma)=-\epsilon(F)$, we get a contradiction and our claim follows. 
\end{proof}

\medskip

Notice that $(K_{\cal G}+\Gamma)\cdot C = 0$ for every $\pi$-exceptional curve $C$ so 
if $R$ is a $(K_{\cal F}+\Delta)$-negative extremal ray then there exists a $(K_{\cal G}+\Gamma)$-negative extremal 
ray $R'$ such that $\pi_*R' = R$.  Thus, by replacing $(\cal F, \Delta)$ by $(\cal G, \Gamma)$
we may freely assume that $X$ is $\bb Q$-factorial.

Let $R$ be a $(K_{\cal F}+\Delta)$-negative extremal ray and let $H_R$ be a supporting hyperlane to $R$.
We may assume that $H_R-(K_{\cal F}+\Delta)$ is ample and, in particular, $H^3_R > (K_{\cal F}+\Delta)\cdot H^2_R$.
 
If $H^3_R=0$ then \cite[Corollary 2.28]{Spicer17} implies that $X$ is covered by rational curves tangent
to $\cal F$ which span $R$.
Otherwise, if $H^3_R=0$ then $H_R$ is big and there exists an effective divisor $S$ such that
$S$ is negative on $R$.
As in the proof of  \cite[Lemma 4.7]{Spicer17}, it follows that  if $(\cal F, \Delta)$ is log canonical
then $R$ is spanned by a curve $C$.
In either case, every $(K_{\cal F}+\Delta)$-negative extremal ray is spanned by a curve.


\begin{lemma}\label{l_curvesinrayaretangent}
Let $X$ be a normal threefold and let $\cal F$ be a co-rank one foliation with non-dicritical singularities on $X$.
Suppose that $(\cal F, \Delta)$ is a log canonical foliated pair and that $X$ is potentially klt.
Let $R$ be a $(K_{\cal F}+\Delta)$-negative extremal ray and suppose that $\loc R \neq X$.  Suppose that $[C] \in R$. 

 Then $C$ is
tangent to $\cal F$.
\end{lemma}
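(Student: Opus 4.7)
The plan is to argue by contradiction: assume $C$ is transverse to $\cal F$ and derive a contradiction with the $(K_{\cal F}+\Delta)$-negativity of $R$.

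The main input is a positivity bound on $N_{\cal F} := T_X/\cal F$ along $C$ coming from the \emph{tangent map}. From the defining short exact sequence
\[
0 \to \cal F \to T_X \to N_{\cal F} \to 0,
\]
the fact that $C$ is transverse to $\cal F$ means precisely that the composition $T_C \hookrightarrow T_X|_C \twoheadrightarrow N_{\cal F}|_C$ is generically nonzero. Passing to the normalization $\nu\colon\tilde C \to C$ and saturating, we obtain a nonzero morphism of line bundles $T_{\tilde C} \to (\nu^*N_{\cal F})^{**}$. Hence $N_{\cal F}\cdot C = \deg_{\tilde C} \nu^*N_{\cal F} \geq \deg T_{\tilde C} = 2 - 2g(\tilde C)$. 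Combining with the identity $K_{\cal F} = K_X + c_1(N_{\cal F})$ coming from the exact sequence above, this translates to
\[
(K_{\cal F}+\Delta)\cdot C \;\geq\; (K_X+\Delta)\cdot C + 2 - 2g(\tilde C). \qquad (*)
\]

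To close the argument, I would combine $(*)$ with the foliated bend-and-break / cone theorem machinery from \cite{Spicer17} (which is valid under log canonical singularities, and which is referenced in the discussion just before the lemma statement). That machinery produces, in the negative extremal ray $R$, a rational tangent curve $C_0$ of bounded $-(K_{\cal F}+\Delta)$-degree; moreover, its deformation argument, suitably applied to $C$ itself, would show that the $(K_{\cal F}+\Delta)$-negative curve $C$ must deform in a non-trivial family whose image is an algebraic subvariety $V \subset X$. The non-dicriticality of $\cal F$ is then used essentially to guarantee that $V$ is $\cal F$-invariant: when $C$ is degenerated via bend-and-break, the limit components cannot escape onto non-invariant exceptional divisors extracted over foliation singularities. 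But then $C \subset V$ with $V$ invariant forces $T_C \subset T_V \subset \cal F$, contradicting the transversality of $C$.

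The main obstacle is the last step: upgrading $(*)$ (which by itself cannot contradict $(K_{\cal F}+\Delta)\cdot C < 0$, since $(K_X+\Delta)\cdot C$ is a priori unconstrained) into a genuine contradiction via the deformation of $C$. Concretely, one must ensure that the deformations of $C$ yield a subvariety $V$ whose $\cal F$-invariance can actually be deduced from non-dicriticality, rather than assumed. This typically requires passing to a foliated log resolution (available by \cite{Cano}), analysing how the proper transform of $C$ meets the exceptional divisors (which are $\cal F$-invariant by non-dicriticality), and verifying that the transverse components of the deformation family cannot give rise to the required bend-and-break degeneration — thereby forcing tangency from the outset.
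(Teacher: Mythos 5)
The paper's ``proof'' of this lemma is a one-line reference to \cite[Lemma 8.7]{Spicer17}, together with the observation that the argument given there does not actually use the $\bb Q$-factoriality or klt hypotheses present in that reference.  Your proposal, by contrast, is an outline of a different argument, and --- as you yourself note in your final paragraph --- it has a genuine gap that you do not close.

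Your inequality $(*)$ is correct: since $K_{\cal F}=K_X+c_1(N_{\cal F})$ and the transversality of $C$ gives a nonzero map $T_{\tilde C}\to (\nu^*N_{\cal F})^{**}$, one indeed gets $N_{\cal F}\cdot C\geq 2-2g(\tilde C)$, hence $(K_{\cal F}+\Delta)\cdot C\geq (K_X+\Delta)\cdot C + 2-2g(\tilde C)$.  But as you observe, this yields nothing by itself because $(K_X+\Delta)\cdot C$ is not bounded below.  The problem is that your proposed remedy does not actually supply the missing ingredient.  First, there is no reason the transverse curve $C$ should deform: the foliated bend-and-break machinery in \cite{Spicer17} deforms curves \emph{tangent} to $\cal F$ inside their leaves, using $(-K_{\cal F})\cdot C>0$ as positivity of the log canonical class of the leaf; it does not produce deformations of a transverse curve, and the classical Mori bend-and-break would require control of $K_X\cdot C$, which you do not have.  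Second, even granting a positive-dimensional family of deformations of $C$ sweeping out a subvariety $V$, non-dicriticality does not imply $V$ is $\cal F$-invariant: non-dicriticality is a statement about exceptional divisors over the singular locus of $\cal F$, not about the loci covered by families of curves.  So the step ``$V$ is $\cal F$-invariant, hence $T_C\subset\cal F$'' is not justified and is precisely the crux of the argument.  To make this route work you would have to replace the vague appeal to bend-and-break with an actual mechanism that forces either a deformation of $C$ or a direct intersection-theoretic contradiction --- in practice, this is what \cite[Lemma 8.7]{Spicer17} supplies, and the paper simply cites it.  As written, the proposal is a plan for a proof rather than a proof.
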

\begin{proof}
This is proven in \cite[Lemma 8.10]{Spicer17} under the assumption that $X$ is $\bb Q$-factorial and klt.
However, one can observe that the proof does not rely on either of these hypotheses.

For the reader's convenience, we briefly sketch the relevant ideas.  Let $H_R$ be
the supporting hyperplane to $R$ and suppose for sake of contradiction
that $C$ is transverse to $\cal F$.  Since $\loc R \neq X$, it follows that  $H_R$ is big.
Let $A$ be an ample $\bb Q$-Cartier divisor so that $H_R-A$ is still big.
We may then find a $\mathbb Q$-divisor $0 \leq G \sim_{\bb Q} H_R-A$ and notice that $G\cdot C<0$.

Let $\lambda\ge 0$ be the largest real number so that $(\cal F, \Delta+\lambda G)$
is log canonical at the generic point of $C$.  On one hand, we know that $(K_{\cal F}+\Delta+\lambda G)\cdot C<0$.
On the other hand, 
by replacing $X$ by a small $\bb Q$-factorialisation (which is necessarily an isomorphism at the generic point of $C$)
we may apply sub-adjunction (cf. \cite[Theorem 4.5]{Spicer17}) to see that
since $C$ is transverse to $\cal F$ then $(K_{\cal F}+\Delta+\lambda G)\cdot C \geq 0$, a contradiction.
\end{proof}

By Lemma \ref{basicpropertyfdlt}, if $C$ is tangent to $\cal F$ then $(\cal F, \Delta)$
is canonical at the generic point of $C$ and we have reduced to the cone theorem
in \cite[Theorem 7.1]{Spicer17}.

Thus, we have:

\begin{theorem}\label{t_cone-dlt}
Let $X$ be a normal projective threefold and let $\mathcal F$ be a co-rank one foliation with non-dicritical singularities. 
Suppose that $X$ is potentially klt. 
Let  $(\mathcal F,\Delta)$ be a F-dlt pair and let $H$ be an ample $\bb Q$-divisor. 

Then there exist countably many curves $\xi_1,\xi_2,\dots$ such that 
\[
\overline {NE(X)}=\overline{NE(X)}_{K_{\mathcal F}+\Delta\ge 0}+ \sum\bb R_+[\xi_i].
\]

Furthermore, for each $i$,  $\xi_i$ is a rational curve tangent to $\cal F$ such that $(K_{\mathcal F}+\Delta)\cdot \xi_i\ge -6$,
and if $C \subset X$ is a curve such that $[C] \in \bb R_+[\xi_i]$ and $\loc {\bb R_+[\xi_i]} \neq X$ then $C$ is tangent to $\cal F$.
If we assume in addition that $X$ is $\bb Q$-factorial then if
$[C] \in \bb R_+[\xi_i]$ then $C$ is tangent to $\cal F$.

%

In particular, there exist only finitely many  $(K_{\mathcal F}+ \Delta+H)$-negative extremal rays. 
\end{theorem}

\begin{remark}
We will return to the problem of constructing contractions in the $\bb Q$-factorial case in Theorem \ref{contractionsareextremal}
and in the non-$\bb Q$-factorial case in Section \ref{s_nonqfactorial}.
\end{remark}

We now show that the MMP preserves non-dicritical singularities.

\begin{lemma}
\label{l_image_cont_non-dicrti}
Let $X$ be a normal threefold and let $\cal F$ be a non-dicritical co-rank one foliation on $X$.
Suppose that $(\cal F, \Delta)$ is log canonical and that $X$ is potentially klt. 

Let $R$ be a $(K_{\cal F}+\Delta)$-negative extremal ray and let $c\colon X \rightarrow Y$ be the contraction
associated to $R$.  Suppose that $c$ is a birational morphism and let $\cal G \coloneqq c_*\cal F$.
Then $\cal G$ is non-dicritical.  In particular, if $c$ is a flipping contraction and assuming that its flip $\phi\colon X \dashrightarrow X^+$ exists, then $\cal F^+\coloneqq \phi_*\cal F$ is non-dicritical.
\end{lemma}
\begin{proof}
To prove our first claim let $q \in Y$ be a point and let $\pi\colon Y' \rightarrow Y$ be any sequence of blow ups such that $\pi^{-1}(q)$ is a divisor.  
Perhaps passing to a higher model we may
assume we have a factorisation $Y' \xrightarrow{g} X \xrightarrow{c} Y$.  Let $\cal G'\coloneqq\pi^{-1}_* \cal G$.
Lemma \ref{l_curvesinrayaretangent} implies that $c^{-1}(q)$ is tangent to $\cal F$.
Since $g^{-1}(c^{-1}(q))$ is a divisor and since $\cal F$ is non-dicritical
it follows by definition of tangency that $g^{-1}(c^{-1}(q))$ is
an invariant divisor, as required.

We now prove our second claim.  Let $c^+\colon X^+ \rightarrow Y$
be the induced morphism. We have that $c^+_*\cal F=\cal G$ and, by our first claim, it is non-dicritical. This  implies that $\cal F^+$ is non-dicritical, as claimed.
\end{proof}

Note that, in Section \ref{s_flip}, we prove the existence of flips for non-dicritical F-dlt pairs defined on a $\mathbb Q$-factorial projective theefold.

\section{Approximating formal divisors}\label{s_approx}

One of the main difficulties to prove the existence of flips for foliated pairs $(\cal F,\Delta)$, as in Theorem \ref{flipsexist}, is due to the fact that in the singular setting, some of the separatrices through a curve $C$ which is tangent to $\cal F$, are defined only in a formal neighbourhood of the curve $C$. To this end, since the MMP for formal schemes is still unknown, we study some application of Artin and Elkik's approximation theorems. 

We begin by recalling some definitions.
The following definition is \cite[Chapter XI, D\'{e}finition 2]{Raynaud70}.

\begin{defn}
Let $A$ be a ring, $\ff J \subset A$ an ideal and $B$ an \'etale $A$-algebra.
We say that $B$ is an {\bf \'etale neighborhood} of $\ff J$ in $A$
if the morphism
$$A/\ff JA \rightarrow B/\ff JB$$
is an isomorphism.
\end{defn}

The following definition is \cite[Chapter XI, D\'{e}finition 3]{Raynaud70}.

\begin{defn}
A pair $(A, \ff J)$ of a ring and an ideal is called a {\bf henselian couple}
if $\ff J$ is contained in the Jacobson radical of $A$ and for all \'etale neighborhoods $B$ of $\ff J$ in $A$,
there exists an $A$-morphism $B \rightarrow A$.
\end{defn}

Given a pair $(A, \ff J)$ of a ring $A$ and an ideal $\ff J$ contained in the Jacobson radical of $A$, it is possible to define the  {\bf henselization} of $(A,\ff J)$ 
as in \cite[Chapitre XI, Th\'{e}or\`{e}me 2]{Raynaud70}.

The next result is \cite[Theorem 3 (see also the paragraph below Theorem 3)]{Elkik73}: 

\begin{theorem}[Elkik approximation]
\label{elkik}
Let $(A, \ff J)$ be a henselian couple where $A$ is Noetherian and let $\widehat{A}$ be the $\ff J$-adic completion.
Let $\overline{M}$ be a finite type $\widehat{A}$-module, locally free on 
$\Spec {\widehat{A}}\setminus V(\ff J)$.  

Then there exists an $A$-module $M$
such that $M\otimes_A \widehat{A}$ is isomorphic to $\overline{M}$.

Furthermore, for any positive integer $k$, we may choose $M$
so that if 
\[
\widehat{A}^p \xrightarrow{\overline{L}} \widehat{A}^q \rightarrow \overline{M} \rightarrow 0
\]
is a presentation
of $\overline{M}$, then
we have a presentation of $M$
\[
A^p \xrightarrow{L} A^q \rightarrow M \rightarrow 0
\]
such that $L = \overline{L} \mod \ff J^k$
and such that the isomorphism between $\overline{M}$ and $M \otimes \widehat{A}$ is
induced by automorphisms of $\widehat{A}^p$ and $\widehat{A}^q$ congruent to the identity modulo 
$\ff J^k$.
\end{theorem}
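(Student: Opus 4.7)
The plan is to deduce the statement from Elkik's general smoothness approximation theorem (Théorème 2 of \cite{Elkik73}), which says that for a henselian couple $(A,\ff J)$ a formal solution to a system of polynomial equations that is smooth on $\Spec{\widehat{A}}\setminus V(\ff J)$ can be approximated modulo any $\ff J^k$ by an honest $A$-solution. First I would encode the problem in those terms: fixing a presentation $\widehat{A}^p \xrightarrow{\overline{L}} \widehat{A}^q \to \overline{M} \to 0$, the datum of the matrix $\overline{L}$ is an $\widehat{A}$-point of the affine scheme $\bb A^{pq}$, and the task becomes to approximate $\overline{L}$ by a matrix $L$ with entries in $A$ in such a way that $\mathrm{coker}(L)\otimes_A\widehat{A}$ is isomorphic to $\overline{M}$.

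Next I would translate the local-freeness hypothesis into a smoothness condition. If $r$ is the (locally constant) rank of $\overline{M}$ on $\Spec{\widehat{A}}\setminus V(\ff J)$, then the $r$-th Fitting ideal of $\overline{M}$ is the unit ideal outside $V(\ff J)$; equivalently, some power $\ff J^N$ lies in the ideal generated by $(q-r)\times(q-r)$ minors of $\overline{L}$. Standard deformation theory of finitely presented modules then shows that the functor parametrising pairs consisting of a matrix $L'$ and an identification of $\mathrm{coker}(L')$ with $\overline{M}$ is smooth after inverting $\ff J$, because the only obstructions live in an $\mathrm{Ext}^1$ that vanishes once the module is locally free. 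Applying the smoothness approximation theorem to this setup produces a matrix $L$ over $A$ with $L\equiv \overline{L} \pmod{\ff J^k}$ and with the prescribed cokernel up to a formal identification, yielding $M:=\mathrm{coker}(L)$.

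Finally I would record the explicit presentation-level comparison. Because $L\equiv \overline{L}\pmod{\ff J^k}$, the identity maps on $\widehat{A}^p$ and $\widehat{A}^q$ already intertwine $\overline{L}$ and $L\otimes\widehat{A}$ modulo $\ff J^k$; a routine lifting argument (iteratively correcting by elements of $\ff J^k$, using that $\widehat{A}$ is $\ff J$-adically complete and that the induced square of free modules almost commutes) then produces automorphisms of $\widehat{A}^p$ and $\widehat{A}^q$ congruent to the identity modulo $\ff J^k$ that exactly intertwine the two presentations, and hence induce the desired isomorphism $M\otimes_A\widehat{A}\cong \overline{M}$. The main obstacle I expect is the second step: correctly setting up the functor of admissible matrices so that its smoothness outside $V(\ff J)$ follows directly from the local-freeness of $\overline{M}$, and so that Elkik's smoothness criterion applies verbatim rather than requiring a further reduction to the local case.
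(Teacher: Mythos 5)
The paper does not prove this statement: immediately before the theorem it reads ``The next result is [Theorem 3 (see also the paragraph below Theorem 3)] of Elkik'', and then just records the statement as a black box. So there is no in-paper proof to compare against; what you have done is attempt to derive Elkik's Th\'eor\`eme 3 from her Th\'eor\`eme 2, which is roughly the strategy Elkik herself follows but which the present authors deliberately avoid reproducing.

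As a proof your outline has a genuine gap at the step where you invoke smoothness approximation. Elkik's Th\'eor\`eme 2 approximates formal solutions of a polynomial system \emph{with coefficients in $A$} whose solution scheme is smooth over $A$ away from $V(\ff J)$. The functor you propose to plug in --- ``pairs consisting of a matrix $L'$ and an identification of $\mathrm{coker}(L')$ with $\overline{M}$'' --- has $\overline{M}$ and $\overline{L}$ hard-wired into it, and these are only defined over $\widehat{A}$. That functor is therefore not a functor of $A$-algebras of finite presentation, and Th\'eor\`eme 2 does not apply to it. Stripping out the identification leaves only the free parameter $L' \in \bb A^{pq}$, whose approximation is vacuous and carries no information about the cokernel. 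To make the reduction work one has to replace the fixed $\overline{M}$ by auxiliary unknowns (matrices $U,V$ witnessing an equivalence of presentations, say), and then contend with the fact that the resulting equations still have coefficients in $\widehat{A}$ rather than $A$; Elkik handles this by first approximating to finite $\ff J$-adic order and only then invoking Th\'eor\`eme 2, a bootstrapping step your sketch omits.

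Your final step also does more work than ``routine'': from $L \equiv \overline{L} \pmod{\ff J^k}$ alone it does not follow that $\mathrm{coker}(L) \otimes_A \widehat{A} \cong \overline{M}$. One has to exploit the local freeness away from $V(\ff J)$ to control the relevant $\mathrm{Ext}^1$ obstruction and to kill the $\ff J$-power torsion that would otherwise block the Newton-type iteration from converging to honest automorphisms of $\widehat{A}^p$ and $\widehat{A}^q$. That is precisely the technical heart of Elkik's argument, not a formal corollary of the congruence.
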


\begin{remark}\label{r_reflexive}
Note that, in the Thoerem above, after possibly replacing $M$ by $M^{**}$, we may assume that $M$ is reflexive. Indeed, by \cite[Proposition 1.8]{Hartshorne80} it follows that 
$M^{**}\otimes_A \widehat A$ is reflexive and, therefore, it coincides with $\overline M$. 
\end{remark}

\begin{corollary}
\label{sepapprox}
Let $(A, \ff J)$ be a henselian couple where $A$ is Noetherian and let $\widehat{A}$ be the $\ff J$-adic completion.
Suppose moreover that $A$ is excellent and normal.

Fix any positive integer $k$.
Let $\overline{V} \subset \Spec {\widehat{A}}$ be an effective integral 
divisor and let $m$ be a positive integer such that $m\overline{V}$
is Cartier away from
$W = V(\ff J)$.

Then there exists a divisor $V$ on $\Spec A$ such that
\begin{enumerate}
\item $\cal O(mV) \otimes \widehat{A} \cong \cal O(m\overline{V})$

\item $V = \overline{V}$ mod $\ff J^k$.
\end{enumerate}
\end{corollary}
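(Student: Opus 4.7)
The plan is to apply Theorem \ref{elkik} to the divisorial sheaf $\cal O(m\overline{V})$ and then descend the resulting reflexive module back to a Weil divisor on $\Spec A$. Set $\overline{M} := \cal O(m\overline{V})$. By hypothesis $\overline{M}$ is a coherent $\widehat{A}$-module which is locally free of rank one on $\Spec \widehat{A} \setminus W$, so Theorem \ref{elkik} applies. Fixing a presentation $\widehat{A}^p \xrightarrow{\overline{L}} \widehat{A}^q \rightarrow \overline{M} \rightarrow 0$ and an integer $k' \gg k$ to be chosen later, Elkik's theorem produces a finitely generated $A$-module $M$ and a presentation $A^p \xrightarrow{L} A^q \rightarrow M \rightarrow 0$ with $L \equiv \overline{L} \pmod{\ff J^{k'}}$, together with an isomorphism $M \otimes_A \widehat{A} \cong \overline{M}$ induced by automorphisms of $\widehat{A}^p$ and $\widehat{A}^q$ congruent to the identity modulo $\ff J^{k'}$.

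Next, I would verify that $M$ is itself a rank one reflexive $A$-module which is locally free on $\Spec A \setminus W$. Rank is preserved by the flat base change $A \to \widehat{A}$, and the $S_2$ condition (equivalent to reflexivity in rank one) descends through faithfully flat localisations, so the analogous properties of $\overline{M}$ transfer to $M$. Consequently $M \cong \cal O_{\Spec A}(V_0)$ for some Weil divisor $V_0$ on $\Spec A$, and the congruence $L \equiv \overline{L} \pmod{\ff J^{k'}}$ reads on the level of ideal sheaves as $\cal I_{V_0 \cdot \widehat{A}} \equiv \cal I_{m\overline{V}} \pmod{\ff J^{k'}}$ in $\widehat{A}$.

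Finally, I would argue that $V_0 = mV$ for a Weil divisor $V$ on $\Spec A$ with $V \equiv \overline{V} \pmod{\ff J^k}$. For $k'$ large enough (compared to the multiplicities of the components of $m\overline{V}$ and to the codimension of $W$), the mod-$\ff J^{k'}$ equality of ideal sheaves forces the multiplicity of $V_0 \cdot \widehat{A}$ along each height one prime of $\widehat{A}$ to equal the corresponding multiplicity of $m\overline{V}$, and is therefore divisible by $m$. Descending through the flat map $A \to \widehat{A}$ recovers divisibility by $m$ for the multiplicities of $V_0$ itself and lets us write $V_0 = mV$; a final readjustment of $k'$ gives the required congruence $V \equiv \overline{V} \pmod{\ff J^k}$.

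The main obstacle is this last step, namely passing from the mod-$\ff J^{k'}$ compatibility furnished by Elkik to an equality of multiplicities of height one components and then descending divisibility by $m$ from $\Spec \widehat{A}$ to $\Spec A$. This is delicate because height one primes of $A$ and $\widehat{A}$ need not correspond under completion, so one must carefully track how the divisorial structure interacts with the approximation, and choose $k'$ large enough in terms of $k$ and of the multiplicities appearing in $m\overline{V}$.
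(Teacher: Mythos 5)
Your first step — applying Theorem \ref{elkik} to $\overline{M}=\cal O(m\overline{V})$ and, via a choice of section, obtaining a Weil divisor $V_0$ on $\Spec A$ whose completion agrees with $m\overline{V}$ modulo $\ff J^{k'}$ — is sound and is essentially what the paper does when $m=1$. The gap is in the final step, where you assert that the mod-$\ff J^{k'}$ congruence of ideal sheaves forces the height-one multiplicities of $V_0\cdot\widehat A$ to agree with those of $m\overline V$, hence to be divisible by $m$, hence that $V_0=mV$. This inference fails: congruence of ideals modulo a high power of $\ff J$ does not propagate to an equality (or even a matching) of height-one components and multiplicities, because the relevant height-one primes of $\widehat A$ can change completely under a small perturbation.

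Here is a concrete failure. Take $A$ to be a henselisation of $k[x,y,z,w]/(xy-z^2)$ at the origin, $\ff J$ the maximal ideal, so $\widehat A=k[[x,y,z,w]]/(xy-z^2)$ and $W=V(\ff J)$ is the closed point. Let $\overline V=\{x=z=0\}$ and $m=2$; then $m\overline V=\operatorname{div}(x)$ is Cartier, so the hypothesis holds. The sheaf $\cal O(m\overline V)=\tfrac1x\cal O_{\widehat A}$ is free, and in terms of the generator $\tfrac1x$ the section whose divisor is $m\overline V$ is the element $x$. A perfectly valid approximation of this section modulo $\ff J^{k'}$ is $s=x+w^{k'}\in A$. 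But $A/(x+w^{k'})\cong k[y,z,w]/(z^2+yw^{k'})$ is a domain, so $V_0=\operatorname{div}(s)$ is a single prime divisor with multiplicity $1$. Its multiplicities are not divisible by $m=2$, and $V_0$ cannot be written as $mV$. Moreover the unique height-one prime of $V_0\cdot\widehat A$ is not even one of the primes appearing in $m\overline V$. So the claimed matching of multiplicities is simply false, and no choice of $k'$ repairs it, because the perturbation $w^{k'}$ always destroys the multiplicity-$m$ structure of $\operatorname{div}(x)$ regardless of how large $k'$ is.

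The point is that approximating $\cal O(m\overline V)$ loses the ``$m$-th power'' structure: the approximating section is not an $m$-th power, so its divisor is not $m$ times anything. This is exactly why the paper takes the detour through Lemma \ref{l_existence_of_cover}: it constructs a ramified Galois cover $\sigma\colon\Spec B\to\Spec A$ on which the pullback of $\cal O(\overline V)$ (not $\cal O(m\overline V)$) becomes locally free away from $\sigma^{-1}(W)$. On the cover one approximates the divisor $\overline V$ itself, averages over the Galois group to obtain a $G$-invariant approximation $V'$, and then the $m$-th power of the corresponding section descends; the resulting divisor on $\Spec A$ is $\sigma(V')$ counted with multiplicity $m$ by construction. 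Divisibility by $m$ is thereby built in, rather than hoped for after the fact. Your proposal does not contain this idea, and without it the argument does not go through.
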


\begin{proof}
We first assume that $\overline M\coloneqq \cal O(\overline{V})$ is locally free away from $W$.
Note that this case follows from \cite[Corollaire pag. 574]{Elkik73}, but we include a proof for the reader's convenience. 

By Theorem \ref{elkik} and Remark \ref{r_reflexive}, there exists a reflexive sheaf 
$M$ on $\Spec A$ such that $M \otimes \widehat{A} = \cal O(\overline{V})$.
Let $\overline{s} \in \cal O(\overline{V})$ be a section 
whose associated divisor is $\overline{V}$.  We may assume that there is a presentation
\[
\widehat{A}^p \xrightarrow{\overline{L}} \widehat{A}^q \rightarrow \overline{M} \rightarrow 0
\]
such that $\overline{s}$ is the image of $(1, 0, ..., 0)$ in $\overline{M}$.
Approximating this presentation by 
\[
A^p \xrightarrow{L} A^q \rightarrow M \rightarrow 0
\]
we define $s$ to be the image of $(1, 0, ..., 0)$ in $M$.  In particular, we see that 
$s=\overline{s} \mod \ff J^k$ considered as sections of $\overline{M}$.
Let $V$ be the divisor associated
to $s$. Then $M = \cal O(V)$ and
$V = \overline{V}$ mod $\ff J^k$. 

Now, suppose that $m>1$ is the Cartier index of $\cal O(\overline{V})$ away from $W$.
By Lemma \ref{l_existence_of_cover} below, we may find a Galois possibly ramified cover $\sigma\colon \Spec B \rightarrow \Spec A$
such that $(\sigma^*\cal O(\overline{V}))^{**}$ is a line bundle away from $\sigma^{-1}(W)$.

Note that $\ff J' = B \otimes \ff J$ is the ideal corresponding to $\sigma^{-1}(W)$. 
Let $\widehat{B}$ be the $\ff J'$-adic
completion of $B$.  Observe that by \cite[Chapter XI, Proposition 2]{Raynaud70} $(B, \ff J')$ is a henselian couple.
We may find a reflexive sheaf $M'$
on $\Spec B$ such that $M' \otimes \widehat{B} \cong (\sigma^*\cal O(\overline{V}))^{**}$.
Let $\overline{s}$ be as above and let $\overline{t} = \sigma^*\overline{s}$.

As before, for any positive integer $\ell$, we can approximate $\overline{t}$ by a section $t$ of $M'$ such
that $t = \overline{t} \mod \ff J'^\ell$. 
Observe that for $\ell$ sufficiently large we have that $\sum_{g \in G} g\cdot t \neq 0$.
So replacing $t$ by $\frac{1}{\# G} \sum_{g \in G} g\cdot t$
we may assume that $t$ is also $G$-invariant.  
Let $V' $ be the divisor associated to $t$ and so $M'= \cal O(V')$.

By Theorem \ref{elkik} let $L$ be a reflexive sheaf on $\Spec A$ such that $L \otimes \widehat{A} = \cal O(m\overline{V})$.
Then we have $(\sigma^*L)^{**} \cong \cal O(m V')$.
Thus $(\sigma^*L)^{**}$ has a $G$-invariant section $t^{\otimes m}$ which 
descends to a section $\eta$ of $L$.

If we set $V = \sigma(V')$, then notice that $(\eta = 0) = mV$
and so $\cal O(mV) \otimes \widehat{A} \cong \cal O(m\overline{V})$.  
Next, we know that $mV = m\overline{V} \mod \ff J^\ell$ which implies that 
$V = \overline{V} \mod \ff J^{\lfloor \ell/m \rfloor}$.
Choosing $\ell \geq km$ gives our result.
\end{proof}

\begin{lemma}
\label{l_existence_of_cover}
Let $(A, \ff J)$ be a henselian couple where $A$ is Noetherian and let $\widehat{A}$ be the $\ff J$-adic completion.
Suppose moreover that $A$ is excellent.

Let $\overline{V} \subset \Spec {\widehat{A}}$ be a divisor such that $\cal O_{\Spec {\widehat{A}}}(\overline{V})$
is 
$\bb Q$-Cartier away from
$W = V(\ff J)$. 

Then there exists a finite Galois morphism $\sigma\colon \Spec B \rightarrow \Spec A$ such
that if $\widehat{\sigma} \colon \Spec {\widehat{B}} \rightarrow \Spec {\widehat{A}}$
is the completion of this map we have that $(\widehat{\sigma}^*\cal O(\overline{V}))^{**}$
is locally free away from $\widehat{\sigma}^{-1}(W)$.
\end{lemma}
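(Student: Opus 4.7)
The plan is to realize the required cover as a degree $m$ cyclic cover, where $m$ is the Cartier index of $\cal O(\overline V)$ on $\Spec{\widehat A} \setminus W$. Such a cover exists tautologically on $\Spec{\widehat A}$ as the index one cover associated to $\overline V$; the task is to descend this construction to $\Spec A$ using Elkik's approximation theorem.

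First, I would apply Theorem \ref{elkik} to the sheaf $\cal O(m\overline V)$, which is a line bundle on $\Spec{\widehat A} \setminus W$, to obtain a reflexive rank one $A$-module $M$ with $M \otimes_A \widehat A \cong \cal O(m\overline V)$. Since $\Spec A$ is affine, $M$ is globally generated; I would choose a very general global section $s$, whose vanishing locus $D = V(s)$ is a very general ample divisor representing the class of $M$.

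Next, from the pair $(M, s)$ I would build a finite degree $m$ cover $\sigma\colon \Spec B \to \Spec A$ ramified along $D$, mimicking the classical cyclic cover construction in which $B$ would be
\[
\Big(\bigoplus_{i=0}^{m-1} L^{\otimes(-i)}\Big)^{**}
\]
for some $m$-th root $L$ of $M$, with the multiplication determined by $s$ viewed as the map $L^{\otimes(-m)} \to \cal O$. After completion, the tautological element $t \in \widehat B$ corresponding to the inclusion $L^{\otimes(-1)} \hookrightarrow B$ satisfies $t^m = \widehat s$, and hence provides a nowhere-vanishing section of $(\widehat \sigma^* \cal O(\overline V))^{**}$ over $\widehat \sigma^{-1}(\Spec{\widehat A} \setminus W)$, yielding the required local freeness.

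The main obstacle is that $M$ need not admit an $m$-th root in the class group of $\Spec A$, so the cyclic cover cannot be built globally in the standard way. The hint provided by the use of this lemma in the proof of Corollary \ref{sepapprox} suggests a workaround: first pass to an auxiliary finite cover of $\Spec A$, obtained as a cyclic cover ramified along a very general ample divisor disjoint from the relevant bad loci, on which the pullback of $M$ does admit an $m$-th root; construct the desired cyclic cover on this auxiliary base; and then compose the two covers to conclude.
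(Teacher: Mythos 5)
Your opening steps match the paper's: apply Theorem \ref{elkik} to obtain a reflexive rank one $A$-module $L$ (your $M$) with $L\otimes\widehat A \cong \cal O(m\overline V)$, and observe that the required cover ought to have degree $m$. You also correctly identify the central obstacle — that $L$ need not admit an $m$-th root in $\text{Cl}(\Spec A)$, so a global cyclic cover cannot be built in the usual way. However, your proposed workaround has a genuine gap. Passing to an auxiliary cyclic cover ramified along a very general ample divisor gives no control whatsoever over the divisibility of $[\sigma^*L]$ in the class group of the cover; there is no mechanism that forces the pullback of $L$ to acquire an $m$-th root, and in fact producing a cover on which $L$ becomes Cartier (hence has all roots) is precisely the content of the lemma itself, so this route is circular as stated.

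The paper sidesteps the $m$-th-root-of-a-line-bundle problem entirely by working locally. Cover $\Spec A\setminus W$ by finitely many affine opens $U_i$ on which $L$ is trivialized by a section $s_i$, let $\varphi_i$ be the corresponding trivializing rational function, and let $V_i\to U_i$ be the cyclic cover extracting an $m$-th root of $\varphi_i$. Over each $\widehat V_i = V_i\times_X\Spec\widehat A$, the bidual of the pullback of $\cal O(\overline V)$ is locally free, since the $m$-th power has been made an $m$-th power of a unit. One then takes $V=V_1\times_X\cdots\times_X V_n$ and defines $B$ to be the integral closure of $A$ in the function field $K(V)$. Because the resulting $\Spec B\to\Spec A$ dominates each $V_i\to U_i$, the bidual of $\widehat\sigma^*\cal O(\overline V)$ is locally free away from $\widehat\sigma^{-1}(W)$. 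The key structural difference is that on each $U_i$ one only needs an $m$-th root of a rational function (always available after a finite cover), not an $m$-th root of the sheaf $L$ in the global class group; gluing is accomplished by the fibre product rather than by choosing compatible roots.
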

\begin{proof}
Let $X=\Spec A$ and let  $m$ be the Cartier index of $\cal O(\overline{V})$ away from $W$. Let $L$ be a reflexive sheaf
on $X$ such that $L \otimes \widehat{A} \cong \cal O(m\overline{V})$ and whose existence is guaranteed by Theorem \ref{elkik}. 

Let $\{U_i\}_{i = 1, ...n}$ be an open cover of $\Spec A\setminus W$ and let $s_i$ be a global section of 
$L$ such that $s_i\vert_{U_i}$ generates
$L\vert_{U_i}$.
Choose a rational function $\varphi_i$ such that $(\varphi_i)= (s_i\vert_{U_i})$.
 Let $V_i \rightarrow U_i$ be a cover extracting a $m$-th root of $\varphi_i$.  If we let 
$\widehat{U_i} = U_i \times_X \Spec {\widehat{A}}$ and $\widehat{V_i} = V_i \times_X\Spec {\widehat{A}}$
then we see that $\cal O(\overline{V})\vert_{\widehat{V_i}}^{**}$ is locally free.

Let $V = V_1\times_XV_2\times_X ... \times_X V_n$ and let $K(V)$ be the field of functions of $V$ and notice
that $K(V)$ is a finite extension of $K(A)$.
Let $B$ be the integral closure of $A$ in $K(V)$. Note that $B$ is finite over $A$ (e.g. see \cite[Proposition 5.17]{AM69}).  Thus, the natural map  $\sigma \colon \Spec B \rightarrow \Spec A$ 
gives a cover with the desired properties.
\end{proof}

\begin{defn} Let $(A, \ff J)$ be a pair of a normal ring and an ideal. 
Let $\Delta'$ be a $\bb Q$-divisor on $\Spec {{A}}$.
We say that a $\bb Q$-divisor $\Delta$ on $\Spec {{A}}$ {\bf approximates} $\Delta' \mod \ff J^n$
if we can write $\Delta = \sum a_iD_i$ and $\Delta' = \sum a_iD'_i$ where
$D_i= D'_i \mod \ff J^n$.
\end{defn}

\begin{lemma}
\label{approxlc}
Let $(A, \ff J)$ be a pair of a ring and an ideal, 
where $A$ is excellent.
Suppose 
that $(\Spec {{A}}, \Delta)$ is a klt (resp. lc) pair.

Then there exists a positive integer $n_0$ such that if $n \geq n_0$ and 
if $\Delta'$ is an approximation of $\Delta \mod \ff J^n$ such that $K_{\Spec {{A}}}+\Delta'$ is $\bb Q$-Cartier
then $(\Spec {{A}}, \Delta')$ is klt (resp. lc).
\end{lemma}
\begin{proof}
Let $\pi\colon Y \rightarrow \Spec {{A}}$ be a log resolution
of $(\Spec {{A}}, \Delta)$ and let $E = \pi^{-1}(V(\ff J))$ whose existence
is guaranteed by \cite[Theorem 1.1.9 and Theorem 1.1.13]{temkin18}.
Note that $\pi$ is projective. Perhaps passing to a higher resolution we may assume that $E$ is a divisor.

Let $\widehat{D}$ be a component of $\Delta$. We may write 
\[
\pi^{-1}I_{\widehat{D}} \cdot \cal O_{Y} = \cal O_Y(-\widehat{D}'-\sum a_iE_i)
\]
where $\widehat{D}'$
is the strict transform of $\widehat{D}$ and $E_i$ are $\pi$-exceptional and $a_i \geq 0$.
Choose $r$ larger than $a_i$ for all $i$ and for all the components $\widehat{D}$ of $\Delta$.

Next, pick $n_0$ so that
$\ff J^{n_0} \subset \pi_*\cal O_Y(-2rE)$.
Note that in this case 
$\pi^{-1} \ff J^{n_0} \subset \cal O_Y(-2rE)$
and so if $s = t \mod \ff J^{n_0}$
then 
\[
\pi^*s = \pi^*t \mod \cal O_Y(-2rE).
\]

This choice of $r, n_0$ guarantees that if $\widehat{D}$ is a component of 
$\supp\Delta$ and $D$ is an approximation of $\widehat{D}$ mod $\ff J^n$ where $n \geq n_0$
that if we write
$$\pi^{-1}I_{\widehat{D}} \cdot \cal O_{Y} = \cal O_Y(-\widehat{D}'-\sum a_iE_i)$$ 
and
$$\pi^{-1}I_D \cdot \cal O_{Y} = \cal O_Y(-D'-\sum b_iE_i)$$ 
then $a_i = b_i$ and so $D' = \widehat{D}'$ in some infinitessimal neighborhood of $E$.
Thus, if $\Delta'$ is an approximation of 
$\Delta \mod \ff J^n$ for $n \geq n_0$
then $\pi$ is also a log resolution
of $(\Spec {{A}}, \Delta')$.

Furthermore, if we let $\pi_*^{-1}\Delta$ (respectively $\pi_*^{-1}\Delta')$ be the strict transform of $\Delta$ (respectively $\Delta'$)
we see that $\pi_*^{-1}\Delta\vert_E = \pi_*^{-1}\Delta'\vert_E$ which implies that
the two are $\pi$-numerically equivalent and hence the discrepancies
of $(\Spec {{A}}, \Delta)$ 
and $(\Spec {{A}}, \Delta')$ are the same
and the result follows.
\end{proof}



\section{Approximating formal separatrices}
\label{s_approx2}

In this section we work in the following set up:

Let $X$ be a $\bb Q$-factorial and klt quasi-projective threefold. Let $\cal F$ be a co-rank one foliation on $X$ with non-dicritical singularities and let $(\cal F, \Delta)$ be a F-dlt foliated pair.
Let $f\colon X \rightarrow Z$
be a birational contraction and
$p \in f(\exc f)$ be a closed point. We assume that $D \coloneqq f^{-1}(p)$ is tangent to $\cal F$.
Suppose moreover that $Z$ is klt away from finitely many points.  As we will see, this will always be the case for birational contractions
arising in the course of the MMP.

Let $\hat{f}\colon  \widehat{X} \rightarrow \widehat{Z}$ be the completion of $f$ along the fibre
$f^{-1}(p)$, and let $\widehat{\cal F}$ be the formal foliation.

\begin{remark} \label{r_sep}
Observe that by Lemma \ref{basicpropertyfdlt} and Lemma \ref{l_fdlt-terminal} 
if $D$ is a curve and $D_i$ is component of $D$ then 
$D_i$ is contained in a (possibly formal) $\widehat{\cal F}$-invariant divisor.
\end{remark}

\begin{lemma}\label{l_approx_etale}
Let $\widehat{S} \subset \widehat{X}$ be any irreducible formal divisor not contained in $\exc f$.
Fix an integer $n>0$.

Then there exists an \'etale morphism $\sigma\colon Z' \rightarrow Z$
and a divisor $S'$ on $X' = X \times_Z Z'$
such that if $\tau\colon X' \rightarrow X$ is the projection, $f'\colon X' \rightarrow Z'$ the induced morphism,
$D' = f'^{-1}(\sigma^{-1}(p))$ and $\widehat{\tau}\colon \widehat {X'}\to \widehat X$ is the completion of $\tau$ along $D'$, 
then $\widehat{S'} = \widehat{\tau}^*\widehat{S}$ mod $I_{D'}^n$, where $\widehat{S'}$ is the restriction of $S'$ to $\widehat{X'}$.
\end{lemma}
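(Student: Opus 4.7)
The plan is to henselise $Z$ at $p$, apply Corollary \ref{sepapprox} (after reducing from the projective to the affine setting) to approximate $\widehat S$ by an honest divisor on the resulting base change, and then use the description of the henselisation as a cofiltered limit of pointed étale neighborhoods to descend this divisor to a finite-stage étale $Z' \to Z$.

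More precisely, let $A = \cal O_{Z,p}^h$ be the henselisation of the local ring of $Z$ at $p$ with respect to its maximal ideal $\ff m$, so that $(A, \ff m A)$ is a henselian couple whose $\ff m$-adic completion is $\widehat A = \widehat{\cal O}_{Z,p}$. Since $f$ is proper, base change along $\Spec A \to Z$ produces $X_A = X \times_Z \Spec A$, whose formal completion along the central fibre $D_A$ is canonically identified with $\widehat X$; and for any étale neighborhood $\sigma\colon Z' \to Z$ of $p$ (choosing a point over $p$ with trivial residue-field extension) the completion of $X' = X \times_Z Z'$ along $D'$ is likewise canonically identified with $\widehat X$ via $\widehat\tau$. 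Thus the problem reduces to producing a divisor $S_A$ on $X_A$ whose formal completion agrees with $\widehat S$ modulo $I_{D_A}^n$ and then descending it.

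The main obstacle is this approximation step: Corollary \ref{sepapprox} is stated in the affine setting, whereas $X_A$ is only projective over $\Spec A$. To bridge this gap I would cover $X_A$ by finitely many affine opens $\{U_i\}$ and apply Corollary \ref{sepapprox} to each henselian couple $(\cal O(U_i), \ff m \cdot \cal O(U_i))$, obtaining local divisors $V_i$ on $U_i$ agreeing with $\widehat S\vert_{\widehat U_i}$ modulo $I_{D_A}^{n'}$ for any preassigned $n'$. Taking $n' \gg n$ and using Theorem \ref{elkik} to approximate the transition data between the local Cartier presentations on the overlaps $U_i \cap U_j$, the $V_i$ can then be adjusted so that they glue to a global divisor $S_A$ on $X_A$ congruent to $\widehat S$ modulo $I_{D_A}^n$. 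The $\bb Q$-Cartier hypothesis of Corollary \ref{sepapprox} holds because $X$ is $\bb Q$-factorial and $f$ is an isomorphism off the central fibre.

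Finally, writing $\Spec A = \varprojlim_\alpha Z_\alpha$ as the cofiltered limit of pointed étale neighborhoods of $(Z, p)$ yields $X_A = \varprojlim_\alpha X \times_Z Z_\alpha$, and by Noetherianity the coherent ideal sheaf of $S_A$ is pulled back from $X_{\alpha_0} = X \times_Z Z_{\alpha_0}$ for some $\alpha_0$. Setting $Z' = Z_{\alpha_0}$ and letting $S'$ be the descended divisor on $X'$ gives the required approximation, with $S' = \widehat\tau^*\widehat S$ modulo $I_{D'}^n$ being immediate from the canonical identification of the completion of $X'$ along $D'$ with $\widehat X$.
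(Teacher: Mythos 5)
Your opening moves — reduce to affine $Z$, henselise at $p$, invoke Grothendieck existence to extend $\widehat S$ across the central fibre, and plan to descend through the colimit presentation of the henselisation — match the paper's proof exactly. The divergence is in how you actually produce a divisor over the henselisation, and that is where the argument breaks down.

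You propose to cover $X_A$ by affine opens $U_i$ and apply Corollary \ref{sepapprox} to the pairs $(\cal O(U_i), \ff m\cal O(U_i))$. But these pairs are \emph{not} henselian couples: the ideal $\ff m\cal O(U_i)$ is generally not contained in the Jacobson radical of $\cal O(U_i)$. For instance, with $A$ the henselisation of $\bb C[t]$ at the origin, $U = \bb A^1_A = \Spec A[x]$, the ideal $(t)\subset A[x]$ is avoided by the maximal ideal $(t-1,x)$, so $(t)$ is not in the Jacobson radical of $A[x]$ and the couple is not henselian. Thus Corollary \ref{sepapprox}, which requires a henselian couple, does not apply on the charts as you use it. Even if one henselised each $(\cal O(U_i), \ff m\cal O(U_i))$ to repair this, the resulting schemes would no longer be open subschemes of a single finite-type $A$-scheme and the descent step at the end would collapse. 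Moreover, the gluing step you sketch — adjusting the $V_i$ so they agree on overlaps using Theorem \ref{elkik} on the ``transition data'' — is a genuine nonlinear patching problem (local approximations are completely free to disagree away from $D_A$), and approximating modules one at a time does not give you compatible choices; you would need a separate Artin-style patching argument, which is not supplied.

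The paper sidesteps both problems by exploiting that $f$ is a \emph{birational} contraction. After extending $\widehat S$ to a divisor $\widetilde S$ on $\widetilde X = X\times_Z\Spec{\widehat A}$ via Grothendieck existence, it pushes forward along the proper morphism $\widetilde f\colon \widetilde X\to\Spec{\widehat A}$ to obtain a divisor $\overline V=\widetilde f_*\widetilde S$ on the \emph{affine} scheme $\Spec{\widehat A}$. Since $(A,\ff m)$ is a bona fide henselian couple (indeed a henselian local ring) with completion $\widehat A$, Corollary \ref{sepapprox} applies directly and globally to give $V$ on $\Spec A$ approximating $\overline V$. One then descends $V$ to some étale $\Spec B'\to Z$ exactly as you propose, and finally takes $S'$ to be the \emph{strict transform} of $V'$ on $X'$, recovering the approximation upstairs. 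No covering, no gluing, and the henselian hypothesis holds where it is needed. If you want to repair your argument, this pushforward-then-strict-transform detour through the base is the ingredient you are missing.
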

\begin{proof} We may assume that $Z=\Spec B$ is affine. 
Let $(A, \ff m)$ be the henselisation of $B$ at $p$, and let 
$\widehat{A}$ be the formal completion of $B$ at $p$.
Let $\widetilde{X} = X \times_{\Spec B} \Spec {\widehat{A}}$.
By the Grothendieck existence theorem there exists a divisor $\widetilde{S}$
on $\widetilde{X}$ such that $\widetilde{S}\vert_{\widehat{X}} = \widehat{S}$.
%

Let $\widetilde f\colon \widetilde X\to \Spec {\widehat A}$ be the induced morphism. By the proper mapping theorem, we have that 
$\overline{V}:= \widetilde f_*\widetilde{S}$ is a divisor on $\Spec {\widehat{A}}$.
We claim that $\overline{V}$ is a $\bb Q$-Cartier divisor on $\Spec {\widehat A} \setminus p$.
Indeed, by assumption $Z$ is klt away from finitely many points.  By \cite[Proposition 9.4]{GKKP11} a
klt variety has quotient singularities in codimension two, and so it follows that, away from finitely many points, $Z$
has quotient singularities.  In particular,  it follows that $\Spec {\widehat{A}}\setminus \{p\}$
is $\bb Q$-factorial and so $\overline{V}$ is $\bb Q$-Cartier.

Let $\widetilde D=\widetilde f^{-1}(p)$. 
Pick a positive integer $k$ large enough so that $\ff m^k\subset \widetilde{f}_*I_{\widetilde D}^{n+n'}$
where $n'$ is a sufficiently large positive integer so that $\cal O(-\overline{V})$ is not contained in $\widetilde f_*I_{\widetilde{D}}^{n'}$.
By Corollary \ref{sepapprox}, there exists a divisor $V$ on $\Spec A$ 
which agrees with $\overline{V}$ mod $\ff m^k$.
Recall
\[
A = \varinjlim_{(\Spec {B'}, q) \rightarrow (\Spec B, p)} B'
\] 
where we run over \'etale morphisms $(\Spec {B'}, q) \rightarrow (\Spec B, p)$
sending $q$ to $p$.
Thus, we see that there exists some \'etale cover $\Spec {B'} \rightarrow \Spec B$ and a
divisor $V'$ on $\Spec {B'}$ which agrees with $V$ when pulled back to $\Spec A$.

Let $S'$ be the strict transform of $V'$ on $X'$ 
and let $\widehat{S'}$ be the restriction of $S'$ to $\widehat{X'}$. Then
we have that $\widehat{S'} = \hat{\tau}^*\widehat{S} \mod I_{D'}^n$, as required.
\end{proof}

\begin{lemma}
Notation as above.
Let $\widehat{S}_1, ..., \widehat{S}_k$ be any collection of distinct irreducible
$\widehat{\cal F}$-invariant divisors and suppose that $\widehat{S}_1, ..., \widehat{S}_k$ are $\bb Q$-Cartier
and such that $\widehat{S}_i$ is not contained in $\exc f$ for any $i$.
Let $X'$ and $S'_1, ..., S'_k$ be as in Lemma \ref{l_approx_etale}, where $S'_i$ is an approximation of  
$\widehat{S}_i$ mod $I_{D'}^n$.  

Then for $n$ large enough and perhaps shrinking $X'$ to a smaller neighborhood of $D'$ we have $(X', \sum S'_i)$ is log canonical.
\end{lemma}
\begin{proof}
We first claim that $S'_i$ is $\bb Q$-Cartier. Indeed, by construction we have an isomorphism
$\hat{f}_*\cal O_{\widehat{X}}(\widehat{S}_i) \cong \hat{f}_*\cal O_{\widehat{X}}(S'_i)$
of reflexive sheaves on $\widehat{Z}$.  Since $\cal O_{\widehat{X}}(\widehat{S}_i)$ and 
$\cal O_{\widehat{X}}(S'_i)$ are reflexive and $\widehat{S}_i$ and $S'_i$ do not contain
any $f$-exceptional divisors in their support it follows that this isomorphism on $Z$
gives an isomorphism $\cal O_{\widehat{X}}(\widehat{S}_i) \cong \cal O_{\widehat{X}}(S'_i)$ on $\widehat{X}$.  
In particular, $S'_i$ is $\bb Q$-Cartier
because $\widehat{S}_i$ is.

The result then follows by combining
Lemma \ref{approxlc} and Lemma \ref{singcomparison}.
\end{proof}


\section{Constructing the flip}
\label{s_flip}

\subsection{Set up}
\label{s_flip_construction}
Through out this section, we assume that
$X$ is a $\bb Q$-factorial klt quasi-projective threefold,
$\cal F$ is a co-rank one foliation on $X$ with non-dicritical singularities 
and
$(\cal F, \Delta)$ is F-dlt foliated pair. 

\medskip

The goal of this section is to show that 
if $f\colon  X \rightarrow Z$ is a   flipping contraction associated to  a
$(K_{\cal F}+\Delta)$-negative
extremal ray $R$, then the $(K_{\cal F}+\Delta)$-flip exists (cf. Section \ref{s_ample_model}).
The basic idea is to reduce the $(K_{\cal F}+\Delta)$-flip to a $(K_X+\widetilde{\Delta})$-flip
for some klt pair $(X, \widetilde{\Delta})$. 

Recall that in \cite[Lemma 8.21]{Spicer17} it was proven that the flipping contraction
exists in the category of algebraic spaces.  Observe that the proof given there only requires
that $(\cal F, \Delta)$ has non-dicritical and log canonical singularities and so works for F-dlt foliated pairs
with non-dicritical singularities.

Our goal here is to show that 
\begin{enumerate}
\item[(a)] if $X$ is projective then also $Z$ is projective and $\rho(X/Y)=1$. 
\item[(b)] the flip exists. 
\end{enumerate}
By Remark \ref{r_fdlt=lc} and Lemma \ref{l_curvesinrayaretangent}, it follows that  $\exc f$ is tangent to the foliation.

\medskip 

The following  result will be needed to address (a) and (b) above:

\begin{lemma}\label{l_rounddown}
 Without loss of generality, we may assume that
$\lfloor \Delta \rfloor = 0$.
\end{lemma}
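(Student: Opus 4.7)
The plan is to exhibit a slight perturbation $\Delta'$ of $\Delta$ with $\lfloor \Delta'\rfloor=0$, for which $f$ remains a flipping contraction, and such that the $(K_{\cal F}+\Delta')$-flip will automatically be the $(K_{\cal F}+\Delta)$-flip. This reduces the general F-dlt case to the case where the boundary has no integral part, which is what the rest of this section will treat.

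Set $S:=\lfloor \Delta\rfloor$ and, for a small $\delta>0$ to be chosen, put $\Delta':=\Delta-\delta S$. Then $\lfloor \Delta'\rfloor=0$: the components of $S$ acquire coefficient $1-\delta\in(0,1)$ in $\Delta'$, while every other component of $\Delta$ already had coefficient in $[0,1)$. Since $\Delta'\le\Delta$, any foliated log resolution of $(\cal F,\Delta)$ also resolves $(\cal F,\Delta')$, with discrepancies that can only increase, so $(\cal F,\Delta')$ is again F-dlt. Because $X$ is $\bb Q$-factorial, $S$ is $\bb Q$-Cartier; letting $C$ generate the flipping extremal ray we have $(K_{\cal F}+\Delta)\cdot C<0$, and so
\[
(K_{\cal F}+\Delta')\cdot C \;=\; (K_{\cal F}+\Delta)\cdot C - \delta\, S\cdot C \;<\; 0
\]
for $\delta$ small enough. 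As $\rho(X/Z)=1$ for the flipping contraction, this shows that $f$ is also a $(K_{\cal F}+\Delta')$-flipping contraction.

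Suppose now that the $(K_{\cal F}+\Delta')$-flip $f^+\colon X^+\to Z$ exists. It remains to verify that the same $f^+$ realises the $(K_{\cal F}+\Delta)$-flip, for which only the $f^+$-ampleness of $K_{\cal F^+}+\Delta^+$ has to be checked. Using $\rho(X/Z)=1$ there is a constant $\gamma>0$, explicitly $\gamma=((K_{\cal F}+\Delta)\cdot C)/((K_{\cal F}+\Delta')\cdot C)$, with
\[
K_{\cal F}+\Delta \;\equiv_Z\; \gamma(K_{\cal F}+\Delta') \qquad \text{on } X.
\]
Since $f$ and $f^+$ are small birational maps between $\bb Q$-factorial threefolds with $\rho(\cdot/Z)=1$, strict transform identifies $N^1(X/Z)$ with $N^1(X^+/Z)$ in a manner compatible with intersections against the extremal rays, so this equivalence transfers to $K_{\cal F^+}+\Delta^+\equiv_Z \gamma(K_{\cal F^+}+(\Delta')^+)$ on $X^+$. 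The right-hand side is $f^+$-ample by construction of the flip and $\gamma>0$, hence $K_{\cal F^+}+\Delta^+$ is $f^+$-ample, as required.

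The one step that deserves attention is the transfer of numerical equivalence across the flip, namely the identification $N^1(X/Z)\cong N^1(X^+/Z)$; however, this is a standard feature of small birational maps between $\bb Q$-factorial varieties with relative Picard number one, since any two such models agree in codimension one and hence carry canonically identified divisor classes which intertwine intersections with the flipping and flipped curves in the expected way.
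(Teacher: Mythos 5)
Your approach is in essence the same as the paper's: replace $\Delta$ by a small perturbation with trivial round-down, argue that the perturbed pair is still F-dlt and that $f$ remains a flipping contraction for the perturbed divisor, and then claim the two flips coincide. The paper uses $\Delta \rightsquigarrow (1-\epsilon)\Delta$, you use $\Delta\rightsquigarrow\Delta-\delta\lfloor\Delta\rfloor$; both work and the difference is cosmetic. Your verification that the perturbed pair is F-dlt and that the contraction remains negative on the ray is correct and is a useful unpacking of the paper's one-line remark.

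The place where you should be more careful is precisely the step you yourself flag at the end, namely the transfer of $\equiv_Z$ from $X$ to $X^+$. You invoke $\rho(X/Z)=1$ twice. The first use, in asserting that $f$ is still a $(K_{\cal F}+\Delta')$-flipping contraction, is not needed: $f$ is the contraction of the extremal ray $R$, so every $f$-contracted curve lies in $R$, and negativity of $K_{\cal F}+\Delta'$ on a generator of $R$ already gives relative anti-ampleness. The second use, however, is essential — and it is precisely at this point that the paper explicitly warns, immediately before the Set-up, that projectivity of $Z$ and the statement $\rho(X/Z)=1$ were \emph{not} established in the earlier work and are only proved later in the section (Lemma~\ref{baseofflipisprojective}). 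What $\rho(X/Z)=1$ really buys is the descent statement: any $f$-numerically trivial $\bb Q$-Cartier divisor on $X$ is of the form $f^*M$. That is the content you need, because then $(K_{\cal F}+\Delta)-\gamma(K_{\cal F}+\Delta')=f^*M$, its strict transform on $X^+$ is $(f^+)^*M$, and the $f^+$-ampleness of $K_{\cal F^+}+\Delta^+$ follows. Your appeal to ``strict transform identifies $N^1(X/Z)$ with $N^1(X^+/Z)$'' hides exactly this descent fact and is not a substitute for it. So the argument is correct, but only once you make the logical order explicit: one first proves Theorem~\ref{flipsexist} (including $\rho(X/Z)=1$) for the perturbed pair $(\cal F,\Delta')$ with $\lfloor\Delta'\rfloor=0$, and only afterwards applies the numerical-proportionality argument to conclude the $(K_{\cal F}+\Delta)$-flip exists. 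As written, your proof reads as if $\rho(X/Z)=1$ were an ambient hypothesis, which the paper is careful to point out is not the case.
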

\begin{proof}
The $(K_{\cal F}+\Delta)$-flip is the $(K_{\cal F}+(1-\epsilon)\Delta)$-flip
for $\epsilon>0$ sufficiently small.
\end{proof}

The following  result will be needed to address  (b) above:

\begin{lemma}
\label{etaleflip}
Let $f\colon  X \rightarrow Y$ be a small contraction between algebraic spaces.
Let $D$ be a $\bb Q$-Cartier divisor on $X$ such that $-D$ is $f$-ample.  Let $\{U_i \rightarrow Y\}_{i\in I}$ be an \'{e}tale
cover of $Y$.
Suppose that, for each $X_i = X \times_Y U_i \rightarrow U_i$, the $D|_{X_i}$-flip exists.

Then the $D$-flip exists.
\end{lemma}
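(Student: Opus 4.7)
The plan is to construct the flip by étale descent, using uniqueness of flips on overlaps to glue the flips $f_i^+\colon X_i^+ \to U_i$ into a morphism $f^+\colon X^+ \to Y$. Throughout I will use that a flip, when it exists, is unique: it is characterised as the unique small projective morphism $f^+\colon X^+\to Y$ such that the strict transform of $D$ is $\bb Q$-Cartier and $f^+$-ample, and this characterisation is compatible with flat base change.

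First, for each $i$ let $f_i^+\colon X_i^+\to U_i$ be the $D|_{X_i}$-flip, which exists by hypothesis. Set $U_{ij}:=U_i\times_Y U_j$, $U_{ijk}:=U_i\times_YU_j\times_YU_k$, and form the fibre products $X_{ij}:=X\times_Y U_{ij}$ and $X_{ijk}:=X\times_Y U_{ijk}$, each endowed with the induced small contraction to the corresponding base. Both $X_i^+\times_{U_i}U_{ij}$ and $X_j^+\times_{U_j}U_{ij}$ are $D|_{X_{ij}}$-flips over $U_{ij}$: smallness, $\bb Q$-Cartierness of the proper transform, and relative ampleness are all preserved under flat base change. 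By uniqueness of flips, they are canonically isomorphic over $X_{ij}$; denote this isomorphism $\varphi_{ij}$. The analogous argument on triple overlaps shows the $\varphi_{ij}$ satisfy the cocycle condition $\varphi_{jk}\circ\varphi_{ij}=\varphi_{ik}$ on $U_{ijk}$.

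By effective descent for algebraic spaces along étale covers, the descent datum $(\{X_i^+\},\{\varphi_{ij}\})$ defines an algebraic space $X^+$ together with a morphism $f^+\colon X^+\to Y$ extending the $f_i^+$, and the birational maps $X_i\dashrightarrow X_i^+$ glue to a birational map $X\dashrightarrow X^+$ over $Y$. Similarly, the strict transforms $D_i^+$ on $X_i^+$ patch, via the $\varphi_{ij}$, to a $\bb Q$-Cartier divisor $D^+$ on $X^+$ (being $\bb Q$-Cartier is étale local). It remains to verify that $f^+$ is the $D$-flip. Smallness, properness, and the property of being an isomorphism in codimension one are all étale local on the target, so they follow from the corresponding properties of the $f_i^+$. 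Finally, relative ampleness of $D^+$ with respect to $f^+$ can be checked after the étale base changes $\{U_i\to Y\}$ (using, e.g., the cohomological criterion for relative ampleness applied affine-locally on $Y$ after pulling back to each $U_i$), and there it holds by construction.

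The only real subtlety is verifying effective étale descent in the category of algebraic spaces together with the descent of relative ampleness; once this is in place, uniqueness of the flip makes the gluing automatic. An alternative route, if one prefers to avoid invoking descent for algebraic spaces directly, is to consider the graded $\cal O_Y$-algebra $\cal R:=\bigoplus_{m\ge 0}f_*\cal O_X(\lfloor mD\rfloor)$, observe via flat base change that its pullback to each $U_i$ is the analogous algebra for $f_i$, hence finitely generated by the existence of the flip on $U_i$, and then deduce finite generation of $\cal R$ from fpqc descent of finite generation; one then sets $X^+:=\mathrm{Proj}_Y\cal R$ and checks the flip axioms étale locally as above.
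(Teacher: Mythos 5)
Your ``alternative route'' is exactly the argument the paper uses: the flip exists if and only if $\bigoplus_m f_*\cal O_X(mD)$ is a finitely generated $\cal O_Y$-algebra, and finite generation is an \'etale-local (indeed fpqc-local) property, so it descends from the $U_i$. That two-line reduction is the whole proof in the paper, and your version of it is correct.

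Your primary route --- glue the flipped spaces $X_i^+$ directly by effective \'etale descent --- is a genuinely different, more geometric argument, and it also works, but it is carrying more machinery than the problem needs. You have to (i) invoke uniqueness of the flip and its compatibility with flat base change to get canonical isomorphisms $\varphi_{ij}$ on overlaps and the cocycle condition on triple overlaps, (ii) appeal to effective descent for algebraic spaces, and (iii) separately descend the strict transform $D^+$ and verify relative ampleness via fppf descent of ampleness. None of these steps is incorrect, but each is a nontrivial input, whereas the algebra-finiteness formulation packages all of them at once into a single well-known descent statement about quasi-coherent sheaves of algebras. What your descent argument buys you is that it never mentions the canonical algebra explicitly and would carry over to situations where the flip is characterised directly by the universal property rather than as a $\mathrm{Proj}$; what the paper's approach buys you is brevity and the fact that it only needs descent for a property of a single quasi-coherent sheaf (finite type), avoiding the need to glue spaces and to descend ampleness separately. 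In a paper where the MMP statements are already phrased in terms of finite generation, the latter is the natural choice.
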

\begin{proof}
The existence of the flip is equivalent to the $\cal O_Y$-algebra
$$\bigoplus_{m \geq 0} f_*\cal O_X(mD)$$ being finitely generated, where $m$ is taken to be sufficiently divisble.
Finite generation of an algebra can be checked \'etale locally and the result follows.
\end{proof}

\medskip 

We briefly sketch the remainder of the goals for this subsection.  First, we  show that, after possibly replacing $X$ by an appropriate small modification, 
 we may find a formal divisor $\widehat{F}$ on $\widehat{X}$, the formal completion of $X$ along $\exc f$,
such that $K_{\widehat{X}}+\widehat{F}$ is numerically equivalent to $K_{\widehat{\cal F}}$ and such that $(\widehat{X}, \widehat{F})$
has log canonical singularities.  
Indeed, we will show that if $\widehat{S}$ is the sum of all the $\cal F$-invariant divisors
meeting $\exc f$ then $K_{\widehat{\cal F}}-(K_{\widehat{X}}+\widehat{S})$ is $f$-nef.  Since $\exc f$ is a curve, 
it is then easy to find an effective divisor $D$ on some analytic neighborhood of $\exc f$ such that $D$ restricted to $\widehat{X}$ is numerically equivalent
to $K_{\widehat{\cal F}}-(K_{\widehat{X}}+\widehat{S})$.  Then $\widehat{F} = \widehat{S}+D$ will be our desired divisor.
Second, we will apply our version of Elkik approximation theorem to show 
that we may approximate $\widehat{F}$ on some \'etale neighborhood $X' \rightarrow X$ of $\exc f$
by a divisor $F'$.  We will then, in the next subsection, construct the foliated flip as a $(K_{X'}+F')$-flip.

\medskip

We now proceed with the construction outlined above. Let $S_1,\dots,S_{\ell}$ be 
the collection of all the $\cal F$-invariant divisors (formal or otherwise) on $X$ meeting at least one of the 
curves contracted by $f$, whose existence is guaranteed 
by Remark \ref{r_sep}.  We remark that by Lemma \ref{l_formalseparatrix} we may extend 
each $S_i$ to a formal divisor on $\widehat{X}$.
We emphasise that each $f$-exceptional curve is contained in one such divisor.
By Lemma \ref{l_approx_etale}, we may find a diagram

\begin{center}
\begin{tikzcd}
X' \arrow{d}{f'} \arrow{r}{\tau} & X \arrow{d}{f} \\
Z' \arrow{r}{\sigma}& Z \\
\end{tikzcd}
\end{center}
where $\sigma\colon Z' \rightarrow Z$ is \'etale
and divisors $S'_k$ on $X'$ which 
approximate the $S_k$ to some arbitrarily high (but fixed) order. Let $\mathcal F'$ be the transformed foliation on $X'$. 

Note that $X'$ is klt since $X$ is. Let $g\colon Y \rightarrow X'$ be a small 
$\bb Q$-factorialisation of $X'$ and let $\cal F_Y$ the pulled back foliation on $Y$.  Then $g^*K_{\cal F'} = K_{\cal F_Y}$ and
$g^*K_{X'} =K_Y$.
Let $h = f'\circ g\colon Y \rightarrow Z'$ be the composition and write
$\exc h = C = \cup C_i$.

Let $U$ be a small analytic
neighbourhood around $C$. We claim that, for each $i$, we can find $\bb Q$-divisors $D_{i1}, \dots, D_{im_i}\ge 0$ on $U$ 
such that $D_{ik}\cdot C_j = \delta_{ij}$ for each $k=1,\dots,m_i$, where $m_i \geq 0$ is an integer whose precise value will be determined later.
Indeed, let $A$ be a general sufficiently ample Cartier divisor on $Y$ so that
$A \cap C = \{P_{ij}\}$ is a finite collection of points where $P_{ij} \in C_i$ and $P_{ij} \notin C_k$ for $k \neq i$.
Then, on a sufficiently small analytic neighborhood $U$ of $C$ 
we may write $A\cap U = \sum A_{ij}$ where $A_{ij}\cap C = P_{ij}$
and $A_{ij} \cap A_{i'j'} = \emptyset$ for $(i, j) \neq (i', j')$.
Taking $D_{i1}, ..., D_{im_i}$ to be some of the $A_{ij}$ as $A$ varies across ample divisors will then have our desired properties

We remark that if we write $g^*(K_{\cal F'}+\Delta') = K_{\cal F_Y}+\Delta_Y$
then by Lemma \ref{lem_fdlt_small_mod}, we have that $(\cal F_Y, \Delta_Y)$ is F-dlt.
Moreover, since $\cal F'$ has non-dicritical singularities it follows
that each $g$-exceptional curve is tangent to $\cal F_Y$.

Let $\widetilde S_k$ be the strict transform of $S'_k$ on $Y$ and note that $\widetilde{S}_k$ is a $\bb Q$-Cartier divisor.
Let $\widehat{Y}$ be the formal completion of $Y$ along $\exc h$, and notice that we have a morphism
$\hat{g}\colon \widehat{Y} \rightarrow \widehat{X}$. Let $T_k$ be the strict transform of $S_k$ on $\widehat{Y}$.
We claim that $T_k$ is a $\bb Q$-Cartier divisor. Observe that we have an isomorphism of sheaves
$\cal O_{\widehat{Y}}(T_k) \cong
\cal O_{\widehat{Y}}(\widetilde{S}_k)$:  indeed, by item (1) of Corollary \ref{sepapprox} we know that
if $\hat{h}$ is the restriction of $h$ to $\widehat{Y}$ then
$\hat{h}_*\cal O_{\widehat{Y}}(T_k) \cong
\hat{h}_*\cal O_{\widehat{Y}}(\widetilde{S}_k)$ and since $\exc h$ is of codimension $\geq 2$ this gives an
isomorphism of reflexive sheaves on $\widehat{Y}$.
Since $\widetilde{S}_k$ is $\bb Q$-Cartier this implies that $T_k$ is $\bb Q$-Cartier as well.


For any $T_j$ which is convergent and contains one of the $C_i$ by Corollary \ref{basicpropertyfdlt2}  we may write
$(K_{\widehat{Y}}+\sum T_k)\vert_{T_{j}}= K_{T_{j}}+\Theta_{j}$
and $K_{\cal F_Y}\vert_{T_{j}} = K_{T_{j}} +\Delta_{j}$ 
where $\Theta_{j}\geq \Delta_{j}$.

Now fix $i$ and consider $C_i$.  We consider two possibilities as in Lemma \ref{l_fdlt-terminal}.
First, suppose that $\cal F_Y$ is terminal at the generic point of $C_i$.
In this case there exists a single $k_i$ such that $C_i \subset T_{k_i}$ and moreover $T_{k_i}$ is convergent.
Again, by Corollary \ref{basicpropertyfdlt2} we know that $\Theta_{k_i}$ and
$\Delta_{k_i}$ have the same coefficient along $C_i$.

Now suppose that $\cal F_Y$ is canonical (but not terminal) at the generic point of $C_i$.
In particular, Lemma \ref{l_fdlt-logsmooth} implies that $\cal F_Y$ has simple singularities at a general point of $C_i$.
Let $q \in C_i$ be a general point and let $S$ be the strong separatrix of $\cal F_Y$ at $q$, recall that $S$ is convergent.
Then by Lemma \ref{l_formalseparatrix} this separatrix may be extended to a convergent divisor containing $C_i$,
which is therefore one of the $T_k$.  Let $T_{k_i}$ be this divisor.
Again, we may apply Corollary \ref{basicpropertyfdlt2} to see that $\Theta_{k_i}$ and
$\Delta_{k_i}$ have the same coefficient along $C_i$.

It follows that
\[
(K_{\widehat{Y}}+\sum T_k)\cdot C_i \leq K_{\cal F_Y}\cdot C_i,
\]
and so 
\[
(K_Y+\sum\widetilde{S}_k)\cdot C_i \leq K_{\cal F_Y}\cdot C_i.
\]

By Lemma \ref{l_approx_etale}, we may find a diagram

\begin{center}
\begin{tikzcd}
Y' \arrow{d}{h'} \arrow{r}{\tau'} & Y \arrow{d}{h} \\
Z'' \arrow{r}{\sigma'} & Z' \\
\end{tikzcd}
\end{center}
where $\sigma'\colon Z'' \rightarrow Z'$ is \'etale
and divisors $D'_{ik}$ on $Y'$ such that $D'_{ik}$
approximate  $D_{ik}$ to some arbitrarily high (but fixed) order.  Notice that, as in the proof of the fact that $T_k$ is $\bb Q$-Cartier, we have that 
 $D'_{ik}$ is  $\bb Q$-Cartier.
Observe also that if $C_i' \subset Y'$ is an $h'$-exceptional curve such that $\tau'(C_i') = C_i$ then we
still have $C_i'\cdot D'_{jk} = \delta_{ij}$.

Notice that if $\widetilde S_k'$ is a surface on $Y'$ such that $\tau'(\widetilde S'_k)=\widetilde S_k$,
 then we still have the inequality
\[
(K_{Y'}+\sum \widetilde S'_k)\cdot C' \leq K_{\cal F_{Y'}}\cdot C'
\]
for any $h'$-exceptional curve $C'$.
Thus, if we take 
\[
a_i = (K_{\cal F_{Y'}} - (K_{Y'}+\sum \widetilde S'_k))\cdot C'_i \geq 0
\]
 where $C'_i$ is a curve such that $\tau'(C'_i) = C_i$,  then we have 
that 
\[
K_{Y'}+\sum\widetilde S'_k+\sum \frac{a_i}{m_i}D'_{ik}\equiv_{h'}K_{\cal F_{Y'}}.
\]

Let $\Delta' = \tau'^*g^*\tau^*\Delta$.
Notice that $(\cal F_{Y'}, \Delta')$ is F-dlt and the $\widetilde S'_k$ are smooth in codimension one, and so 
Lemma \ref{singcomparison} and Lemma \ref{approxlc} imply that (perhaps replacing $Z''$ be an open neighborhood of $h'((\tau')^{-1}(C))$)
$(Y', \Delta'+\sum S'_k)$ is lc and $(Y', \Delta'+(1-\epsilon)\sum S'_k)$ is klt for $0<\epsilon\le 1$. So
by choosing the $D_{ik}$ general enough and $m_i$ large enough so that $a_i \leq m_i$
we may assume that
if $A = \sum \widetilde S'_k +\sum \frac{a_i}{m_i}D'_{ik}$,
then $(Y', \Delta'+A)$ is lc and $(Y', \Delta+(1-\epsilon)A)$ is klt.

Since $X \rightarrow Z$ contracts only a single extremal ray
and $-(K_{\cal F}+\Delta)$ is relatively ample, there exists  $\lambda \in \bb Q$ such that 
$\lambda (K_{\cal F}+\Delta)\equiv_f (K_X+\Delta)$.
Since $g$ is small and $\tau$ and $\tau'$ are \'etale, it follows that 
$\lambda (K_{\cal F_{Y'}}+\Delta') \equiv_{h'} (K_{Y'}+\Delta')$, where $\cal F_{Y'}$ is the transformed foliation on $Y'$.
Since $A \equiv_{h'} (K_{\cal F_{Y'}}+\Delta') - (K_{Y'}+\Delta')$, we have
\[
A \equiv_{h'} \mu (K_{\cal F_{Y'}}+\Delta')
\] 
for $\mu = 1-\lambda$.

\subsection{Existence of the flip}
Below, we use the same notation as in the previous subsection.

\begin{lemma}
\label{baseofflipisprojective}
Set up as above.

Then the following hold:
\begin{enumerate}
\item$\rho(X/Z) = 1$; and

\item if, in addition,  $X$ is projective,  then also $Z$ is projective.
\end{enumerate}
\end{lemma}
\begin{proof} 
Let $D$ be a $\bb Q$-Cartier divisor on $X$ which is $f$-numerically trivial.
In order to show that $\rho(X/Z)=1$, it is enough to show that there exists a $\bb Q$-Cartier divisor $M$ on $Z$ such that $f^*M = D$. Indeed, since $f\colon X\to Z$ is a flipping contraction associated to an extremal ray $R$, the claim implies that  the sequence
\[
0\longrightarrow \text{Pic}(Z)\otimes \bb Q\stackrel{f^*}{\longrightarrow}\text{Pic}(X)\otimes \bb Q\longrightarrow \bb Q\longrightarrow 0
\]
is exact, where the second to last arrow is given by $D \mapsto D\cdot \xi$ where $\xi$ is a fixed curve such that $[\xi] \in R$.

Let $D' = \tau'^*g^*\tau^*D$.
Since the descent problem above is \'etale local,
it suffices to show that $D' = (h')^*M'$ for some $\bb Q$-Cartier divisor on $Z'$.
The existence of $M'$ follows by applying the classical relative base point free theorem
to the pair $(Y', \Delta'+(1-\epsilon)A)$, for some $0 < \epsilon \ll 1$. Indeed, recall from Section \ref{s_flip_construction} we have that $(Y', \Delta'+(1-\epsilon)A)$ is klt.
Thus, since 
\[
D' - (K_{Y'}+\Delta'+(1-\epsilon)A) \equiv_{h'} -(1-\epsilon\mu)(K_{\cal F_{Y'}}+\Delta')
\]
is $h'$-big and nef for small $\epsilon$, we have that $D'$ is $h'$-semi-ample.  
Thus, by definition, there is
some $n \gg 0$ and a Cartier divisor $L$ on $Z'$ such that $(h')^*L = nD'$.  
Thus, we may choose $M' = \frac{1}{n}L$ and (1) follows. 

Assume now that $X$ is projective and that 
$D=H_R$ is a nef $\mathbb Q$-Cartier divisor on $X$ which defines a supporting hyperplane for $R$ in $\overline {NE}(X)$ and let $M$ be the induced $\bb Q$-Cartier divisor on $Z$.  The existence of $H_R$ is a direct consequence of the Cone theorem, \cite{Spicer17} or Theorem \ref{t_cone-dlt}. 
Projectivity then follows by  noting that, for any subvariety $V$ of $Z$ we have $M^{\dim V}\cdot V >0$. Indeed, $M$ is ample by the Nakai-Moishezon criterion for
algebraic spaces,
\cite[Theorem 3.11]{Kollar90},
and so $Z$ is projective. Thus, (2) follows.
\end{proof}


\begin{theorem}
\label{flipsexist}
Let $X$ be a  $\bb Q$-factorial projective threefold and let $\cal F$ be a co-rank one foliation
on $X$ with non-dicritical singularities.
Suppose that $(\cal F, \Delta)$ is a F-dlt foliated pair.
 Let $f\colon X\to Z$ be a $(K_{\cal F}+\Delta)$-negative flipping contraction.

Then the  $(K_{\cal F}+\Delta)$-flip $X\dashrightarrow X^+$ exists.  
Moreover,
\begin{enumerate}
\item $X^+$ is projective and $\bb Q$-factorial,

\item $(\cal F^+, \Delta^+)$ has F-dlt singularities,

\item If, in addition, the foliated pair $(\cal F, \Delta)$ admits terminal (resp. canonical, resp. log terminal, resp. log canonical) singularities then so does $(\cal F^+, \Delta^+)$,

\item $\cal F^+$ has non-dicritical singularities and

\item $X^+$ is klt.
\end{enumerate}
\end{theorem}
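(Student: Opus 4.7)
The plan is to reduce the existence of the $(K_{\cal F}+\Delta)$-flip to the existence of a classical flip for a klt pair on the auxiliary threefold $Y'$ constructed in the previous subsection, and then descend back to $X$. The setup has already been assembled: a diagram
\[
X \xleftarrow{\tau} X' \xleftarrow{g} Y \xleftarrow{\tau'} Y',
\]
with $\tau, \tau'$ \'etale, $g$ a small $\bb Q$-factorialization, together with the boundary $A = \sum \widetilde S'_k + \sum a_i D'_i$ on $Y'$ such that $(Y', \Delta'+(1-\epsilon)A)$ is klt for $0<\epsilon\ll 1$, and the numerical equivalences $\lambda(K_{\cal F_{Y'}}+\Delta') \equiv_{h'} K_{Y'}+\Delta'$ and $A \equiv_{h'} (1-\lambda)(K_{\cal F_{Y'}}+\Delta')$ hold over $Z''$.

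The first step is to combine these two equivalences to obtain
\[
K_{Y'}+\Delta'+(1-\epsilon)A \equiv_{h'} \bigl(1-\epsilon(1-\lambda)\bigr)(K_{\cal F_{Y'}}+\Delta'),
\]
which is $h'$-antiample for $0<\epsilon\ll 1$ since $K_{\cal F_{Y'}}+\Delta'$ is. Because $h'$ is a small projective morphism and $(Y',\Delta'+(1-\epsilon)A)$ is klt, BCHM produces the $(K_{Y'}+\Delta'+(1-\epsilon)A)$-flip $h'^+\colon (Y')^+ \to Z''$ with $(Y')^+$ projective and $\bb Q$-factorial. The numerical proportionality over $Z''$, together with the fact that both divisors are negative on exactly the same set of $h'$-exceptional curves, implies finite generation of $\bigoplus_m h'_*\cal O_{Y'}(m(K_{\cal F_{Y'}}+\Delta'))$, so $(Y')^+$ is simultaneously the $(K_{\cal F_{Y'}}+\Delta')$-flip.

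The next step is descent. Since $\tau'$ is \'etale and $g$ is small, finite generation of the foliated canonical algebra on $Y'$ over $Z''$ propagates to finite generation of $\bigoplus_m f'_*\cal O_{X'}(m(K_{\cal F'}+\tau^*\Delta))$ over $Z'$; \'etale descent through $\sigma$ then gives, via Lemma \ref{etaleflip}, finite generation of $\bigoplus_m f_*\cal O_X(m(K_{\cal F}+\Delta))$ over $Z$. Defining $X^+$ as the relative $\mathrm{Proj}$ of this algebra produces the flip, which is projective because $Z$ is projective by Lemma \ref{baseofflipisprojective}. The properties of $X^+$ descend from those of $(Y')^+$: $\bb Q$-factoriality and kltness are preserved by small birational maps and \'etale covers by standard arguments on discrepancies, while preservation of the F-dlt (or terminal, canonical, log terminal, log canonical) class of $(\cal F^+,\Delta^+)$ follows from Lemma \ref{l_mmp-fdlt} applied to the single flipping step.

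The principal technical obstacle lies not in this final reduction but in the upstream construction of the divisor $A$: the formal separatrices $\widehat S_k$ guaranteed by Lemma \ref{l_formalseparatrix} exist only formally along the flipping locus, and their approximations $S'_k$ on the \'etale cover (produced by Corollary \ref{sepapprox}) must be close enough to the formal objects that the combined boundary continues to define a genuinely log canonical, and after perturbation klt, pair. This is precisely what Lemma \ref{approxlc} together with Lemma \ref{singcomparison} secure, and without the approximation machinery of the previous two sections the entire strategy of trading a foliated flip for a classical one via an ample auxiliary divisor would fail.
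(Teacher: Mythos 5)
Your proof is correct and follows essentially the same strategy as the paper: reduce, via Lemma~\ref{etaleflip}, to constructing the flip \'etale locally over the base, then identify the $(K_{\cal F_{Y'}}+\Delta')$-flip with the $(K_{Y'}+\Delta'+(1-\epsilon)A)$-flip (equivalently, their common ample model over $Z''$, since they are numerically proportional with positive constant over $h'$), and invoke BCHM for the klt pair $(Y',\Delta'+(1-\epsilon)A)$. The only slight imprecision is your attribution of the preservation of terminal/canonical/lt/lc singularities in item~(2) to Lemma~\ref{l_mmp-fdlt}, which is stated only for the F-dlt case; the other classes are preserved via the standard discrepancy comparison (Lemma~\ref{l_negativity}), but this is a minor bookkeeping point and the overall argument is the paper's.
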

\begin{proof}
By Lemma \ref{etaleflip} it suffices to construct the flip \'etale locally on the base.
Thus, taking $Z''$ as in Section \ref{s_flip_construction}, we see that to construct the flip it suffices to produce
an ample model for $K_{\cal F_{Y'}}+\Delta'$ over $Z''$, see Section \ref{s_ample_model}.

However, we know that for $\epsilon>0$ sufficiently small $K_{Y'}+\Delta'+A$,
$K_{Y'}+\Delta+(1-\epsilon)A$ and $K_{\cal F_{Y'}}+\Delta'$ all have the same ample
model over $Z''$. As above, we have that $(Y', \Delta'+(1-\epsilon)A)$ is klt
and so the ample model over $Z''$ exists by \cite[Theorem 1.2]{BCHM06}.  Call this model
\[
c\colon Y'\dashrightarrow Y'^+.
\]

Since $h'\colon Y' \rightarrow Z''$ is small, we know that $c$ is small and
so $(Y'^+, c_*(\Delta'+(1-\epsilon)A))$ is in fact klt.
Projectivity and $\bb Q$-factoriality follow easily. 

Our claims on the singularities of $(\cal F^+, \Delta^+)$ are a direct consequence of Lemma \ref{l_negativity}.

Non-dicriticality of $\cal F^+$ follows from the fact that $X$ is klt by Lemma \ref{singcomparison} and Lemma \ref{l_image_cont_non-dicrti}.
\end{proof}

\begin{remark}
If one is so inclined this can all be done in the analytic topology around the
flipping curves.  The relevant classical log MMP is known to exist by \cite{Nakayama87}.
\end{remark}

\begin{remark}\label{r_qpflip}
Since the construction of the flip is local on the base, if $f\colon X \rightarrow Z$ is a flipping contraction between
quasi-projective varieties then the arguments above show that the flip $\phi\colon X \dashrightarrow X^+$
still exists and enjoys all the properties of the flip listed in Theorem
\ref{flipsexist}, with the exception of projectivity of $X^+$.
\end{remark}

\subsection{$(K_{\cal F}+\Delta)$-negative contractions are extremal}

\begin{theorem}
\label{contractionsareextremal}
Let $X$ be a normal $\bb Q$-factorial projective threefold and 
let $\mathcal F$ be a co-rank one foliation on   $X$ with non-dicritical singularities. 
Let  $(\mathcal F,\Delta)$ be a F-dlt pair and let 
$R$ be a $(K_{\cal F}+\Delta)$-negative extremal ray.

Then the contraction associated to $R$ 
\[
\phi_R\colon X \rightarrow Y
\]
exists in the category
of projective varieties and $\rho(X/Y) = 1$.  In particular, $\phi_R$ is extremal. Moreover, if $\phi_R$ is a divisorial contraction then $Y$ is $\mathbb Q$-factorial and klt. 
\end{theorem}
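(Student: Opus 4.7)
The plan is to split into cases based on the dimension of $\mathrm{loc}(R)$, following the case analysis already developed in the discussion preceding the theorem. In each case I would verify that $\phi_R$ exists as a projective morphism, that $\rho(X/Y)=1$, and (when $\phi_R$ is birational) that any two Cartier divisors on $X$ are linked by a $\phi_R$-$\bb Q$-linear equivalence; this last condition, together with $\bb Q$-factoriality of $X$ which is part of the hypotheses, is exactly what the definition of extremal requires.

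When $\mathrm{loc}(R)=X$ or $\mathrm{loc}(R)=D$ is a divisor, the reduction already carried out in the text, relying on \cite[Theorem 8.9, Lemma 8.11]{Spicer17} together with Corollary \ref{basicpropertyfdlt2} and Lemma \ref{singcomparison}, produces a pair $(X,\Delta_X)$ with $\Delta_X\geq 0$ which is klt or dlt and for which $R$ is also $(K_X+\Delta_X)$-negative. Applying the classical log MMP (in the Mori fibre or divisorial contraction form, as appropriate) then yields $\phi_R$ as a projective morphism with $\rho(X/Y)=1$, and such a contraction on a $\bb Q$-factorial klt/dlt pair is automatically extremal in the birational case.

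The remaining case $\mathrm{loc}(R)=C$ a curve is a flipping-type contraction, where $\phi_R$ a priori lives only in the category of algebraic spaces by \cite[Lemma 8.16]{Spicer17}. Here Lemma \ref{baseofflipisprojective} does the work: it shows that $Y$ is in fact projective with $\rho(X/Y)=1$, and the argument proving this simultaneously establishes that every $\phi_R$-numerically trivial $\bb Q$-Cartier divisor on $X$ is a $\bb Q$-pullback from $Y$. Given Cartier divisors $D,E$ on $X$, one chooses integers $a,b$ with $aD-bE$ numerically trivial over $Y$ (possible since $\rho(X/Y)=1$); the descent then gives $aD\sim_{\phi_R,\bb Q}bE$, completing the verification of extremality. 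The main conceptual obstacle is precisely this flipping case, but it has already been overcome by the formal separatrix approximation machinery of Sections 4 and 5 which powers Lemma \ref{baseofflipisprojective}, so no new input beyond packaging those earlier results is needed here.
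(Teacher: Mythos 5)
Your proposal matches the paper's proof: the case split by $\dim\mathrm{loc}(R)$ corresponds exactly to the paper's trichotomy (fibre type, divisorial, flipping), with the first two cases reduced to \cite[Theorem 8.9 and Theorem 8.12]{Spicer17} (or equivalently to the classical log MMP via the adjunction reduction laid out before the theorem), and the flipping case handled by Lemma \ref{baseofflipisprojective}. The only difference is that you unpack the descent argument for extremality explicitly, whereas the paper treats it as implicit in $\rho(X/Y)=1$ together with the relative base point free machinery already established; both are sound.
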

\begin{proof}
First, observe that, by Lemma \ref{singcomparison}, $X$ is klt.

If $\loc R = X$, then, as in \cite[Theorem 8.13]{Spicer17}, it follows that $R$ is in fact $K_X$-negative and so the contraction exists.

If $\loc R = D$ a divisor and $D$ is transverse to the foliation, then as in \cite[Lemma 8.15]{Spicer17}, it follows that
$R$ is $(K_X+\Delta)$-negative.
If $D$ is  invariant then we claim that 
that $R$ is $(K_X+\Delta+D)$-negative.  
Indeed, $D$ is covered by curves $C_t$ which span $R$ and if  $D^{\nu}\to D$ is the normalisation of $D$ and
 we write 
 \[
 (K_{\cal F}+\Delta)\vert_{D^{\nu}}= K_{D^{\nu}}+\Theta \qquad\text{and}\qquad  (K_X+\Delta+D)\vert_{D^\nu} = K_{D^{\nu}}+\Theta'
 \]
then Corollary \ref{basicpropertyfdlt2} implies that
$(\Theta-\Theta')\cdot C_t \geq 0$ and so since $(K_{D^{\nu}}+\Theta)\cdot C_t<0$ we see that $(K_{D^{\nu}}+\Theta')\cdot C_t<0$
as required.
In either case, the contraction exists by 
Lemma \ref{singcomparison} and the existence of $(K_X+D)$-negative divisorial contractions.

If $\loc R = C$ is a curve then the result follows by Lemma \ref{baseofflipisprojective}.
\end{proof}

\section{Special termination}
The goal of this section is to show 
the following:

\begin{theorem}[Special Termination]\label{t_spter}
Let $X$ be a $\bb Q$-factorial quasi-projective threefold and let $\cal F$ be a co-rank one foliation with non-dicritical singularities on $X$.
Let $(\cal F, \Delta)$ be an F-dlt pair.  
Let 
\[
X=X_0\dashrightarrow X_1\dashrightarrow X_2\dashrightarrow \dots
\]
be an infinite sequence of $(K_{\cal F}+\Delta)$-flips and let $(\cal F_i,\Delta_i)$ be the transformed foliated pair on $X_i$.

Then after finitely many flips, the flipping and flipped locus are disjoint from any lc centres
of $(\cal F_i, \Delta_i)$.
\end{theorem}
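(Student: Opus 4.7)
The plan is to follow Shokurov's standard inductive strategy for special termination, reducing the three-fold problem to termination of 2-fold MMPs via adjunction. The induction is on the dimension of the lc centre, starting from the divisorial case.

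First I would handle the divisorial lc centres. Since no divisorial contraction occurs in an infinite sequence of flips (Picard rank is non-negative and drops under divisorial steps), each divisorial lc centre $S$ of $(\cal F_i, \Delta_i)$ is preserved throughout the sequence as a Weil divisor. Apply adjunction (Lemma \ref{adjunction}): when $\epsilon(S)=1$, the restriction to the normalisation $S^n$ yields an F-dlt foliated surface pair $(\cal G_S, \Theta_S)$; when $\epsilon(S)=0$, using also Corollary \ref{basicpropertyfdlt2}, it yields a dlt log surface pair $(S^n, \Theta'_S)$. Whenever a flipping curve $C \subset X_i$ meets $S$, the restriction of the birational map $X_i \dashrightarrow X_{i+1}$ to $S^n$ is a pseudo-flip of the corresponding surface pair, along which discrepancies strictly increase. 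By monotonicity of the Shokurov difficulty function on $S^n$, only finitely many such restrictions can be non-trivial: in the invariant case this is classical termination of 2-fold log flips for dlt pairs, and in the non-invariant case it is termination of foliated 2-fold pseudo-flips, which follows from the surface foliated MMP theory of Brunella and McQuillan. Thus after finitely many flips no flipping or flipped curve meets any divisorial lc centre.

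Next I would handle lc centres of codimension $\ge 2$. By Proposition \ref{p_finmanylccenters}, outside $\lfloor \Delta_i \rfloor$ there are only finitely many such centres, and since discrepancies are preserved by flips (Lemma \ref{l_mmp-fdlt}) this finite set is stable along the sequence. Each such centre is either contained in a divisorial lc centre $S$, in which case it corresponds to an lc centre of the restricted pair on $S^n$ and is handled by combining the previous step with the surface induction on $S^n$, or it is an isolated stratum of $\text{sing}(\cal F_i)$ disjoint from all divisorial lc centres. In the latter case, Lemma \ref{l_fdlt-terminal} and Lemma \ref{l_formalseparatrix} produce an invariant (possibly formal) separatrix through the stratum, and the same inductive machinery after restricting to this separatrix (or to a global approximation provided by the techniques of Section 4) yields the desired conclusion.

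The main obstacle I expect is making the reduction to surface MMP entirely rigorous. In particular, one must verify that the restriction of each three-fold flip to $S^n$ is a genuine pseudo-flip with discrepancies strictly increasing in a uniform sense, so that the Shokurov difficulty function on $S^n$ is well-defined and strictly decreasing at each non-trivial step. This requires careful control of the different $\Theta_S$ (resp. $\Theta'_S$) under the flip, relying on the comparison in Corollary \ref{basicpropertyfdlt2} between the foliated and classical differents when $S$ is invariant, and on the preservation of F-dlt under restriction (Lemma \ref{adjunction}) when $S$ is non-invariant. A secondary subtlety is that $S$ itself need not be smooth in codimension one near $\text{sing}(\cal F)$, so one must exercise care in interpreting the induced pair on $S^n$ and in running the surface pseudo-flip sequence on its components.
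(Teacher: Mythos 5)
Your overall plan — induct on the dimension of lc centres, restrict to centres via adjunction, and use monotonicity arguments to force termination — is the correct spirit, and several of your lemma references (Lemma \ref{adjunction}, Corollary \ref{basicpropertyfdlt2}, Proposition \ref{p_finmanylccenters}, Lemma \ref{l_fdlt-terminal}) are the ones the paper actually uses. But there is one gap that is fatal to the argument as written, and two places where the reduction you invoke is not what is actually available.

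The fatal gap: you implicitly assume there are only finitely many divisorial lc centres when you say ``after finitely many flips no flipping or flipped curve meets any divisorial lc centre.'' Every $\cal F$-invariant divisor is automatically an lc centre, and nothing in the hypotheses rules out infinitely many invariant divisors (e.g.\ when $\cal F$ is a fibration over a curve). Your per-centre argument, even if correct, only bounds the number of flips interacting with each \emph{individual} centre; if there are infinitely many such centres, no uniform bound follows. The paper disposes of this case first, as a separate lemma: if $\cal F$ has infinitely many invariant divisors, then non-dicriticality forces $\cal F$ to be induced by a fibration $X\to C$, the formula $K_{\cal F}\sim_{\bb Q,f}K_X+\Gamma$ identifies the $(K_{\cal F}+\Delta)$-flips with $(K_X+\Delta+\Gamma)$-flips of a log canonical threefold pair, and termination then follows from classical threefold termination. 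Only after this reduction is Proposition \ref{p_finmanylccenters} enough to make any finiteness argument on lc centres work.

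The second issue is the appeal to ``termination of foliated $2$-fold pseudo-flips from Brunella--McQuillan'' for divisorial centres transverse to $\cal F$. No such statement is available off the shelf: the restricted maps $W^\nu\dashrightarrow W^{+,\nu}$ are not steps of a surface foliated MMP, merely birational maps with controlled discrepancy behaviour. The paper's actual argument is elementary and does not pass through any surface MMP: one first shows (Proposition \ref{p_disjointtranscentre}) that the flipped locus cannot meet $W^+$ in a divisor, because such a divisor would have to be $\cal G^+$-invariant with \emph{positive} coefficient in $\Theta^+$, contradicting Lemma \ref{adjunction}; it follows that each flip whose flipping locus meets $W$ restricts to a birational contraction of $W$ which must contract a divisor (else one gets a sign contradiction from $Z\cdot W>0$ versus $Z^+\cdot W^+<0$), so each such flip drops $\rho(W)$. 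That Picard-rank count, not a difficulty function, is what terminates this step. Finally, for tangent centres the paper does not lump everything into a single ``surface induction'': it treats the cases $d=0,1,2$ with genuinely different mechanisms — finiteness for $d=0$, a DCC/hyperstandard-coefficient argument via adjunction to $C^\nu$ (Lemma \ref{l_tangentlccentre}, Lemma \ref{l_adjunction}) for $d=1$, and for $d=2$ a bespoke counting function $d_I(W,\Theta)$ built from discrepancies on the normalised invariant surface together with a Picard-rank argument. Your proposal would need to supply each of these, and the appeal to ``restricting to a separatrix'' via Section~4's approximation machinery is not what is done here — those approximation techniques are for constructing the flip, not for termination.
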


Note that the result and the some of the proofs below were inspired by Shokurov's special termination in the classical setting \cite{Shokurov03} (see also \cite[\S 4.2]{Corti05}).
 
We begin with the following:
\begin{lemma}
Let $X$ be a $\bb Q$-factorial quasi-projective threefold and let $\cal F$ be a co-rank one foliation with non-dicritical singularities on $X$.
Let $(\cal F, \Delta)$ be an F-dlt pair.  
Suppose that there exist infinitely many $\cal F$-invariant divisors. 

Then any sequence of $(K_{\cal F}+\Delta)$-flips terminates. 
\end{lemma}
\begin{proof}
Since the intersection of two invariant divisors is contained in $\sing \cal F$ and since $\mathcal F$ has non-dicritical singularities, it follows that there exist infinitely many pairwise disjoint $\cal F$-invariant divisors. 
By
\cite[Theorem 2]{Pereira06}, 
 there exists a morphism $f\colon X\to C$ onto a smooth curve $C$ such that $\mathcal F$ is induced by $f$ and, in particular, 
\[
K_{\mathcal F}=K_{X/C}+\sum (1-\ell_D)D
\]
where the sum is taken over all the vertical irreducible divisors and $\ell_D$ denotes the multiplicity of the fibre $f^{-1}(f(D))$ along $D$. 
Thus 
\[
K_{\mathcal F}\sim_{\bb Q,f} K_X + \Gamma
\]
where $\Gamma$ is the sum of all the vertical prime divisors which are contained in a non-reduced fibre. 
Since $(\mathcal F,\Delta)$ is F-dlt and since any component of $\Gamma$ is $\mathcal F$-invariant, Lemma \ref{singcomparison} implies that $(X,\Delta+\Gamma)$ is log canonical.

Note that if $X\dashrightarrow X'$ is a $(K_{\cal F}+\Delta)$-flip and the flipping curve $\xi$ is horizontal, then $F\cdot \xi>0$ for any general fibre $F$ of $f$ and, in particular, the strict transform $F'$ of $F$ on $X'$ contains the flipped curve $\xi'$, contradicting the fact that the transformed foliation $\cal F'$ on $X'$ has non-dicritical singularities
(cf. Lemma \ref{l_image_cont_non-dicrti}). Thus, we may assume that any sequence of $(K_{\cal F}+\Delta)$-flips preserves the fibration onto the curve $C$. 
 In particular, any sequence of $(K_{\mathcal F}+\Delta)$-flips is also a sequence of $(K_X+\Delta+\Gamma)$-flips. Thus, termination follows from termination of three-dimensional log canonical flips (cf. \cite[Theorem 2.3]{Shokurov03}). 
\end{proof}

Thus, from now on, we assume that $\mathcal F$ admits at most finitely many invariant divisors.  The proof proceeds in two steps.  We first consider the case of lc
centres transverse to the foliation.  We then handle the case of lc centres
tangent to the foliation by induction on dimension:
supposing the statement is true for all 
$(d-1)$-dimensional lc centres, we then prove it for all $d$-dimensional lc centres.

%
%
%
%

\subsection{Log canonical centres transverse to the foliation}

\begin{proposition}
\label{p_disjointtranscentre} Let $X$ be a $\bb Q$-factorial quasi-projective threefold 
and let $\cal F$ be a co-rank one foliation with non-dicritical singularities  on $X$.
Suppose that $(\cal F, \Delta)$ is F-dlt and let $W \subset X$ be an lc centre transverse
to $\cal F$. Let $\phi\colon X\dashrightarrow X^+$ be a flip and let
 $\Delta^+ = \phi_*\Delta$.  Suppose that $W$ is not contained in the flipping locus $Z$.
Let $W^+$ be the strict transform of $W$ and   
$Z^+$ be the flipped locus.

 Then $W^+ \cap Z^+ \subset W^+$ is not a divisor.

\end{proposition}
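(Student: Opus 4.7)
My plan is to use adjunction (Lemma \ref{adjunction}) to descend the question to a birational comparison of foliated pairs on surfaces, and then to apply the negativity lemma (Lemma \ref{l_negativity}) to derive a contradiction. The case $\dim W = 0$ is trivial, and the case $\dim W = 1$ should reduce to the surface case because, by Lemma \ref{l_fdlt-logsmooth} and the discrepancy calculation in Lemma \ref{l_logsmoothlogcanonical}, a codimension-two transverse lc centre must lie in the intersection of two components of $\lfloor\Delta\rfloor$. So the main case is $\dim W = 2$.

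Assume $\dim W = 2$. Since $(\cal F, \Delta)$ is F-dlt and $W$ is a codimension-one lc centre transverse to $\cal F$, the multiplicity of $W$ in $\Delta$ must equal $\epsilon(W) = 1$, so $W$ is a component of $\lfloor\Delta\rfloor$. Write $\Delta = W + \Delta'$ and suppose for contradiction that $W^+$ contains a flipped curve $C^+ \subset Z^+$. Let $f\colon X \to Y$ and $f^+\colon X^+ \to Y$ be the flipping and flipped contractions. Applying Lemma \ref{adjunction} (the $\epsilon(S) = 1$ case) on each side produces F-dlt foliated pairs $(\cal G,\Theta)$ on $W^n$ and $(\cal G^+,\Theta^+)$ on $(W^+)^n$ satisfying
\[
(K_{\cal F}+\Delta)|_{W^n} = K_{\cal G}+\Theta, \qquad (K_{\cal F^+}+\Delta^+)|_{(W^+)^n} = K_{\cal G^+}+\Theta^+,
\]
where $\cal G, \cal G^+$ denote the restrictions of $\cal F, \cal F^+$. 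Since $W \not\subset Z$ and $f, f^+$ are isomorphisms in codimension one, $f(W) = f^+(W^+) =: W_Y$, and the induced maps $\bar f\colon W^n \to W_Y^n$ and $\bar f^+\colon (W^+)^n \to W_Y^n$ are birational. The relative ampleness of $-(K_{\cal F}+\Delta)$ over $Y$ and of $K_{\cal F^+}+\Delta^+$ over $Y$ descends through adjunction and the finite normalization $W_Y^n \to W_Y$, giving that $-(K_{\cal G}+\Theta)$ is $\bar f$-ample and $K_{\cal G^+}+\Theta^+$ is $\bar f^+$-ample.

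Now apply Lemma \ref{l_negativity} to the birational map $W^n \dashrightarrow (W^+)^n$ over $W_Y^n$, with $E$ taken to be the valuation corresponding to the prime divisor $C^+$ of $(W^+)^n$. Since $\bar f^+$ contracts $C^+$, the strict inequality
\[
a(C^+, \cal G, \Theta) < a(C^+, \cal G^+, \Theta^+)
\]
holds. The curve $C^+$ is tangent to $\cal F^+$ (as a flipped curve, by Lemma \ref{l_curvesinrayaretangent}) and contained in $W^+$, which is transverse to $\cal F^+$; hence $C^+$ is a leaf of $\cal G^+$, so it is $\cal G^+$-invariant and $\epsilon(C^+) = 0$. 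The F-dlt condition on $(\cal G^+, \Theta^+)$ forbids any $\cal G^+$-invariant divisor in the support of $\Theta^+$, so $C^+ \not\subset \text{supp}(\Theta^+)$ and $a(C^+, \cal G^+, \Theta^+) = 0$. Therefore $a(C^+, \cal G, \Theta) < 0$, contradicting the bound $a(C^+, \cal G, \Theta) \geq -\epsilon(C^+) = 0$ coming from the log canonicity of $(\cal G, \Theta)$.

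The main obstacle I expect is verifying the geometric facts underlying the setup: that relative ampleness of $\pm(K_{\cal F}+\Delta)$ with respect to $f$ (resp.\ $f^+$) descends to the restricted morphisms $\bar f$ (resp.\ $\bar f^+$) on the normalizations, and that the invariance property of the valuation $C^+$ is intrinsic so that $\epsilon(C^+) = 0$ holds for the F-dlt bound on $(\cal G, \Theta)$ as well as on $(\cal G^+, \Theta^+)$. The curve subcase $\dim W = 1$ should follow from the surface case by repeating the adjunction-and-negativity argument on a containing codimension-one lc centre and then restricting once more, though some additional care is needed to rule out intersections of $W^+$ with flipped curves that do not lie in any such containing surface.
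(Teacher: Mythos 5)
Your argument is correct and follows the paper's own approach: adjunction (Lemma \ref{adjunction}) to the divisorial lc centre, the negativity lemma (Lemma \ref{l_negativity}) applied to the surface birational map $W^{\nu} \dashrightarrow W^{+,\nu}$ over $W_Y^{\nu}$, and tangency of the flipped curve (Lemma \ref{l_curvesinrayaretangent}) to conclude that it is $\cal G^+$-invariant. The only difference is cosmetic: you derive the contradiction against log canonicity of $(\cal G, \Theta)$ on the $W$-side, while the paper phrases it against log canonicity of $(\cal G^+, \Theta^+)$ on the $W^+$-side --- both are immediate consequences of the same strict discrepancy inequality.
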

\begin{proof}
Suppose for sake of contradiction that $W^+ \cap Z^+ =: D \subset W^+$ is a divisor.

By Remark \ref{r_finmanylccenters} it follows that 
$W$ is contained in the support of $\lfloor \Delta \rfloor$, and in fact is a stratum of $\lfloor \Delta \rfloor$. 
Hence $W^+$ is  a stratum of $\lfloor \Delta^+\rfloor$. 

 Let $\cal F^+$ be the transformed foliation on $X^+$ and let $\cal G^+$ be the foliation restricted to $W^{+, \nu}$ where $\nu\colon W^{+, \nu} \rightarrow W^+$ is the 
normalisation. 
Thus, since $W^+$ is a stratum of $\lfloor \Delta^+\rfloor$,  we may apply   Lemma \ref{adjunction} and by induction on the codimension of $W^+$, we may write 
\[
(K_{\cal F^+}+\Delta^+)\vert_{W^{+, \nu}} = K_{\cal G^+}+\Theta^+
\]
where $(\cal G^+, \Theta^+)$ is log canonical.
Likewise we may write $(K_{\cal F}+\Delta)\vert_{W^\nu} = K_{\cal G}+\Theta$.

We now claim that $D$ is $\cal G^+$ invariant.
 
  If $\text{dim}(W) = 2$ then $D$ is a curve and it must be tangent to $\cal F^+$ by Lemma \ref{l_image_cont_non-dicrti}, and hence it is invariant
 by $\cal G^+$.  Indeed, by Lemma \ref{l_negativity} we know that $\cal F^+$ is terminal at the generic point of $D$ and so by Lemma \ref{l_fdlt-terminal}
 there exists a unique germ $S$ of an invariant surface containing $D$.  It follows that $\nu^{-1}(S \cap W^+)$ is $\cal G^+$-invariant and so
 $D$ is also $\cal G^+$-invariant.  
 
 If $\text{dim}(W) = 1$ then $D$ is a finite collection of points and $\cal G^+$ is the foliation by points on $W^+$ and so $D$ is automatically
 $\cal G^+$-invariant.

 By Lemma \ref{l_negativity}, applied to the map $W^\nu\dashrightarrow W^{+,\nu}$, we know that $a(D, \cal G, \Theta) < a(D, \cal G^+, \Theta^+)$.
 However, since $(\cal G^+, \Theta^+)$ is log canonical and since $D$ is invariant it follows that $a(D, \cal G^+, \Theta^+) = 0$, i.e., the coefficient
 of $D$ in $\Theta^+$ is zero,  see Remark \ref{remark_inv_not_lc}.
 On the other hand, since $(\cal G, \Theta)$ is log canonical, we have $a(D, \cal G, \Theta) \geq -\epsilon(D) = 0$.
 This, however, is an impossibility.
\end{proof}

\begin{corollary}
\label{c_termdisjointtranscentre}
Let $X$ be a $\bb Q$-factorial quasi-projective threefold and let $\cal F$ be a co-rank one foliation with non-dicritical singularities on $X$.
Suppose that $(\cal F, \Delta)$ is F-dlt. 

Then, after finitely many flips, the flipping locus
is disjoint from all the lc centres transverse to the foliation.  
\end{corollary}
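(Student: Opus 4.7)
The plan is to reduce to a surface adjunction argument on each divisorial transverse lc centre, using Proposition~\ref{p_disjointtranscentre} as the main new ingredient together with classical monotonicity of discrepancies. First I would observe that the collection of transverse lc centres of $(\cal F, \Delta)$ is finite: the divisorial ones are exactly the components of $\lfloor \Delta \rfloor$ (by F-dlt, every component of $\Delta$ is transverse), and the codimension at least two ones are either intersections of components of $\lfloor \Delta \rfloor$ or, by Proposition~\ref{p_finmanylccenters}, form a finite isolated collection. Since F-dlt is preserved under flips by Lemma~\ref{l_mmp-fdlt}, strict transforms of these centres remain transverse lc centres of $(\cal F_i, \Delta_i)$; hence it suffices to show that for each divisorial transverse lc centre $W$ with strict transform $W_i$, eventually $Z_i \cap W_i = \varnothing$ (and the same for the finitely many isolated higher codimension transverse centres).

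Next I would eliminate the case $Z_i \subset W_i$. In that case $W_i \not\subset Z_i$ trivially by dimension, so Proposition~\ref{p_disjointtranscentre} forces $Z_{i+1}^+ \not\subset W_{i+1}$ (otherwise $W_{i+1} \cap Z_{i+1}^+ = Z_{i+1}^+$ would be a divisor in the surface $W_{i+1}$). Thus the induced birational map $W_i^\nu \to W_{i+1}^\nu$ is a morphism contracting the curve $Z_i$ to a point: a divisorial contraction on the normal surface $W^\nu$ that strictly reduces $\rho(W_i^\nu)$. Since Picard numbers are positive integers, this case can occur only finitely many times for each $W$. After this phase, $Z_i \not\subset W_i$ always, so by Proposition~\ref{p_disjointtranscentre} the map $W_i^\nu \dashrightarrow W_{i+1}^\nu$ is an isomorphism in codimension one, hence an isomorphism of normal surfaces. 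Via adjunction (Lemma~\ref{adjunction}), $(W^\nu, \Theta_i)$ is a dlt surface pair, and Lemma~\ref{l_negativity} shows that for every divisorial valuation $E$ over $W^\nu$, the discrepancy $a(E, W^\nu, \Theta_i)$ is weakly increasing in $i$, strictly whenever the 3-fold flip is nontrivial above the centre of $E$. Since the $\bb Q$-Cartier index of $K_{\cal F}+\Delta$ is preserved under the MMP, discrepancies take values in a discrete bounded subset of $\bb Q$, so each can strictly increase only finitely many times.

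The hard part will be translating this stability of discrepancies into actual disjointness $Z_i \cap W_i = \varnothing$: the conclusion needs a finite set of valuations whose simultaneous stabilisation detects disjointness, not merely avoidance of lc centres of $(W^\nu, \Theta_i)$. The natural choice is the exceptional divisors of a fixed minimal log resolution of $(W^\nu, \Theta_i)$, since any intersection point $p \in Z_i \cap W_i$ forces a strict discrepancy change at some such valuation whose centre passes through $p$; once all their discrepancies stabilise, no such intersection can occur. The same monotonicity argument applied directly to blowups of the finitely many isolated higher codimension transverse lc centres outside $\lfloor \Delta \rfloor$ handles the remaining cases, and the whole argument may be cleanly packaged as classical Shokurov special termination applied to each adjunction surface pair $(W^\nu, \Theta_i)$.
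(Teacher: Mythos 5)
Your proposal captures the right framework — reduce to components $W$ of $\lfloor\Delta\rfloor$, invoke Proposition~\ref{p_disjointtranscentre} to rule out flipped curves lying in $W^+$, and drop $\rho(W)$ each time a flipping curve lies in $W$ — and these pieces line up with the paper's proof. But there is a genuine gap at the point you yourself flag as ``the hard part.''

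After the divisorial-contraction phase, you are left with the case where the flipping curve $Z$ meets $W$ but is not contained in $W$, and you try to dispose of it via a Shokurov-style monotonicity of discrepancies on $(W^\nu,\Theta_i)$. This is never carried through: you never specify a finite set of valuations whose stabilisation would actually force $Z_i\cap W_i=\varnothing$ (tracking exceptional divisors of ``a fixed minimal log resolution'' is not well-defined once $\Theta_i$ is changing, and even if it were, avoiding further lc centres of the surface pair is weaker than avoiding $W$ altogether). The paper avoids this entirely by observing that the troublesome case simply cannot occur: since $X$ is $\bb Q$-factorial, if $Z\not\subset W$ but $Z\cap W\neq\varnothing$ then $W\cdot Z>0$, and because the flip reverses the sign of any divisor against the flipping/flipped ray, this forces $W^+\cdot Z^+<0$ and hence $Z^+\subset W^+$ — directly contradicting Proposition~\ref{p_disjointtranscentre}. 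With that observation, the only way the flipping locus can meet $W$ is for the flipping curve to sit inside $W$, so that $\psi$ contracts a divisor and $\rho(W)$ strictly drops; no discrepancy bookkeeping is needed. Your reduction step also slightly mischaracterises the higher-codimension transverse centres: the finite collection from Proposition~\ref{p_finmanylccenters} outside $\lfloor\Delta\rfloor$ consists of strata of $\mathrm{sing}(\cal F)$, hence tangent centres, so there are no extra ``isolated transverse'' centres to treat; every transverse lc centre of codimension $\ge 2$ is already contained in some component of $\lfloor\Delta\rfloor$, which is why the whole statement reduces cleanly to the components of $\lfloor\Delta\rfloor$.
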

\begin{proof} Remark \ref{r_finmanylccenters} implies that there are only
finitely many lc centres transverse to $\cal F$ and these are strata of $\lfloor \Delta \rfloor$. Thus, by Lemma \ref{l_negativity}, we may assume that  no lc centre transverse
to the foliation is contained in the flipping locus.  
Let $\phi\colon X\dashrightarrow X^+$ be a flip.
Suppose that the flipping locus meets some lc centre of $(\cal F, \Delta)$ transverse to the foliation.
Then, since these are strata of $\lfloor \Delta \rfloor$, it follows that the flipping locus  meets some divisorial component $W$ of $\lfloor \Delta \rfloor$.
Thus, to prove our result it suffices to show that, for any component
$W$ of $\lfloor \Delta \rfloor$, after finitely many flips the flipping locus is disjoint from $W$.

So suppose that $W$ meets the flipping locus.
Note that by Lemma \ref{singcomparison} we see that $(X, \Delta)$ is dlt and so by \cite[Corollary 5.52]{KM98} it follows that $W$ is normal.
Let $\psi\colon W \dashrightarrow W^+$ be the induced map.
Let $\cal G$ denote the restricted foliation
and write
\[
(K_{\cal F}+\Delta)\vert_W = K_{\cal G}+\Theta
\]
where, by Lemma \ref{adjunction}, $(\cal G, \Theta)$ is F-dlt. In particular, $W$ is klt.

By Proposition \ref{p_disjointtranscentre}, none of the curves in  the flipped locus is  contained in $W^+$. Thus, $\psi$ is a birational contraction. If $\psi$ does not contract any divisors, then there exists a curve $Z$ contained in the flipping locus, such that $Z\cap W\neq \emptyset$ but $Z$ is not contained in $W$.  Then $Z\cdot W>0$ and so if
$Z^+$ is a flipped curve we must have $Z^+\cdot W^+ <0$ implying that $Z^+\subset W^+$, a contradiction.  

Thus, $\psi$ contracts a divisor at each flip. In particular, each flip reduces $\rho(W)$ by $1$ and we can only have finitely many such flips. 
%
%
%
%
%
\end{proof}

By Corollary \ref{c_termdisjointtranscentre}, it suffices to show that the flipping
locus is eventually disjoint from lc centres which are tangent to the foliation.

\begin{lemma}
\label{noncontained}
After a finite sequence of flips, 
if $Z$ is an lc centre and $C$ is a flipping curve
then $Z$ is not contained in $C$.
\end{lemma}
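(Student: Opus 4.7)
The plan is to leverage the discrepancy monotonicity of Lemma \ref{l_negativity} together with the finiteness result of Proposition \ref{p_finmanylccenters} to bound the number of flips whose flipping locus can meet an lc centre. The argument proceeds in three steps: a preliminary reduction, a monotonicity claim, and a counting argument.

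First, I will apply Corollary \ref{c_termdisjointtranscentre} to reduce to the case where, for every $i$, the flipping locus of $X_i \dashrightarrow X_{i+1}$ is disjoint from every lc centre of $(\cal F_i,\Delta_i)$ transverse to $\cal F_i$, and in particular from $\lfloor \Delta_i\rfloor$. After replacing the sequence by a tail, this holds for all indices. Under this assumption, any lc centre $Z$ contained in a flipping curve $C\subset X_i$ must have codimension at least two, must be tangent to $\cal F_i$, and must lie outside $\lfloor \Delta_i\rfloor$. Hence $Z$ belongs to the set $\cal L_i$ of lc centres of $(\cal F_i,\Delta_i)$ of codimension at least two not contained in $\lfloor \Delta_i\rfloor$, which is finite by Proposition \ref{p_finmanylccenters}.

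The core step will be to show that, up to strict transform, $\cal L_{i+1}\subseteq \cal L_i$, with strict inclusion whenever the flipping locus $C_i$ of $X_i\dashrightarrow X_{i+1}$ contains some element of $\cal L_i$. Given $Z^+\in \cal L_{i+1}$ with witnessing divisor $E$ satisfying $a(E,\cal F_{i+1},\Delta_{i+1})=-\epsilon(E)$, I will first argue that $Z^+$ cannot lie in the flipped locus $C_i^+$: indeed, otherwise the centre of $E$ on the base $Y_i$ of the flip lies in $f_i(C_i)$, so by Lemma \ref{l_negativity} one would have $a(E,\cal F_i,\Delta_i)<-\epsilon(E)$, contradicting the log canonicity of $(\cal F_i,\Delta_i)$. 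Hence $Z^+$ corresponds to a strict transform $Z\subset X_i\setminus C_i$, and a second application of Lemma \ref{l_negativity} (with equality, since both $f_i$ and $f_i^+$ are isomorphisms above the generic point of the centre of $E$ in $Y_i$) shows $a(E,\cal F_i,\Delta_i)=-\epsilon(E)$, so $Z\in \cal L_i$. This yields the required injection $\cal L_{i+1}\hookrightarrow \cal L_i\setminus\{Z\in \cal L_i : Z\subset C_i\}$.

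Finally, since $|\cal L_0|$ is finite and strictly decreases each time a flipping locus contains some element of $\cal L_i$, this can occur only finitely many times. After this, no lc centre of $(\cal F_i,\Delta_i)$ is contained in any subsequent flipping curve, which gives the conclusion. The main obstacle I anticipate is the careful bookkeeping needed to make the strict transform correspondence between lc centres of successive $X_i$ precise, particularly for $0$-dimensional lc centres where the naive notion of strict transform is ambiguous; the witness-divisor formulation above handles both $0$- and $1$-dimensional cases uniformly and avoids this issue.
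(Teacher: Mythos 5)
Your proof is correct and follows the same route as the paper's: reduce via Corollary \ref{c_termdisjointtranscentre} to the case where the flipping locus avoids $\lfloor\Delta\rfloor$, then combine the finiteness of lc centres outside $\lfloor\Delta\rfloor$ from Proposition \ref{p_finmanylccenters} with the discrepancy monotonicity of Lemma \ref{l_negativity}. The paper's proof is just these two sentences; you have supplied the implicit bookkeeping — the injection $\cal L_{i+1}\hookrightarrow\cal L_i\setminus\{Z:Z\subseteq C_i\}$ and the resulting strict decrease in cardinality — that makes "the claim follows from Lemma \ref{l_negativity}" rigorous.
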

\begin{proof} By Corollary \ref{c_termdisjointtranscentre}, 
after finitely many flips we may assume that the flipping locus is disjoint
from all lc centres transverse to the foliation, in particular,
it is disjoint from $\lfloor \Delta \rfloor$.

By Proposition \ref{p_finmanylccenters}, there are only finitely many lc centres of $(\cal F, \Delta)$ not contained in
$\lfloor \Delta \rfloor$  and so the claim follows from Lemma \ref{l_negativity}.
\end{proof}

\begin{defn}[Hyperstandard set]\label{d_hyperstandard}
Let $I\subset [0,1]$ be a subset. We define:
\[
I_+=\{ \sum_{j=1}^m a_j i_j\mid i_j\in I, a_j\in \mathbb N \text{ for } j=1,\dots,m\}\cap [0,1],
\]
and
\[
D(I)=\{ \frac{m-1+f}{m}\mid m\in \mathbb N, f\in I_+\}\cap [0,1].
\]
\end{defn}
Note that $D(D(I))=D(I)$ \cite[Lemma 4.4]{MP04}.
Moreover, if $I\subset [0,1]$ is a finite set, then the only accumulation point of $D(I)$ is $1$ and, in particular, it satisfies DCC. 

\begin{lemma}
\label{l_adjunction}
Let $(X,\Delta)$ be a log canonical pair such that the coefficients of $\Delta$ belong to a subset $I\subset [0,1]$, and let $S$ be an irreducible component of $\lfloor \Delta \rfloor$. Let $\nu\colon S^{\nu}\to S$ be the normalisation and let $\Theta$ be the divisor on $S^{\nu}$ defined by adjunction:
\[
(K_X+\Delta)|_{S^{\nu}}=K_{S^{\nu}}+\Theta.
\]

Then,  the coefficients of $\Theta$ belong to $D(I)$.
\end{lemma}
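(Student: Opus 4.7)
The claim is local at the generic point of each prime divisor $D \subset S$. Localising at that generic point reduces the problem to the following: $(X, p)$ is a normal surface germ, $S$ is a (germ of a) curve through $p$, $\Delta = S + \sum a_i D_i$ with $a_i \in I$, the pair is lc, and we must show that the coefficient $c$ of $p$ in the different $\Theta$ has the form $\frac{m-1+f}{m}$ for some positive integer $m$ and some $f \in I_+$.

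The plan is to access $c$ via the index-one cover. Let $m_0$ be the Cartier index of $K_X + S$ at $p$, and let $\pi \colon Y \to X$ be the associated cyclic cover of degree $m_0$, which is quasi-\'etale and satisfies $\pi^*(K_X+S) = K_Y + T$ with $T := \pi^{-1}(S)_{\mathrm{red}}$ and $K_Y + T$ Cartier. Since $(X, S)$ is lc and $\pi$ is quasi-\'etale, the pair $(Y, T)$ is lc; being of index one it is canonical, so by the two-dimensional classification $T$ is smooth (or at worst nodal) at every point of $q \in \pi^{-1}(p)$, and in particular $\mathrm{Diff}_{T}(0)$ vanishes along $q$.

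Next I would pull $\Delta$ back to $Y$, write
\[
\pi^*(K_X + \Delta) = K_Y + T + \sum_i a_i \pi^* D_i,
\]
and restrict to $T$. Since $\pi$ is quasi-\'etale outside $p$, each $\pi^*D_i$ is an effective Weil divisor with integer multiplicities, so the coefficient of any point $q \in \pi^{-1}(p)$ in $(\pi^*D_i)|_T$ is a non-negative integer $n_i$, and the total contribution at $q$ from the $\sum a_i \pi^* D_i$ term is $f := \sum_i a_i n_i \in I_+$. On the other side, letting $m$ denote the ramification index of the finite map $\pi_T \colon T \to S$ at $q$ (a positive integer dividing $m_0$), Riemann--Hurwitz gives $K_T = \pi_T^* K_S + (m-1) q + \cdots$ near $q$, while $\pi_T^*(c\cdot p) = m c\cdot q + \cdots$. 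Equating the coefficients of $q$ in $\pi_T^*(K_S + \Theta) = K_T + \sum a_i (\pi^*D_i)|_T$ yields
\[
m\,c \;=\; (m-1) + f,\qquad\text{i.e.}\qquad c = \frac{m-1+f}{m} \in D(I),
\]
as required; the constraint $c \leq 1$ holds because $(X,\Delta)$ is lc.

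The main technical point to be careful about is the case when $T$ is not smooth at $q$ (i.e.\ $(Y, T)$ is canonical but not plt there); then one must argue separately either via the classification of canonical surface pairs of index one or by passing to the normalisation of $T$ and absorbing the nodal contribution, showing it only increases $m$ or $f$ in the formula above. Everything else is a bookkeeping exercise with ramification indices and multiplicities of Weil divisors under a cyclic quasi-\'etale cover.
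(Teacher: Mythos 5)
The paper cites this lemma to \cite[Lemma 4.3]{MP04} and gives no proof of its own, so there is no internal argument to compare against; your index-one cover argument is the standard one and the core computation $mc=(m-1)+f$ in the plt case is correct. Two points want repair. First, ``being of index one it is canonical'' is not right: a log canonical pair of index one can still carry an exceptional divisor of discrepancy $-1$. What you actually need from the two-dimensional classification is that $(Y,T)$ lc together with $K_Y+T$ Cartier forces, at each $q\in\pi^{-1}(p)$, either that $Y$ and $T$ are both smooth at $q$ (the plt case) or that $Y$ is smooth and $T$ is nodal at $q$ (the non-plt case); in the plt case the smoothness of $Y$ at $q$ is precisely what makes your intersection numbers $n_i$ honest non-negative integers, so it should be stated (it follows since the different of a cyclic quotient germ $\tfrac{1}{r}(1,a)$ along a coordinate axis has coefficient $\tfrac{r-1}{r}$, which is integral only when $r=1$). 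Second, the non-plt case that you flag at the end should actually be closed rather than deferred, and it is easier than the plt case: there $(X,S)$ is lc but not plt at $p$, so $\mathrm{Diff}_S(0)$ already has coefficient $1$ at $p$, and additivity of the different together with log canonicity of $(X,\Delta)$ forces the coefficient $c$ of $\Theta$ at $p$ to equal exactly $1$. This lies in $D(I)$ because $1=\frac{1-1+1}{1}$ with $1\in I_+$, the latter since $S$ is a component of $\lfloor\Delta\rfloor$, whose coefficient $1$ lies in $I$ by hypothesis. With these two repairs the argument is complete and agrees with the standard one.
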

\begin{proof}
Let $\pi\colon X' \rightarrow X$ be a dlt blow up $(X, \Delta)$, see \cite[Theorem 10.4]{Fujino09},
and write $K_{X'}+\Delta' = \pi^*(K_X+\Delta)$.  Note that, 
after possibly replacing $I$ by $I \cup \{1\}$,
 the coefficients of $\Delta' $ belong to  $I$ 
and that every irreducible component of $\lfloor \Delta \rfloor$ is normal.
We may freely replace $(X, \Delta)$ by $(X', \Delta')$ and so we may assume without loss of generality
that $S$ is normal.  The result is then a direct consequence of \cite[Lemma 4.3]{MP04}.
\end{proof}

\medskip 

Given a F-dlt pair $(\cal F,\Delta)$ on a $\bb Q$-factorial quasi-projective threefold $X$, we denote by $d$ the minimal dimension of an lc centre of $(\cal F,\Delta)$ which is tangent to $\cal F$ and intersects the flipping locus of a $(K_{\cal F}+\Delta)$-flip. Our goal is to show that there can be only finitely many flips with $d=0,1$ or $2$. 

\subsection{Special termination in dimension $d = 0$}
This follows directly from Lemma \ref{noncontained}

\subsection{Special termination in dimension $d = 1$}

\begin{lemma}
Set up as above. Then
after finitely many flips the flipping locus is disjoint from all
one-dimensional lc centres.
\end{lemma}
\begin{proof} Let $I$ be the set of coefficients of $\Delta$.  Recall that $D(I)$ satisfies DCC.
Let $C$ be a one-dimensional lc centre which is tangent to $\cal F$ and let $\nu\colon C^{\nu}\to C$ be its normalisation. Let $\Theta\ge 0$ 
be the $\mathbb Q$-divisor on $C^{\nu}$ whose existence is guaranteed by Lemma \ref{l_tangentlccentre}. 
Since $D(D(I))=D(I)$, it follows by Lemma \ref{l_adjunction} and the proof of Lemma  \ref{l_tangentlccentre} that 
 the coefficients of $\{\Theta\}$ take values in $D(I)$.
It follows by
Lemma \ref{l_negativity}
 that, after a flip, $\{\Theta\}$ strictly decreases. However, by Lemma \ref{l_tangentlccentre} and since we are assuming that there are no zero-dimensional lc centre intersecting the flipping locus, it follows  that the flip is an isomorphism near $\lfloor \Theta \rfloor$
and the result follows.
\end{proof}

\subsection{Special termination in dimension $d = 2$}

\begin{lemma}
Set up as above. Then after finitely many flips the flipping locus
is disjoint from all two-dimensional lc centres.  
\end{lemma}
\begin{proof}
Let $I\subset [0,1]$ be a finite set containing the coefficients of $\Delta$. Let $S$ be a two-dimensional  lc centre intersecting the flipping locus. By Corollary \ref{c_termdisjointtranscentre}, we may assume that $S$ is $\mathcal F$-invariant and, by Lemma \ref{adjunction},  we may  write $(K_{\cal F}+\Delta)\vert_W = K_W+\Theta$ for some $\bb Q$-divisor $\Theta\ge 0$ where $W \rightarrow S$ is the normalisation and such that
$(W,\Theta':=\lfloor \Theta \rfloor_{red}+\{\Theta\})$ is lc
and $(W, (1-\epsilon)\Theta')$ is klt for $0 < \epsilon < 1$. 
Note that the coefficients of $\Theta'$ belong to $D(I)$. 

We define 
\[
d_I(W,\Theta)=\sum_{a\in D(I)}\#\{E\mid a(E,W,\Theta)<-a \text{ and }c_X(E)\not\subset \lfloor \Theta' \rfloor \}.
\]
Then $d_I(W,\Theta)<\infty$. 

Let $\phi\colon  X \dashrightarrow X^+$ be a flip,
and let $\psi\colon S \dashrightarrow S^+$ be the induced birational map.  We denote by $(\mathcal F^+,\Delta^+)$ be the transformed foliated pair  and we write
$(K_{\cal F^+}+\Delta^+)\vert_{W^+} = K_{W^+}+\Theta^+$ where $W^+ \rightarrow S^+$ is the normalisation.
Note that $d_I(W,\Theta)\ge d_I(W^+,\Theta^+)$.

Suppose first that $\psi^{-1}$ contracts a divisor $D\subset S^+$. 
Let $Z \subset S$ be the centre of $D$ on $S$.  By induction we know that $Z$ is not contained
in $\lfloor \Theta' \rfloor$. It follows that $d_I(W^+,\Theta^+)<d_I(W,\Theta)$. 

Thus, after finitely many flips,  we may assume that $\psi$ is a birational contraction. 
As in the proof of Corollary \ref{c_termdisjointtranscentre}, the claim follows. 
%
\end{proof}

The Lemma above concludes the proof of Theorem \ref{t_spter}.

\begin{corollary}
\label{specialcaseMMP}
Let $X$ be a $\bb Q$-factorial quasi-projective threefold and let $\cal F$ be a co-rank 1 foliation with non-dicritical singularities on $X$.
Let $\pi\colon X\to Z$ be a birational morphism. 
Let $(\cal F, \Delta)$ be an F-dlt pair on $X$.  Assume that every component of $\text{exc}(\pi)$ is an lc centre for $(\cal F,\Delta)$. 

Then any sequence of $(K_{\cal F}+\Delta)$-flips over $Z$
\[
X=X_0\dashrightarrow X_1\dashrightarrow X_2\dashrightarrow \dots
\]
terminates.
\end{corollary}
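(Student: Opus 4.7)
The plan is to apply Special Termination (Theorem \ref{t_spter}) and derive a contradiction from the assumption that the sequence does not terminate. Suppose for contradiction we have an infinite sequence of $(K_{\mathcal F}+\Delta)$-flips $X=X_0\dashrightarrow X_1\dashrightarrow\cdots$ over $Z$, and let $\pi_i\colon X_i\to Z$ denote the induced birational morphism. Since each flip is small, the flipping curve $C_i\subset X_i$ is contained in $\text{exc}(\pi_i)$. Theorem \ref{t_spter} guarantees that after finitely many flips $C_i$ is disjoint from every lc centre of $(\mathcal F_i,\Delta_i)$, so the contradiction will follow once we show that $C_i$ always meets some lc centre of $(\mathcal F_i,\Delta_i)$.

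The key intermediate assertion is that every irreducible component of $\text{exc}(\pi_i)$ is contained in an lc centre of $(\mathcal F_i,\Delta_i)$. For the divisorial components I would argue by induction on $i$: because each flip is an isomorphism in codimension one, the divisorial components of $\text{exc}(\pi_i)$ correspond bijectively via strict transform to those of $\text{exc}(\pi)$, and the latter are lc centres of $(\mathcal F,\Delta)$ by hypothesis. A divisor on an F-dlt pair is an lc centre exactly when it is either a component of $\lfloor\Delta\rfloor$ or is $\mathcal F$-invariant, and both conditions are preserved under flips (the first by pushforward of the boundary, the second by non-dicriticality together with the codimension-one bijection of divisors), so the strict transform remains an lc centre of $(\mathcal F_i,\Delta_i)$.

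Next, $C_i$ is contained in the fibre $\pi_i^{-1}(z)$ for $z=\pi_i(C_i)$, and this fibre is connected by Zariski's Main Theorem. If the fibre has a divisorial component, then by connectedness $C_i$ meets it and we are done. Otherwise $\pi_i^{-1}(z)$ is purely one-dimensional, and using the same bijection of divisors under flips, $\pi^{-1}(z)$ on $X$ is also purely one-dimensional; its components are codim-two components of $\text{exc}(\pi)$, hence lc centres of $(\mathcal F,\Delta)$ by hypothesis. Using the finiteness of codim-two lc centres (Proposition \ref{p_finmanylccenters}) together with the negativity lemma (Lemma \ref{l_negativity}), I would track how these curve lc centres evolve through the sub-MMP over $z$ to conclude that $C_i$ still meets an lc centre of $(\mathcal F_i,\Delta_i)$.

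The main obstacle is this final case: a curve lc centre may be contracted in a flip and replaced by a flipped curve whose discrepancy has strictly increased by Lemma \ref{l_negativity}, so the flipped curve is not automatically an lc centre of the new pair. Handling this requires combining the finiteness bound on codim-two lc centres with a careful accounting of how the one-dimensional fibre $\pi_i^{-1}(z)$ is modified by each flip, to ensure the lc centre property is not lost before Special Termination forces the flipping locus to escape it.
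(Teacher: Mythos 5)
Your plan is exactly the paper's: apply Special Termination (Theorem \ref{t_spter}) and observe that the flipping locus, which always lies in $\text{exc}(\pi_i)$, must therefore eventually be empty. The paper's own proof is literally a one-liner to this effect. Your identification of the key intermediate assertion — that each component of $\text{exc}(\pi_i)$ remains contained in an lc centre of $(\cal F_i,\Delta_i)$ — is the right thing to want, and your treatment of the divisorial components via strict transforms is correct and is the main content.

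Two problems, though. First, the connectedness step is not valid as written: if $\pi_i^{-1}(z)$ is connected and contains both $C_i$ and a divisorial component $D$, Zariski's Main Theorem only gives that $C_i$ and $D$ lie in the same connected fibre, which means there is a chain of irreducible components linking them — it does not give $C_i \cap D \neq \emptyset$. So "by connectedness $C_i$ meets it" is a non sequitur; you would need to rule out intermediate curve components that are not lc centres.

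Second, the gap you flag at the end is genuine and is precisely where the paper's terse proof also stops short. After the bound $N$ from Theorem \ref{t_spter}, the flipping curve $C_i$ is disjoint from every divisorial lc centre, hence (by your divisorial-persistence argument) disjoint from the divisorial part of $\text{exc}(\pi_i)$, so it must be a standalone one-dimensional component of $\text{exc}(\pi_i)$. Such a component is either the birational transform of a one-dimensional lc centre of the original $(\cal F,\Delta)$ — whose discrepancy may have strictly increased under earlier flips (Lemma \ref{l_negativity}) so that it is no longer an lc centre — or a flipped curve $C_j^+$ from some earlier step, which was never an lc centre at all. Neither your draft nor the paper's sentence-long proof explains why such a curve cannot be a flipping curve; closing this requires an additional bookkeeping step (for instance, tracking the first flip that disturbs the curve and arguing it must occur before step $N$ since it meets a genuine lc centre, or tracking how the sign of $(K_{\cal F_i}+\Delta_i)\cdot C$ can flip). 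So you have correctly located the place where the published argument is compressed; the proposal as it stands does not close it.
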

\begin{proof}
By Theorem \ref{t_spter}, any sequence of flips is eventually disjoint from the lc centres of 
$(\cal F, \Delta)$ and so is eventually disjoint from $\text{exc}(\pi)$, in which case the MMP
terminates.
\end{proof}


\section{Existence of F-dlt modifications}

We now show the existence of an F-dlt modification as in Definition \ref{d_fdltm}. The result is a consequence of the existence of flips and special termination and it will be used to prove the base point free theorem in Section \ref{s_bpft}.

\begin{theorem}[Existence of F-dlt modifications]
\label{t_existencefdlt}
Let $\cal F$ be a co-rank one foliation on a 
normal projective variety $X$ of dimension at most three.
Let $(\cal F, \Delta)$ be a foliated pair.

Then  $(\cal F, \Delta)$ admits a F-dlt modification $\pi\colon Y\to X$ (cf. Definition \ref{d_fdltm}) such that
if $\cal G$ is the pulled back foliation on $Y$ then 
\begin{enumerate}

\item $Y$ is $\bb Q$-factorial,

\item $Y$ is klt and

\item $\cal G$ is non-dicritical.
\end{enumerate}

Suppose in addition that $(\cal F,\Delta)$ is lc 
 and let $\Gamma\coloneqq \pi_*^{-1} \Delta +\sum \epsilon(E_i)E_i$ where the sum is over all the $\pi$-exceptional divisors. Then we may choose $\pi\colon Y\to X$ so that 
 if $Z$ is an lc centre of 
$(\cal G, \Gamma)$ then $Z$ is contained in a codimension one lc centre of $(\cal G, \Gamma)$.
\end{theorem}
\begin{proof}
We assume that $\dim X=3$. The case where $\dim X=2$ is similar. 
Let $\phi\colon W\to X$ be a  foliated log resolution. Let $\cal H$ be the pulled back foliation on $W$.

We may write
\[
K_\cal H + \sum b_iG_i +\sum a_jF_j+\phi_*^{-1} \Delta= \phi^*(K_{\cal F}+\Delta)
\]
where $G_i, F_j$ are $\phi$-exceptional prime divisors such that  $b_i \geq \epsilon(G_i)$ and $a_j < \epsilon(F_j)$.
Let 
\[
\widetilde{\Delta} = \sum b_iG_i +\sum a_jF_j+\phi_*^{-1} \Delta
\]
and
\[
\Theta = \sum \epsilon(G_i)G_i +\sum \epsilon(F_j)F_j+\phi_*^{-1} \Delta.
\]
Note that $(\cal H, \Theta)$ is F-dlt.  By Proposition \ref{p_finmanylccenters}, after possibly replacing $W$ by a sufficiently high log resolution, we may assume that every lc centre
is contained in a codimension one lc centre. 

We run a 
$(K_{\cal H}+\Theta)$-MMP over $X$. Recall that, by Remark \ref{r_simple=ndc}, simple singularities are non-dicritical. 
By 
Theorem \ref{contractionsareextremal}
all the required divisorial
contractions exists and
by Theorem \ref{flipsexist} all the flips exist.  By construction
each $G_i, F_j$ is an lc centre of $(\mathcal H, \Theta)$ and so Corollary
\ref{specialcaseMMP} implies that this MMP terminates. 
Call this MMP $f\colon W \dashrightarrow Y$ and let $\cal G$ be the transformed foliation on $Y$.
Note that Lemma \ref{l_mmp-fdlt} implies that  $(\cal G, f_*\Theta)$ is F-dlt.

The MMP preserves $\bb Q$-factoriality, 
klt singularities (cf. Theorem \ref{flipsexist} and Theorem \ref{contractionsareextremal}) and non-dicriticality (cf. Lemma \ref{l_image_cont_non-dicrti}), and so we have that $Y$ is $\bb Q$-factorial and klt and $\cal G$ is non-dicritical.

Denote by $\pi\colon Y\to X$ the induced morphism.
We have $K_{\cal G}+f_*\widetilde{\Delta} = \pi^*(K_{\cal F}+\Delta)$
and so $D:= f_*\Theta - f_*\widetilde{\Delta}$ is $\pi$-nef and $\pi$-exceptional.
The negativity lemma then implies that $f_*\widetilde{\Delta} - f_*\Theta=-D \geq 0$.
Thus, setting $F =-D$ and noting that $f_*\Theta = \pi_*^{-1}\Delta+\sum \epsilon(E_i)E_i$
where we sum over the $\pi$-exceptional divisors, we have
\[
K_{\cal G}+\pi_*^{-1}\Delta+\sum \epsilon(E_i)E_i+F = \pi^*(K_{\cal F}+\Delta).\]

We now show the last claim. To this end,  
we may freely replace $(\cal F, \Delta)$ by $(\cal G, f_*\Theta)$ and so we may assume
that $(\cal F, \Delta)$ is F-dlt. Arguing as above we see that it suffices to show that the $(K_{\cal H}+ \Theta)$-MMP does not contract
any component in the support of $\sum G_i$.
By assumption, $b_i = \epsilon(G_i)$ and so
\[(K_{\cal H}+\Theta) - (K_{\cal H}+\widetilde{\Delta})  = \sum (\epsilon(F_j)-a_j)F_j \geq 0.\]
Since $K_{\cal H}+\widetilde{\Delta}$ is trivial over $X$, each step of the $(K_\cal H+\Theta)$-MMP is 
$\sum (\epsilon(F_j)-a_j)F_j$-negative and so only components in the support
of $\sum (\epsilon(F_j)-a_j)F_j$ are contracted.  In particular, no component in the support of $\sum E_i$ is 
contracted by the MMP and our result follows.
\end{proof}

\begin{theorem}[Cone theorem for lc pairs]
\label{t_cone-lc}
Let $X$ be a normal projective threefold and let $\mathcal F$ be a co-rank one foliation on $X$ with non-dicritical singularities. Suppose that $X$ is potentially klt.
Let  $(\mathcal F,\Delta)$ be an lc pair where $\Delta\ge 0$ and let $H$ be an ample $\bb Q$-divisor. 

Then there exist countable many curves $\xi_1,\xi_2,\dots$ such that 
\[
\overline {NE(X)}=\overline{NE(X)}_{K_{\mathcal F}+\Delta\ge 0}+ \sum\bb R_+[\xi_i].
\]

Furthermore, for each $i$,  $\xi_i$ is a rational curve tangent to $\cal F$ such that $(K_{\mathcal F}+\Delta)\cdot \xi\ge -6$,
and if $C \subset X$ is a curve such that $[C] \in \bb R_+[\xi_i]$ and $\loc {\bb R_+[\xi_i]} \neq X$ then $C$ is tangent to $\cal F$.


In particular, there exist only finitely many  $(K_{\mathcal F}+ \Delta+H)$-negative extremal rays. 
\end{theorem}
\begin{proof}
By Theorem \ref{t_existencefdlt}, there exists a 
F-dlt modification 
 $\pi\colon Y\to X$ for the foliated pair $(\cal F, \Delta)$.
We may write $K_{\cal G}+\Gamma = \pi^*(K_{\cal F}+\Delta)$ where $\cal G$ is the transformed foliation on $Y$ and $\Gamma\ge 0$.

Observe that if $R \subset \overline{NE}(X)$ is an extremal ray then there exists an extremal ray $R' \subset \overline{NE}(Y)$
such that $\pi_*R' = R$. 
Moreover, if $R$ is $(K_{\cal F}+\Delta)$-negative then $R'$ is $(K_{\cal G}+\Gamma)$-negative and so by Theorem \ref{t_cone-dlt}
$R'$ is spanned by a rational curve $\xi$ tangent to $\cal G$ with $(K_{\cal G}+\Gamma)\cdot \xi \geq -6$.
Then $\pi(\xi)$ spans $R$ and has all the desired properties.

If $C \subset X$ and $[C] \in \bb R_+[\xi_i]$ for some $i$ then we may apply Lemma \ref{l_curvesinrayaretangent} to conclude
that $C$ is tangent to $\cal F$.
%
%
\end{proof}

\subsection{Contraction in the non-$\bb Q$-factorial case}\label{s_nonqfactorial}

Through out this subsection, we assume that $(X, \Gamma)$ is klt for some $\Gamma \geq 0$ but that $X$
is not necessarily $\bb Q$-factorial.

\begin{lemma}\label{l_contraction_as}
Let $X$ be a normal projective threefold and let $\cal F$ be a co-rank one foliation on $X$ with non-dicritical singularities. Suppose that $X$ is potentially klt. 
Let $\Delta\ge 0$ be a $\bb Q$-divisor such that $(\mathcal F,\Delta)$ is a log canonical pair,
and suppose there exists a small $\bb Q$-factorialisation $\pi\colon \overline{X} \rightarrow X$ (cf. Definition \ref{defn_small_q_fact})
such 
that if we write $K_{\overline{\cal F}}+\overline{\Delta} = \pi^*(K_{\cal F}+\Delta)$, 
where $\overline{\cal F}$ is the pulled back foliation on $\overline{X}$, 
then for any choice of $\epsilon>0$ we may find $\Theta$ such that  
$(1-\epsilon)\overline{\Delta} \leq \Theta \leq \overline{\Delta}$ and $(\overline{\cal F}, \Theta)$
is F-dlt.
Let $R$ be a $(K_{\cal F}+\Delta)$-negative extremal ray. Assume that $\loc R \neq X$.

Then there exists a contraction
\[\phi_R\colon X \rightarrow \cal Y\] 
of $R$ in the category of algebraic spaces.
\end{lemma}
\begin{proof}
By Theorem \ref{t_cone-lc}, there exists a nef $\mathbb Q$-Cartier divisor $H_R$ on $X$ which defines a supporting hyperplane for $R$ in $\overline {NE}(X)$.
Let $\pi\colon \overline{X} \rightarrow X$ be a small $\bb Q$-factorialisation
of $X$ as in the hypotheses of the Lemma.

First, suppose that $\loc R = D$ is a divisor.  By Theorem \ref{t_cone-lc}, we may find  $\epsilon>0$ sufficiently small and a $\mathbb Q$-divisor
$\Theta$ as in the hypotheses of the Lemma so that $K_{\overline{\cal F}}+\Theta$ is negative on any extremal ray $R'$
in $\overline{NE}(\overline{X})$ such that $\pi_*R' = R$.

Suppose that  $D$ contains an irreducible component $D_0$ transverse to the foliation.
Let $\overline{D_0}$ be the strict transform of $D_0$ under $\pi$ 
and let $\nu\colon \overline{D_0}^{\nu}\to \overline{D_0}$ the normalisation map. We may write
\[
\nu^*(K_{\overline{\cal F}}+\overline{\Delta}+\overline{D_0}) = K_{\cal G}+\Gamma
\]
 where $\cal G$ is the restricted foliation and $\Gamma\ge 0$. It follows that $\overline{D_0}^{\nu}$ is covered by $(K_{\cal G}+\Gamma)$-negative curves and, therefore, $K_{\cal G} +\Gamma$ is not pseudo-effective. Thus, 
\cite[Corollary 2.28]{Spicer17} implies that $\overline {D_0}^\nu$ is covered by rational curves which are tangent
to $\cal G$ and, in particular, $\cal G$ is algebraically integrable. Thus, since ${\overline{\cal F}}$ is non-dicritical, there exists a morphism $\overline{D_0}^\nu\to B$ onto a curve $B$ such that the general fibre is $\mathbb P^1$. 
In particular, if 
 $F$ is a general fibre in this $\bb P^1$-fibration structure,
then we claim that 
\[
(K_{\overline{\cal F}}+\overline{\Delta}+\overline{D_0})\cdot F = (K_{\overline X}+\overline{\Delta}+\overline{D_0})\cdot F.
\]
Indeed, if we write  $\nu^*(K_X+\overline{\Delta}+\overline{D_0}) = K_{D^\nu_0}+\Gamma'$ then by Lemma \ref{lem_trans_diff_comp} we see that the coefficient in $\Gamma$ and $\Gamma'$ of a curve $\Sigma$
which is not a fibre of $D^{\nu}_0\rightarrow B$ is the same.  In particular, it follows that $\Gamma\cdot F = \Gamma' \cdot F$.
Finally, observe that $K_{\overline{D_0}^{\nu}}\cdot F = K_{\cal G}\cdot F = -2$ and our stated equality follows.

We run a $(K_{\overline{\cal F}}+\Theta)$-MMP contracting/flipping only $\pi^*H_R$-trivial extremal rays $R'$
such that $\loc {R'}$ meets the strict transform of $D$.
Theorem \ref{flipsexist} implies that  all the required flips exist.

By our above observation we know that if the strict transform of $D$ contains a non-invariant component we may choose 
this extremal ray to also be $(K_{\overline{X}}+\Theta)$-negative.
Thus, by classical termination of log flips we see that there can only be finitely many flips before the strict transform of each non-invariant
component of $D$ is contracted. 
By Special Termination (cf. Theorem \ref{t_spter}), there are only finitely many such flips before the strict transform of each invariant component of $D$ is contracted.

Denote by $\overline{Y}$ the step in this MMP after the last component of $D$ is contracted,
and let $f:\overline{X} \dashrightarrow \overline{Y}$ denote the induced rational map. 
Observe that each step of this MMP is $\pi^*H_R$-trivial
and so $\pi^*H_R$ descends to a $\bb Q$-Cartier divisor $M$ on $\overline{Y}$

We know that $\overline{Y}$ is $\bb Q$-factorial and, as in the proof of
\cite[Lemma 8.20]{Spicer17},
we know that if $S$ is a divisor on $\overline {Y}$ then $M^2\cdot S>0$.  Moreover, if $B$ is a curve we see
that 
$M\cdot B = 0$ if and only if $B$ is the strict transform of a $\pi$-exceptional curve, $B$ is the strict transform
of a flipped curve or $B$ is the strict transform of a curve $C$ with $[C] \in R$.  Notice that 
there are finitely many such curves and let $\Sigma$ be the union of all such curves.
By \cite[Lemma 8.21]{Spicer17} there exists a contraction of $\Sigma$ in the category of algebraic spaces, call
it $c \colon \overline{Y} \rightarrow \cal Y$.  Since $c$ contracts every $\pi$-exceptional curve and every 
flipped curve it gives a contraction
\[\phi_R \colon X \rightarrow \cal Y.\]

Now suppose that $\loc R$ is a curve.
In this case we see that if $S$ is a divisor on $\overline{X}$ then, 
as in the proof of \cite[Lemma 8.20]{Spicer17}, we have  that $(\pi^*H_R)^2 \cdot S >0$
and that $(\pi^*H_R)\cdot C = 0$ if and only if $C \subset \pi^{-1}(\loc R) \cup \exc\pi$.  As above, we apply \cite[Lemma 8.21]{Spicer17}
to produce a contraction of $\pi^{-1}(\loc R)$ in the category of algebraic spaces, which factors through $\pi$.
\end{proof}

\begin{theorem}
\label{t_nonqfactcontract}
Let $(X,\Gamma)$ be a  projective three-dimensional klt pair and let $\mathcal F$ be a co-rank one foliation on $X$ with  non-dicritical singularities. 
Let $\Delta\ge 0$ be a $\bb Q$-divisor such that $(\mathcal F,\Delta)$ is a log canonical pair
and suppose there exists a small $\bb Q$-factorialisation $\pi\colon \overline{X} \rightarrow X$ such 
that if we write $K_{\overline{\cal F}}+\overline{\Delta} = \pi^*(K_{\cal F}+\Delta)$, 
where $\overline{\cal F}$ is the pulled back foliation on $\overline{X}$, 
then for any choice of $\epsilon>0$ we may find $\Theta$ such that  
$(1-\epsilon)\overline{\Delta} \leq \Theta \leq \overline{\Delta}$ and $(\overline{\cal F}, \Theta)$
is F-dlt.
Let $R$ be a $(K_{\cal F}+\Delta)$-negative extremal ray. Assume that $\loc R \neq X$.

Then the contraction associated to $R$ 
\[
\phi_R\colon X \rightarrow Y
\]
exists in the category
of projective varieties and $\rho(X/Y) = 1$.  In particular,  $\phi_R$ is extremal.
\end{theorem}
Note that, by Lemma \ref{lem_fdlt_small_mod}, the assumption of the Theorem are verified if $(\cal F,\Delta)$ is F-dlt.

\begin{proof}


Let $\phi_R\colon X \rightarrow \cal Y$ be the contraction onto an algebraic space $\cal Y$, whose existence is guaranteed by Lemma \ref{l_contraction_as}.
We first show that if $M$ is a $\bb Q$-Cartier divisor with $M\cdot R = 0$ then $M = \phi_R^*N$ for some $\bb Q$-Cartier
divisor on $\cal Y$.
Observe that this problem is \'etale local on $\cal Y$, so we may freely replace $\cal Y$ by a sufficiently
small \'etale neighborhood of some point $y \in \cal Y$.  

Let $\pi\colon \overline{X} \rightarrow X$ be a small $\bb Q$-factorialisation
of $X$ as in the hypotheses of the Lemma
and let $g\colon \overline{X} \rightarrow \cal Y$ be the composition of $\pi$ with $\phi_R$.
We may write 
\[
K_{\overline{\cal F}} +\overline{\Delta} = \pi^*(K_{\cal F}+\Delta)
\quad\text{and}\quad
K_{\overline{X}}+\overline{\Gamma} = \pi^*(K_X+\Gamma).
\]

By Lemma \ref{l_approx_etale}, we may replace $\cal Y$ by another \'etale neighborhood of $y \in \cal Y$, so that we may approximate
every invariant divisor (formal or otherwise) of $\overline{\cal F}$ meeting $g^{-1}(y)$ 
by global divisors on $\overline{X}$.
Let $S_k$ be the collection of all such divisors. 
As in Section \ref {s_flip_construction} we see that
\[(K_{\overline{\cal F}}+\overline{\Delta}) - (K_{\overline{X}}+\overline{\Delta}+\sum S_k)\]
is $g$-nef.

For some $\epsilon>0$ sufficiently small, we may run a 
$(K_{\overline{X}}+\overline{\Gamma}+\epsilon (\overline{\Delta}+ \sum S_k))$-MMP 
over $X$, and we obtain a map $\overline X\dashrightarrow \overline X'$.  
Let $\pi' \colon \overline{X}' \rightarrow X$ be the induced morphism.  Each step of this MMP
is $(K_{\overline{X}}+\overline{\Gamma})$-trivial and so if we let $T_k$ be the strict transform of $S_k$ and $\overline{\Delta}'$ be the strict transform of $\overline{\Delta}$ on $\overline X'$, 
we see that $\overline{\Delta}'+\sum T_k$ is nef over $X$.  Observe that $\sum T_k$ still approximates the  divisors which are invariant with respect to 
the transformed foliation $\overline{\cal F}'$ on $\overline X'$.

Thus, replacing $\overline{X}$ by $\overline{X}'$ we may freely assume that $(\Delta+\sum S_k) \cdot C \geq 0$
for any $\pi$-exceptional curve $C$.
Since $K_{\cal F}+\Delta$ is strictly negative on any $\phi_R$-exceptional curve, we see that 
for $0< \delta \ll 1$ we have 
\[-(K_{\overline{X}}+(1-\delta)(\overline{\Delta}+\sum S_k))\] is nef over $\cal Y$.
By Lemma \ref{singcomparison} and the fact that
$(\overline{\cal F}, \overline{\Delta})$ is log canonical we see that 
$(\overline{X}, \overline{\Delta}+\sum S_k)$ is log canonical and since $\overline{X}$ is klt 
we have that  $(\overline{X}, (1-\delta)(\overline{\Delta}+\sum S_k))$ is klt for $\delta>0$ and
so we may apply the base point free theorem to $\pi^*M$ to conclude that there exists a $\bb Q$-Cartier divisor
$N$ on $\cal Y$ with $\phi_R^*N = M$.

Let $H_R$ be 
 a nef $\mathbb Q$-Cartier divisor on $X$ which defines a supporting hyperplane for $R$ in $\overline {NE}(X)$. Taking $M = H_R$ we see that by applying the Nakai-Moishezon criterion for algebraic spaces (cf. \cite[Theorem 3.11]{Kollar90}) to $N$
that $N$ is ample and hence $\cal Y = Y$ is projective.
\end{proof}

Observe that, by Lemma \ref{lem_fdlt_small_mod},  the hypotheses of Theorem \ref{t_nonqfactcontract}
are satisfied if we suppose that $(\cal F, \Delta)$ is F-dlt.

\subsection{Potentially klt varieties}

\begin{lemma}
\label{l_lockltglobklt}
Let $X$ be a normal projective variety.  Suppose that $X$ is \'etale locally potentially klt. Then $X$ is potentially klt.

In particular, let $\phi_R\colon X \rightarrow Y$ be the contraction associated to an extremal ray as in Theorem
\ref{t_nonqfactcontract}.
Then $Y$ is potentially klt.
\end{lemma}
\begin{proof}
Choose a finite \'etale cover 
\[\{g_i \colon U_i \rightarrow X\}_{i = 1, ..., N}\] 
such that 
$U_i$ is affine and there exists $\Delta_i \geq 0$ such
that $(U_i, \Delta_i)$ is klt.

Without loss of generality we may assume each $g_i$ is Galois,
with Galois group $G_i$. Perhaps replacing $\Delta_i$ by $\frac{1}{\# G_i}\sum_{g \in G_i} g\cdot \Delta_i$
we may assume that there exists a Zariski open set $V_i \subset X$ and a $\bb Q$-divisor $\Theta_i \geq 0$
on $V_i$ such that $g_i$ factors through $V_i$,  $K_{V_i}+ \Theta_i$ is $\bb Q$-Cartier  and $g_i^*(K_{V_i}+\Theta_i) =K_{U_i}+\Delta_i$.  Observe that $(V_i, \Theta_i)$ is klt and so we may freely assume that
$g_i \colon U_i \rightarrow X$ is an open immersion.

There exists $m>0$ such that $m\Delta_i \in  \lvert -mK_{U_i} \rvert$ for all $i$. Let $H$ be a divisor on $X$
such that $\cal O(-mK_X+mH)$ is globally generated. We may assume that, 
for all $i$, there exists
$D_i \in \lvert -mK_X+mH \rvert$ such that $(U_i, \frac{1}{m}D_i|_{U_i})$ is klt.
It follows that, for a general element $D \in \lvert -mK_X+mH \rvert$, we have that $(X, \frac{1}{m}D)$ is klt.
Thus, $X$ is potentially klt. 

To prove our final claim it suffices to check that $Y$ is 
\'etale locally potentially klt.  So let $y \in Y$ and let $U$ be a sufficiently small 
\'etale neighborhood of $y$, and let $X_U = X \times_Y U$.
By the construction given in proof of Theorem \ref{t_nonqfactcontract}, there exists a small morphism
$\pi \colon \overline{X_U} \rightarrow X_U$ and a divisor
 $D \geq 0$ such that $(\overline{X_U}, D)$ is klt and  $-(K_{\overline{X_U}}+D)$ is $\phi_R$-nef.
By the basepoint free theorem, we may find a $0 \leq A \sim_{\bb Q} -(K_{\overline{X_U}}+D)$ such
that $(\overline{X_U}, D+A)$ is klt and $K_{\overline{X_U}}+D+A$ is $\phi_R$-trivial.
Thus, $(U, (\phi_R)_*(D+A))$ is klt and so $X$ is \'etale locally potentially klt.
\end{proof}

\section{Base Point Free Theorem}\label{s_bpft}

The goal of this section is to prove the base point free theorem. We begin with the following version of the canonical bundle formula:

\begin{lemma}
\label{canonicalbundleformula}
Let $X$ be a normal projective variety of dimension at most three 
and
let $(\cal F, \Delta)$ be an lc pair on $X$ such that $\Delta\ge 0$.
Suppose that there is a fibration $f\colon X \rightarrow Y$ onto a variety $Y$ with klt singularities and 
such that the general fibre of $f$ is tangent to $\cal F$. Assume that $K_{\cal F}+\Delta \sim_{\bb Q, f} 0$.

Then there is a foliation $\cal G$ on $Y$ such
that $f^{-1}\cal G = \cal F$, and a $\bb Q$-divisor $\Theta \geq 0$ and a semi-ample divisor $D$
such that $K_{\cal F}+\Delta \sim_{\bb Q} f^*(K_{\cal G}+\Theta+D)$
and $(\cal G, \Theta)$ is lc.
\end{lemma}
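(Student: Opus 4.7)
The plan is to descend $\cal F$ to a foliation $\cal G$ on $Y$ and then reduce to the classical canonical bundle formula of Ambro--Fujino--Mori applied to an auxiliary classical log pair on $X$.

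First, I would construct $\cal G$. The hypothesis that every fibre of $f$ is tangent to $\cal F$ is equivalent to the inclusion $T_{X/Y}\subseteq \cal F$ at smooth points of $f$; dually, every vertical prime divisor is $\cal F$-invariant. The quotient $\cal F/T_{X/Y}$ therefore embeds in $T_X/T_{X/Y}\cong f^{*}T_Y$ on the smooth locus of $f$. Pushing forward by $f$ and saturating inside $T_Y$ produces a coherent, saturated subsheaf $\cal G\subseteq T_Y$; integrability of $\cal G$ is a local statement and follows from integrability of $\cal F$. By construction $f^{-1}\cal G=\cal F$, and moreover the condition on fibres means that $\cal G$ is the unique foliation on $Y$ with this property.

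Next, I would upgrade $(\cal F,\Delta)$ to a classical log pair. Working locally with a defining $1$-form $\omega$ of $\cal G$ and its pullback $f^{*}\omega$ defining $\cal F$, a Riemann--Hurwitz-type computation yields a $\bb Q$-divisor $V=\sum v_D D$ supported on vertical prime divisors, with $0\le v_D\le 1$, and a $\bb Q$-divisor $E$ on $Y$ satisfying
\[
K_X+\Delta+V \sim_{\bb Q} K_{\cal F}+\Delta+f^{*}E.
\]
When $\dim Y=1$ this reproduces the formula $K_{\cal F}=K_{X/Y}+\sum(1-\ell_D)D$ already used earlier in the paper; when $\dim Y=2$ one also has to account for ramification of $f$ over $\cal G$-invariant curves. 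Since each $D\subseteq \mathrm{supp}(V)$ is $\cal F$-invariant, Lemma \ref{singcomparison} combined with the lc hypothesis on $(\cal F,\Delta)$ shows that $\Delta_X:=\Delta+V$ makes $(X,\Delta_X)$ lc, and by construction $K_X+\Delta_X$ is $\bb Q$-linearly trivial along the fibres of $f$.

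I would then apply the classical canonical bundle formula to the lc, relatively $K$-trivial fibration $f\colon(X,\Delta_X)\to Y$. Since $\dim X\le 3$, this yields an lc pair $(Y,B)$ and a semi-ample $\bb Q$-divisor $M$ with
\[
K_X+\Delta_X \sim_{\bb Q} f^{*}(K_Y+B+M).
\]
Combining with the identity of the previous paragraph and rewriting $K_Y$ via $K_{\cal G}=K_Y-N^{*}_{\cal G}$ absorbs the base divisor $E$ and the conormal correction, leaving
\[
K_{\cal F}+\Delta \sim_{\bb Q} f^{*}(K_{\cal G}+\Theta+D)
\]
for some $\Theta\ge 0$ and $D=M$ semi-ample. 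Log canonicity of $(\cal G,\Theta)$ is then deduced from that of $(Y,B)$ by reversing the bookkeeping of the second paragraph on the base side: non-$\cal G$-invariant components of $\Theta$ inherit the bound $\le 1$ from $B$, and $\cal G$-invariant components are absorbed into the $K_{\cal G}$ term.

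The hard part will be paragraph 2, namely producing $V$ with coefficients in $[0,1]$ and correctly identifying $E$. In particular, the case $\dim Y=2$ requires a careful local analysis at vertical divisors lying over $\cal G$-invariant curves and at non-reduced fibre components above them, where the naive Riemann--Hurwitz correction can exceed $1$ and must be redistributed between $V$ on $X$ and $B$ on $Y$. A secondary delicate point is checking that the final $\Theta$ is effective and that its $\cal G$-invariant components have coefficients compatible with the foliated lc condition ($\epsilon(D)=0$ for invariant $D$).
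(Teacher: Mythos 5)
Your high-level plan shares some elements with the paper's proof (descend $\cal F$ to a foliation $\cal G$ on $Y$, pass through the classical canonical bundle formula, use the Bogomolov-type comparison between $K_{\cal F}$ and $K_X$), but the route is genuinely different and the critical middle step is not carried through — indeed, as you correctly sense, it does not work as stated. You want to produce an effective vertical $\bb Q$-divisor $V$ with coefficients in $[0,1]$ such that $(X,\Delta+V)$ is lc and $K_X+\Delta+V\sim_{\bb Q,f}0$, so that the classical lc canonical bundle formula applies directly. But the natural vertical correction relating $K_{\cal F}$ and $K_X$ over $Y$ is the $\cal F$-invariant part $R$ of the ramification divisor of $f$, whose coefficient along a multiplicity-$\ell$ vertical component is $\ell-1\geq 1$. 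There is no canonical way to truncate or ``redistribute'' $R$ into a boundary $V\leq 1$ that preserves the relative $K$-triviality, and Lemma \ref{singcomparison}, which you invoke, only handles a \emph{reduced} invariant divisor and does not supply the specific $V$ you need. So the key input to the classical CBF in your argument is not produced.

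The paper's proof sidesteps this entirely: it never constructs an effective classical lc pair on the total space. After passing to a resolution $g\colon X'\to Y'$, it uses the identity $K_{\cal F'/\cal G'}=K_{X'/Y'}-R$ and applies Koll\'ar's version of the canonical bundle formula (\cite[Theorem 8.3.7]{Kollar07}) to the possibly \emph{non-effective} boundary $\Delta'-R$, obtaining $\mu^*M\sim_{\bb Q}K_{\cal G'}+B+J$ with $J$ nef. The effectivity bound on the base boundary $B=\sum a_iB_i$ is then obtained not by normalizing anything on the total space, but by translating the classical lc threshold defining $a_i$,
\[
a_i=1-\sup\{t\mid(X',\Delta'-R+tg^*B_i)\text{ is lc over the generic point of }B_i\},
\]
into the foliated lc threshold
\[
a_i=\epsilon(B_i)-\sup\{t\mid(\cal F',\Delta'+tg^*B_i)\text{ is lc over the generic point of }B_i\},
\]
which together with the foliated lc hypothesis immediately yields $a_i\leq\epsilon(B_i)$ and hence $\mu_*B\geq0$. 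Semi-ampleness of $J$ is imported from Ambro (when $\dim Y=1$) and Kawamata (when $\dim Y=2$), and the conclusion is pushed down from $Y'$ to $Y$ by $\mu_*$ (the base locus of $\mu_*J$ is isolated, hence $\mu_*J$ is semi-ample). This is the structural point your proposal misses: the foliated lc condition controls the base boundary directly through the lc-threshold formula of the CBF, so one need not and should not force the total-space boundary to be an effective lc boundary. A secondary point is that you work on $X$ and $Y$ directly without resolving, whereas the ramification computation and the CBF with semi-ample moduli part are cleanest (and in the lc case, typically only proved) on log smooth models.
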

\begin{proof}
First, notice that since the fibres of $f$ are tangent to $\cal F$ there exists
a foliation $\cal G$ on $Y$ so that $\cal F = f^{-1}\cal G$.
We also have that there exists $M$ on $Y$ so that 
$K_{\cal F}+\Delta \sim_{\bb Q} f^*M$.

Consider a commutative diagram as follows

\begin{center}
\begin{tikzcd}
X' \arrow{d}{g} \arrow{r}{\nu} & X \arrow{d}{f} \\
Y' \arrow{r}{\mu} & Y \\
\end{tikzcd}
\end{center}
where $\mu, \nu$ are resolutions of singularities.
Let $\cal F'$ and $\cal G'$ be the transformed foliations
on $X'$ and $Y'$ respectively.

Write
\[
K_{\cal F'} +\Delta' = \nu^*(K_{\cal F}+\Delta).
\]
 and so we have $K_{\cal F'}+\Delta' \sim_{\bb Q} g^*(\mu^*M)$.

We claim that there exists an open subset $U\subset Y'$ such that $Y'\setminus U$ has codimension at least two and  such that, over $U$ we have that $K_{\cal F'/\cal G'} = K_{X'/Y'} - R$ where $R$ 
is $\cal F'$-invariant and $K_{\cal F'/G'}\coloneqq K_{\cal F'}-g^*K_{\cal G'}$. 
Indeed, $R$ is supported in the zero locus of a $1$-form obtained as the pull-back of a $1$-form on $Y'$ which defines $\cal G'$. 
Thus, the claim follows. 
By \cite[Theorem 8.3.7]{Kollar07} we may find
a nef $\bb Q$-divisor $J$ and an effective $\bb Q$-divisor $B$
such that $\mu^*M \sim_{\bb Q} K_{\cal G'}+B+J$.  Furthermore,
by \cite[Theorem 0.1]{Ambro04} in the case $\text{dim}(Y) = 1$ and \cite[Theorem 8.1]{PS09} when $\text{dim}(Y) = 2$ 
we know that $J$ is in fact semi-ample.

If $B=\sum a_iB_i$, then
\[
a_i=1-\sup \{t \mid (X', \Delta'-R+tg^*B_i) \text{ is lc above the generic point of } B_i\}.
\]
An explicit calculation
shows that 
\[
a_i=\epsilon(B_i) - \sup \{t \mid (\cal F', \Delta'+tg^*B_i) \text{ is lc above the generic point of } B_i\}.
\]

Thus, since $(\cal F', \Delta')$ is lc, it follows that $a_i \leq \epsilon(B_i)$
and $\mu_*B \geq 0$.

Since $Y$ is klt of dimension at most two, it follows that $Y$ is $\mathbb Q$-factorial.
Since $J$ is semi-ample the base locus of $\mu_*J$ consists of isolated points. 
 Thus, Fujita Theorem \cite[Theorem 1.10]{Fujita83} implies that $\mu_*J$ is semi-ample.
Letting $\Theta = \mu_*B$ and $D = \mu_*J$ gives our result.
\end{proof}

\begin{lemma}\label{l_lambda}
Let $X$ be a normal projective threefold and let $\mathcal F$ be a co-rank one foliation with non-dicritical singularities. Suppose that $(X, D)$ is klt for some $D \geq 0$.
Let $\Delta=A+B$ be a $\bb Q$-divisor such that $(\mathcal F,\Delta)$ is  an lc pair, $A\ge 0$ is an ample $\bb Q$-divisor and $B\ge 0$. Assume that $K_{\mathcal F}+\Delta$ is not nef, but there exists a $\bb Q$-divisor $H$ such that $K_{\mathcal F}+\Delta+H$ is nef. 
Let 
\[
\lambda=\inf\{t>0\mid K_{\mathcal F}+\Delta+tH \text{ is nef }\}.
\] 

Then there exists a $(K_{\mathcal F}+\Delta)$-negative extremal ray $R$ such that $(K_{\mathcal F}+\Delta+\lambda H)\cdot \xi=0$, for any $\xi\in R$. 
\end{lemma}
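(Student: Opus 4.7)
My plan is to adapt the classical nef-threshold argument from MMP with scaling to the foliated setting, using the cone theorem for lc foliated pairs (Theorem \ref{t_cone-lc}) as the main input. The ample piece $A$ inside $\Delta$ will play the role of the ample divisor $H$ in the cone theorem, which is what produces finiteness of the relevant extremal rays despite the fact that $\overline{NE}(X)$ may have infinitely many extremal rays in total.

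First I would fix a small $\epsilon\in(0,1)$ and write $\Delta=\Delta_0+\epsilon A$ with $\Delta_0:=B+(1-\epsilon)A$. Since $0\le\Delta_0\le\Delta$ and foliated discrepancies only increase when the boundary decreases, $(\cal F,\Delta_0)$ is again lc. Applying Theorem \ref{t_cone-lc} to $(\cal F,\Delta_0)$ with the ample divisor $\epsilon A$ yields a cone decomposition
\[
\overline{NE}(X) = \overline{NE}(X)_{K_{\cal F}+\Delta_0\ge 0}+\sum_j\bb R_+[\eta_j]
\]
in which only finitely many $\eta_j$ are $(K_{\cal F}+\Delta)$-negative. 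Since $A$ is ample, any $(K_{\cal F}+\Delta)$-negative curve is automatically $(K_{\cal F}+\Delta_0)$-negative, so each $(K_{\cal F}+\Delta)$-negative extremal ray of $\overline{NE}(X)$ must be one of these finitely many $\eta_j$. Enumerate them $R_1,\ldots,R_n$ with spanning curves $\xi_i$. Nefness of $K_{\cal F}+\Delta+H$ combined with $(K_{\cal F}+\Delta)\cdot\xi_i<0$ forces $H\cdot\xi_i>0$, so
\[
\lambda_i := \frac{-(K_{\cal F}+\Delta)\cdot\xi_i}{H\cdot\xi_i}>0
\]
is well-defined, and $\mu:=\max_i\lambda_i$ is attained at some ray $R_{i_0}$.

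It remains to show $\lambda=\mu$, from which $(K_{\cal F}+\Delta+\lambda H)\cdot\xi_{i_0}=0$ is immediate. Since $K_{\cal F}+\Delta+H$ is nef we have $\lambda\le 1$; for $t\in[0,1]$, the convex combination $K_{\cal F}+\Delta+tH=(1-t)(K_{\cal F}+\Delta)+t(K_{\cal F}+\Delta+H)$ lets me test nefness class by class in the cone decomposition. On any $\alpha$ in the $(K_{\cal F}+\Delta_0)$-nonnegative part, ampleness of $A$ on $\overline{NE}(X)$ gives $(K_{\cal F}+\Delta)\cdot\alpha = (K_{\cal F}+\Delta_0)\cdot\alpha + \epsilon A\cdot\alpha\ge 0$, so both terms of the convex combination are nonnegative. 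On each $(K_{\cal F}+\Delta)$-nonnegative $\eta_j$ the same argument applies. On the finitely many $R_i$, nonnegativity of $(K_{\cal F}+\Delta+tH)\cdot\xi_i$ is precisely the condition $t\ge\lambda_i$. Hence $K_{\cal F}+\Delta+tH$ is nef for $t\in[0,1]$ iff $t\ge\mu$, giving $\lambda=\mu$. No substantive obstacle arises beyond the finiteness step: once the ample piece $A$ is absorbed into the role of the ample divisor of Theorem \ref{t_cone-lc}, the remainder is elementary convex analysis on $\overline{NE}(X)$.
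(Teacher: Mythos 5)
Your proof is correct and follows essentially the same route as the paper: apply the cone theorem for lc foliated pairs, exploit the ample piece of $\Delta$ to extract finitely many $(K_{\cal F}+\Delta)$-negative extremal rays, and then identify $\lambda$ as a maximum of ratios over these finitely many rays. You spell out the decomposition $\Delta=\Delta_0+\epsilon A$ needed to invoke Theorem \ref{t_cone-lc} and the resulting convexity argument more explicitly than the paper does, but there is no substantive difference in the argument.
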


\begin{proof}
By Theorem \ref{t_cone-lc}, there exist only finitely many curves $\xi_1,\dots,\xi_m$ such that
if $R_i=\bb R_+ [\xi_i]$ for $i=1,\dots,m$, then $R_1,\dots,R_m$ are $(K_{\mathcal F}+\Delta)$-negative extremal rays. 

Let $C\coloneqq K_{\cal F}+\Delta+H$ and let 
\[
\mu=\min_i \frac{C\cdot \xi_i}{-(K_{\mathcal F}+\Delta)\cdot \xi_i}.
\]
It follows easily that $\mu=\frac{1-\lambda}{\lambda}$. By construction, there exists $j$ such that 
\[
\frac 1 \lambda (K_{\mathcal F}+\Delta + \lambda H )\cdot \xi_j=(\mu(K_{\mathcal F}+\Delta)+C)\cdot \xi_j=0.
\]
Thus, the claim follows by taking $R=R_j$. 
\end{proof}

\begin{lemma}\label{l_bpft}
Let $X$ be a normal projective threefold and let $\mathcal F$ be a co-rank one foliation with non-dicritical singularities. 
Suppose that $X$ is potentially klt. 
Let $\Delta\ge 0$ be a $\bb Q$-divisor such that $(\mathcal F,\Delta)$ is a log canonical pair,
and suppose there exists a small $\bb Q$-factorialisation $\pi\colon \overline{X} \rightarrow X$ such 
that if we write $K_{\overline{\cal F}}+\overline{\Delta} = \pi^*(K_{\cal F}+\Delta)$, where $\overline{\cal F}$ is the pulled back foliation on $\overline{X}$, 
then for any choice of $\epsilon>0$ we may find $\Theta$ such that  
$(1-\epsilon)\overline{\Delta} \leq \Theta \leq \overline{\Delta}$ and $(\overline{\cal F}, \Theta)$
is F-dlt.
Let $A\ge 0$ and $B\ge 0$ be $\bb Q$-divisors such that $\Delta= A+B$ and $A$ is ample. Assume that $K_{\mathcal F}+\Delta$ is nef.

Then $K_{\mathcal F}+\Delta$ is semi-ample.
\end{lemma}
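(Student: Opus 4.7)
The plan is to reduce the foliated base point free statement to the classical Kawamata--Shokurov base point free theorem for klt pairs, by comparing $K_{\cal F}+\Delta$ with the canonical divisor of a suitable klt pair on the ambient variety, in the spirit of the flip construction of Theorem~\ref{flipsexist}.

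First, I would pass to the small $\bb Q$-factorialization $\pi\colon\overline X\to X$ provided by the hypothesis. Since $\pi$ is small, semi-ampleness of $K_{\cal F}+\Delta$ is equivalent to that of $\pi^*(K_{\cal F}+\Delta)=K_{\overline{\cal F}}+\overline\Delta$. As $\pi^*A$ is big and nef, Kodaira's lemma gives $\pi^*A\sim_{\bb Q} H+E$ with $H$ ample and $E\geq 0$ supported on the $\pi$-exceptional locus. Combining with the F-dlt approximation hypothesis and Lemma~\ref{l_A+B}, we may replace $\overline\Delta$ by a $\bb Q$-linearly equivalent F-dlt boundary of the form $A'+B'$ with $A'$ ample. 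Hence we may assume throughout that $X$ is $\bb Q$-factorial, $(\cal F,\Delta)$ is F-dlt, $\Delta=A+B$ with $A$ ample, and $K_{\cal F}+\Delta$ is nef.

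Next, adapting the strategy of the flip construction, I would globally approximate the formal separatrices of $\cal F$ at each singular centre by honest effective $\cal F$-invariant divisors $S_1,\dots,S_k$ on $X$, possibly after passing to an \'etale cover, using the Artin--Elkik approximation results of the sections on approximating formal divisors and separatrices together with the non-dicriticality of $\cal F$. Together with small auxiliary divisors $D_j$ and suitable non-negative rational coefficients $a_j$, Corollary~\ref{basicpropertyfdlt2} and Lemma~\ref{singcomparison} allow one to arrange that the classical divisor
\[
M := K_X+\Delta+\textstyle\sum_k S_k+\sum_j a_j D_j
\]
is nef, is $\bb Q$-linearly equivalent to $K_{\cal F}+\Delta$ modulo an effective correction, and that the pair $(X,\Delta+\sum_k S_k+\sum_j a_j D_j)$ is klt after a mild perturbation.

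The classical Kawamata base point free theorem, applied to the klt pair above with the nef and big divisor $M$ (big because $\Delta$ contains the ample component $A$), then yields semi-ampleness of $M$. Since $K_{\cal F}+\Delta$ and $M$ agree up to an effective $\bb Q$-linearly equivalent summand, semi-ampleness descends to $K_{\cal F}+\Delta$. If the construction requires passing to an \'etale cover, the descent of semi-ampleness proceeds exactly as in Lemma~\ref{etaleflip}.

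The principal technical obstacle lies in the global construction of the invariant divisors $S_k$: several separatrices of $\cal F$ may exist only in formal neighborhoods of singular points or curves, and one must use the Artin--Elkik approximation--possibly after passing to \'etale covers--to promote them to honest divisors on $X$. A second subtlety is arranging that the comparison $K_{\cal F}+\Delta \sim_{\bb Q} M +(\text{effective})$ holds as a genuine $\bb Q$-linear equivalence rather than a merely numerical one, so that semi-ampleness can be transferred, and that the coefficients $a_j$ can be chosen to simultaneously preserve nefness of $M$ and klt-ness of the associated pair.
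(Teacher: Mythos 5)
Your proposal is not correct — it rests on a globalization step that cannot be carried out, and the paper's actual argument is fundamentally different in structure.

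The central difficulty is your second paragraph. The Artin--Elkik approximation results in the paper (Corollary~\ref{sepapprox}, Lemma~\ref{l_approx_etale}) only produce honest divisors approximating formal separatrices after passing to an \'etale neighborhood $Z'\to Z$ of the target of a birational contraction $f\colon X\to Z$: they are statements over the henselisation of the base at a point, and the divisor $S'$ one obtains lives on $X'=X\times_Z Z'$, not on $X$. There is no global target $Z$ in the base point free problem — you are trying to produce invariant divisors on all of $X$ (or a single \'etale cover of $X$), and formal separatrices through the singular locus of $\cal F$ are in general genuinely non-algebraic and admit no global extension. The flip construction in the paper exploits the fact that a flipping contraction is a proper birational morphism with one-dimensional fibers, so everything happens in a formal/henselian neighborhood of a curve; the BPF has no such local structure to exploit. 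Relatedly, the key numerical relation $\lambda(K_{\cal F}+\Delta)\equiv_f (K_X+\Delta)$ used in the flip proof is a relative equivalence over $Z$, valid precisely because $\rho(X/Z)=1$. There is no reason for a global analogue $M\sim_{\bb Q} K_{\cal F}+\Delta + (\text{effective})$ to hold: the difference $N^*_{\cal F}=K_X-K_{\cal F}$ can fail to be even pseudo-effective, and $K_X+\Delta+\sum S_k+\sum a_jD_j$ would typically be negative on curves transverse to the foliation (where $K_{\cal F}+\Delta$ sees no negativity at all). Your bigness claim also does not follow — $K_X$ could be arbitrarily negative.

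For comparison, the paper's proof is an induction on the Picard number via the MMP. One decomposes $\Delta=\Gamma+\tfrac12 A$, runs a scaling argument to find a $(K_{\cal F}+\Gamma)$-negative extremal ray $R$ on which $K_{\cal F}+\Delta$ is trivial, contracts $R$ using Theorem~\ref{t_nonqfactcontract}, and then either lowers $\rho(X)$ (birational case, verifying the F-dlt hypotheses descend using Lemma~\ref{l_lockltglobklt}) or drops dimension via the foliated canonical bundle formula (Lemma~\ref{canonicalbundleformula}) in the fibration case. The only place the separatrix-approximation machinery enters this argument is \emph{inside} the proofs of the contraction and flip theorems, always relative to a specific contraction; it is never invoked globally.
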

\begin{proof}
Let $\Gamma=\frac 1 2 A+ B$ and let 
\[
\lambda = \min\{t\ge 0\mid K_{\mathcal F}+\Gamma+tA \text{ is nef }\}.
\]
If $\lambda <1/2$ then $K_{\mathcal F}+\Delta$ is ample and there is nothing to prove. Thus, we may assume that $\lambda=1/2$. By Lemma \ref{l_lambda}, 
there exists a $(K_{\mathcal F}+\Gamma)$-negative extremal ray $R$ such that $(K_{\mathcal F}+\Delta)\cdot \xi =0$ for all $\xi\in R$.

Suppose that $\loc R \neq X$. By Theorem \ref{t_nonqfactcontract}, 
there exists a morphism $f\colon X\to X'$ which contracts exactly all the curves in $R$. 
Let $\mathcal F'$ be the transformed foliation on $X'$ and let 
$A'$ be an ample $\bb Q$-divisor on $X'$ such that $A-f^*A'$ is also ample. Then there exists a $\bb Q$-divisor $A''\ge 0$ on $X'$  and a $\bb Q$-divisor $B\ge 0$ on $X$ such that  $A'\sim_{\bb Q}A''$ and if  $\Delta':=f^*A''+B'$, then $(\cal F,\Delta')$ is log canonical.  Let $\Delta''$ be the image of $\Delta'$ in $X'$.
Then  $\Delta'' = A''+B''$ where $B'' \geq 0$ and
$(\cal F', \Delta'')$ is lc.
Note that $\rho(X')<\rho(X)$. Lemma \ref{l_lockltglobklt} implies
that $X'$ is potentially klt.

If $f$ is a flipping contraction then the existence of a small $\bb Q$-factorialisation $\pi\colon \overline{X'} \rightarrow X'$ satisfying the hypotheses of the lemma is an immediate consequence of the existence by such a small $\bb Q$-factorialisation for $X$.  By Lemma \ref{l_image_cont_non-dicrti}, it follows that  $\cal F'$ has non-dicritical singularities.
Thus we may replace
$(\cal F, \Delta)$ by $(\cal F', \Delta'')$ and continue.

Now suppose $f$ is a divisorial contraction.  Consider a diagram as in the proof of 
Lemma \ref{l_contraction_as}
\begin{center}
\begin{tikzcd}
\overline{X} \arrow{d}{\pi} \arrow[dashed]{r}{\overline{f}} & \overline{X'} \arrow{d}{\pi'} \\
X \arrow{r}{f} & X'\\
\end{tikzcd}
\end{center}
where $\pi\colon\overline{X} \rightarrow X$ is a small $\bb Q$-factorialisation satisfying the hypotheses of the lemma,
$\overline{f}$ is a $(K_{\overline{\cal F}}+\overline{\Gamma})$-MMP 
where $K_{\overline{\cal F}}+\overline{\Gamma} = \pi^*(K_{\cal F}+\Gamma)$
and $\pi'$ is the induced morphism.
We claim that $\pi'\colon \overline{X'} \rightarrow X'$ satisfies the hypotheses of the lemma.  It is immediate that $\overline{X'}$ is projective
and $\bb Q$-factorial since $\overline{X}$ is and $\pi'$ is small.
We may choose $\epsilon, \delta >0$ sufficiently small and  $\Theta$ on $\overline{X}$ such that $(\overline{\cal F}, \Theta)$ is F-dlt 
as guaranteed by our hypotheses
and such that $\overline{f}$ is $(K_{\overline{\cal F}}+\Theta-\delta\pi^*A)$-MMP.
Thus, if we let $\Theta' = \overline{f}_*(\Theta-\epsilon\pi^*A)$ we see that $(\overline{\cal F'}, \Theta')$ is F-dlt.
Observe again, that Lemma \ref{l_image_cont_non-dicrti} implies that  $\cal F'$ has non-dicritical singularities.
We may therefore replace $(\cal F, \Delta)$ by $(\cal F', \Delta'')$ and continue.
After finitely many steps we obtain the claim.

Now assume that $\loc R = X$.  
Let $H_R = K_{\cal F}+\Delta+A$ be a supporting hyperplane to $R$ where $A$ is an ample divisor
and let $\nu = \nu(H_R) <3$. Observe that $H_R^3 = 0$ and $(K_{\cal F}+\Delta)\cdot D_1\cdot D_2 <0$
where $D_i = H_R$ for $1 \leq i \leq \nu$ and $D_i=A$ otherwise.  Then we may apply \cite[Corollary 2.28]{Spicer17}
to see that $X$ is covered by rational curves tangent to $\cal F$ and spanning $R$.

Let $C$ be a general curve spanning $R$ tangent to $\cal F$
and let $C'$ be the strict transform of $C$ on $\overline{X}$.  Notice that $K_{\overline{\cal F}}\cdot C<0$ and
$K_{\overline{X}}\cdot C <0$.  
We may run a $K_{\overline{X}}$-MMP where each step of the 
MMP is $K_{\overline{\cal F}}+\overline{\Delta}$-trivial.  Call this MMP 
$\psi\colon \overline{X} \dashrightarrow W$ and set $\cal H = \psi_*\overline{\cal F}$
and $\Gamma =\psi_*\overline{\Delta}$.  Notice that $(\cal H, \Gamma)$ 
has canonical singularities.  Observe that it suffices to show that
$K_{\cal H}+\Gamma$ is semi-ample.  

Notice that our MMP will terminate with a  Mori fibre space $f\colon W \rightarrow Y$
whose fibres are tangent to $\cal H$.
Notice that, as in the proof of \cite[Theorem 8.13]{Spicer17}, it follows that $f$ is $K_X$-negative. Thus, $Y$ is klt.
By Lemma \ref{canonicalbundleformula}, there is a foliation
$\cal G$ on $Y$, a semi-ample divisor $D$ and $\Theta \geq 0$
such that $(\cal G, \Theta)$ is lc and
\[
K_{\cal H}+\Gamma \sim_{\bb Q} f^*(K_{\cal G}+\Theta+D).
\]

Let $G$ be an ample divisor on $Y$ and choose $0< \delta \ll 1$ such that 
$\Delta - \delta f^*G \sim_{\bb Q} \Delta' \geq 0$
and $(\cal F, \Delta')$ is lc.
Since $K_{\cal F}+\Delta' \sim_{\bb Q, f} 0$, we may apply Lemma \ref{canonicalbundleformula} again
to find $D'$ and $\Theta' \geq 0$ such that $\Theta+D \sim_{\bb Q} \Theta'+D+D'+\delta G \sim_{\bb Q} \Theta'' \geq 0$
where $(\cal G, \Theta'')$ is lc.  
Replacing $X$ and $(\cal F, \Delta)$ by
$Y$ and $(\cal G, \Theta'')$ respectively and proceeding as above, we obtain the claim.
\end{proof}

\begin{theorem}\label{t_bpf} Let $X$ be a normal projective threefold and let $\mathcal F$ be a co-rank one foliation with non-dicritical singularities. Suppose that 
$X$ is potentially klt. 
Let $\Delta$ be a $\bb Q$-divisor such that $(\mathcal F,\Delta)$ is a F-dlt pair.
Let $A\ge 0$ and $B\ge 0$ be $\bb Q$-divisors such that $\Delta= A+B$ and $A$ is ample. Assume that $K_{\mathcal F}+\Delta$ is nef. 

Then $K_{\mathcal F}+\Delta$ is semi-ample.  
\end{theorem}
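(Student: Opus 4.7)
The plan is to reduce the statement directly to Lemma~\ref{l_bpft} by constructing a small $\bb Q$-factorialisation of $X$ that verifies the extra hypothesis demanded there.

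First, I would appeal to the klt assumption on $(X, D)$ to produce a small $\bb Q$-factorialisation $\pi\colon \overline{X} \to X$. Setting $\overline{\Delta} = \pi_*^{-1}\Delta$ and writing $\overline{\cal F}$ for the induced foliation, smallness of $\pi$ yields the pullback identity $\pi^*(K_{\cal F}+\Delta) = K_{\overline{\cal F}} + \overline{\Delta}$ and makes the discrepancy of every divisor over $X$ the same with respect to the two pairs. Starting from a foliated log resolution $\phi\colon W \to X$ of $(\cal F, \Delta)$ whose extracted divisors all have discrepancy $>-\epsilon(E)$, I would refine $W$ (resolving the indeterminacy of $\pi^{-1}\circ \phi$) so that $\phi$ factors as $\phi = \pi\circ\phi'$ for some morphism $\phi'\colon W\to \overline{X}$. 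Any $\phi'$-extracted divisor is then also $\phi$-extracted, with the same discrepancy when computed against $(\overline{\cal F},\overline{\Delta})$, so $\phi'$ exhibits $(\overline{\cal F}, \overline{\Delta})$ as F-dlt.

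For any $\epsilon > 0$, taking $\Theta = \overline{\Delta}$ then satisfies $(1-\epsilon)\overline{\Delta} \leq \Theta \leq \overline{\Delta}$ with $(\overline{\cal F}, \Theta)$ F-dlt, fulfilling the F-dlt-type hypothesis of Lemma~\ref{l_bpft}. The remaining hypotheses of that lemma---log canonicity of $(\cal F,\Delta)$ (implied by F-dlt), the decomposition $\Delta = A+B$ with $A$ ample, and nefness of $K_{\cal F}+\Delta$---are immediate from the theorem's assumptions, so Lemma~\ref{l_bpft} applies and yields the semi-ampleness of $K_{\cal F}+\Delta$. The only delicate point is the descent of F-dlt under $\pi$, which I expect to be routine given the invariance of discrepancies under small birational morphisms, but still deserves explicit verification.
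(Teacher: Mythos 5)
Your reduction to Lemma~\ref{l_bpft} via a small $\bb Q$-factorialisation $\pi\colon \overline X\to X$ is exactly the route the paper takes: the paper's proof is the single sentence ``The Theorem follows immediately from Lemma~\ref{l_bpft}'', relying on the earlier one-line observation that the hypotheses of Theorem~\ref{t_nonqfactcontract} (which are identical to those of Lemma~\ref{l_bpft}) hold whenever $(\cal F,\Delta)$ is F-dlt. So your plan, including taking $\Theta=\overline\Delta$ and noting that the ampleness/nefness assumptions transfer trivially, matches the paper.

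The one place where your argument has a genuine gap is in the verification that $(\overline{\cal F},\overline\Delta)$ is F-dlt. You start from a foliated log resolution $\phi\colon W\to X$ witnessing F-dlt and then ``refine $W$'' so that it factors through $\overline X$, and you assert that ``any $\phi'$-extracted divisor is then also $\phi$-extracted.'' That statement is false after the refinement: resolving the indeterminacy of $\pi^{-1}\circ\phi$ replaces $W$ by a higher model $W'\to W$, and the exceptional divisors of $W'\to W$ are $\phi'$-extracted but not $\phi$-extracted, so their discrepancies are not controlled by the F-dlt hypothesis on $\phi$. In general the F-dlt witness is not preserved under further blow-ups: Lemma~\ref{l_logsmoothlogcanonical} only gives the weak inequality $a(E,\cal F,\Delta)\ge -\epsilon(E)$ for blow-ups of a foliated log smooth pair, and strict inequality can fail (e.g.\ blowing up the intersection of two coefficient-one transverse components of $\Delta$ produces discrepancy exactly $-1$). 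So you need an additional argument to show the new divisors appearing in the refinement also satisfy the strict bound (or that the refinement can be chosen to avoid such centres, using that it is only needed over the locus where $\pi$ fails to be an isomorphism). You flag this point yourself as ``deserving explicit verification''---that flag is well placed; the paper also passes over it, stating the F-dlt $\Rightarrow$ hypothesis implication without proof in the sentence following Theorem~\ref{t_nonqfactcontract}.
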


\begin{proof}
The Theorem follows immediately  from Lemma \ref{lem_fdlt_small_mod} and Lemma \ref{l_bpft}.
\end{proof}


\section{Minimal Model Program with scaling}

\label{S_mmp_with_scaling}

The goal of this section is to show the existence of a minimal model for a F-dlt pair $(\cal F,A+B)$ where $A
\ge 0$ is an ample $\bb Q$-divisor, $B\ge 0$ and such that $K_{\cal F}+A+B$ has non-negative Kodaira dimension.  To this end, we are not able to show termination of flips in general, but we can show that a special sequence of flips terminates. This process is called MMP with scaling. Below, we adopt many of the techniques used in \cite{BCHM06}. 

\medskip 

Let $f\colon X\dashrightarrow Y$ be a proper birational map of normal varieties and let $D$ be a  $\bb Q$-divisor on $X$ such that both $D$ and $D':=f_*D$ are $\bb Q$-Cartier. We say that $f$ is $D$-{\bf non-positive} if for any resolution of indeterminacy $p\colon W\to X$ and $q\colon W\to Y$, we may write
\[
p^*D=q^*D'+E
\]
where $E\ge 0$ is $q$-exceptional.

In particular, if  $(\mathcal F,\Delta)$ is a F-dlt foliation on a normal projective variety $X$, then a sequence of $(K_{\mathcal F}+\Delta)$-flips and divisorial contractions is a $(K_{\mathcal F}+\Delta)$-non-positive birational map. 
A {\bf minimal model} of $(\mathcal F,\Delta)$ is a $(K_{\mathcal F}+\Delta)$-non-positive birational map $f\colon X\dashrightarrow X'$ such that if $\mathcal F'$ is the transformed foliation on $X'$ 
and $\Delta'=f_*\Delta$, then 
\begin{enumerate}
\item $X'$ is $\mathbb Q$-factorial and klt and $\cal F'$ is non-dicritical, 
\item   $(\mathcal F',\Delta')$ is F-dlt and $K_{\mathcal F'}+\Delta'$ is nef, and 
\item if $E$ is a $f^{-1}$-exceptional divisor on $X'$ then $E$ is $\cal F'$-invariant and $a(E,\mathcal F)=0$. 
\end{enumerate}

%
%

\begin{lemma}\label{l_mmpws}
Let $\mathcal F$ be a co-rank one foliation with non-dicritical singularities on a normal $\bb Q$-factorial projective threefold $X$. 
Let $\Delta$ be a $\bb Q$-divisor such that $(\mathcal F,\Delta)$ is a F-dlt pair. Let $A\ge 0$ and $B\ge 0$ be $\bb Q$-divisors such that $\Delta\sim_{\bb Q} A+B$, $A$ is ample and $(\mathcal F,A+B)$ is F-dlt. Assume that $H\ge 0$ is a $\bb Q$-divisor such that $K_\mathcal F+\Delta+H$ is nef and  
\[
K_{\mathcal F}+\Delta\sim_{\mathbb R}D+\alpha H.
\]
where $\alpha \ge 0$, and $D\ge 0$ is a $\mathbb R$-divisor whose support is a union of lc centres of $(\mathcal F,\Delta)$. 

Then there exists a birational contraction $f\colon X\dashrightarrow Y$ which is a minimal model for $(\mathcal F,\Delta)$. 
\end{lemma}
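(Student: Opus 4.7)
The plan is to run a $(K_\mathcal F+\Delta)$-MMP with scaling of $H$. Set $(X_0,\mathcal F_0,\Delta_0,H_0,D_0)=(X,\mathcal F,\Delta,H,D)$ and suppose inductively that steps
\[
X=X_0\dashrightarrow X_1\dashrightarrow\cdots\dashrightarrow X_i
\]
of a $(K_\mathcal F+\Delta)$-MMP have been constructed, with transforms $(\mathcal F_i,\Delta_i,H_i,D_i)$. By Lemma~\ref{l_mmp-fdlt} the pair $(\mathcal F_i,\Delta_i)$ is F-dlt, and by Lemma~\ref{l_A+B} we may write $\Delta_i\sim_{\bb Q}A_i+B_i$ with $A_i$ ample, $B_i\ge 0$ and $(\mathcal F_i,A_i+B_i)$ F-dlt. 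Define
\[
\lambda_i=\inf\{t\ge 0\mid K_{\mathcal F_i}+\Delta_i+tH_i\text{ is nef}\}.
\]
If $\lambda_i=0$ then $f\colon X\dashrightarrow X_i$ is the desired minimal model. Otherwise, applying Lemma~\ref{l_lambda} to $(\mathcal F_i,A_i+B_i)$ produces a $(K_{\mathcal F_i}+\Delta_i)$-negative extremal ray $R_i$ on which $K_{\mathcal F_i}+\Delta_i+\lambda_iH_i$ vanishes. Its contraction exists by the F-dlt cone and contraction theorem (Theorem~\ref{t_cone-dlt} together with the discussion that follows it), and when the contraction is flipping its flip exists by Theorem~\ref{flipsexist}.

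I next claim no such contraction can be of fibre type. Let $C$ be a curve spanning $R_i$. Since $(K_{\mathcal F_i}+\Delta_i)\cdot C<0$, $(K_{\mathcal F_i}+\Delta_i+\lambda_iH_i)\cdot C=0$ and $\lambda_i>0$, we have $H_i\cdot C>0$. Using the relation $K_{\mathcal F_i}+\Delta_i\sim_{\bb R}D_i+\alpha H_i$,
\[
D_i\cdot C=(K_{\mathcal F_i}+\Delta_i)\cdot C-\alpha H_i\cdot C<0,
\]
so $C\subset\text{supp}(D_i)$. If the associated contraction were of fibre type, curves in $R_i$ would sweep out $X_i$, forcing $\text{supp}(D_i)=X_i$, which is impossible. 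Hence every step is birational; letting $X_{i+1}$ denote its target, $K_{\mathcal F_{i+1}}+\Delta_{i+1}+\lambda_iH_{i+1}$ is again nef, so $\lambda_{i+1}\le\lambda_i$ and the induction continues.

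The crux is termination. Divisorial contractions strictly drop the Picard rank and hence can occur only finitely often. For the flipping steps, the inequality $D_i\cdot C<0$ just established shows that every flipping curve lies in $\text{supp}(D_i)$, and $\text{supp}(D_i)$ remains a union of codimension-one lc centres of $(\mathcal F_i,\Delta_i)$ throughout the MMP: divisorial lc centres (whether $\mathcal F$-invariant divisors or non-invariant components of coefficient one in $\Delta_i$) are preserved by F-dlt flips and by divisorial contractions that do not contract them, while a component of $D_i$ that is itself contracted simply disappears from $D_{i+1}$. Special Termination (Theorem~\ref{t_spter}) then tells us that after finitely many flips the flipping locus is disjoint from every lc centre, contradicting the existence of any further flipping step in this sequence. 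Hence $\lambda_i=0$ for some $i$, producing the claimed minimal model. The main technical obstacle in this outline is the simultaneous preservation of the F-dlt condition and of the ``ample $+$ effective'' decomposition $A_i+B_i$ along the entire MMP, so that the cone, contraction and flip results apply at every step; this is exactly what Lemmas~\ref{l_mmp-fdlt} and~\ref{l_A+B} together with Theorem~\ref{flipsexist} are designed to guarantee.
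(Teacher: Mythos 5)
Your proof is correct and takes essentially the same route as the paper's: an MMP with scaling of $H$ driven by Lemma~\ref{l_lambda}, with the key observation that $D_i\cdot C<0$ forces every contracted extremal curve into $\text{supp}(D_i)$, followed by termination via Theorem~\ref{t_spter}. You make explicit two points the paper leaves implicit — that fibre-type contractions are excluded because $\text{loc}(R_i)\subseteq\text{supp}(D_i)\subsetneq X_i$, and that $\text{supp}(D_i)$ remains a union of divisorial lc centres along the MMP — both of which are needed and correct.
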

\begin{proof}
Note that, by Lemma \ref{singcomparison}, $X$ is klt. 
Let 
\[
\lambda =\inf \{ t>0\mid K_{\mathcal F}+ \Delta+tH \text { is nef}\}. 
\]
If $\lambda =0$, then $K_{\mathcal F}+\Delta$ is nef and there is nothing to prove. Otherwise, by 
Lemma \ref{l_lambda},  there exists a curve $\xi$ in $X$ such that $R=\bb R_+[\xi]$ is an extremal ray of $\overline {NE(X)}$ satisfying: 
\[
(K_{\mathcal F}+\Delta)\cdot \xi<0 \qquad\text{and}\qquad (K_{\mathcal F}+\Delta+\lambda H)\cdot \xi=0. 
\]
Note that, since $(D+\alpha H)\cdot \xi<0$, $\alpha\ge 0$ and $H\cdot \xi>0$, it follows that $\xi$ is contained in the support of $D$ and, in particular, $\xi$ intersects an lc centre of $(\mathcal F,\Delta)$. 

By Theorem \ref{flipsexist} and 
Theorem \ref{contractionsareextremal}, 
$R$ defines a  divisorial contraction or a flip $\phi\colon X\dashrightarrow X'$ and $X'$ is klt and $\mathbb Q$-factorial. Let $\mathcal F'$ be the transformed foliation on $X'$ and let $\Delta'$, $H'$  and $D'$ be the image in $X'$ of $\Delta$, $H$ and $D$ respectively. It follows that $K_{\mathcal F'}+\Delta'+\lambda H'$ is nef. 
By Lemma \ref{l_mmp-fdlt}, $(\mathcal F',\Delta')$ is F-dlt and by Lemma \ref{l_image_cont_non-dicrti}, $\cal F'$ is non-dicritical. 

By Lemma \ref{l_A+B}, there exist $\bb Q$-divisors $A'\ge 0$ and $B'\ge 0$ such that $\Delta'\sim_{\bb Q}A'+B'$, $A'$ is ample and $(\mathcal F', A'+B')$ is F-dlt.  

Thus, we may replace $X, \Delta,\cal F, D, H$ and $\alpha$ by $X', \cal F', \Delta', D', \lambda H'$ and $\alpha/ \lambda$ respectively and we proceed as above. 
Theorem \ref{t_spter} implies that, after finitely many steps, we obtain a minimal model of $(\mathcal F,\Delta)$. 
\end{proof}

\begin{lemma}\label{l_existence-of-minmod}
Let $\mathcal F$ be a non-dicritical co-rank one foliation on a 
smooth projective threefold $X$. 
Let $\Delta=A+B$ be a $\bb Q$-divisor such that 
$(\mathcal F,\Delta)$ is a F-dlt pair, 
$A\ge 0$ is an ample $\bb Q$-divisor and $B\ge 0$. Assume that there exists a $\bb Q$-divisor $D\ge 0$ such that 
\begin{enumerate}
\item $K_{\mathcal F}+\Delta\sim_{\mathbb Q}D$,
\item \label{i_logsmooth} $(\mathcal F,\Delta+D)$ is log smooth, and 
\item \label{i_component} any component of $D$ is either semi-ample or it is contained in the stable base locus of $D$. 
\end{enumerate}

Then there exists a birational contraction $f\colon X\dashrightarrow Y$  which is a minimal model for $(\mathcal F,\Delta)$. 
\end{lemma}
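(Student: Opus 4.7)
My plan is to argue by induction on the integer $n$ counting the components of $D$ contained in the stable base locus of $D$. When $n=0$, hypothesis (3) forces every component of $D$ to be semi-ample, so $D$ itself is semi-ample (a positive $\bb Q$-combination of semi-ample divisors is semi-ample, since base-point-freeness is preserved under addition), and therefore $K_{\cal F}+\Delta\sim_{\bb Q}D$ is in particular nef. Combined with the F-dlt hypothesis this shows that the identity on $X$ is already a minimal model, giving the base case.

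For the inductive step $n>0$, the plan is to reduce to Lemma \ref{l_mmpws} by enlarging the boundary so that $D$ becomes supported on lc centres. I decompose $D=P+N$ with $P$ the sum of the semi-ample components and $N$ the sum of the components contained in the stable base locus. Since $P$ is semi-ample, I choose a general effective $\bb Q$-divisor $F\sim_{\bb Q}P$ whose prime components are transverse to $\cal F$ and distinct from those of $\Delta+N$; then $K_{\cal F}+\Delta\sim_{\bb Q}F+N$. For each transverse prime divisor $T$ in the support of $F+N$, let $c_T$ be its coefficient in $\Delta$, and set
\[
\Delta^{*}:=\Delta+\sum_{T}(1-c_T)\,T.
\]
The pair $(\cal F,\Delta^{*})$ is still F-dlt: hypothesis (2) gives log smoothness, all coefficients remain at most $1$, and no invariant component is added, so Lemma \ref{l_logsmoothlogcanonical} together with the identity being a foliated log resolution does the job. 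Writing $\Delta^{*}=A+B^{*}$ with $B^{*}\geq 0$, I compute
\[
K_{\cal F}+\Delta^{*}\sim_{\bb Q}F+N+\sum_{T}(1-c_T)\,T=:D^{*},
\]
and every component of $D^{*}$ is either foliation invariant (hence an lc centre by the general remark in the text) or appears with coefficient $1$ in $\Delta^{*}$. Thus the support of $D^{*}$ is a union of lc centres of $(\cal F,\Delta^{*})$. Choosing $H:=tA$ for $t\gg 0$ so that $K_{\cal F}+\Delta^{*}+H$ is ample, I apply Lemma \ref{l_mmpws} to $(\cal F,\Delta^{*})$ with $\alpha=0$ and obtain a minimal model $f\colon X\dashrightarrow Y$ for $(\cal F,\Delta^{*})$.

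The final step, and the main obstacle, is to convert this into a minimal model for the original pair $(\cal F,\Delta)$, or else reduce the problem to a strictly smaller instance of the lemma and close the induction. The key observation is that the MMP furnished by Lemma \ref{l_mmpws} only contracts curves tangent to $\cal F$ and contained in lc centres of $(\cal F,\Delta^{*})$, hence in the support of $D^{*}$, and that the perturbation $\Delta^{*}-\Delta$ is supported precisely on these lc centres. I expect that after termination either the added components of $\Delta^{*}-\Delta$ are entirely contracted, so that on $Y$ the pair $(\cal F^{Y},f_{*}\Delta)$ is F-dlt with $K_{\cal F^{Y}}+f_{*}\Delta$ nef and $f$ is a minimal model of $(\cal F,\Delta)$, or else the pushed-forward divisor on $Y$ still satisfies hypotheses (1), (2), (3) but with a strictly smaller value of $n$, so that the inductive hypothesis applies to $Y$. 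Verifying this dichotomy, and in particular checking that (2) and (3) are preserved through each flip and divisorial contraction produced by Lemma \ref{l_mmpws}, is where the bulk of the technical work will lie, and it will depend crucially on the log smoothness of $(\cal F,\Delta+D)$ which survives each $(K_{\cal F}+\Delta^{*})$-non-positive step.
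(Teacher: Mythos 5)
Your overall plan—induct on the number of components of $D$ not already ``handled,'' perturb the boundary to make $D$ supported on lc centres, and invoke Lemma~\ref{l_mmpws}—is recognisably the same strategy as the paper's, but you have explicitly punted on the step that is actually hard, and the approach you sketch for that step is not what works.

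The crucial issue is the final paragraph. The paper first produces a minimal model $X\dashrightarrow X'$ for the \emph{perturbed} pair $(\cal F,\Delta+\lambda D_2)$ (its analogue of your $(\cal F,\Delta^*)$), and then runs Lemma~\ref{l_mmpws} again on $X'$ to obtain $X'\dashrightarrow Y$ which is a minimal model for $(\cal F',\Delta')$ itself; the output $Y$ is not a minimal model for the perturbed pair but for the original pushed-forward one. To promote the composite $f\colon X\dashrightarrow Y$ to a minimal model of $(\cal F,\Delta)$, the paper invokes Lemma~\ref{l_A+B} to write $f_*\Delta\sim_{\bb Q}A'+B'$ with $A'$ ample and then the base point free theorem (Theorem~\ref{t_bpf}) to conclude that $K_{\cal G}+f_*\Delta$ is \emph{semi-ample} on $Y$. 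Only then, on a common resolution $p\colon W\to X$, $q\colon W\to Y$, can one write $p^*(K_{\cal F}+\Delta)+F=q^*(K_{\cal G}+f_*\Delta)+E$ and argue that $F=0$: semi-ampleness forces the stable base locus of $q^*(K_{\cal G}+f_*\Delta)+E$ to be $\operatorname{supp}(E)$, while hypothesis~(\ref{i_component}) forces any component of $F$ to sit in the stable base locus of $p^*D+F$, a contradiction unless $F=0$. Your proposal never mentions Theorem~\ref{t_bpf}, and your ``I expect that after termination either\ldots or\ldots'' dichotomy is not a proof; there is no obvious reason why the MMP steps from Lemma~\ref{l_mmpws} should preserve hypotheses (2) and (3), nor why the added components of $\Delta^*-\Delta$ should be cleanly contracted.

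There are also two smaller divergences worth flagging. First, your induction counts components of $D$ in the stable base locus, whereas the paper inducts on the number $k$ of components of $D$ that are \emph{not} lc centres of $(\cal F,\Delta)$; the paper's scaling parameter $\lambda=\sup\{t>0\mid(\cal F,\Delta+tD_2)\text{ is F-dlt}\}$ is chosen precisely so that one additional component acquires coefficient $1$ (hence becomes an lc centre), decrementing $k$, and this bookkeeping is load-bearing. Second, replacing $P$ by a general $F\sim_{\bb Q}P$ and then raising all transverse components of $F+N$ to coefficient $1$ in a single jump is risky: you would need to verify that $(\cal F,\Delta+F+N)$ is still foliated log smooth (hypothesis~(2) applies to $\Delta+D$, not to $\Delta+F+N$), and that the one-shot perturbation $\Delta^*$ really is F-dlt. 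The paper avoids this by perturbing only by $\lambda D_2$ at each step and keeping everything inside the support of $\Delta+D$.

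In short: the framework is reasonable and parallel to the paper's, but the lemma is not proved, because the $(K_{\cal F}+\Delta)$-non-positivity of the final contraction—which hinges on the base point free theorem and a base-locus argument using hypothesis~(3)—is exactly what you have left as an open expectation.
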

\begin{proof}
We may write $D=D_1+D_2$ where $D_1,D_2\ge 0$ and  the components  of $D_1$ are exactly the components of $D$ which are lc centres of $(\mathcal F,\Delta)$. Note that, in particular, $D_1$ contains all the components of $D$ which are $\mathcal F$-invariant. 
Let $k$ be the number of components of $D_2$. We proceed by induction on $k$. 

If $k=0$, then $D_2=0$ and the support of $D$ is a union of lc centres of $(\mathcal F,\Delta)$. Let $H$ be a sufficiently ample $\bb Q$-divisor such that
 $K_{\mathcal F}+\Delta+H$ is ample. Then Lemma \ref{l_mmpws} implies that 
 there exists a birational contraction $f\colon X\dashrightarrow Y$ which is a minimal model for $(\mathcal F,\Delta)$. 
 
We now assume that $k>0$. Let 
\[
\lambda = \sup\{t\ge 0\mid (\mathcal F,\Delta+tD_2) \text{ is } F\text{-dlt}\}
\]
By Item (2), it follows that  $\lambda>0$ and $(\cal F,\Delta+\lambda D_2)$ is F-dlt.
Note that $\lambda\in \mathbb Q$. 
 Moreover, we have
\[
K_{\mathcal F}+\Delta+\lambda D_2\sim_{\mathbb Q} D+\lambda D_2
\]
By induction, it follows that $(\mathcal F,\Delta+\lambda D_2)$ admits a minimal model $X\dashrightarrow X'$, which is a birational contraction. Let $\mathcal F'$ be the transformed foliation on $X'$ and let $\Delta', D', D'_1$ and $D'_2$ be the image of $\Delta, D, D_1$ and $D_2$ on $X'$ respectively. Let $H'=\lambda D'_2$. Then $K_{\mathcal F'}+\Delta'+ H'$ is nef and 
\[
K_{\mathcal F'}+\Delta'\sim_{\mathbb Q} D'_1+ \frac 1 \lambda H'.
\]
Thus, Lemma \ref{l_mmpws} implies that 
there exists a birational contraction $X'\dashrightarrow Y$ which is a minimal model for $(\mathcal F',\Delta')$.

Let $f\colon X\dashrightarrow Y$ be the induced map. Note that $f$ is a birational contraction. 
In order to show that $f\colon X\dashrightarrow Y$ is a minimal model for $(\mathcal F,\Delta)$, it is enough to show that $f$ is $(K_{\mathcal F}+\Delta)$-non-positive. Let $\mathcal G$ be the transformed foliation on $Y$ and let $\Gamma=f_*\Delta$. By Lemma \ref{l_A+B}, there exists $\bb Q$-divisors $A'\ge 0$ and $B'\ge 0$ such that $\Gamma\sim_{\bb Q}A'+B'$, $A'$ is ample and $(\mathcal F',A'+B')$ is F-dlt. 
Thus,  Theorem \ref{t_bpf} implies that $K_{\mathcal G}+\Gamma$ is semi-ample. 

Let  $p\colon W\to X$ and $q\colon W\to Y$ be a  resolution of indeterminacy of $f$. Then, we may write 
\[
p^*(K_{\mathcal F}+\Delta)+F=q^*(K_{\mathcal G}+\Gamma)+E
\]
where $E,F\ge 0$ are $q$-exceptional $\bb Q$-divisors  without any common component. Since $K_{\mathcal G}+\Gamma$ is semi-ample, it follows that the stable base locus of $q^*(K_{\mathcal G}+\Gamma)+E$ coincides with the support of $E$. Let us assume that $F\neq 0$. Then, we claim that there exists a component $S$ of $F$ which is contained in the stable base locus of $p^*D+F$. 
Indeed either there exists a component $S$ of $F$ which is $p$-exceptional and the claim follows immediately or the image $T$ of a component $S$ of $F$  in $X$ is $f$-exceptional. In particular, $T$ is contained in the support of $D$ and, by Item \ref{i_component}, $T$ is contained in the stable base locus of $D$. 
It follows that $S$ is contained in the stable base locus of $p^*D+F$. Thus, $S$ is a component of $E$, a contradiction. It follows that $F=0$ and, in particular, $f$ is $(K_{\mathcal F}+\Delta)$-non-positive. Thus, $f\colon X\dashrightarrow Y$ is a minimal model for $(\mathcal F,\Delta)$. 
\end{proof}

\begin{theorem}\label{t_existence-minmod}
Let $\mathcal F$ be a co-rank one foliation with non-dicritical singularities on a  $\bb Q$-factorial projective threefold $X$. 
Let $\Delta=A+B$ be a $\bb Q$-divisor such that $(\mathcal F,\Delta)$ is a F-dlt pair, 
$A\ge 0$ is an ample $\bb Q$-divisor and $B\ge 0$. Assume that there exists a $\bb Q$-divisor $D\ge 0$ such that 
$K_{\mathcal F}+\Delta\sim_{\mathbb Q}D$. 

Then $(\mathcal F,\Delta)$ admits a minimal model. 
\end{theorem}

\begin{proof}
By Lemma \ref{l_exc}, after possibly replacing $A$ by a $\bb Q$-equivalent divisor, we may assume that
 $\lfloor \Delta \rfloor = 0$ and that, for any exceptional divisor $E$ over $X$, if $a(E,\cal F,\Delta)=-\epsilon(E)$ then $E$ is invariant and $a(E,\cal F)=a(E,\cal F,\Delta)=0$. 
 
By \cite[Proposition 3.5.4]{BCHM06}, we may find a positive integer $m$ and $\bb Q$-divisors $P\ge 0$ and $N\ge 0$ such that $P+N\sim_{\bb Q}D$ and any component of $N$ is  contained in the stable base locus of $P+N$, whilst every component $\Sigma$ of $P$   is  such that $m\Sigma$ is mobile. Let $\pi\colon Z\to X$ be a foliated log resolution of $(\cal F,\Delta+P+N)$ which also resolves the base locus of $|m\Sigma|$ for any component $\Sigma$ of $P$.
Let $\mathcal G$ be the transformed foliation on $Z$. We may write 
\[
K_{\mathcal G}+\Delta_Z=\pi^*(K_{\mathcal F}+\Delta)+F
\]
for some $\bb Q$-divisors $\Delta_Z,F\ge 0$ without common components.
Let $C\ge 0$ be a $\pi$-exceptional $\bb Q$-divisor on $Z$ such that $\pi^*A-C$ is ample. Notice that $\pi^*A-tC$ is ample for any $0<t<1$. 

Thus, there exist $\delta,\epsilon >0$  and a $\bb Q$-divisor 
 $\Gamma\sim_{\bb Q} \Delta_Z -\delta C + \epsilon\sum  E_i$ where the sum is taken over all the non-invariant $\pi$-exceptional divisors and such that 
\begin{enumerate}
\item $\Gamma=A'+B'$ where $A'\ge 0$ is an ample $\mathbb Q$-divsor and $B'\ge 0$, 
\item $(\mathcal G,\Gamma)$ is F-dlt, 
\item we may write 
\[
K_{\mathcal G}+\Gamma\sim_{\mathbb Q}\pi^*(K_{\mathcal F}+\Delta)+F'
\]
where $F'\ge 0$ is a $\pi$-exceptional $\bb Q$-divisor, whose support contains every exceptional divisor $E$ of $\pi$ such that $a(E,\mathcal F)> -\epsilon(E)$, 
\item there exists an effective divisor $D'\sim_{\bb Q} K_{\mathcal G}+\Gamma$ such that any component of $D'$ is either semi-ample or it is contained in the stable base locus of $D'$, and 
\item $(\mathcal G,\Gamma+D')$ is a log smooth foliation.
\end{enumerate}

Lemma \ref{l_existence-of-minmod} implies that $(\mathcal G,\Gamma)$ admits a minimal model $g\colon Z\dashrightarrow Y$, which is a birational contraction. We want to show that the induced map $f\colon X\dashrightarrow Y$ is a minimal model of $(\mathcal F,\Delta)$. 
Let $p\colon W\to Z$ and $q\colon W\to Y$ be proper birational morphisms that resolve the indeterminacy locus of $g$. Let $r\colon W\to X$ be the induced morphism. Since $g$ is $(K_{\mathcal G}+\Gamma)$-non-positive, we may write 
\[
p^*(K_{\mathcal G}+\Gamma) = q^*(K_{\mathcal F'}+\Gamma')+G
\]
where $\mathcal F'$ is the transformed foliation on $Y$, $\Gamma'=g_*\Gamma$ and $G\ge 0$ is $q$-exceptional. On the other hand, we also have 
\[
p^*(K_{\mathcal G}+\Gamma)\sim_{\mathbb Q}r^*(K_{\mathcal F}+\Delta)+p^*F'.
\]

Since $K_{\mathcal F'}+\Gamma'$ is nef, the negativity lemma implies that $G\ge p^*F'$.
In particular, the support of $G$ contains 
every exceptional divisor $E$ of $\pi$ such that $a(E,\mathcal F)> -\epsilon(E)$.
 Thus, if $E'$ is a $f^{-1}$-exceptional divisor on $Y$ then $E'$ is invariant and $a(E',\mathcal F)=-\epsilon(E)=0$.
 Moreover, it follows that $f$ is $(K_{\mathcal F}+\Delta)$-non-positive. Thus, the claim follows. 
\end{proof}

\section{Existence of F-terminalisations}

\begin{theorem}[Existence of F-terminalisations]
\label{t_existenceFterminalization}
Let $\cal F$ be a co-rank one 
foliation on a normal variety $X$ of dimension $\leq 3$.
Let $(\cal F, \Delta)$ be a foliated pair with $\Delta\ge 0$.

Then there exists a birational morphism $\pi\colon Y \rightarrow X$
such that 
\begin{enumerate}
\item  if $\cal G$ is the transformed foliation and $\Delta_Y = \pi_*^{-1}\Delta$, then $(\cal G, \Delta_Y)$
is F-dlt, canonical and terminal along $\sing Y$,

\item $Y$ is klt and $\bb Q$-factorial and

\item $K_{\cal G}+\Delta_Y+E = \pi^*(K_{\cal F}+\Delta)$ where $E \geq 0$.
\end{enumerate}
\end{theorem}
\begin{proof}
We assume that $\dim X = 3$.  The case where $\dim X = 2$ is similar.
Let $\mu\colon W\to X$ be a foliated log resolution of $\cal F$, let $\cal H$ be the pulled back foliation on $W$ and let 
$\Delta' =\mu_*^{-1}\Delta$. Let $A$ be an
ample Cartier divisor on $X$ and let $C\ge 0$ be a $\mu$-exceptional $\bb Q$-divisor on $W$  such that $\mu^*A-C$ is ample. 
Let $0<\delta\ll 1$ such that if 
$\Gamma= \mu^*A-\delta C$
then 
\[
K_{\cal H}+\Delta'+\Gamma+G_1=\mu^*(K_\cal F+\Delta+A) +G_2
\]
where $G_1,G_2\ge 0$ are $\mu$-exceptional $\bb Q$-divisors without any common component and the support of $G_2$ contains all the $\mu$-exceptional divisors with discrepancy greater than zero with respect to $(\cal F,\Delta)$.

For $0<\epsilon \ll 1$ we have that $\mu^*A-\delta C+\epsilon \lfloor \Delta'\rfloor$ is ample. Thus,  by Lemma \ref{l_perturbation} and Remark \ref{r_simple=ndc} we may find $0\le A'\sim_{\bb Q}\Gamma+\epsilon \lfloor \Delta'\rfloor$
such that $(\cal H, \Delta''+A')$ is F-dlt where $\Delta'' \coloneqq \Delta' -\epsilon \lfloor \Delta'\rfloor$.
We claim that if $n\ge 6$ then any $(K_\cal H+\Delta''+A'+n\mu^*A)$-negative extremal ray is generated by a curve which is contracted by $\mu$. 
Indeed let $C$ be a curve spanning a $(K_\cal H+\Delta''+A')$-negative extremal ray and suppose
that $\mu_*C \neq 0$.  On one hand, Theorem \ref{t_cone-dlt} implies that $-(K_\cal H+\Delta''+A')\cdot C \leq 6$, on the other
hand $n\mu^*A\cdot C \geq 6$ and so $(K_\cal H+\Delta''+A'+n\mu^*A)\cdot C \geq 0$, proving our claim.

By choosing $n$ sufficiently large, we may also assume that there exists a $\bb Q$-divisor $D\ge 0$ on $W$ 
such that $D\sim_{\mathbb Q}K_{\cal H}+\Delta''+A'+n\mu^*A$. 
By Lemma \ref{l_perturbation}, Remark \ref{r_simple=ndc} and Corollary \ref{c_canonical_perturb}
we may find $0\le A''\sim_{\bb Q}A' + n\mu^*A$  such that  $(\cal H, \Delta''+A'')$ 
is F-dlt and canonical. 
By Theorem \ref{t_existence-minmod}, $K_{\cal H}+\Delta''+A''$ admits a minimal model $f\colon W\dashrightarrow Y$.

We claim that each step in this MMP will be an MMP over $X$.  Indeed, as observed above, the first step
of this MMP must contract only $\mu$-exceptional curves, and so this first step is a step over $X$.
Let $ W' \dashrightarrow W''$ be an intermediate step of this MMP.  By induction, we have a morphism $\mu'\colon W' \rightarrow X$
and, as observed earlier, Theorem \ref{t_cone-dlt} implies that if $f'\colon W\dashrightarrow W'$ is the induced map then 
every $(K_{f'_*\cal H}+f'_*\Delta''+f'_*A'')$-negative extremal ray
is spanned by a curve $C$ such that $(\mu')^*A\cdot C = 0$, and so this step of the MMP will again be a step over $X$.
Let $\pi \colon Y \rightarrow X$ be the induced morphism.



Note that,  Lemma \ref{l_negativity} implies that if  $\cal G$ is the transformed foliation and $\Delta_Y = f_*\Delta' = \pi_*^{-1}\Delta$, 
then  $\cal G$ is  F-dlt and canonical
and, moreover it is terminal along $\sing Y$.
 Moreover, by definition of minimal model, we have that
$Y$ is klt and $\bb Q$-factorial. Finally, $f_*(G_2-G_1)$
is nef over $X$ and $\pi$-exceptional
and so the negativity lemma applies to show that $f_*G_2=0$.
Thus, if $E:= \pi^*(K_{\cal F}+\Delta)-(K_{\cal G}+\Delta_Y)$ then $E \geq 0$.
\end{proof}

\begin{defn}
We call a modification $\pi\colon Y \rightarrow X$ as in Theorem \ref{t_existenceFterminalization}
an {\bf F-terminalisation} for the foliated pair $(\cal F, \Delta)$. 
\end{defn}

\begin{theorem}
\label{t_canimpliesnondicritical}
Let $(\cal F, \Delta)$ be a foliated pair on a projective threefold $X$.
Assume that $\cal F$ is a co-rank one foliation and either
\begin{enumerate}
\item $(\cal F, \Delta)$ is F-dlt or
\item $(\cal F, \Delta)$ is canonical.
\end{enumerate}

Then $\cal F$ has non-dicritical singularities.

Furthermore, if $(\cal F, \Delta)$ is F-dlt and $K_X$ is $\bb Q$-Cartier then $X$ is klt.
\end{theorem}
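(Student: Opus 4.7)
The plan is to argue by contradiction in both cases, producing a non-invariant exceptional divisor $F$ over $X$ with discrepancy $a(F,\cal F,\Delta) \le -1 = -\epsilon(F)$. This contradicts the canonical hypothesis (which demands $a(F,\cal F,\Delta)\ge 0$) and it also contradicts F-dlt, since F-dlt implies log canonical (Lemma \ref{l_logsmoothlogcanonical} together with Definition \ref{d_fdlt}) and hence $a(F,\cal F,\Delta)\ge -\epsilon(F)$ with strict inequality witnessed on some resolution.

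Suppose then that $\cal F$ has a dicritical singularity at some $q\in X$. By definition, there is a sequence of blow-ups producing a divisor over $q$ which is non-invariant with respect to the transformed foliation. I would take a foliated log resolution $\pi\colon Y \to X$ of $(\cal F,\Delta)$ factoring through this sequence, which exists by Cano's theorem \cite{Cano}. On $Y$ the transformed foliation $\cal F_Y$ has simple singularities (hence is itself non-dicritical), while by construction some $\pi$-exceptional prime divisor $E \subset \pi^{-1}(q)$ satisfies $\epsilon(E) = 1$. Since $E$ is not $\cal F_Y$-invariant, the restriction $\cal G := \cal F_Y|_E$ is a rank-one foliation on the smooth surface $E$, whose singular locus is controlled by the simple singularities of $\cal F_Y$ meeting $E$ together with the curves where $E$ crosses the invariant separatrices of $\cal F_Y$.

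The crux of the argument is an explicit discrepancy computation. I would pick a singular point $p$ of $\cal G$ on $E$ and blow up $p$ in $Y$, producing a new exceptional divisor $F$ on a higher model. Using the formal normal forms for simple singularities of types (1) and (2) together with an adjunction-type identity
\[
(K_{\cal F_Y}+E)|_E = K_{\cal G}+\Theta
\]
for some effective $\Theta\ge 0$ (compare Lemma \ref{adjunction} in the case $\epsilon(S)=1$), I would verify that $F$ is non-invariant with respect to the transformed foliation and that the bound
\[
a(F,\cal F,\Delta) \le -1
\]
holds, possibly after iterating the blow-up procedure at a suitably chosen sequence of centres on the strict transforms of $E$. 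The main obstacle I expect is precisely this calculation: tracking the adjunction contribution along the non-invariant exceptional divisor $E$, exploiting Cano's formal classification of simple singularities, and checking that the singular structure of $\cal G$ on $E$ unavoidably forces the new discrepancy below $-\epsilon(F)$.

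Once non-dicriticality is proved, the final clause is immediate from Lemma \ref{singcomparison}. Indeed, if $(\cal F,\Delta)$ is F-dlt then by Lemma \ref{l_exc} we may assume $\lfloor \Delta \rfloor = 0$; letting $D$ denote the local sum of all $\cal F$-invariant divisors through any chosen point of $X$, Lemma \ref{singcomparison} gives that $(X, \Delta + (1-\epsilon)D)$ is klt for every sufficiently small $\epsilon>0$. Since the boundary $\Delta + (1-\epsilon)D$ is effective and $K_X$ is assumed $\bb Q$-Cartier, it follows that $a(E,X,0) \ge a(E,X,\Delta+(1-\epsilon)D) > -1$ for every exceptional divisor $E$, so $X$ itself is klt.
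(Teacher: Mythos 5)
Your proposal for non-dicriticality takes a genuinely different route from the paper, and the difference is not cosmetic. You aim for a purely local argument: find a foliated log resolution dominating the dicritical blow-up sequence, restrict the foliation to the non-invariant exceptional divisor $E$, and then show by explicit blow-ups of singular points of $\cal G = \cal F_Y|_E$ that some higher non-invariant divisor $F$ has $a(F,\cal F,\Delta) \le -1$. The paper's proof is, by contrast, essentially global: it runs a carefully chosen relative MMP from the resolution to contract exactly the non-invariant $\mu$-exceptional divisors (by the negativity lemma), producing a small modification $\nu\colon Y\to X$ with every $\nu$-exceptional divisor invariant, and then derives a contradiction by intersecting the strict transform $C'$ of a transverse curve in $\mu^{-1}(P)$ with strict transforms of Cartier divisors through $P$ and invoking foliation subadjunction \cite[Theorem 4.3]{Spicer17} to forbid $(K_{\cal G}+\nu_*^{-1}\Delta+\lambda D)\cdot C' < 0$ at an lc centre. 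The mechanism of contradiction is entirely different.

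The proposal as written has a genuine gap. You explicitly defer the heart of the matter---\emph{``The main obstacle I expect is precisely this calculation''}---and that calculation is not routine. It is not clear what centres to blow up, why the adjunction coefficient $\Theta$ along $E$ forces the discrepancy below $-1$, or how to handle the case $a(E,\cal F,\Delta)\ge 0$, where $E$ contributes nothing negative to the boundary on $Y$ and further blow-ups inside $E$ can only raise the discrepancy. Furthermore, even if one produced $F$ with $a(F,\cal F,\Delta) \le -1$, this contradicts canonical but not F-dlt: an F-dlt pair is only required to be log canonical overall (Lemma \ref{l_logsmoothlogcanonical}), and $a(F) = -1 = -\epsilon(F)$ is perfectly consistent with the existence of the distinguished resolution in Definition \ref{d_fdlt}. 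You would need to show strict inequality $a(F) < -1$, or use a different contradiction in the F-dlt case (which is why the paper treats the two cases separately). Finally, for the klt clause: Lemma \ref{l_exc} requires $\Delta = A+B$ with $A$ ample, which is not assumed here (the reduction to $\lfloor\Delta\rfloor=0$ works by scaling $\Delta\rightsquigarrow (1-\epsilon)\Delta$, but that is a different argument), and Lemma \ref{singcomparison} requires $X$ $\bb Q$-factorial and the invariant divisor $D$ to actually be a $\bb Q$-Cartier divisor on $X$; under the stated hypotheses ($K_X$ $\bb Q$-Cartier only) neither is available, and the separatrices through a point may be only formal. The paper avoids these problems by running a $(K_{\cal H}+\mu_*^{-1}\Delta+F)$-MMP over $X$ from a log resolution, showing via the negativity lemma that the output $\nu\colon Y\to X$ is small with $Y$ klt, whence $K_Y = \nu^*K_X$ and $X$ is klt.
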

\begin{proof}
We will only prove the case where $(\cal F, \Delta)$ is F-dlt. The other one may be handled in a similar manner.
Let $\mu\colon W\to X$ be a foliated log resolution of $(\cal F, \Delta)$ 
which only extracts divisors $E$ of discrepancy $-\epsilon(E)$ and let $\cal H$ be the pulled back foliation on $W$.
Our result follows if we can show $\mu^{-1}(P)$ is tangent to $\cal H$ for all $P \in X$.
So suppose for sake of contradiction that there is some $P$ such 
that $\mu^{-1}(P)$ is not tangent to $\cal H$ and let $C\subset \mu^{-1}(P)$ be a general curve transverse
to the foliation.

Write $K_{\cal H}+\Gamma = \mu^*(K_{\cal F}+\Delta)+E$ where $E \geq 0$ is $\mu$-exceptional, $\Gamma \geq 0$ and $\mu_*\Gamma = \Delta$, 
so that $E$ and $\Gamma$ do not have any common component. 
Observe that $\lfloor \Gamma -\mu_*^{-1}\Delta\rfloor = 0$.
Let $A\ge 0$ be an ample divisor on $X$ and let $G\ge 0$ be a $\mu$-exceptional $\bb Q$-divisor on $W$ such that $\mu^*A-G$ is ample. 
Let $F$ be the sum of all the $\mu$-exceptional non-invariant divisors. 
There exist sufficiently small $\epsilon, \delta>0$ such that if $\Theta=\mu^*A-\delta G+\Gamma +\epsilon F$, then we may write
\[
K_\cal H + \Theta+E_1=\mu^*(K_\cal F+\Delta+A)+E_2
\]
where $E_1, E_2\ge 0$ are $\mu$-exceptional $\bb Q$-divisors without common components and such that the  support of $E_2$ contains all the $\mu$-exceptional non-invariant divisors. 

As in the proof of Theorem \ref{t_existenceFterminalization}, by Theorem \ref{t_existence-minmod}, we may run a $(K_{\cal H}+\Theta +n\mu^*A)$-MMP $\phi\colon W\dashrightarrow Y$, where $n$ is sufficiently large so that the induced map $\nu\colon Y\to X$ is a proper morphism. Let $\cal G$ be the transformed foliation on $Y$. Notice that, the negativity lemma  implies that 
$\phi_*E_2=0$ and, in particular, $\phi$ contracts all the non-invariant $\mu$-exceptional divisors. 
Moreover, we have
\[
E_2-E_1=E-\delta G+\epsilon F.
\]
Thus, if $\delta$ is sufficiently small, then the support of $E$ is contained in the support of $E_2$ and therefore $\phi_*E=0$. 
It follows that
\[
K_{\cal G}+\phi_*\Gamma = K_{\cal G}+\nu_*^{-1}\Delta  = \nu^*(K_{\cal F}+\Delta)
\]
and that every $\nu$-exceptional divisor is $\cal G$-invariant.
Since $C$ is transverse to the foliation we have $0 \neq \phi_*C =: C'$ is also transverse to the foliation and so 
is not contained in any $\nu$-exceptional divisor.  Let $A_1$ and $A_2$ be two distinct effective Cartier divisors containing 
$\nu(C') = P$ and write $\nu^*A_i = \nu_*^{-1}A_i + B_i$ where $B_i \geq 0$ is $\nu$-exceptional.
On one hand we know that $B_i\cdot C' >0$, on the other hand we know that $\nu^*A_i\cdot C' = 0$ and so $\nu_*^{-1}A_i \cdot C' <0$.
Let $D = \nu_*^{-1}A_1+\nu_*^{-1}A_2$
and let
\[
\lambda = \sup \{ t \geq 0 | (\cal G, \nu_*^{-1}\Delta+tD) \text{ is log canonical at the generic point of } C' \}.
\]
We claim that $\lambda >0$.  Indeed, suppose that $\lambda = 0$.    In this case $C'$ is an lc centre
of $(\cal G, \nu_*^{-1}\Delta)$, which in turn implies that $\nu(C') = P$ is an lc centre
of $(\cal F, \Delta)$. By Lemma \ref{l_fdlt-logsmooth}, it follows that $\cal F$ has simple singularities near $P$, and so, by  Remark \ref{r_simple=ndc}, we have that $\cal F$
is non-dicritical near $P$, a contradiction.

Thus, by the definition of $\lambda$, it follows that  $C'$ is an lc centre of $(\cal G, \nu_*^{-1}\Delta+\lambda D)$. Moreover, we have 
$(K_{\cal G}+\nu_*^{-1}\Delta+\lambda D)\cdot C' <0$.  
This however is a contradiction of foliation subadjunction, \cite[Theorem 4.5]{Spicer17} which implies that 
$(K_{\cal G}+\nu_*^{-1}\Delta+\lambda D)\cdot C' \geq 0$. 

To see our final claim, since $(\cal F, \Delta)$ is F-dlt, we may find a log resolution $\mu\colon W \rightarrow X$ only extracting divisors $E$ of discrepancy $>-\epsilon(E)$.  We run a 
$(K_{\cal H}+\mu_*^{-1}\Delta+F)$-MMP over $X$, where $F$ is the sum of all the $\mu$-exceptional non-invariant divisors. Note that this MMP
terminates by Corollary \ref{specialcaseMMP}. 
Let $\phi\colon W\dashrightarrow Y$ be the output
of this MMP, with induced morphism $\nu\colon Y\rightarrow X$. Observe that $Y$ is klt. By the negativity lemma, we know that $\nu$
is small and so $K_Y = \nu^*K_X$ which implies
that $X$ is klt.
\end{proof}

\begin{remark}
Theorem \ref{t_canimpliesnondicritical}
shows that the hypothesis of non-dicriticality in the cone theorem (and in the above results) is superfluous.
When $X$ is smooth this result follows from \cite[Proposition 3.11]{LPT11}.
\end{remark}

\begin{proof}[Proof of Theorems \ref{T_flips_exist} and \ref{T_min_model1}] 

Note that Theorem \ref{T_flips_exist} (resp. Theorem \ref{T_min_model1}) 
follows directly 
from Theorem \ref{t_canimpliesnondicritical} and Theorem \ref{flipsexist} (resp. Theorem \ref{t_existence-minmod}).
\end{proof}


\section{Abundance for $c_1(K_{\cal F}+\Delta) = 0$}
\label{s_abundance}

The goal of this section is to prove the following:

\begin{theorem}
\label{threefoldlogabundancezero}
Let $X$ be a projective threefold and $\cal F$ be a co-rank one foliation.
Let $(\cal F, \Delta)$ be a foliated pair with log canonical 
singularities where $\Delta \geq 0$ is a $\bb Q$-divisor  such that  $c_1(K_{\cal F}+\Delta) = 0$.

Then $\kappa(K_{\cal F}+\Delta) = 0$.
\end{theorem}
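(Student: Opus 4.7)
My approach is to combine the F-dlt modification machinery of Theorem \ref{t_existencefdlt}, the foliated MMP machinery of the paper, and the abundance theorem of Loray--Pereira--Touzet \cite{LPT11} (giving torsion of $K_{\cal F}$ when $X$ is smooth, $\cal F$ is canonical, and $c_1(K_{\cal F})=0$). The overall plan is to reduce to that smooth canonical $\Delta=0$ situation.

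First I would apply Theorem \ref{t_existencefdlt} to pass to the F-dlt modification $\pi\colon Y\to X$ of $(\cal F,\Delta)$, yielding $(\cal G,\Gamma)$ F-dlt on a klt $\bb Q$-factorial threefold with $K_{\cal G}+\Gamma=\pi^*(K_{\cal F}+\Delta)$; this step is crepant by the lc hypothesis, so both $\kappa$ and numerical triviality are preserved, and by Theorem \ref{t_canimpliesnondicritical} we may assume $\cal G$ is non-dicritical. Renaming, I may assume from the start that $(\cal F,\Delta)$ is F-dlt and non-dicritical on a klt $\bb Q$-factorial threefold $X$, with $K_{\cal F}+\Delta\equiv 0$.

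Next I would reduce to the case $\Delta=0$. For a small $t\in(0,1)$ the class $K_{\cal F}+(1-t)\Delta\equiv-t\Delta$ is not pseudoeffective whenever $\Delta\ne 0$, so one can run a $(K_{\cal F}+(1-t)\Delta)$-MMP with scaling by $t\Delta$, invoking the existence of flips (Theorem \ref{flipsexist}), special termination (Theorem \ref{t_spter}), and Lemma \ref{l_curvesinrayaretangent} to see that every contracted ray is tangent to $\cal F$; by construction $K_{\cal F}+\Delta$ remains numerically trivial at each step. The MMP must terminate either with $K_{\cal F'}$ nef on some $X'$, in which case both $K_{\cal F'}$ and $-K_{\cal F'}\equiv\Delta'\geq 0$ are nef, forcing $\Delta'=0$; or with a Mori-fibre-space contraction $f\colon X'\to Z$ whose general fibre is tangent to $\cal F'$, in which case Lemma \ref{canonicalbundleformula} descends to an lc pair $(\cal G_Z,\Theta_Z)$ on $Z$ and a semi-ample $D_Z$ with $K_{\cal F'}+\Delta'\sim_{\bb Q}f^*(K_{\cal G_Z}+\Theta_Z+D_Z)\equiv 0$; since $\dim Z\leq 2$, the classical log abundance for surface lc pairs combined with semi-ampleness of $D_Z$ gives $\kappa(K_{\cal G_Z}+\Theta_Z+D_Z)=0$, and this pulls back to $\kappa(K_{\cal F}+\Delta)=0$.

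Finally, in the remaining case $\Delta=0$ and $K_{\cal F}\equiv 0$, I would take the F-terminalisation of Theorem \ref{t_existenceFterminalization}, obtaining a canonical $\cal G$ on a klt $\bb Q$-factorial $Y$ with $K_{\cal G}+E=\pi^*K_{\cal F}\equiv 0$ and $E\geq 0$, followed by a foliated log resolution $\mu\colon W\to Y$ giving $\cal H$ on a smooth threefold $W$ with simple and hence canonical singularities (Lemma \ref{l_simplecanonical}); applying the negativity lemma to the pair of expressions $K_{\cal H}=\mu^*K_{\cal G}+\sum a_jF_j$ (with $a_j\geq 0$) and $(\mu\pi)^*K_{\cal F}\equiv 0$, combined with a further crepant relative MMP as in the proof of Theorem \ref{t_existenceFterminalization}, lets one arrange $K_{\cal H}\equiv 0$ on $W$. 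Theorem A of \cite{LPT11} then forces $K_{\cal H}$ to be $\bb Q$-torsion, hence so is $K_{\cal F}$, giving $\kappa(K_{\cal F})=0$. The hard part will be this last step: F-terminalisation by itself is merely canonical, not crepant, so producing an exactly numerically trivial smooth model on which to apply \cite{LPT11} requires a careful joint analysis of Theorem \ref{t_existenceFterminalization} and the relative MMP, together with the negativity lemma, to absorb all exceptional discrepancy.
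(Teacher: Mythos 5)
Your first two reductions (F-dlt modification, then running a $(K_{\cal F}+(1-t)\Delta)$-MMP to strip off $\Delta$) are reasonable in spirit, but there are two genuine gaps, the second of which is fatal.

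\textbf{The Mori fibre space step.} After descending via Lemma \ref{canonicalbundleformula} you obtain $K_{\cal G_Z}+\Theta_Z+D_Z\equiv 0$ where $\cal G_Z$ is a \emph{foliation} on $Z$, not the tangent sheaf $T_Z$. Citing ``classical log abundance for surface lc pairs'' is therefore incorrect: you need abundance for rank-one foliations on surfaces, which is Lemma \ref{logabundancenumtrivial} in the paper and rests on McQuillan's classification \cite[Theorem IV.3.6]{McQuillan08}, a completely different input. In addition you do not say how to dispose of the semi-ample summand $D_Z$; the paper first reduces (via Lemma \ref{algintcase}) to $\cal F$ not algebraically integrable, which forces $D_Z=0$, and only then applies the surface foliation result.

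\textbf{The $\Delta=0$ case.} The plan of passing to a foliated log resolution $\mu\colon W\to Y$ and ``absorbing all exceptional discrepancy'' to produce a \emph{smooth} model with $K_{\cal H}\equiv 0$ and canonical singularities cannot work as stated. After the (crepant, since $\cal F$ is canonical) F-terminalisation one has $K_{\cal G}\equiv 0$ on a klt but in general singular $Y$; a foliated log resolution $\mu\colon W\to Y$ then gives $K_{\cal H}=\mu^*K_{\cal G}+\sum a_jF_j$ with $a_j\geq 0$, and as soon as any $a_j>0$ (which is unavoidable if $\cal G$ does not already have simple singularities, and canonical does not imply simple) one has $c_1(K_{\cal H})\neq 0$. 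Running a relative MMP over $Y$ to contract the $F_j$ with $a_j>0$ by negativity simply returns a $\bb Q$-factorialisation of $Y$, which is again singular, so you never land on a smooth model with $c_1(K)=0$. Thus \cite[Theorem 2]{LPT11} cannot be applied directly. You flag this as ``the hard part'' but offer no mechanism to get around it, and there isn't one along these lines: this is precisely why the paper's Lemma \ref{KXpsefcase} is so elaborate. It replaces the attempted reduction to \cite{LPT11} with an intrinsically singular argument using the identity $K_{\cal F}+N^*_{\cal F}=K_X$, Bogomolov--Castelnuovo--De Franchis to bound $\kappa(X)$, Touzet's classification theorem \ref{touzetstheorem} for co-rank one foliations of KLT type (applied to $N^*_{\cal G}+E$ on a resolution, with careful control of the logarithmic contribution of the invariant exceptional divisor to keep the KLT-type hypothesis), and McQuillan's classification of numerically trivial surface foliations (Theorem \ref{t_triv_surface_foliation}) to handle the Iitaka fibres. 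None of this appears in your proposal.

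In short: the reduction from $\Delta\neq 0$ to $\Delta=0$ is close to what the paper does (modulo the surface-foliation abundance confusion), but the $\Delta=0$, $\cal F$ canonical case cannot be reduced to the smooth case of \cite{LPT11}, and that is exactly where the real content of the theorem lies.
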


As we mentioned in the Introduction, the result above is a consequence of \cite[Theorem 2]{LPT11} in the case of foliations with canonical singularities defined on a smooth projective variety. See also \cite[Theorem 1.3]{Druel21} for results in this direction on singular varieties,
as well as the recent preprint \cite{DruelOu19}, which achieves the main result of Section \ref{ss_d0Fcan}
in all dimensions.

\subsection{$\Delta \neq 0$ or $\cal F$ is log canonical
but not canonical}

\begin{lemma}
\label{logabundancenumtrivial}
Suppose $X$ is a klt projective surface and $\cal F$ is a rank one foliation on $X$.
Suppose that $(\cal F, \Delta)$ is lc where $\Delta \geq 0$ is a $\bb Q$-divisor and $c_1(K_{\cal F}+\Delta)= 0$.

Then $\kappa(K_{\cal F}+\Delta) = 0$.
\end{lemma}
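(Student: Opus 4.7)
The plan is to reduce to the situation of a rank-one foliation with canonical (i.e.\ Seidenberg-reduced) singularities on a smooth projective surface with trivial boundary and $c_1(K_{\mathcal G})=0$, where the conclusion is the two-dimensional instance of \cite[Theorem 2]{LPT11} (Brunella's theorem), and then peel off the boundary and the surface singularities by a foliated MMP argument.

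First I would take $\mu\colon Y\to X$ to be a composition of the minimal resolution of the klt surface $X$ with a Seidenberg reduction of the pulled back foliation, so that $Y$ is a smooth projective surface and $\mathcal G := \mu^{-1}\mathcal F$ has only reduced singularities. Setting
\[
\Gamma := \mu^{-1}_{*}\Delta + \sum_i \epsilon(E_i)\, E_i
\]
(summed over the $\mu$-exceptional divisors), the log-canonical assumption on $(\mathcal F,\Delta)$ yields
\[
K_{\mathcal G}+\Gamma \;=\; \mu^{*}(K_{\mathcal F}+\Delta) + F,
\]
with $F\ge 0$ a $\mu$-exceptional $\bb Q$-divisor. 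Because $F$ is effective and exceptional, the projection formula gives $\kappa(K_{\mathcal G}+\Gamma)=\kappa(K_{\mathcal F}+\Delta)$, so it suffices to analyze the smooth model.

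Next I would run a $(K_{\mathcal G}+\Gamma)$-MMP over $X$. Since $K_{\mathcal G}+\Gamma\equiv F$ is numerically equivalent to an effective $\mu$-exceptional divisor, every curve not contained in the exceptional locus of $\mu$ is $(K_{\mathcal G}+\Gamma)$-non-negative, and so every negative extremal ray is generated by an $\mu$-exceptional curve. The classical surface foliated MMP (whose steps in dimension two are purely divisorial) terminates in finitely many blow-downs on a model $\mu'\colon Y'\to X$ on which $K_{\mathcal G'}+\Gamma'$ is nef over $X$. The negativity lemma applied to $F' = (K_{\mathcal G'}+\Gamma')-(\mu')^{*}(K_{\mathcal F}+\Delta)\ge 0$ then forces $F'=0$, so $K_{\mathcal G'}+\Gamma' \equiv 0$ on $Y'$. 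Foliated adjunction (\cite[Proposition 3.3]{Spicer17}) along each component $C$ of $\Gamma'$, which is transverse to $\mathcal G'$, combined with the numerical triviality of $K_{\mathcal G'}+\Gamma'$, forces $\Gamma'\cdot C = 0$ for every such $C$; a further case analysis (either $\Gamma'=0$, or $\mathcal G'$ is uniruled along $\Gamma'$ by rational curves and is induced by a $\mathbb P^1$-fibration, in which case the canonical bundle formula shows $K_{\mathcal G'}+\Gamma'$ is torsion) reduces to $\Gamma'=0$ and $Y'$ carrying a rank-one foliation $\mathcal G'$ with reduced singularities and $K_{\mathcal G'}\equiv 0$. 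At this stage, the two-dimensional case of \cite[Theorem 2]{LPT11} yields that $K_{\mathcal G'}$ is torsion, so $\kappa(K_{\mathcal G'})=0$; unwinding the reductions gives $\kappa(K_{\mathcal F}+\Delta)=0$.

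The main obstacle is the reduction to $\Gamma'=0$: one has to verify that each intermediate step of the MMP preserves both the log canonical hypothesis and the Iitaka dimension, and one must rule out (or completely describe via the classification of surface foliations of numerical dimension zero) the possibility of a nonzero transverse boundary persisting on the final model. The key input is the classification of rank-one foliations on surfaces with $-K_{\mathcal F}$ pseudo-effective, which forces a fibration structure transverse to $\Gamma'$ and makes the canonical bundle formula produce the required torsion.
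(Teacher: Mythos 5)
Your proposal follows essentially the same strategy as the paper's proof: normalize to a model where the foliation has canonical singularities (you do this by hand via resolution plus a relative MMP, which is exactly constructing an F-dlt modification as the paper invokes), then split into the case of trivial boundary — handled by Brunella/McQuillan, equivalently the surface case of \cite{LPT11} — and nonzero boundary, where $K_{\cal F}$ fails to be pseudo-effective, the $K_{\cal F}$-MMP produces a $\bb P^1$-fibration, and the canonical bundle formula closes the argument.

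The one place you go astray is the ``foliated adjunction forces $\Gamma'\cdot C = 0$'' step, which is both unnecessary and not quite right as stated: restricting $K_{\cal G'}+\Gamma'\equiv 0$ to a component $C$ of $\Gamma'$ and applying adjunction gives vanishing of the different $\Theta_C$, which controls $(\Gamma'-C)\cdot C$ plus the singular contributions, not $\Gamma'\cdot C$ itself (the self-intersection $C^2$ can be negative). This detour can simply be deleted: once you are on a model where $K_{\cal G'}+\Gamma'\equiv 0$ with $\cal G'$ canonical, the observation that $\Gamma'\neq 0$ effective nonzero forces $K_{\cal G'}\equiv -\Gamma'$ to be non-pseudo-effective already puts you in the uniruled case directly, after which the $\bb P^1$-fibration and the canonical bundle formula (Lemma \ref{canonicalbundleformula}) finish the proof exactly as you describe. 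With that correction, your argument matches the paper's.
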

\begin{proof}
Without loss of generality we may replace $(\cal F, \Delta)$ by an F-dlt modification.
In particular, we may assume that $\cal F$ has canonical singularities.

If $\Delta = 0$, then our claim follows from \cite[Lemma IV.3.1]{McQuillan08}.

If $\Delta \neq 0$, then $K_{\cal F}$ is not pseudo-effective.
Running an MMP for $K_{\cal F}$ with scaling of some ample divisor, and replacing $\cal F$
by this output we may assume that we have a $\bb P^1$-fibration
$f\colon X \rightarrow C$ such that $\cal F$ is induced by the fibration.

By Lemma \ref{canonicalbundleformula} we see that $K_{\cal F}+\Delta \sim_{\bb Q} f^*\Theta$
where $\Theta \geq 0$ and our result is proven.
\end{proof}

\begin{lemma}
\label{algintcase}
Let $X$ be a normal projective threefold and $\cal F$ be a co-rank one foliation on $X$.
Suppose that $\cal F$ is algebraically integrable, 
$(\cal F, \Delta)$ is log canonical where $\Delta \geq 0$ is a $\bb Q$-divisor and $c_1(K_{\cal F}+\Delta) = 0$.

Then $\kappa(K_{\cal F}+\Delta) = 0$.
\end{lemma}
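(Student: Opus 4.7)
The plan is to reduce to the canonical bundle formula for foliations (Lemma \ref{canonicalbundleformula}) applied to the algebraic integration map. Since $\cal F$ is algebraically integrable of co-rank one, there is a rational map $f\colon X\dashrightarrow C$ to a curve whose general fibres are the closures of the leaves of $\cal F$, and $\cal F=f^{-1}\cal G_0$ where $\cal G_0$ is the foliation by points on $C$ (so $K_{\cal G_0}=0$). Combining Theorem \ref{t_existencefdlt} with a further foliated log resolution I would produce a projective birational morphism $\pi\colon Y\to X$ from a $\bb Q$-factorial klt threefold $Y$ together with a morphism $g\colon Y\to C$ such that $g=f\circ\pi$ and the transformed foliation $\cal G$ on $Y$ equals $g^{-1}\cal G_0$; in particular every fibre of $g$ is tangent to $\cal G$. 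Writing the usual pullback formula and splitting its exceptional part into its positive and negative halves, I obtain effective $\bb Q$-divisors $\Delta_Y\geq 0$ and $F\geq 0$ without common components, with $F$ $\pi$-exceptional, $(\cal G,\Delta_Y)$ lc and
\[
K_{\cal G}+\Delta_Y=\pi^*(K_{\cal F}+\Delta)+F.
\]

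In order to apply Lemma \ref{canonicalbundleformula} to $g$, I next need to verify $K_{\cal G}+\Delta_Y\sim_{\bb Q,g}0$. A general fibre $S$ of $g$ is a smooth projective surface with $\cal G|_S=T_S$, and foliated adjunction gives $(K_{\cal G}+\Delta_Y)|_S=K_S+\Delta_Y|_S$ with $(S,\Delta_Y|_S)$ lc. Since $c_1(K_{\cal F}+\Delta)=0$, the above identity yields $K_S+\Delta_Y|_S-F|_S\equiv 0$ on $S$, where $F|_S$ is exceptional for the birational morphism $\pi|_S\colon S\to\pi(S)$ onto a general leaf of $\cal F$. Classical log abundance for numerically trivial (sub-)lc surface pairs then implies $K_S+\Delta_Y|_S-F|_S\sim_{\bb Q}0$, i.e.\ the restriction of $K_{\cal G}+\Delta_Y-F$ to the geometric generic fibre of $g$ is $\bb Q$-linearly trivial. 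The key technical step is then to upgrade this fibrewise triviality first to a relative $\bb Q$-linear triviality of $K_{\cal G}+\Delta_Y-F$ over $C$ and, using that $F$ is $\pi$-exceptional and the negativity lemma, to the desired $K_{\cal G}+\Delta_Y\sim_{\bb Q,g}0$.

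With this in hand, Lemma \ref{canonicalbundleformula} produces an effective $\bb Q$-divisor $\Theta\geq 0$ and a semi-ample $\bb Q$-divisor $D$ on $C$ with
\[
K_{\cal G}+\Delta_Y\sim_{\bb Q}g^*(K_{\cal G_0}+\Theta+D)=g^*(\Theta+D).
\]
Because $C$ is a curve, $\Theta+D$ is $\bb Q$-linearly equivalent to an effective $\bb Q$-divisor, and so the same holds for $g^*(\Theta+D)$; pushing forward by $\pi$ and using $\pi_*F=0$ gives $K_{\cal F}+\Delta\sim_{\bb Q}\pi_*g^*(\Theta+D)$, which is $\bb Q$-effective. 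Combined with the numerical triviality $c_1(K_{\cal F}+\Delta)=0$, this forces $\kappa(K_{\cal F}+\Delta)=0$. The main obstacle is the upgrade in the second paragraph, passing from fibrewise to relative $\bb Q$-linear triviality and absorbing the $\pi$-exceptional contribution $F$; the natural route is to run a relative $(K_{\cal G}+\Delta_Y)$-MMP over $C$ using Theorem \ref{flipsexist} and Theorem \ref{t_spter}, whose output is a model on which Lemma \ref{canonicalbundleformula} applies directly.
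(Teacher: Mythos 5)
The paper's proof is substantially shorter and hinges on a key observation you are missing: since $\cal F$ has canonical singularities, it has non-dicritical singularities by Theorem \ref{t_canimpliesnondicritical}, and this forces the rational first integral $f\colon X\dashrightarrow C$ to already be a \emph{morphism} on $X$. Concretely, after resolving indeterminacies $\mu\colon X'\to X$ with $f'=f\circ\mu\colon X'\to C$, each fibre $\mu^{-1}(p)$ is tangent to the transformed foliation by non-dicriticality, so the holomorphic first integral $f'$ is constant on it; the rigidity lemma then descends $f'$ to a morphism $f\colon X\to C$. With $f$ a morphism, Lemma \ref{canonicalbundleformula} applies directly on $X$ and gives $K_{\cal F}+\Delta\sim_{\bb Q}f^*\Theta$ with $\Theta\ge 0$ on the curve $C$; numerical triviality then forces $\Theta\sim_{\bb Q}0$, so $\kappa(K_{\cal F}+\Delta)=0$.

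Your route instead resolves $f$ to a morphism $g\colon Y\to C$ and tries to apply the canonical bundle formula on $Y$. This introduces the $\pi$-exceptional contribution $F\ge 0$ in $K_{\cal G}+\Delta_Y=\pi^*(K_{\cal F}+\Delta)+F$, and here your argument has a real gap: the hypothesis you need for Lemma \ref{canonicalbundleformula} is $K_{\cal G}+\Delta_Y\sim_{\bb Q,g}0$, but what you actually know is that $K_{\cal G}+\Delta_Y$ is numerically equivalent to the vertical but nonzero divisor $F$. Establishing relative $\bb Q$-linear triviality therefore requires absorbing $F$, and neither surface log abundance on a general fibre (where $F$ simply vanishes) nor the negativity lemma (which controls the sign of $\pi$-exceptional, $\pi$-nef divisors, not their membership in $g^*\operatorname{Pic}(C)\otimes\bb Q$) does this. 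Your fallback of running a relative $(K_{\cal G}+\Delta_Y)$-MMP over $C$ is also not carried out: Theorem \ref{t_spter} gives special termination away from lc centres, not termination of this particular relative MMP, and even granting a minimal model over $C$, relative nefness of $K_{\cal G}+\Delta_Y$ over $C$ would still not by itself give $\sim_{\bb Q,g}0$. All of these difficulties disappear once one notices, as the paper does, that no birational modification is needed because $f$ is already a morphism; I would encourage you to look for that kind of structural simplification before passing to a resolution.
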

\begin{proof}
By Theorem \ref{t_existencefdlt} we may replace $(\cal F, \Delta)$ by an F-dlt modification and so we may assume
without loss of generality that $X$ is $\bb Q$-factorial and klt.

By assumption $\cal F$ admits a meromorphic first integral $f\colon X \dashrightarrow C$ where $C$ is a smooth proper
curve.
Let $\mu \colon X' \rightarrow X$ be a resolution of indeterminacies of $f$ and let $f'\colon X' \rightarrow C$
be the resolved map. Observe that $f'$ is a holomorphic first integral of $\cal F'$, the pull back
of $\cal F$ on $X'$.

As $(\cal F, \Delta)$ is F-dlt 
it follows that $\cal F$ has non-dicritical singularities by Theorem \ref{t_canimpliesnondicritical}, and so
if $p \in X$ then $\mu^{-1}(p)$ is tangent to $\cal F'$. Since $f'$ is a 
holomorphic first integral of $\cal F'$ this implies that $f'(\mu^{-1}(p))$ is a single point
and so $f'$ contracts every fibre of $\mu$.  The rigidity lemma then implies
that in fact $f \colon X \rightarrow C$ is a morphism.

We apply Lemma \ref{canonicalbundleformula} to write $K_{\cal F}+\Delta \sim_{\bb Q} f^*\Theta$
for some $\bb Q$-divisor $\Theta \geq 0$ and we can conclude.
\end{proof}

\begin{proposition}
\label{KFlogcanonical}
Let $X$ be a normal projective threefold and $\cal F$ be a co-rank one foliation.
Let $(\cal F, \Delta)$ be a foliated pair with log canonical 
singularities and $\Delta \geq 0$ is a $\bb Q$-divisor.
Suppose that $c_1(K_{\cal F}+\Delta) = 0$
and that either $\Delta \neq 0$ or $\cal F$ is log canonical
but not canonical.

Then $\kappa(K_{\cal F}+\Delta) = 0$.
\end{proposition}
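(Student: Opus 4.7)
The plan is to reduce to the surface case by adjunction to a codimension-one lc centre. First, by Theorem \ref{t_existencefdlt} I replace $(\cal F,\Delta)$ by its F-dlt modification $\pi\colon Y\to X$, obtaining an F-dlt pair $(\cal G,\Gamma)$ with $\Gamma=\pi_*^{-1}\Delta+\sum\epsilon(E_i)E_i$ and $K_{\cal G}+\Gamma=\pi^*(K_{\cal F}+\Delta)\equiv 0$; since $(\cal F,\Delta)$ is lc the extra term $F$ in Definition \ref{d_fdltm} vanishes. Here $Y$ is $\bb Q$-factorial and klt, and $\cal G$ has non-dicritical singularities by Theorem \ref{t_canimpliesnondicritical}. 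As $\kappa$ is a birational invariant, it suffices to show $K_{\cal G}+\Gamma\sim_{\bb Q}0$.

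The key step is to produce a prime divisor $S\subset Y$ which is an lc centre of $(\cal G,\Gamma)$. Under our hypothesis this can be arranged: a component of $\lfloor\Delta\rfloor$, suitably transformed, supplies one; if $\cal F$ is lc but not canonical then, after passing to a sufficiently high foliated log resolution and adjusting the MMP in the construction of the F-dlt modification so as to extract an exceptional divisor $E$ with $\epsilon(E)=1$ and $a(E,\cal F,\Delta)=-1$, $\Gamma$ acquires a non-invariant round-one component; finally, any $\cal F$-invariant prime divisor on $X$ automatically furnishes a codim-one lc centre via Item (\ref{i_lccentres}) of Theorem \ref{t_existencefdlt}.

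Having found $S$, apply Lemma \ref{adjunction}. If $\epsilon(S)=1$, restriction yields a rank-one F-dlt foliated pair $(\cal G_S,\Theta)$ on the normal surface $S^\nu$ with $K_{\cal G_S}+\Theta\equiv 0$, and Lemma \ref{logabundancenumtrivial} gives $K_{\cal G_S}+\Theta\sim_{\bb Q}0$. If $\epsilon(S)=0$, one obtains a dlt log surface pair $(S^\nu,\Theta')$ with $K_{S^\nu}+\Theta'\equiv 0$, and classical log-surface abundance gives $K_{S^\nu}+\Theta'\sim_{\bb Q}0$. In either case, $(K_{\cal G}+\Gamma)|_{S^\nu}\sim_{\bb Q}0$.

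Finally I lift this rational equivalence to $Y$. By Lemma \ref{singcomparison}, $(Y,\Gamma+\sum T_k)$ is lc (and klt after a small perturbation) where the $T_k$ are the $\cal G$-invariant divisors meeting $S$, and the comparison $K_Y+\Gamma+\sum T_k-(K_{\cal G}+\Gamma)$ is effective and supported on the invariant locus. Kawamata--Viehweg vanishing applied to this klt perturbation then gives $H^1(Y,m(K_{\cal G}+\Gamma)-S)=0$ for $m$ sufficiently divisible, so the restriction $H^0(Y,m(K_{\cal G}+\Gamma))\to H^0(S^\nu,m(K_{\cal G}+\Gamma)|_{S^\nu})$ is surjective; a non-zero section on $S^\nu$ lifts to $Y$, and since $K_{\cal G}+\Gamma\equiv 0$ this forces $K_{\cal G}+\Gamma\sim_{\bb Q}0$ and hence $\kappa(K_{\cal F}+\Delta)=0$. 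The main obstacle will be this final vanishing/lifting step, which requires a delicate comparison of the foliated canonical class with the classical one via the conormal sheaf $N_{\cal G}^*$ and the boundary contributed by the invariant divisors.
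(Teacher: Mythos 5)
Your proposal diverges from the paper's argument, and it contains two genuine gaps.

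The first gap is in the production of a codimension-one lc centre. When $\Delta=0$ and $\cal F$ is lc but not canonical, you claim the F-dlt modification can be arranged to extract a non-invariant divisor $E$ with $a(E,\cal F,\Delta)=-1$, so that $\Gamma$ acquires a round-one component. But ``log canonical and not canonical'' only guarantees a non-invariant exceptional divisor $E$ with $-1\le a(E,\cal F)<0$; nothing forces $a(E,\cal F)=-1$. (The condition that forces $a=-\epsilon(E)$ is ``lc but not log terminal'', which is strictly stronger than ``lc but not canonical''.) In particular, $(\cal F,0)$ could already be F-dlt with $-1<a(E)<0$ for the non-canonical $E$, in which case the F-dlt modification is the identity and $\Gamma=0$. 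Moreover, an F-dlt modification by construction only extracts divisors with $a=-\epsilon(E)$, never the divisors with $-\epsilon(E)<a<0$ that witness failure of canonicity, so it cannot ``see'' them. Your last fall-back, an $\cal F$-invariant prime divisor on $X$, also need not exist: a foliation without algebraic leaves has none. Thus the key object $S$ of your argument may simply not be there. (The paper sidesteps this by taking an F-\emph{terminalization}, which does extract every $E$ with $a(E)<0$ and therefore produces a nonzero boundary.)

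The second gap is in the lifting step, and it is a wrong approach rather than a fixable detail. You invoke Kawamata--Viehweg vanishing to obtain $H^1\bigl(Y,m(K_{\cal G}+\Gamma)-S\bigr)=0$. But $K_{\cal G}+\Gamma\equiv 0$, so $m(K_{\cal G}+\Gamma)-S\equiv -S$, and Kawamata--Viehweg would need $-S-(K_Y+\Theta)$ to be big and nef for some klt boundary $\Theta$, which is false in any but the most degenerate situations. Conceptually, the restriction-and-lift strategy cannot work here: a numerically trivial $\bb Q$-divisor whose restriction to a fixed prime divisor $S$ is torsion need not be torsion globally (restrict a generic non-torsion degree-zero line bundle on an abelian threefold to a divisor). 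Even with the vanishing, the long exact sequence would only give an \emph{injection} $H^0(Y,m(K_{\cal G}+\Gamma))\hookrightarrow H^0(S^\nu,m(K_{\cal G}+\Gamma)|_{S^\nu})$, which yields nothing in the direction you need. There is no foliated analogue of the lifting mechanism you are reaching for.

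For comparison, the paper first disposes of the algebraically integrable case (Lemma \ref{algintcase}), passes to an F-terminalization (Theorem \ref{t_existenceFterminalization}) to reach a canonical F-dlt foliation with $\Delta\ne 0$ on a klt threefold, observes that $K_{\cal F}\equiv-\Delta$ is then not pseudo-effective, uses the resulting dominant family of rational curves tangent to $\cal F$ to run a $K_X$-MMP ending in a Mori fibre space onto a surface whose fibres are tangent to $\cal F$, applies the canonical bundle formula (Lemma \ref{canonicalbundleformula}), and finishes with the surface case (Lemma \ref{logabundancenumtrivial}). It is this fibration argument, not adjunction to a divisorial lc centre, that carries the proof.
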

\begin{proof}
 By Theorem \ref{t_existenceFterminalization},
we may replace $(\cal F, \Delta)$ with a F-terminalisation,
so we may assume without loss of generality 
that $(\cal F, \Delta)$ is F-dlt and canonical 
with $\Delta \neq 0$
and that $X$ is $\bb Q$-factorial and klt.

In this case, $K_{\cal F}$ is not pseudo-effective and hence it is uniruled by Lemma \ref{lem_KF_not_psef_uniruled}.  So there exists a diagram
\begin{center}
\begin{tikzcd}
W \arrow{r}{p} \arrow{d}{q} & X \\
B & \\ 
\end{tikzcd}
\end{center}
where $q \colon W \rightarrow B$ parametrizes a dominant family of rational curves tangent to $\cal F$.

We may obviously assume that $\text{dim}(B) = 2$. 
Let $G$ be an ample divisor on $B$.
Let $b \in B$ be general and let $W_b$ be the fibre over $b$.  There are two cases, either 
\begin{enumerate}
\item \label{i_inf} $p(W_b) \cap p(W_{b'}) \neq \emptyset$
for infinitely many $b'$ or
\item \label{i_empty} $p(W_b) \cap p(W_{b'}) = \emptyset$ for $b \neq b'$.
\end{enumerate}

Suppose that we are in case \eqref{i_inf}, and let $Z_b \subset B$ be the closed subset of $B$ parametrizing those points $b'$ with 
$p(W_{b'}) \cap p(W_{b}) \neq \emptyset$.   Set $S = p(q^{-1}(Z_b))$.  
Observe that $\cal F$ has non-dicritical singularities by Theorem \ref{t_canimpliesnondicritical}.
We claim that $S$ is $\cal F$-invariant.  Supposing the claim we see that a general leaf of $\cal F$ is algebraic and we may conclude by Lemma \ref{algintcase}.
To see the claim, without loss of generality we may assume that $p(W_b)$ and $p(W_{b'})$ all pass through the same point $x \in X$.
Let $m\colon X' \rightarrow X$ be a resolution of singularities of $X$ such that $m^{-1}(x)$ is a divisor, let $S' = m_*^{-1}S$ 
and let $\Sigma_b$ and $\Sigma_{b'}$ be the strict
transforms of $p(W_b)$ and $p(W_{b'})$ respectively.  
Observe that $\Sigma_{b'} \cap m^{-1}(x) \subset \sing \cal F'$ where $\cal F'$ is the pulled back foliation.
Thus, $S'$ is covered by curves tangent to $\cal F'$ which pass through $\sing \cal F'$ which by 
\cite[Th\'eor\`eme 4]{CM92} and the fact that $\cal F'$ is non-dicritical implies that $S'$ must be $\cal F'$-invariant, as required.

So suppose we are in case \eqref{i_empty}.  Note that the rational map $q\circ p^{-1}\colon X \dashrightarrow B$ is almost proper, 
and in particular, we see that 
\[
p(W_b)\cdot p_*q^*G = 0
\]
where $G$ is an ample Cartier divsior on $B$.
Let $A$ be an ample divisor on $X$. 
Given a sufficiently large positive integer $m$, we may run a $K_X$-MMP with scaling of $A+p_*q^*(mG)$
Denote this MMP by $\phi\colon X \dashrightarrow X'$, and let $(\cal F',\Delta')$ be the transformed foliated pair on $X'$.
Each step of this MMP is $(K_{\cal F}+\Delta)$-trivial and so we see that $(\cal F', \Delta')$ is lc, 
$c_1(K_{\cal F'}+\Delta') = 0$ and that $\kappa(K_{\cal F'}+\Delta') = 0$ implies that 
$\kappa(K_{\cal F}+\Delta) = 0$.  

Observe that $K_X\cdot p(W_b) <0$.  If we choose $m$ larger than $6m'$ where $m'$ is the Cartier index of $p_*q^*G$ 
then, arguing as in the proof of Theorem \ref{t_existenceFterminalization}, each step of this MMP will be $p_*q^*mG$ trivial and so 
this MMP must terminate in a Mori fibre space $f\colon X' \rightarrow S$ such that $S$ is a klt surface and $f$ contracts the strict transform of $p(W_b)$. Thus, 
 the fibration $f$ is tangent to $\cal F'$.
By Lemma \ref{canonicalbundleformula},
there is a foliation $\cal G$ on $S$ so
that $\cal F' = f^{-1}\cal G$, 
a pseudo-effective divisor $D$ and divisor
$\Theta \geq 0$ such that $K_{\cal F'}+\Delta' \sim_{\bb Q} f^*(K_{\cal G}+\Theta+D)$
and $(\cal G, \Theta)$ is lc.  

Suppose for sake of contradiction that $D \neq 0$.  Since $c_1(K_{\cal G}+\Theta) = -D$ it follows that 
$\cal G$ is uniruled by Lemma \ref{lem_KF_not_psef_uniruled}, in particular, $\cal G$ is algebraically integrable which implies the same holds
for $\cal F$.
Since $f$ is $K_{X'}$-negative,
we see that, in addition, $S$ is klt.
Thus, we can apply Lemma \ref{logabundancenumtrivial} to conclude
that $K_{\cal G}+\Theta$, and hence $K_{\cal F'}+\Delta'$, is torsion.
\end{proof}

\subsection{$\Delta = 0$ and $\cal F$ is canonical}
\label{ss_d0Fcan}
In this section $\cal F$ is a co-rank one foliation on a normal projective threefold with
$c_1(K_{\cal F}) = 0$.  Suppose that $\cal F$ has canonical singularities.

\begin{lemma}
\label{l_replacebyFterminalization}
We may freely replace $\cal F$ by an F-terminalisation.  Thus we may 
assume that $X$ is klt and $\bb Q$-factorial and $\cal F$ is terminal along $\sing X$. In particular, $\sing X$
is tangent to $\cal F$.
\end{lemma}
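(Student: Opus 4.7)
The plan is to let $\pi\colon Y \to X$ be an F-terminalization of $\cal F$ as provided by Theorem \ref{t_existenceFterminalization} and to replace $(X,\cal F)$ by $(Y,\cal G)$, where $\cal G$ is the induced foliation on $Y$. The theorem guarantees that $Y$ is klt and $\bb Q$-factorial, that $\cal G$ is F-dlt and canonical and terminal along $\text{sing}(Y)$, and that $K_{\cal G}+E = \pi^*K_{\cal F}$ for some $\pi$-exceptional $E \geq 0$. The only thing to check is that this replacement preserves the hypotheses and conclusion of Theorem \ref{threefoldlogabundancezero}.

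The one substantive step is the observation that $E = 0$. Writing out the discrepancy formula $K_{\cal G} = \pi^*K_{\cal F} + \sum a(E_i,\cal F)E_i$, we get $E = -\sum a(E_i,\cal F)E_i$; but $(\cal F,0)$ is canonical, so $a(E_i,\cal F) \geq 0$ for every exceptional $E_i$, and combined with $E \geq 0$ this forces all discrepancies to vanish. Hence $E = 0$ and $K_{\cal G} = \pi^*K_{\cal F}$. Consequently $c_1(K_{\cal G}) = 0$ and $\kappa(K_{\cal G}) = \kappa(K_{\cal F})$, so proving Theorem \ref{threefoldlogabundancezero} for $(\cal G,0)$ on $Y$ is equivalent to proving it for $(\cal F,0)$ on $X$, and the replacement is harmless.

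The tangency of $\text{sing}(Y)$ to $\cal G$ in the final clause is then a direct reading of the F-terminalization output, invoking in addition that canonical foliations are non-dicritical by Theorem \ref{t_canimpliesnondicritical}. There is no real obstacle here; everything except the vanishing of $E$ is pure quotation from Theorem \ref{t_existenceFterminalization}, and the vanishing itself is a one-line consequence of canonicity paired with non-negativity of $E$.
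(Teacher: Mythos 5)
Your argument is correct and is the same as the paper's: compare the two inequalities — $K_{\cal G}+E = \pi^*K_{\cal F}$ with $E\ge 0$ from the F-terminalization, and $K_{\cal G} - \pi^*K_{\cal F} = \sum a(E_i,\cal F)E_i \ge 0$ from canonicity — and conclude $E = 0$, hence $c_1(K_{\cal G}) = 0$ and the torsion statements transfer. The remaining assertions are, as you say, quoted directly from Theorem \ref{t_existenceFterminalization}.
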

\begin{proof}
Let $\pi\colon Y\to X$  be an F-terminalisation, whose existence is guaranteed by Theorem \ref{t_existenceFterminalization} and let $\cal G$ be the pulled back foliation on $Y$.
By definition, $K_{\cal G}+F = \pi^*K_{\cal F}$ where $F \geq 0$.
On the other hand, since $\cal F$ is canonical, 
$K_{\cal G} = \pi^*K_{\cal F}+E$ where $E \geq 0$.
Thus $E = F = 0$ and so $c_1(K_{\cal G})=0$.
Furthermore, if $K_{\cal G}$ is torsion then so is $K_{\cal F}$. The last claim follows from \cite[Lemma 8.6]{Spicer17} together with Theorem \ref{t_canimpliesnondicritical}.
\end{proof}

\begin{lemma}
\label{KXMMPreduction}
Let $\phi\colon X \dashrightarrow X'$ be a sequence of steps of a  $K_X$-MMP such that $\phi$ is birational and let $\cal F'$ be the
transformed foliation on $X'$.

Then
\begin{enumerate}
\item $c_1(K_{\cal F'}) = 0$;

\item $X'$ has klt and $\bb Q$-factorial singularities,

\item $\cal F'$ has canonical singularities, and

\item \label{i_tang} $\sing X'$ is tangent to $\cal F'$.
\end{enumerate}

Moreover, if $K_{\cal F'}$ is torsion then so is $K_{\cal F}$.
\end{lemma}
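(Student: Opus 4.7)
The plan is to reduce to a single step $\phi\colon X\dashrightarrow X'$ of the $K_X$-MMP, which is either a divisorial contraction $f\colon X\to X'$ or a flip fitting in a diagram $X\to Z\leftarrow X'$; the general sequence then follows by induction. The starting observation is that since $c_1(K_{\cal F})=0$, every curve $C$ contracted by $\phi$ satisfies $K_{\cal F}\cdot C=0$, so $\phi$ is automatically a $K_{\cal F}$-trivial operation as well.

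The technical core will be to show that $K_{\cal F}$ and $K_{\cal F'}$ pull back to the same divisor on any common log resolution. In the divisorial case I would write $K_{\cal F}=f^*K_{\cal F'}+aE$ and intersect with an $f$-contracted curve to force $a=0$. For a flip I would apply the negativity lemma twice over $Z$, in the spirit of Lemma \ref{l_negativity}, using the fact that both $K_{\cal F}$ and $K_{\cal F'}$ are numerically trivial over $Z$. This will yield the discrepancy identity $a(E,\cal F)=a(E,\cal F')$ for every geometric valuation $E$. From this equality, items (1), (2), (3) and the torsion claim follow quickly: numerical triviality of $K_{\cal F'}$ transfers via strict transforms of curves, klt and $\bb Q$-factoriality of $X'$ are standard consequences of the classical $K_X$-MMP on a klt variety, canonicity of $\cal F'$ is immediate from discrepancy preservation, and an effective relation $mK_{\cal F'}\sim 0$ pulls back to $mK_{\cal F}\sim 0$ on a common resolution and descends to $X$.

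Item (4) is where the real work lies. By item (3) and Theorem \ref{t_canimpliesnondicritical}, $\cal F'$ is non-dicritical. In the divisorial case, any contracted divisor is $K_{\cal F}$-trivial and forced to be invariant by non-dicriticality, so $\text{sing}(X')$ consists of images of strata of $\text{sing}(X)$ and tangency transfers directly. The genuinely new difficulty is flips, where $\text{sing}(X')$ may acquire a flipped curve $C^+$. Here I would argue: the flipping curve $C\subset X$ is contained in $\text{sing}(X)$ (since a $K_X$-negative small contraction of a $\bb Q$-factorial klt threefold has exceptional locus in $\text{sing}(X)$), and hence is tangent to $\cal F$ by hypothesis. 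Then for any divisor $E$ dominating $C^+$ on $X'$, its center on $X$ is either $C$ itself, in which case $E$ is invariant by tangency of $C$, or a point of $C$, in which case $E$ is invariant by non-dicriticality of $\cal F$. Either way $E$ is $\cal F'$-invariant, proving that $C^+$ is tangent to $\cal F'$. The main obstacle throughout will be the careful matching of centers of divisors across the flip diagram, combined with the dual inputs of tangency of $\text{sing}(X)$ and non-dicriticality of $\cal F$ that together force the required invariance.
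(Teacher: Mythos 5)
Your handling of items (1)--(3) and the torsion claim is correct and matches the paper: each step of the $K_X$-MMP is $K_{\cal F}$-trivial, so on a common resolution $p^*K_{\cal F}=q^*K_{\cal F'}$, and the four easy conclusions follow. The gaps are in item (4).

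\textbf{Divisorial case.} You claim that a contracted divisor $E$ is ``forced to be invariant by non-dicriticality.'' This is not true. Non-dicriticality (as defined in the paper) only controls exceptional divisors lying over \emph{points}: if $E$ is contracted to a point of $X'$ you can indeed conclude $E$ is $\cal F'$-invariant, but if $E$ is contracted to a \emph{curve} $C\subset X'$, non-dicriticality says nothing. This second possibility is precisely the hard case, and the paper handles it by a foliated adjunction argument: if $E$ were transverse to $\cal F$, the restricted foliation on $E$ would be tangent to the ruling $E\to C$, and along a general fibre $F$ the adjunction of Corollary~\ref{basicpropertyfdlt2}/Lemma~\ref{singcomparison} yields $K_{\cal F}\cdot F=(K_X+E)\cdot F$; since $K_{\cal F}\cdot F=0$ and $E\cdot F<0$, one gets $K_X\cdot F>0$, contradicting $K_X$-negativity of the contraction. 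This computation is the real content of the lemma and cannot be replaced by an appeal to non-dicriticality.

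\textbf{Flip case.} You assert that the flipping curve $C$ is \emph{contained} in $\text{sing}(X)$ because ``a $K_X$-negative small contraction of a $\bb Q$-factorial klt threefold has exceptional locus in $\text{sing}(X)$.'' This is not a statement you can take for granted: even in the terminal case (Mori's classification) flipping curves merely \emph{pass through} the (finite) singular set, they are not contained in it, and the klt situation is no better. Consequently you cannot deduce tangency of $C$ from the running hypothesis that $\text{sing}(X)$ is tangent to $\cal F$. The paper instead derives tangency of the flipping and flipped loci from the observation that, at each step, the non-dicriticality of the transformed foliation forces contracted curves to be tangent, and then concludes item~(4) by combining the divisorial-case adjunction argument above with Lemma~\ref{l_replacebyFterminalization}. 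Your center-of-$E$ dichotomy over the flipped curve $C^+$ is a reasonable idea, but both branches of the dichotomy rest on the unproven tangency of $C$, so the argument as written does not close.
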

\begin{proof}
Each step of the MMP is $K_{\cal F}$-trivial so we see that
$\cal F'$ has canonical singularities and $c_1(K_{\cal F'}) = 0$.
Furthermore, we see that $K_{\cal F} =\phi^*K_{\cal F'}$ and so if $K_{\cal F'}$
is torsion then so is $K_{\cal F}$.

Since $X$ has klt and $\bb Q$-factorial singularities it follows that
$X'$ has klt and $\bb Q$-factorial singularities.

By Theorem \ref{t_canimpliesnondicritical} at each step of the MMP the transformed foliation has non-dicritical
singularities and so we see that only curves tangent to $\cal F$ are contracted by the MMP.
In particular, we see that the flipping and flipped loci are all tangent to the foliation.

To prove Item (\ref{i_tang}), consider a step in the $K_X$-MMP, call it $f \colon Y \dashrightarrow W$ and let $\cal F_Y$ and $\cal F_W$ be the transformed 
foliations on $Y$ and $W$ respectively.
We claim that if $f$ is a divisorial contraction, then $\text{exc}(f)$ is foliation invariant.  Indeed suppose not.
By our above observation, $f$ contracts a divisor $E$ transverse to the foliation
to a curve $C$ and such that the foliation restricted to $E$ must be tangent to the
fibration $E \rightarrow C$.  Let $F$ be a general fibre of $E \rightarrow C$. By Lemma \ref{singcomparison}
we know that \[0= K_{\cal F_Y}\cdot F = (K_Y+E)\cdot F\]
a contradiction of the fact that $f$ is a $K_Y$-negative contraction.

Thus, all divisorial contractions in the MMP only contract invariant divisors and so by Lemma \ref{l_replacebyFterminalization}
we may conclude that
$\sing X'$ is indeed tangent to $\cal F'$.
\end{proof}

\begin{lemma}
\label{notuniruled}
$\cal F$ is not uniruled.
\end{lemma}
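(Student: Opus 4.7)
The plan is to argue by contradiction: if $\cal F$ were uniruled, then for a general rational curve $C$ tangent to $\cal F$ one should have $K_{\cal F}\cdot C < 0$, contradicting $c_1(K_{\cal F})=0$.

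Assuming $\cal F$ is uniruled, I would first produce a covering family of rational curves tangent to $\cal F$, realised by a diagram
\[
\begin{tikzcd}
W \arrow{r}{p} \arrow{d}{q} & X \\
B &
\end{tikzcd}
\]
with $p$ dominant and the general $q$-fibre mapped birationally to a smooth rational curve $C\subset X$ tangent to $\cal F$. Lemma \ref{l_replacebyFterminalization}, combined with the non-dicriticality coming from Theorem \ref{t_canimpliesnondicritical}, lets us assume a general such $C$ lies in the smooth loci of both $X$ and $\cal F$. On $C\cong\bb P^1$ we then have the short exact sequence
\[
0 \longrightarrow T_C \longrightarrow \cal F|_C \longrightarrow Q \longrightarrow 0
\]
with $Q$ a line bundle, and taking degrees yields $-K_{\cal F}\cdot C = 2+\deg Q$. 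Hence, to contradict $K_{\cal F}\equiv 0$, it is enough to show $\deg Q \geq 0$.

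The main obstacle is exactly this positivity of $Q$, which I would establish by mimicking the MMP argument in the proof of Proposition \ref{KFlogcanonical}. Non-dicriticality implies that every curve contracted by $p$ is contracted by $q$, so $p(F)\cdot p_*q^*G = 0$ for a general $q$-fibre $F$ and any divisor $G$ on $B$. I would then run a $K_X$-MMP with scaling of $A+p_*q^*(mG)$ for an ample $A$ and sufficiently large $m$, appealing to Lemma \ref{KXMMPreduction} to preserve all the hypotheses. Since $K_X\cdot p(F)<0$, the MMP must terminate at a Mori fibre space $\varphi\colon X'\to S'$ whose general fibre is tangent to the transformed foliation $\cal F'$. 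The canonical bundle formula (Lemma \ref{canonicalbundleformula}) then writes $K_{\cal F'}\sim_{\bb Q}\varphi^*(K_{\cal G}+\Theta+D)$ with $(\cal G,\Theta)$ lc and $D$ semi-ample, and intersecting with a rationally connected general fibre contradicts $K_{\cal F'}\equiv 0$. The delicate case is $\dim S'=0$, where no fibration remains; there one falls back on the direct computation above, verifying $\deg Q\ge 0$ via a bend-and-break argument in the leaf of $\cal F'$ through the general member of the family.
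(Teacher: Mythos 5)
The paper dispatches this lemma with a one-line citation to \cite[Theorem 3.7]{LPT11}, asserting that the proof given there carries over when $X$ is singular (which is legitimate because by the preceding Lemma \ref{l_replacebyFterminalization} one may assume $X$ is klt, $\cal F$ is terminal along $\text{sing}(X)$ and non-dicritical). Your proposal attempts a self-contained argument, and it has a genuine gap at the step you yourself flag as the obstacle.

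The exact sequence $0 \to T_C \to \cal F|_C \to Q \to 0$ does give $-K_{\cal F}\cdot C = 2 + \deg Q$, so under $K_{\cal F}\equiv 0$ one simply gets $\deg Q = -2$; this is not a contradiction, and nothing you have written excludes $\cal F|_C \cong \cal O(2)\oplus\cal O(-2)$ for a free tangent rational curve. You correctly identify $\deg Q\geq 0$ as the needed input, but the MMP argument that follows never engages with $Q$ --- it is an entirely separate line of attack, and it also does not close. After running a $K_X$-MMP and invoking Lemma \ref{canonicalbundleformula}, the conclusion $K_{\cal F'}\sim_{\bb Q}\varphi^*M$ is \emph{compatible} with $K_{\cal F'}\equiv 0$ (that is precisely what triviality along fibres says), so ``intersecting with a rationally connected general fibre contradicts $K_{\cal F'}\equiv 0$'' is unjustified as stated. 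A genuine contradiction appears only when $\dim S' = 1$ and the general fibre $F$ is actually a leaf of $\cal F'$, since then $K_F = K_{\cal F'}|_F$ would be both anti-ample and numerically trivial; but you never show the fibre is tangent to $\cal F'$ rather than merely swept out by a one-parameter family of tangent rational curves. When $\dim S' = 2$ the formula only reduces the question to a rank-one foliation on a surface without resolving it, and you concede $\dim S' = 0$ as ``delicate'' with a promise of bend-and-break, which is exactly the content that is missing. The case $\dim(B)\geq 3$ for the initial dominating family (handled separately in Proposition \ref{KFlogcanonical}) is not addressed either. As written the argument does not go through; the paper's appeal to \cite{LPT11} sidesteps all of this.
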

\begin{proof}
The proof of \cite[Theorem 3.7]{LPT11} works equally well in the case where $X$ is singular.
\end{proof}

\begin{lemma}
\label{tosurfacefibrestangent}
Suppose we have a morphism $f\colon X \rightarrow S$ where $S$ is a surface with klt singularities.
Suppose furthermore that $K_{\cal F} \sim_{\bb Q, f} 0$ and the fibres of $f$ are tangent to $\cal F$.

Then $\kappa(K_{\cal F}) = 0$.
\end{lemma}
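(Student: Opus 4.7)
The plan is to reduce to abundance on the base surface via the foliated canonical bundle formula. After possibly passing to the Stein factorisation we may assume $f$ has connected fibres, and since these fibres are tangent to $\cal F$, I would apply Lemma \ref{canonicalbundleformula} with $\Delta = 0$ to obtain a rank-one foliation $\cal G$ on $S$ with $\cal F = f^{-1}\cal G$, an effective $\bb Q$-divisor $\Theta$, and a semi-ample $\bb Q$-divisor $D$ on $S$ such that $(\cal G, \Theta)$ is log canonical and
\[
K_{\cal F}\sim_{\bb Q} f^*(K_{\cal G}+\Theta+D).
\]
Since $c_1(K_{\cal F})=0$ and $f^*$ is injective on numerical classes, this gives $K_{\cal G}+\Theta+D \equiv 0$ on $S$, so the task reduces to promoting this to $\bb Q$-linear triviality.

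I would then split into two cases. If $\cal G$ is algebraically integrable then so is $\cal F = f^{-1}\cal G$, and Lemma \ref{algintcase} gives $\kappa(K_{\cal F})=0$ at once. Otherwise $\cal G$ admits only finitely many invariant divisors on $S$, since a foliation with infinitely many algebraic leaves is algebraically integrable. In this non-integrable case I would fix $m \gg 0$ so that $|mD|$ is base point free, pick a general element $D_1 \in |mD|$, and set $D' := \tfrac{1}{m}D_1$, so that $D' \sim_{\bb Q} D$. By Bertini together with the generality of $D_1$, the divisor $D_1$ is reduced and none of its components is $\cal G$-invariant, and for $m$ sufficiently large the small coefficient $\tfrac{1}{m}$ ensures that $(\cal G, \Theta+D')$ remains log canonical.

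Finally, I would apply Lemma \ref{logabundancenumtrivial} to the rank-one log canonical pair $(\cal G, \Theta+D')$ on the klt surface $S$ to obtain $\kappa(K_{\cal G}+\Theta+D') = 0$; combined with $K_{\cal G}+\Theta+D' \equiv 0$ this forces $K_{\cal G}+\Theta+D' \sim_{\bb Q} 0$, and hence $K_{\cal G}+\Theta+D \sim_{\bb Q} 0$ and $K_{\cal F} \sim_{\bb Q} 0$, whence $\kappa(K_{\cal F}) = 0$. The main technical obstacle I expect is confirming that $(\cal G, \Theta+\tfrac{1}{m}D_1)$ remains log canonical under the perturbation; this rests crucially on the finiteness of $\cal G$-invariant divisors in the non-integrable case (so that no component of $D_1$ is forced to be invariant, which would automatically violate log canonicity) together with a Bertini-type openness argument ensuring that the foliated discrepancies of $(\cal G, \Theta)$ are perturbed by at most $O(1/m)$.
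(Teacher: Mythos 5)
Your proof follows the same overall framework as the paper's (the paper's proof of this lemma simply refers back to the argument in Proposition \ref{KFlogcanonical}): apply the foliated canonical bundle formula Lemma \ref{canonicalbundleformula}, dispose of the algebraically integrable case via Lemma \ref{algintcase}, and reduce the remaining case to Lemma \ref{logabundancenumtrivial}. The divergence is in how you handle the moduli part $D$, and that is exactly where there is a genuine gap.

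The gap is in the claim that for $m\gg0$ and general $D_1\in |mD|$ the pair $(\cal G, \Theta+\tfrac1m D_1)$ is log canonical. Your reasoning is that finiteness of invariant divisors prevents $D_1$ from being invariant and that Bertini perturbs discrepancies by $O(1/m)$, but this misses where the constraint bites. If $\lfloor\Theta\rfloor\neq 0$, a general member $D_1$ of a moving linear system will meet $\lfloor\Theta\rfloor$ at some point $q$, and for generic such $q$ one is at a smooth point of both $S$ and $\cal G$. Blowing up a smooth point of a rank-one surface foliation produces an \emph{invariant} exceptional divisor $E$ (so $\epsilon(E)=0$) with $a(E,\cal G)=1$; hence $a(E,\cal G,\Theta+\tfrac1m D_1)=1-\mathrm{mult}_q(\Theta)-\tfrac1m=-\tfrac1m<0=-\epsilon(E)$. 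No choice of $m$ fixes this, because the point is that $(\cal G,\Theta)$ already has a one-parameter family of zero-dimensional lc centres along $\lfloor\Theta\rfloor$ (cf. the Example after Lemma \ref{l_logsmoothlogcanonical}), so ``$O(1/m)$ openness'' cannot hold along that locus. Since Lemma \ref{canonicalbundleformula} allows $\Theta$ to have a reduced part, this scenario is not excluded by your hypotheses.

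What actually makes the argument close is the observation the paper uses (tersely: ``since $\cal F$ is not algebraically integrable, $D=0$''). Once $\cal G$ is not algebraically integrable it is in particular not a foliation by rational curves, so $K_{\cal G}$ is pseudo-effective; then $K_{\cal G}+\Theta$ is pseudo-effective, while $D\equiv -(K_{\cal G}+\Theta)$ is semi-ample, forcing $D\equiv 0$ and hence $D\sim_{\bb Q}0$. With $D$ torsion there is nothing to add to the boundary, and one applies Lemma \ref{logabundancenumtrivial} directly to $(\cal G,\Theta)$. Your argument becomes vacuously correct once this is noticed (the system $|mD|$ is then $\{0\}$), but without this observation the Bertini step as you've written it does not go through.
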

\begin{proof}
By Lemma \ref{canonicalbundleformula},
there is a foliation $\cal G$ on $S$ so
that $\cal F = f^{-1}\cal G$, a pseudo-effective divisor $D$ and divisor
$\Theta \geq 0$ such that $K_{\cal F} \sim_{\bb Q} f^*(K_{\cal G}+\Theta+D)$
and $(\cal G, \Theta)$ is lc.  
It suffices to show that $K_{\cal G}+\Theta$ is torsion.

Suppose for sake of contradiction that $D \neq 0$.  Since $c_1(K_{\cal G}+\Theta) = -D$ it follows that 
$\cal G$ is uniruled, by Lemma \ref{lem_KF_not_psef_uniruled}. In particular, $\cal G$ is algebraically integrable, which implies the same holds
for $\cal F$.

We may freely replace $(\cal G, \Theta)$ by an F-dlt modification and so we may assume without loss of
generality that $S$ is klt. Thus,  so we may 
 apply Lemma \ref{logabundancenumtrivial} to conclude
that $K_{\cal G}+\Theta$ is torsion.
\end{proof}

\begin{lemma}
\label{maptosurface}
Suppose we have a morphism $f\colon X \rightarrow S$ where $S$ is a surface with klt singularities
and $\kappa(S) \geq 0$.  Suppose moreover that the fibres of $f$ are generically transverse to $\cal F$
and that for all $s \in S$, the fibre $f^{-1}(s)$ does not contain a divisor.

Then $\kappa(K_{\cal F}) = 0$.
\end{lemma}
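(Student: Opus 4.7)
The plan is to use the generic transversality of the fibres of $f$ to $\cal F$ to derive a divisorial formula of the shape $K_{\cal F}\sim_{\bb Q} f^*K_S+D$ with $D\ge 0$, and then to combine this with the hypotheses $c_1(K_{\cal F})=0$ and $\kappa(S)\ge 0$ to conclude that $K_{\cal F}$ is in fact torsion.

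First I would invoke Lemma \ref{l_replacebyFterminalization} to replace $X$ by an F-terminalisation, so that $X$ becomes $\bb Q$-factorial and klt and $\cal F$ has canonical singularities. On a dense open $U\subseteq X$ where $f$ is smooth and $\cal F$ is locally free, the differential of $f$ induces a morphism $\alpha\colon \cal F|_U\to (f^*T_S)|_U$ between rank two locally free sheaves; generic transversality of the fibres of $f$ to $\cal F$ is exactly the assertion that $\alpha$ is generically an isomorphism. Taking determinants yields an injection of line bundles $\det \cal F|_U\hookrightarrow f^*\det T_S|_U$ whose cokernel is supported on an effective divisor. Using the normality and $\bb Q$-factoriality of $X$ and the $\bb Q$-Cartier-ness of $K_{\cal F}$ and $f^*K_S$, this codimension one data promotes to a global $\bb Q$-linear equivalence
\[K_{\cal F}\sim_{\bb Q} f^*K_S+D\]
for some effective $\bb Q$-divisor $D\ge 0$ on $X$.

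Next, since $c_1(K_{\cal F})=0$, the formula gives $-f^*K_S\equiv D\ge 0$, so $-f^*K_S$ and, by surjectivity of $f$, $-K_S$ is pseudoeffective on $S$. Combined with $\kappa(S)\ge 0$, which says that $K_S$ is itself pseudoeffective, one deduces $K_S\equiv 0$. As $S$ is klt, the classical two-dimensional abundance theorem then gives that $K_S$ is torsion, so passing to a suitable multiple I may assume $f^*K_S\sim 0$. Then $K_{\cal F}\sim_{\bb Q} D$ with $D\ge 0$ numerically trivial on the projective threefold $X$; intersecting each irreducible component of $D$ with a sufficiently general complete intersection curve forces $D=0$. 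Hence $K_{\cal F}$ is torsion, and in particular $\kappa(K_{\cal F})=0$.

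The main obstacle will be making the divisor formula $K_{\cal F}\sim_{\bb Q} f^*K_S+D$ rigorous in the presence of the singularities of $X$, the singular locus of $\cal F$, and the discriminant locus of $f$: a priori the determinantal argument only works on the smooth transverse locus, but since everything in sight is $\bb Q$-Cartier on a normal $\bb Q$-factorial variety, only codimension one information is needed to pin down the $\bb Q$-linear equivalence class of $K_{\cal F}-f^*K_S$, and the effectivity of $D$ follows from the fact that the determinantal cokernel is, by construction, supported on an effective divisor.
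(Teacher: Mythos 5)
Your proof is correct, and the central observation is the same one the paper uses: generic transversality of the fibres to $\cal F$ makes $d f$ restrict to a generically isomorphic map $\cal F \to f^*T_S$, and taking determinants over the big open set where everything is locally free yields $K_{\cal F}\sim_{\bb Q} f^*K_S + D$ with $D\ge 0$. The paper packages exactly this as a nonzero map $H^0(S,mK_S)\to H^0(X,mK_{\cal F})$ of pluricanonical spaces, and then the proof is one line: $\kappa(S)\ge 0$ forces $H^0(S,mK_S)\ne 0$ for $m$ divisible, hence $H^0(X,mK_{\cal F})\ne 0$, so $\kappa(K_{\cal F})\ge 0$, and with the standing hypothesis $c_1(K_{\cal F})=0$ this gives $\kappa(K_{\cal F})=0$.

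Where you diverge is in the second half. You use $c_1(K_{\cal F})=0$ to write $-f^*K_S\equiv D\ge 0$, argue that $-K_S$ is pseudoeffective, combine with $\kappa(S)\ge 0$ to get $K_S\equiv 0$, invoke surface abundance for klt surfaces to get $K_S$ torsion, and finally conclude $D=0$ and $K_{\cal F}$ torsion. This is all correct, but it is a detour, and one step is glossed over: the implication ``$f^*(-K_S)$ pseudoeffective $\Rightarrow$ $-K_S$ pseudoeffective'' is true but needs a sentence of justification (e.g.\ for a nef class $N$ on $S$ and an ample $H$ on $X$, $f^*N\cdot H$ is a movable curve class, so $0\le f^*(-K_S)\cdot f^*N\cdot H = d\,(-K_S)\cdot N$ with $d>0$, hence $-K_S$ lies in the cone dual to nef, which on a surface is the pseudoeffective cone). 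You can avoid this entirely and also avoid surface abundance: from $mK_S\sim E\ge 0$ one gets $mK_{\cal F}\sim f^*E+mD\ge 0$; since $c_1(mK_{\cal F})=0$ and $f^*E+mD$ is effective and numerically trivial on a projective variety, $f^*E+mD=0$, forcing $E=D=0$, which already gives $K_{\cal F}\sim_{\bb Q}f^*K_S$ torsion. Also, the opening appeal to Lemma~\ref{l_replacebyFterminalization} is harmless but superfluous here: in the subsection where this lemma sits, the F-terminalization has already been performed, and your argument never actually uses klt-ness of $X$.
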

\begin{proof}
Let $S^\circ = S \setminus \sing S$ and let $X^\circ = f^{-1}(S^\circ)$.

The pull back of a pluri-canonical form 
on $S^\circ$ restricts to a non-zero form on the leaves
of $\cal F\vert_{X^\circ}$, i.e., we have a non-zero map 
\[
H^0(S^\circ, \cal O_S(mK_S)) \rightarrow H^0(X^\circ, \cal O_{X^\circ}( mK_{\cal F}))
\]
for all $m\ge 0$.
Since the complement of $X^\circ$ in $X$ has codimension at least two, 
we see that $H^0(X^\circ, \cal O_{X^\circ}( mK_{\cal F})) \cong H^0(X, \cal O_X( mK_{\cal F}))$.
By assumption, we have that  $H^0(S, \cal O_S(mK_S))\neq 0$ for some $m$ sufficiently divisible, and our result follows.
\end{proof}

We will need the following definition, \cite[Definition 8.1]{Touzet16}.

\begin{defn}
Let $X$ be a projective manifold and let $\cal F$ be a co-rank one foliation on $X$.
Let $H_1, ..., H_p$ be $\cal F$-invariant hypersurfaces such that $\sum H_i$ is a normal crossings divisor.
We say that $\cal F$ is of {\bf KLT type}
with respect to $H_1, ..., H_p$
if there exist rational numbers $0 \leq a_i <1$ such that
\[
N^*_{\cal F}+\sum a_iH_i
\]
is pseudo-effective.
\end{defn}

We will also need the following theorem, due to Touzet.

\begin{theorem}
\label{touzetstheorem}
Let $X$ be a projective manifold and let $\cal F$ be a co-rank one foliation on $X$.
Let $H_1, ..., H_p$ be $\cal F$-invariant hypersurfaces such that $\sum H_i$ is a normal crossings divisor.
Suppose that $\cal F$ is of KLT type with respect to $H_1, ..., H_p$.

Then either
\begin{enumerate}
\item $\kappa(N^*_{\cal F}+\sum H_i) = \nu(N^*_{\cal F}+\sum H_i) \geq 0$ or

\item $\kappa(N^*_{\cal F}+\sum H_i) = -\infty$, $\nu(N^*_{\cal F}+\sum H_i) =1$ and 
there exists a holomorphic map $\Psi\colon X \rightarrow \ff h$ where $\ff h = \bb D^n/\Gamma$ is a quotient
of a polydisc by an irreducible lattice $\Gamma \subset ({\rm Aut~} \bb D)^n$
and $\cal F = \Psi^{-1}\cal H$ where $\cal H$ is one of the tautological foliations on $\ff h$.
\end{enumerate}
\end{theorem}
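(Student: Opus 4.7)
The plan is to split along the dichotomy on $\kappa$. Writing $D := N^*_{\cal F}+\sum a_iH_i$, the KLT-type hypothesis gives a closed positive $(1,1)$-current $T$ representing $c_1(D)$. Because each $H_i$ is $\cal F$-invariant, one expects $T$ to carry additional ``foliated'' structure: after a Siu decomposition and absorbing divisorial Lelong numbers, the residual current $T_{\mathrm{res}}$ is supported by the foliation in the sense that its Lelong numbers vanish in codimension one outside the $H_i$. This compatibility between $T$ and $\cal F$ is the starting point for everything that follows.

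First I would treat the case $\kappa(N^*_{\cal F}+\sum H_i) \ge 0$. Here the task is to upgrade Kodaira dimension to numerical dimension. Using the sections together with the fact that $\cal F$ is tangent to the divisorial components of $D$, one would form the Iitaka fibration of $N^*_{\cal F}+\sum H_i$ and check that it is compatible with $\cal F$ (its general fibre is $\cal F$-invariant because the base locus is built from $\cal F$-invariant data). A canonical bundle formula along this fibration, in the spirit of Lemma \ref{canonicalbundleformula}, then reduces the claim to an abundance statement on the base, which one obtains via semi-positivity of the relevant log-relative canonical bundle plus the positivity coming from KLT type.

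Next I would treat $\kappa = -\infty$. A concavity argument for Monge--Amp\`ere masses of pseudo-effective classes (Boucksom's formalism, or the Demailly--Peternell--Schneider inequalities) combined with the absence of sections shows $\nu\le 1$, and $\nu=0$ is ruled out because $D\equiv 0$ together with $\cal F$-invariance of the $H_i$ would force non-negative Kodaira dimension of $N^*_{\cal F}$ by a vanishing/$L^2$-argument. Hence $\nu=1$. The delicate point is then to promote the current $T$ to a transverse invariant \emph{hyperbolic} structure on $\cal F$: one shows that locally $T$ can be written as $i\,\partial\bar\partial\varphi$ with $\varphi$ a plurisubharmonic function basic for $\cal F$, and such a basic potential with $\nu=1$ and $\kappa=-\infty$ produces a transverse invariant measure of hyperbolic type (this uses Brunella--Perrone--Touzet's classification of foliations carrying transverse invariant measures, ruling out the parabolic and algebraic cases since neither is compatible with $\kappa=-\infty$ and $\nu=1$). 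Developing this transverse hyperbolic structure on the universal cover $\widetilde X$ yields a holomorphic map $\widetilde X \to \bb D^n$ equivariant for a representation $\pi_1(X) \to (\operatorname{Aut}\bb D)^n$; the image of $\pi_1(X)$ is shown to be an irreducible lattice $\Gamma$ by an ergodicity argument (otherwise the current $T$ would split, contradicting $\nu=1$), and passing to the quotient gives the desired $\Psi\colon X\to \ff h = \bb D^n/\Gamma$ with $\cal F=\Psi^{-1}\cal H$.

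The hard part will be the construction of the transverse hyperbolic structure in the case $\kappa=-\infty,\ \nu=1$. Showing that the current $T$ admits a basic potential requires delicate control over the Lelong numbers of $T$ along $\cal F$-leaves and a regularization argument that respects the foliation, and ruling out the parabolic alternative for the transverse invariant measure (which would produce a map to an elliptic curve or to $\bb C^*$) uses the interplay of the KLT hypothesis with the non-existence of sections in a non-trivial way. Finally, promoting the ``one-variable'' transverse hyperbolic structure to a full polydisc developing map requires a splitting of the foliated normal bundle into simultaneously integrable rank-one factors, which is a non-trivial monodromy/uniformization statement of Bogomolov--McQuillan flavour.
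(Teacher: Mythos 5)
The paper does not prove this statement: it is quoted verbatim from Touzet and the proof in the paper is a one-line citation to \cite[Theorems 6 and 9.7]{Touzet16}. So there is no ``paper's own proof'' to compare against; what you have written is a blind attempt to reconstruct Touzet's argument.

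Your high-level framing (work with a closed positive current $T$ representing $c_1(N^*_{\cal F}+\sum a_iH_i)$, use its $\cal F$-compatibility, split on the Kodaira dimension, build a transverse hyperbolic structure and develop to a polydisc quotient in the $\kappa=-\infty$ case) is in the right spirit, but two of your key reductions are not correct as stated. First, ``a concavity argument for Monge--Amp\`ere masses \ldots\ combined with the absence of sections shows $\nu\le 1$'' is false for arbitrary pseudo-effective classes: on abelian varieties one has nef classes with $\kappa=-\infty$ and $\nu$ taking any value up to the dimension. The bound $\nu\le 1$ here is a genuinely \emph{foliated} statement; it relies on the integrability of $\cal N^*_{\cal F}$ (the leaf-wise flatness of the current) and is one of the substantive points of Touzet's argument, not an off-the-shelf fact from positivity theory. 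Second, the claim that ``$\nu=0$ \ldots\ would force non-negative Kodaira dimension of $N^*_{\cal F}$ by a vanishing/$L^2$-argument'' is asserted, not argued, and it is precisely the kind of step where the KLT-type hypothesis and the invariance of the $H_i$ enter in a nontrivial way; as written it is a gap. Beyond this, your treatment of case (1) via an Iitaka fibration and a foliated canonical bundle formula is a different route from Touzet's current-theoretic one, and you would need to justify that the Iitaka fibre is actually $\cal F$-invariant and that the canonical bundle formula of Lemma~\ref{canonicalbundleformula} (proved here only for $\dim X\le 3$ and for algebraically defined fibrations tangent to $\cal F$) applies in this generality.

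If the goal is to use this theorem as the paper does, the appropriate move is to cite Touzet rather than reprove it; if the goal is to actually reconstruct the proof, the two boxed claims above are the places where you would need to do the real work.
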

\begin{proof}
This is \cite[Theorem 6]{Touzet16} and \cite[Theorem 9.7]{Touzet16}.
\end{proof}

We will also need the following classification theorem on surface foliations:

\begin{theorem}
\label{t_triv_surface_foliation} Let $X$ be a normal projective surface and let $\cal L$ be a rank one foliation on $X$  with canonical foliation singularities.  Suppose $c_1(K_{\cal L}) = 0$.

Then there exists a finite cover $\tau\colon \widetilde{X} \rightarrow X$ and a birational morphism $\mu \colon \widetilde X\rightarrow Y$ such that if $\widetilde{\cal L}$ and $\cal G$ are the transformed foliations on $\tilde X$ and $Y$ respectively, then $\tau$ is 
ramified along $\widetilde{\cal L}$-invariant divisors and $\mu$ 
contracts $K_{\widetilde{\cal L}}$-trivial rational curves tangent to $\widetilde{\cal L}$.

Moreover,  one of the following holds:
\begin{enumerate}
\item \label{i_sesqui} $X = C \times E/G$ where $g(E) =1$, $C$ is a smooth projective curve, $G$ is a finite group
acting on $C \times E$ and $\cal G$ is the foliation induced by the $G$-invariant fibration $C\times E \rightarrow C$;

\item \label{i_kron} $\cal G$ is a linear foliation on the abelian surface $Y$;

\item \label{i_ricatti} $Y$ is a $\bb P^1$-bundle over an elliptic curve and $\cal G$ is transverse
to the bundle structure and leaves at least one section invariant; or

\item \label{i_qtoric} $Y \cong \bb P^2, \bb P^1 \times \bb P^1$ or $\bb F_n$ (the $n$-th Hirzebruch surface)
and $\cal G$ admits at least 3 invariant rational curves.  If $Y \cong \bb P^1 \times \bb P^1$ or $\bb F_n$ 
then at least 2 of these invariant curves must be fibres of a $\bb P^1$-bundle structure on $Y$.
\end{enumerate}
\end{theorem}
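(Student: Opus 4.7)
My plan is to reduce to McQuillan's classification of foliated projective surfaces with numerically trivial canonical class \cite{McQuillan08}.

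First, I resolve the foliation. Take a foliated resolution $\sigma\colon W\to X$, so that $W$ is smooth and $\cal L_W := \sigma^{-1}\cal L$ has only simple singularities. Since $\cal L$ is canonical, $K_{\cal L_W}=\sigma^*K_{\cal L}+E$ with $E\ge 0$ and $\sigma$-exceptional; because $c_1(K_{\cal L_W})\equiv E$ is both numerically trivial and effective, the negativity lemma forces $E=0$, so $c_1(K_{\cal L_W})=0$. Abundance for foliated surfaces (see \cite[Chapter IV]{McQuillan08}) then gives that $K_{\cal L_W}$ is torsion of some order $m$; the associated cyclic cover $\rho\colon \widetilde W\to W$ ramifies only along a divisor in $|mK_{\cal L_W}|$, which is $\cal L_W$-invariant because the canonical sheaf of a foliation restricts to the canonical sheaf of any non-invariant smooth transverse disc and is therefore nowhere vanishing there. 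On $\widetilde W$ the pulled-back foliation $\widetilde{\cal L}$ has $K_{\widetilde{\cal L}}\sim 0$ and, being the pull-back under a cover ramified only along invariant divisors, retains simple singularities.

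Next, I perform the birational contraction. The $K_{\widetilde{\cal L}}$-trivial contractible curves on $\widetilde W$ are exactly smooth rational $(-1)$-curves tangent to $\widetilde{\cal L}$, arising as exceptional divisors over smooth foliated points. Contract them step by step via an MMP $\mu\colon \widetilde W\to Y$ to produce a smooth foliated minimal model $(Y,\cal G)$. Setting $\widetilde X:=\widetilde W$ and taking $\tau$ to be the composition $\sigma\circ\rho$ (interpreted via Stein factorisation, as $\rho$ is built over the $\sigma$-exceptional locus which lies in $\mathrm{sing}(\cal L)\cup \mathrm{sing}(X)$) produces a finite map $\tau\colon \widetilde X\to X$ ramified only along $\widetilde{\cal L}$-invariant divisors, while $\mu$ contracts $K_{\widetilde{\cal L}}$-trivial tangent rational curves, as required.

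It remains to classify smooth foliated minimal pairs $(Y,\cal G)$ with $c_1(K_{\cal G})=0$ and no $K_{\cal G}$-trivial contractible tangent rational curve. This is the core of \cite[Chapter IV]{McQuillan08} and splits by the Kodaira dimension of $Y$. When $\kappa(Y)=1$, the Iitaka fibration is tangent to $\cal G$, isotrivial with elliptic fibres, yielding case \ref{i_sesqui}. When $\kappa(Y)=0$, $Y$ must be an abelian surface and $\cal G$ a Kronecker-type linear foliation, case \ref{i_kron}; ruling out $K3$, Enriques, and bielliptic minimal models here proceeds by the fine study of Baum--Bott residues along hypothetical invariant curves. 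When $\kappa(Y)=-\infty$ with irregularity $q(Y)=1$, one obtains the Riccati case \ref{i_ricatti} on a $\bb P^1$-bundle over an elliptic base. When $\kappa(Y)=-\infty$ and $Y$ is rational, case \ref{i_qtoric} holds with at least three $\cal G$-invariant rational curves. Finally $\kappa(Y)=2$ is excluded by Miyaoka's semi-positivity for the conormal of a foliation on a surface of general type. The hardest step is the $\kappa(Y)=0$ case, where Brunella's index theorem is needed to obtain the positivity of Baum--Bott residues that forces $Y$ to be abelian.
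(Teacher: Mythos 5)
The paper's own proof is a one-line citation of \cite[Theorem IV.3.6]{McQuillan08}; your proposal reaches the same endpoint but attempts to reconstruct the intermediate steps, and the very first reduction contains a genuine error. You claim that after a foliated resolution $\sigma\colon W\to X$, writing $K_{\cal L_W}=\sigma^*K_{\cal L}+E$ with $E\ge 0$ and $\sigma$-exceptional, ``the negativity lemma forces $E=0$.'' This is not correct. Since $c_1(K_{\cal L})=0$ you only get $K_{\cal L_W}\equiv E$; concluding that $E$ is numerically trivial would require already knowing $K_{\cal L_W}\equiv 0$, which is precisely the point at issue, and the negativity lemma requires $-E$ (or $E$) to be $\sigma$-nef, neither of which is given. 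In fact $E$ is typically nonzero: for instance if $X$ has a quotient singularity and $\cal L$ is the descent of a smooth foliation, then $\cal L$ is \emph{terminal} there (cf.\ the McQuillan characterisation recalled in Section 2), so every $\sigma$-exceptional divisor over that point has strictly positive discrepancy. Thus you cannot pass to a smooth $W$ and retain $c_1(K_{\cal L_W})=0$; McQuillan's abundance and classification are formulated on the (possibly singular) canonical model exactly to avoid this.

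The subsequent step is also muddled: if $K_{\cal L_W}$ were Cartier and torsion of order $m$ on a smooth $W$, the associated cyclic cover is \'etale, not ``ramified along a divisor in $|mK_{\cal L_W}|$'' (that linear system contains only the zero divisor). The nontrivial ramification allowed in the theorem statement arises because one must take the cover on the singular model, where $K_{\cal L}$ is only $\bb Q$-Cartier. The rest of your outline --- contracting $K_{\widetilde{\cal L}}$-trivial tangent rational curves to reach a foliated minimal model and classifying by Kodaira dimension, with Touzet/Brunella-type arguments excluding $\kappa=2$ and pinning down $\kappa=0$ --- is a fair paraphrase of McQuillan's Chapter IV, but as written it rests on the broken reduction above. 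The correct move, and the one the paper actually makes, is to invoke \cite[Theorem IV.3.6]{McQuillan08} directly on $(X,\cal L)$ without first resolving.
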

\begin{proof}
This follows directly from \cite[Theorem IV.3.6]{McQuillan08}.
\end{proof}

\begin{lemma}
\label{KXpsefcase}
Suppose that $\kappa(X) \geq 0$.  Then $\kappa(K_{\cal F}) = 0$.
\end{lemma}
\begin{proof}
We have an equality of divisors $K_{\cal F}+N^*_{\cal F} = K_X$,
in particular we see that $c_1(N^*_{\cal F}) = c_1(K_X)$.

If $\kappa(X) = 3$, then $\kappa(N^*_{\cal F}) = 3$, a contradiction
of the Bogomolov-Castelnuovo-De Franchis inequality (see \cite[Theorem 7.2]{GKKP11} for the proof of this
statement in the singular setting). Thus, we may assume that $\kappa(X) \leq 2$.
Moreover, by Lemma \ref{KXMMPreduction} we may assume that $K_X$ is nef.

We distinguish two cases. We first assume that $\kappa(N^*_{\cal F}) = -\infty$.
Let $\mu\colon Y \rightarrow X$ be a log resolution of $X$ and let
$E$ be the reduced divisor whose support coincides with the $\mu$-exceptional divisor and $\cal G$ be the pulled back foliation.  
By Lemma \ref{KXMMPreduction} Item (\ref{i_tang}), $\sing X$ is tangent to $\cal F$ and so
$E$ is $\cal G$-invariant.
Notice that $\cal O_Y(N^*_{\cal G}+E)$ is the saturation
of the image of $\mu^*\cal O_X(N^*_{\cal F})$ in $\Omega^1_Y(\log E)$.
Since $\mu_*E = 0$ and $\mu_*N^*_{\cal G} = N^*_{\cal F}$, we have that
$\kappa(N^*_{\cal G}+E) = -\infty$.

Write $N^*_{\cal G}+E_1 \sim_{\bb Q} \mu^*N^*_{\cal F}+E_2$ where $E_i \geq 0$ and is $\mu$-exceptional. 
We claim
that $\lfloor E_1 \rfloor = 0$. Indeed, this is a local problem on both $X$ and $Y$, 
so perhaps shrinking both we may assume that $E_1 = aE$ for some rational number $a>0$ so that $E_1$ consists of a single component
and  $N^*_{\cal G} \sim_{\bb Q} \mu^*N^*_{\cal F}-aE$.
Consider the following commutative diagram
\begin{center}
\begin{tikzcd}
Y' \arrow{d}{\nu} \arrow{r}{\tau} & Y \arrow{d}{\mu}\\
X' \arrow{r}{\sigma} & X \\
\end{tikzcd}
\end{center}
where $\sigma$ is the index one cover associated to $N^*_{\cal F}$.
Let $\cal F'$ be the pull back of $\cal F$ along $\sigma$ and let $\cal G'$ be the pull
back of $\cal G$ along $\tau$.
Assume that the ramification index along $E$ is $r$.
Since $X'$ is klt, by \cite{GKKP11} we have a morphism $\nu^*(\Omega^{[1]}_{X'}) \rightarrow \Omega^1_{Y'}$
and so 
\[N^*_{\cal G'}= \nu^*N^*_{\cal F'}+cE'\]
where $c \geq 0$.
On the other hand, since $E'$ is invariant
\[N^*_{\cal G'} = \tau^{*}(N^*_{\cal G})+(r-1)E' \sim_{\bb Q, \nu} (-ra+(r-1))E'.\]
Thus, $-ra+(r-1) = c \geq 0$
which implies that $a\leq \frac{r-1}{r} <1$, as claimed.

Since $N^*_{\cal G}+E_1$ is pseudo-effective and $\lfloor E_1 \rfloor = 0$, it follows that 
$\cal G$ is 
a KLT type foliation.

Since $\kappa(N^*_{\cal G}+E) = -\infty$, Theorem \ref{touzetstheorem} implies 
that $\nu(N^*_{\cal G}+E) = 1$.  Since $\cal F$ has canonical singularities, we may write
$K_{\cal G} \equiv F$ where $F \geq0$ is $\mu$-exceptional. Since $\lfloor E_1 \rfloor = 0$, it follows that 
\[
K_Y+E = N^*_{\cal G}+E+K_{\cal G} \equiv N^*_{\cal G}+E+F
\]
has numerical dimension, and hence Kodaira dimension, equal to one,
which in turn implies that $\kappa(K_X) = 1$.

Let $f\colon X \rightarrow C$ be the Iitaka fibration associated to $K_X$. If $\cal F$ is the foliation induced by $f$ then we may
conclude by Lemma \ref{algintcase}.  So suppose for sake of contadiction that 
the general fibre of $f$ is transverse to $\cal F$.
We therefore get an exact sequence
\[0 \rightarrow \cal L \rightarrow \cal F \rightarrow f^*T_C \otimes I_Z \rightarrow 0 \]
where $\cal L$ is the foliation by curves coming from the intersection of $\cal F$ and $T_{X/C}$
and $Z$ is a subscheme supported on $\sing X\cup\sing \cal F$ and on the locus
where $\cal F$ is tangent to the fibres of $X \rightarrow C$.

Let $X_p$ be the fibre over $p\in C$,
and let $\cal F_p$ be the foliation restricted to $X_p$.  For general $p$ we see that $c_1(K_{\cal F_p}) = 0$ and that $X_p$
is not uniruled. In particular, $\cal F_p$ is canonical. We claim that  $N^*_{\cal F_p}$ is torsion for general $p$.  Indeed, $K_{\cal F_p}\sim_{\bb Q}0$ by Lemma \ref{logabundancenumtrivial} and $K_{X_p}\sim_{\bb Q}0$ and so $N^*_{\cal F_p}\sim_{\bb Q}0$.
Thus, we may find a $\bb Q$-divisor $A$ on $X$ whose support is contained in fibres of $f$, but contains no fibre of $f$
and a divisor $B$ on $C$ such that $f^*B \sim_{\bb Q} N^*_{\cal F}+A$.
However, $K_X \sim_{\bb Q} f^*H$ where $H$ is an ample $\bb Q$-divisor on $C$ and $K_X \equiv N^*_{\cal F}$.  Thus, $A = 0$ and $B \equiv H$. In particular, $B$ is ample and $\kappa(N^*_{\cal F}) = 1$. It follows, by Lemma \ref{l_bcd} below, that  $\cal F$ is algebraically integrable and so we may conclude
by Lemma \ref{algintcase}.

\medskip

We now assume that $\kappa(N^*_{\cal F}) \geq 0$.

If $\kappa(X) = 0$, then since $K_X$ is nef, it follows
that both $N^*_{\cal F}$ and $K_X$, hence $K_{\cal F}$ are torsion, and we are done.

Suppose $\kappa(X) \geq 1$ and let $f\colon X \rightarrow B$ be the canonical map.
By assumption, there exists $D\geq 0$ such that  $N^*_{\cal F} \sim_{\bb Q} D$ and $D$ is numerically equivalent
to $f^*H$ where $H \geq 0$ is ample.  However, this implies that $D$ is actually supported on fibres of
$f$, and is in fact equal to a sum of fibres (with the appropriate multiplicities), and so $D = f^*B$ for some ample divisor $B$.
But this implies that $\kappa(N^*_{\cal F}) \geq 1$.
By Lemma \ref{l_bcd} below, it follows
that $\cal F$ is in fact algebraically integrable and so we conclude by Lemma \ref{algintcase}.
\end{proof}

\begin{lemma}\label{l_bcd}
Let $X$ be a klt projective variety and let $\cal F$ be a co-rank one foliation on $X$ with non-dicritical singularities.
Suppose that $\sing X$ is tangent to $\cal F$, $N^*_{\cal F}$ is $\bb Q$-Cartier and $\kappa(N^*_{\cal F}) = 1$.  

Then $\cal F$ is algebraically integrable.
\end{lemma}
We thank the referee for suggesting the following proof of Lemma \ref{l_bcd}.
\begin{proof}
Let $\pi\colon X' \rightarrow X$ be a log resolution of $X$ and let $\cal F'$ be the pulled back foliation.
Let $E$ be the sum of all the $\pi$-exceptional divisors and observe that $E$ is $\cal F'$-invariant.

By \cite[Lemma 4.6]{Druel21} 
we may find effective $\pi$-exceptional divsiors $E_1$ and $E_2$ such that $E_2$ is $\cal F'$-invariant and 
$\lfloor E_2 \rfloor =0$
such that $\pi^*N^*_{\cal F}+E_1 \sim_{\bb Q} N_{\cal F'}+E_2$.  In particular, $\kappa(N^*_{\cal F'}(E)) =1$.
We may then apply \cite[Lemma 9.2]{Touzet16} to conclude.
\end{proof}

\begin{lemma}
\label{l_rank1_abun}
Let $X$ be a $\bb Q$-factorial projective threefold and
let $\pi\colon X \rightarrow B$ be a fibration over a curve.  
Let $\cal L$ be a rank one foliation on $X$ tangent to the fibres of $X \rightarrow B$.
Suppose that 
\begin{enumerate}
\item $c_1(K_{\cal L}) = 0$;
\item $\cal L$ is not uniruled; and
\item the general fibre of $\pi\colon X \rightarrow B$ does not admit a quasi-\'etale cover by an abelian surface.
\end{enumerate} 

Then $\kappa(K_{\cal L}) = 0$.
\end{lemma}
\begin{proof}
Observe first of all that we may assume that $\cal L$ has canonical singularities, otherwise $\cal L$ would be uniruled,
a contradiction. 

Let $\cal L_p$ be the foliation restricted to $X_p$. 
For a general $p$ we have $K_{\cal L}\vert_{X_p} = K_{\cal L_p}$.

By Theorem \ref{t_triv_surface_foliation}, for a general fibre $X_p$, we may find a finite morphism
$\tau_p\colon \widetilde X_p\rightarrow X_p$,  ramified along foliation invariant divisors such that the pulled back foliation on $\widetilde X_p$ is generated by a global vector field. In particular,  $K_{{\cal L}}\vert_{X_p} \sim_{\bb Q} 0$. This implies
that $K_{{\cal L}} \sim_{\bb Q} \sum a_iF_i$ where $a_i$ are rational numbers and $F_i$ are divisors supported on fibres
of $f$, and are therefore in fact entire fibres (counted with multiplicity) since $c_1(K_{\cal L}) =0$.
 Thus, we may find a finite morphism
$\sigma\colon  \widetilde{X}\rightarrow X$ ramified only along divisors supported on fibres and such that if $\widetilde {\cal L}$ be the pulled back foliation on $\widetilde X$ then $K_{\widetilde {\cal L}}$ is Cartier.
Since all the fibres of $X \rightarrow B$ are $\cal L$-invariant we have that $K_{\widetilde{\cal L}} = \sigma^*K_{\cal L}$.
Thus we may freely
replace $X$ and $\cal L$ by $\widetilde{X}$ and ${\widetilde{\cal L}}$ respectively and we may assume that $K_{\cal L}$
is Cartier.

In particular, around all points of $X$, we have that $\cal L$ is generated by a vector field $v$.  Thus by \cite{BM97}
we may find a resolution of singularities $\mu \colon Y \rightarrow X$ by only blowing up in $\cal L$-invariant centres.
Since $\cal L$ has canonical singularities this implies that $K_{\cal L_Y} = \mu^*K_{\cal L}$ where $\cal L_Y$ is the transformed
foliation.  Thus, we may replace $X$ and $\cal L$ by $Y$ and $\cal L_Y$ respectively and  we may assume that $X$ is smooth.

We first handle the case where $\cal L_p$ is algebraically integrable for general $p$, and hence $\cal L$ is algebraically integrable.
In this case we get a rational map
$g\colon X \dashrightarrow S$ such that the fibres of $g$ induce $\cal L$. 
Again, by \cite{BM97} we may find a resolution of singularities of $g$, $\mu \colon X' \rightarrow X$, 
by blowing up in $\cal L$-invariant centres and let $f'\colon X' \rightarrow S$ be the induced map. Observe
that $\mu^*K_{\cal L} = K_{\cal L'}$ where $\cal L'$ is the transformed foliation and
we may argue as in Lemma \ref{algintcase}
to show that $\kappa(K_{\cal L'}) = 0$ and so $\kappa(K_{\cal L}) = 0$.

By Theorem \ref{t_triv_surface_foliation} we may find a cover $\tau_p\colon \widetilde{X}_p\rightarrow X_p$ ramified along
invariant divisors and a birational contraction $\mu_p\colon \widetilde{X}_p \rightarrow Y_p$ such that
$Y_p$ falls into one the types listed in Theorem \ref{t_triv_surface_foliation}.
We now argue based on which case $Y_p$ in falls into. Note that, since the surfaces appearing in Cases (2), (3) and (4) of  Theorem \ref{t_triv_surface_foliation} are not deformation equivalent, we may consider the four cases separately. 
Observe also that in Case (2) that $\tau_p$ is an isomorphism, unless $\cal L_p$ is algebraically integrable. In Case (2) we also have that 
$\mu_p$ is an isomorphism since the foliation on $Y_p$ is smooth, hence terminal, 
and $c_1(K_{\tau_p^{-1}\cal L_p}) = 0$.

In Case (\ref{i_sesqui}), we see that $\cal L$ is algebraically integrable and so we are done.

Case (\ref{i_kron}) does not occur by assumption.

In Case (\ref{i_ricatti}), we run a $K_X$-MMP over $B$, call it $\phi\colon X  \dashrightarrow X'$ and let $\cal L'$ be the transformed foliation on $X'$.
Each step of this MMP is $\cal L$-trivial. Thus, it suffices to check that $\kappa(K_{\cal L'}) = 0$.
A general fibre is uniruled, but not rationally connected, and so 
this MMP terminates in a Mori fibre space $g\colon X' \rightarrow S$ over $B$ with $\text{dim}(S) = 2$. Let $h\colon S\to B$ be the induced morphism. As in the proof of Lemma \ref{KXMMPreduction} we see that $\phi$ only contracts curves tangent to $\cal L$. 
Note that since $K_{\cal L}$ is Cartier we also have that $K_{\cal L'}$ is Cartier, and so $K_{\cal L'} = g^*M$ where $M$ is a Cartier divisor on $S$
with $c_1(M)=0$.

By assumption, we know that the fibres of $g$ are generically not tangent
to $\cal L'$ and so we have a non-zero morphism 
\[
dg:(g_*\cal L')^{**} \cong M \rightarrow T_{S/B}.
\]
Since $c_1(M) = 0$ this immediately implies
that for a general $p \in B$ that $M\vert_{S_p} \cong T_{S/B}\vert_{S_p}$,
where $S_p$ is the fiber over $p$.

Since $\rho(X'/S) = 1$ and $K_{\cal L'}$ is $g$-trivial,
we see that $(g_*\cal L')^{**}$ is a rank one reflexive sheaf and
$c_1((g_*\cal L')^{**}) =0$.
Thus, we have $(g_*\cal L')^{**} = T_{S/B}(-\sum a_iF_i)$
where $F_i$ are supported on fibres of $S \rightarrow B$.

Since $c_1(T_{S/B}(-\sum a_iF_i)) = 0$ and noting
that $T_{S/B}^* = K_{S/B} - R$ where $R$ is the ramification divisor of $h$,
we have 
\[ 
c_1(K_{S/B} +\sum a_iF_i - R) = 0.
\]
Thus,
\[
K_{S/B} +\sum a_iF_i - R\sim_{\bb Q} h^*M,
\]
for some $\bb Q$-divisor $M$ on $B$ such that $c_1(M)=0$. 
We apply \cite[Theorem 3.5]{Ambro05} to conclude that $M\sim_{\bb Q}0$
and hence
$K_{\cal L}$ is torsion.

In Case (\ref{i_qtoric}), we proceed in a similar fashion to the above case with a few modifications.
We first claim that we may assume without loss of generality that $\rho(X_p) \geq 2$.
Since $\cal L_p$ is singular for a general $p$ it follows
that there exists a component $\Sigma \subset \sing {\cal L}$
such that $\Sigma$ dominates $B$.  Let $b\colon \widetilde{X} \rightarrow X$ be a blow up centred in $\Sigma$
followed by a resolution of singularities which is a sequence of blow ups in foliation invariant centres,
again which exists by \cite{BM97}.  Let $\widetilde{\cal L}$ be the pulled back foliation
and observe that $K_{\widetilde{\cal L}} = b^*K_{\cal L}$.  By construction we have $\rho(\widetilde{X}_p) \geq 2$,
$c_1(K_{\widetilde{\cal L}}) = 0$ and $\widetilde{\cal L}$ has canonical singularities.   So we may freely replace $X$ by
$\widetilde{X}$ as required.

For general $p$ let $D_{0, p}$ and $D_{\infty, p}$ denote two $\cal L_p$-invariant divisors
which are fibres in a $\bb P^1$-fibration structure on $X_p$. 

First, assume that there exist two divisors $D_0$ and $D_\infty$ on $X$ such that $D_0\cap X_p = D_{0, p}$
and $D_\infty \cap X_p = D_{\infty, p}$ for general $p$ and that
if we run a $K_X$-MMP $\phi\colon X \dashrightarrow X'$ over $B$, we terminate in
a Mori fibre space $g\colon X' \rightarrow S$ where $S$ is a surface, $D_0$ and $D_\infty$ are not contracted by $\phi$ 
and $g$ contracts the strict transforms
of $D_0$ and $D_\infty$, call them $D'_0$ and $D'_\infty$.
Arguing as above we see that $c_1((g_*\cal L)^{**}) = T_{S/B}(-\Sigma_0 - \Sigma_\infty-\sum a_iF_i)$
where $\Sigma_0 = g(D'_0)$ and $\Sigma_\infty=g(D'_\infty)$ are reduced divisors dominating $B$  
and $F_i$ are supported on fibres of $S \rightarrow B$.  

Since $c_1(T_{S/B}(-\Sigma_0 - \Sigma_\infty-\sum a_iF_i)) = 0$ and noting
that $T_{S/B}^* = K_{S/B} - R$ where $R$ is the ramification divisor of $h$
we have 
\[c_1(K_{S/B} +\Sigma_0+\Sigma_\infty+\sum a_iF_i - R) = 0.\]
Thus,
\[K_{S/B} +\Sigma_0+\Sigma_\infty+\sum a_iF_i - R\sim_{\bb Q} h^*M.\]
We again apply \cite[Theorem 3.5]{Ambro05} (see also \cite[Theorem 1.3]{Floris14})
 to conclude that $M\sim_{\bb Q}0$
and hence
$K_{\cal L}$ is torsion.

Thus, to conclude it suffices to arrange the existence of $D_0$ and $D_\infty$ and such 
a Mori fibre space structure. Let $p \in B$ be a general point.
Observe that $\cal L$ is singular and so by assumption $\rho(X_p) \geq 2$.
Then we may find a sufficiently small \'etale neighborhood $U$ of $p$ such that $X \times_B U$ admits divisors $D_0$ and $D_\infty$ as
required and 
an MMP over $U$ terminating
in the desired Mori fibre space structure.  Thus, we may find a (possibly ramified) cover $\overline{B} \rightarrow B$
such that $\overline{X} = X \times_B \overline{B}$ admits such an MMP over $\overline{B}$.  
Let $\overline{\cal L}$ be the pulled back foliation. 
Observe
that $\sigma\colon \overline{X} \rightarrow X$ is ramified only along $\overline{\cal L}$-invariant divisors
and so $K_{\overline{\cal L}} = \sigma^*K_{\cal L}$ and thus we may freely replace $X$ and $\cal L$ by $\overline{X}$ and  $\overline{\cal L})$ respectively
and our result follows.
\end{proof}

\begin{corollary}
\label{c_fibration}
Suppose we have a morphism $f\colon X \rightarrow C$ where $C$ is a smooth curve of positive genus
and suppose that the general fibre does not admit a quasi-\'etale cover by an abelian surface.

Then $\kappa(K_{\cal F}) = 0$.
\end{corollary}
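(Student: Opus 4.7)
The plan is to analyse the interplay between $\cal F$ and the fibration via the natural rank-one subfoliation tangent to the fibres. By Lemma \ref{algintcase} we may assume $\cal F$ is not algebraically integrable, so the general fibre of $f$ is transverse to $\cal F$. Set $\cal L = \cal F \cap T_{X/C}$; involutivity of both $\cal F$ and $T_{X/C}$ makes $\cal L$ a saturated rank-one subfoliation of $\cal F$. Taking determinants in the natural short exact sequence
\[
0 \to \cal L \to \cal F \to f^*T_C \otimes I_Z \to 0,
\]
where $Z$ is supported on the $\cal F$-invariant components of the fibres of $f$ together with a codimension-two residual locus, we obtain an identity
\[
K_{\cal F} \sim K_{\cal L} + f^*K_C + D,
\]
with $D \geq 0$ the weighted sum of $\cal F$-invariant fibre components of $f$.

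Next I would show that $g(C) = 1$, $D = 0$ and $K_{\cal L} \equiv 0$. Lemma \ref{notuniruled} gives that $\cal F$ is not uniruled, and since any dominating family of rational curves tangent to $\cal L$ would be tangent to $\cal F$, the subfoliation $\cal L$ is not uniruled either, so $K_{\cal L}$ is pseudoeffective. Together with the nefness of $f^*K_C$ (from $g(C) \geq 1$) and the effectivity of $D$, the salience of the pseudoeffective cone and the fact that $K_{\cal L} + f^*K_C + D \equiv K_{\cal F} \equiv 0$ force each summand to be numerically trivial. In particular $\deg K_C = 0$ yields $g(C) = 1$, $D = 0$, and $K_C$ is $\bb Q$-torsion; thus $K_{\cal F}$ will be $\bb Q$-torsion if and only if $K_{\cal L}$ is.

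I then plan to apply Lemma \ref{l_rank1_abun} to the rank-one foliation $\cal L$, which is tangent to the fibres of $f\colon X \to C$ and satisfies $c_1(K_{\cal L}) = 0$. The main obstacle will be to arrange the hypotheses of that lemma, chiefly $\rho(X_p) \geq 2$ for a general fibre $X_p$ whenever $\cal L$ is singular above the generic point of $C$, along with canonicity of $\cal L$ above that generic point. I would address this by passing to a birational modification $\pi\colon X' \to X$ obtained by blowing up those components of $\text{sing}(\cal L)$ which dominate $C$ and then running an auxiliary relative MMP in the style of the proof of Theorem \ref{t_existenceFterminalization}; the boundary will be chosen so that the MMP is $K_{\cal F}$-trivial and leaves $K_{\cal L}$ numerically unchanged while increasing the generic Picard number of the fibres and improving the singularities of $\cal L$ along them.

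With the hypotheses of Lemma \ref{l_rank1_abun} verified, we conclude either that $K_{\cal L}$ is $\bb Q$-torsion, giving $\kappa(K_{\cal F}) = 0$ immediately, or that $\cal L$ is algebraically integrable. In the latter case the leaves of $\cal L$ assemble into a rational factorisation $g\colon X \dashrightarrow Y$ over $C$ with $\dim Y = 2$, all of whose fibres are tangent to $\cal F$. After resolving indeterminacies and running a relative MMP to realise $g$ as a morphism onto a klt surface with $K_{\cal F} \sim_{\bb Q, g} 0$, using $D = 0$ and $f^*K_C \sim_{\bb Q} 0$, Lemma \ref{tosurfacefibrestangent} then yields $\kappa(K_{\cal F}) = 0$.
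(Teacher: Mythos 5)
Your proof follows the paper's argument closely: the same subfoliation $\cal L=\cal F\cap T_{X/C}$, the same exact sequence and decomposition $K_{\cal F}=K_{\cal L}+f^*K_C+D$, the same pseudoeffectivity/nefness argument forcing $g(C)=1$ and $D=0$, and the same reduction to Lemma~\ref{l_rank1_abun} after arranging $\rho(X_p)\geq 2$ via a blow-up followed by an F-terminalization-style MMP. Two small deviations are worth noting. First, you blow up components of $\text{sing}(\cal L)$ dominating $C$, whereas the paper blows up a component $\Sigma\subset\text{sing}(\cal F)$; these coincide generically over $C$ (a smooth point of $\cal F$ through which the fibre passes transversally gives a smooth point of $\cal L_p$), but the paper's phrasing is preferable because the MMP machinery of Theorem~\ref{t_existenceFterminalization} keeps precisely the exceptional divisors with $a(E,\cal F)=0$, which is visible only once you name $\Sigma$ as a canonical centre of $\cal F$. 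Second, and more substantively, for the case where Lemma~\ref{l_rank1_abun} outputs that $\cal L$ is algebraically integrable, the paper short-circuits by observing that $\cal L$ algebraically integrable forces $\cal F$ algebraically integrable, putting us back in Lemma~\ref{algintcase}; you instead build a Mori fibre space $g\colon X\to S$ from the leaves of $\cal L$ and invoke Lemma~\ref{tosurfacefibrestangent}. That route can be made to work, but your stated justification of the hypothesis $K_{\cal F}\sim_{\bb Q,g}0$ ``using $D=0$ and $f^*K_C\sim_{\bb Q}0$'' is circular --- those facts only give $K_{\cal F}\sim_{\bb Q}K_{\cal L}$, which is exactly what we are trying to prove is torsion. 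The correct reason is that once you are on a Mori fibre space with $\rho(X/S)=1$, relative numerical triviality of $K_{\cal F}$ upgrades to $\sim_{\bb Q,g}0$ by the relative base point free theorem. The paper's short-circuit is simply cleaner.
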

\begin{proof}
By Lemma \ref{algintcase},
we may assume that $\cal F$ is generically transverse to $f$.
Let $\cal L$ be the foliation in curves tangent to both $\cal F$ and the fibration $f\colon X \rightarrow C$.
We have an exact sequence
\[
0 \rightarrow \cal L \rightarrow \cal F \rightarrow (f^*T_C)\otimes I_Z \rightarrow 0
\]
where $Z$ is supported on the components of fibres
which are $\cal F$-invariant and on subvarieties of codimension at least 2. Thus, we have
$K_{\cal F} = K_{\cal L}+f^*K_C+D$ where $D \geq 0$ is the codimension one part of $Z$.

By Lemma \ref{notuniruled}, $\cal F$ is not uniruled and so we know that $K_{\cal L}$ is pseudo-effective.
By assumption, $f^*K_C$ is nef, and since $K_{\cal F}$ is numerically trivial we must
have $C$ is genus one and $D = 0$.  In particular, since $D = 0$ we see that no component of a fibre
of $f$ can be invariant under $\cal F$.

So $K_{\cal L}\sim_{\bb Q} K_{\cal F}$ and,
in particular, it suffices to prove that $K_{\cal L}$ is torsion.
Observe that $\cal L$ has canonical singularities above the generic point of $C$.
We may now apply Lemma \ref{l_rank1_abun} to conclude.
%
\end{proof}

\begin{proposition}
\label{KFcanonical}
Suppose $\cal F$ has canonical singularities and $c_1(K_{\cal F})= 0$.  

Then
$\kappa(K_{\cal F}) = 0$.  
\end{proposition}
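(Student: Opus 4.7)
The plan is to reduce to the case $\kappa(X) = -\infty$ and then produce a Mori fibre space structure via a $K_X$-MMP, after which every possible output can be shown to fall into one of the previously treated settings. First I would invoke Lemma \ref{l_replacebyFterminalization} to replace $\cal F$ by an F-terminalization, so that $X$ is $\bb Q$-factorial and klt and $\cal F$ is F-dlt and canonical. By Lemma \ref{KXpsefcase} I may assume $\kappa(X) = -\infty$, i.e.\ $K_X$ is not pseudo-effective. I would then run a $K_X$-MMP; Lemma \ref{KXMMPreduction} guarantees that at every stage $c_1(K_{\cal F}) = 0$, $\cal F$ remains canonical with $\text{sing}(X)$ tangent to $\cal F$, and it suffices to prove $\kappa(K_{\cal F}) = 0$ on the output. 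The MMP terminates in a Mori fibre space $f\colon X \rightarrow S$ with $0 \leq \dim S \leq 2$.

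The bulk of the argument consists of a case analysis on $\dim S$. One clean observation handles several cases at once: whenever $X$ is rationally connected, Kawamata--Viehweg vanishing gives $H^1(X, \cal O_X) = 0$, so $\text{Pic}^0(X)$ is trivial; consequently any $\bb Q$-Cartier divisor with vanishing first Chern class is $\bb Q$-linearly trivial, in particular $K_{\cal F}$ is torsion and $\kappa(K_{\cal F}) = 0$. This covers the cases $\dim S = 0$ (so $X$ is Fano of Picard number one), $\dim S = 1$ with $S \cong \bb P^1$ (rationally connected fibres over a rational curve), and $\dim S = 2$ with $S$ rationally connected ($\bb P^1$-fibration over a rationally connected surface). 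The remaining case $\dim S = 1$ with $g(S) \geq 1$ is handled directly by Corollary \ref{c_fibration}.

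The last case is $\dim S = 2$ with $S$ not rationally connected. If the general fibre of $f$ is tangent to $\cal F$, then because $\rho(X/S) = 1$ and $K_{\cal F}$ is numerically trivial we have $K_{\cal F} \sim_{\bb Q, f} 0$, and Lemma \ref{tosurfacefibrestangent} applies. If the general fibre is generically transverse to $\cal F$ and $\kappa(S) \geq 0$, then Lemma \ref{maptosurface} gives the conclusion. Finally, if $\kappa(S) = -\infty$ and $S$ is not rationally connected, then the MRC fibration of the (klt) surface $S$ is a morphism onto a curve $C$ of genus $\geq 1$; composing with $f$ produces a morphism $X \rightarrow C$ and Corollary \ref{c_fibration} concludes the argument.

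The main obstacle I anticipate is the bookkeeping in the $\dim S = 2$ case: one must be sure that the MRC fibration of the base surface lifts to a morphism from $X$ (not merely a rational map) and that $S$ being $\bb Q$-factorial klt is enough to run the classification of uniruled surfaces needed to produce the positive genus curve. Beyond this the proof is largely a mechanical case-split that leverages the fact that all the genuinely structural work — Touzet's theorem, McQuillan's classification of surface foliations, and the algebraic integrability arguments — has already been carried out in the previous lemmas, particularly in Lemma \ref{KXpsefcase} and Corollary \ref{c_fibration}.
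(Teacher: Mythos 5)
Your proof follows the paper's strategy exactly: reduce to $X$ klt and $\bb Q$-factorial via an F-terminalization, dispose of the case $K_X$ pseudo-effective using Lemma \ref{KXpsefcase}, then run a $K_X$-MMP (using Lemma \ref{KXMMPreduction} to preserve the hypotheses at each step) to a Mori fibre space $f\colon X\to S$ and split according to the output. Your case analysis is in fact more explicit than the paper's terse final step, which compresses all remaining outputs into ``we may find a map $X\to C$''; the uniform observation that $X$ rationally connected forces $H^1(X,\cal O_X)=0$, hence any numerically trivial $\bb Q$-Cartier divisor is torsion, is exactly the right way to dispose of the cases $\dim S=0$, $S\cong\bb P^1$, and $S$ a rationally connected surface.

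Two small remarks. First, the vanishing $H^1(X,\cal O_X)=0$ for rationally connected klt $X$ is not Kawamata--Viehweg vanishing (which directly yields it only when $-K_X$ is nef and big); it follows from $X$ having rational singularities together with $H^1=0$ for a smooth rationally connected resolution. Second, your worry about the MRC fibration of the klt surface $S$ being a morphism is unfounded: $S$ has rational (quotient) singularities, so the exceptional curves of a minimal resolution $\widetilde S\to S$ are rational and map to points under $\widetilde S\to C$ (the base curve being of positive genus, hence not uniruled), so the morphism descends to $S\to C$. Equivalently, one may simply run the $K_S$-MMP, which on a surface is a composition of morphisms terminating in a Mori fibre space $S'\to C$: if $g(C)\ge 1$ compose with $f$ and apply Corollary \ref{c_fibration}, otherwise $S$, and hence $X$, is rationally connected.
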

\begin{proof}
First, assume that $K_X$ is pseudo-effective.
Then $\kappa(K_X) \geq 0$ and we apply Lemma \ref{KXpsefcase} to conclude.

Now, assume that $K_X$ is not pseudo-effective. By Lemma \ref{KXMMPreduction}, we may assume that $X$ admits a Mori fibre space
$f\colon X \rightarrow B$.  If $B$ is a surface with $\kappa(B) \geq 0$ or $f$ is tangent to $\cal F$,
we may apply Lemmas \ref{maptosurface} and \ref{tosurfacefibrestangent}
to conclude (note that since $f\colon X \rightarrow B$ is a Mori fibre space it follows that $f^{-1}(b)$ does not
contain a divisor for any $b$ and that $B$ is klt since $X$ is).  If $B$ is a rationally connected surface then $X$ is rationally connected and therefore it
is immediate that $K_{\cal F}$ is torsion.

Otherwise, we may find a map $X \rightarrow C$ where $C$ is a curve.  If $C$ is of positive genus then 
observe that if $F$ is a general fibre then $-K_F \neq 0$ and so it does not admit a quasi-\'etale cover by an abelian surface and
we conclude by Corollary \ref{c_fibration}. Otherwise $X$ is rationally connected and again it is
immediate that $K_{\cal F}$ is torsion.
\end{proof}

\subsection{Proof of Theorem \ref{threefoldlogabundancezero}}
\begin{proof}
If $\Delta \neq 0$ or $\cal F$ is not canonical we apply
Proposition \ref{KFlogcanonical}.

If $\Delta = 0$ and $\cal F$ is canonical we apply
Proposition \ref{KFcanonical}.
\end{proof}

\bibliography{math}

\begin{thebibliography}{BCHM10}

\bibitem[AD13]{AD13}
C.~Araujo and S.~Druel.
\newblock On {F}ano foliations.
\newblock {\em Adv. Math.}, 238:70--118, 2013.

\bibitem[AD14]{AD14}
C.~Araujo and S.~Druel.
\newblock On codimension 1 del {P}ezzo foliations on varieties with mild
  singularities.
\newblock {\em Math. Ann.}, 360(3-4):769--798, 2014.

\bibitem[AD19]{AD19}
C.~Araujo and S.~Druel.
\newblock Characterization of generic projective space bundles and algebraicity
  of foliations.
\newblock {\em Comment. Math. Helv.}, 94(4):833--853, 2019.

\bibitem[AM69]{AM69}
M.~F. Atiyah and I.~G. Macdonald.
\newblock {\em Introduction to commutative algebra}.
\newblock Addison-Wesley Publishing Co., Reading, Mass.-London-Don Mills, Ont.,
  1969.

\bibitem[Amb04]{Ambro04}
F.~Ambro.
\newblock Shokurov's boundary property.
\newblock {\em J. Differential Geom.}, 67(2):229--255, 2004.

\bibitem[Amb05]{Ambro05}
F.~Ambro.
\newblock The moduli {$b$}-divisor of an lc-trivial fibration.
\newblock {\em Compos. Math.}, 141(2):385--403, 2005.

\bibitem[BCHM10]{BCHM06}
C.~Birkar, P.~Cascini, C.~Hacon, and J.~M\textsuperscript{c}Kernan.
\newblock Existence of minimal models for varieties of log general type.
\newblock {\em J. Amer. Math. Soc.}, 23(2):405--468, 2010.

\bibitem[BM97]{BM97}
E.~Bierstone and P.~D. Milman.
\newblock Canonical desingularization in characteristic zero by blowing up the
  maximum strata of a local invariant.
\newblock {\em Invent. Math.}, 128(2):207--302, 1997.

\bibitem[Bru00]{Brunella00}
M.~Brunella.
\newblock {\em Birational geometry of foliations}.
\newblock Monograf\'\i as de Matem\'atica. Instituto de Matem\'atica Pura e
  Aplicada (IMPA), Rio de Janeiro, 2000.

\bibitem[Can04]{Cano}
F.~Cano.
\newblock Reduction of the singularities of codimension one singular foliations
  in dimension three.
\newblock {\em Ann. of Math. (2)}, 160(3):907--1011, 2004.

\bibitem[CC92]{CC92}
F.~Cano and D.~Cerveau.
\newblock Desingularization of nondicritical holomorphic foliations and
  existence of separatrices.
\newblock {\em Acta Math.}, 169(1-2):1--103, 1992.

\bibitem[CM92]{CM92}
F.~Cano and J.~Mattei.
\newblock Hypersurfaces int\'egrales des feuilletages holomorphes.
\newblock {\em Annales de l'institut Fourier}, 42(1-2):49--72, 1992.

\bibitem[Cor05]{Corti05}
A.~Corti.
\newblock 3-fold flips after {S}hokurov.
\newblock In Alessio Corti, editor, {\em Flips for 3-folds and 4-folds}, pages
  13--40. Oxford University Press, 2005.

\bibitem[CP19]{CampanaPaun19}
F.~Campana and M.~P\u{a}un.
\newblock Foliations with positive slopes and birational stability of orbifold
  cotangent bundles.
\newblock {\em Publ. Math. Inst. Hautes \'{E}tudes Sci.}, 129:1--49, 2019.

\bibitem[Del71]{Deligne71}
P.~Deligne.
\newblock Th\'{e}orie de {H}odge. {II}.
\newblock {\em Inst. Hautes \'{E}tudes Sci. Publ. Math.}, (40):5--57, 1971.

\bibitem[DO19]{DruelOu19}
S.~Druel and W.~Ou.
\newblock Codimension one foliations with trivial canonical class on singular
  spaces {II}.
\newblock arXiv:1912.07727, 2019.

\bibitem[Dru21]{Druel21}
S.~Druel.
\newblock Codimension {$1$} foliations with numerically trivial canonical class
  on singular spaces.
\newblock {\em Duke Math. J.}, 170(1):95--203, 2021.

\bibitem[Elk73]{Elkik73}
R.~Elkik.
\newblock Solutions d'\'equations \`a coefficients dans un anneau hens\'elien.
\newblock {\em Ann. Sci. \'Ecole Norm. Sup. (4)}, 6:553--603 (1974), 1973.

\bibitem[Flo14]{Floris14}
E.~Floris.
\newblock Inductive approach to effective b-semiampleness.
\newblock {\em Int. Math. Res. Not. IMRN}, (6):1465--1492, 2014.

\bibitem[Fuj83]{Fujita83}
T.~Fujita.
\newblock Semipositive line bundles.
\newblock {\em J. Fac. Sci. Univ. Tokyo Sect. IA Math.}, 30(2):353--378, 1983.

\bibitem[Fuj11]{Fujino09}
O.~Fujino.
\newblock Fundamental theorems for the log minimal model program.
\newblock {\em Publ. Res. Inst. Math. Sci.}, 47(3):727--789, 2011.

\bibitem[GKKP11]{GKKP11}
D.~Greb, S.~Kebekus, Kov{\'a}cs, and T.~Peternell.
\newblock Differential forms on log canonical spaces.
\newblock {\em Publ. Math. Inst. Hautes \'Etudes Sci.}, (114):87--169, 2011.

\bibitem[Gro61]{EGAIII}
A.~Grothendieck.
\newblock \'{E}l\'ements de g\'eom\'etrie alg\'ebrique. {III}.
\newblock {\em Inst. Hautes \'Etudes Sci. Publ. Math.}, (11):91, 1961.

\bibitem[Har77]{Hartshorne77}
R.~Hartshorne.
\newblock {\em Algebraic Geometry}.
\newblock Springer-Verlag, 1977.

\bibitem[Har80]{Hartshorne80}
R.~Hartshorne.
\newblock Stable reflexive sheaves.
\newblock {\em Math. Ann.}, 254(2):121--176, 1980.

\bibitem[KM98]{KM98}
J.~Koll{\'a}r and S.~Mori.
\newblock {\em Birational geometry of algebraic varieties}, volume 134 of {\em
  Cambridge tracts in mathematics}.
\newblock Cambridge University Press, 1998.

\bibitem[Kol90]{Kollar90}
J.~Koll{\'a}r.
\newblock Projectivity of complete moduli.
\newblock {\em J. Differential Geom.}, 32(1):235--268, 1990.

\bibitem[Kol07]{Kollar07}
J.~Koll{\'a}r.
\newblock Kodaira's canonical bundle formula and adjunction.
\newblock In {\em Flips for 3-folds and 4-folds}, volume~35 of {\em Oxford
  Lecture Ser. Math. Appl.}, pages 134--162. Oxford Univ. Press, Oxford, 2007.

\bibitem[Kol13]{Kollar13}
J.~Koll{\'a}r.
\newblock {\em Singularities of the minimal model program}, volume 200 of {\em
  Cambridge Tracts in Mathematics}.
\newblock Cambridge University Press, Cambridge, 2013.
\newblock With a collaboration of S{\'a}ndor Kov{\'a}cs.

\bibitem[LPT18]{LPT11}
F.~Loray, J.~Pereira, and F.~Touzet.
\newblock Singular foliations with trivial canonical class.
\newblock {\em Invent. Math.}, 213(3):1327--1380, 2018.

\bibitem[McQ05]{McQ05}
M.~McQuillan.
\newblock Semi-stable reduction of foliations, 2005.
\newblock IHES/M/05/02.

\bibitem[McQ08]{McQuillan08}
M.~McQuillan.
\newblock Canonical models of foliations.
\newblock {\em Pure Appl. Math. Q.}, 4(3, part 2):877--1012, 2008.

\bibitem[Men00]{Mendes00}
L.~G. Mendes.
\newblock Kodaira dimension of holomorphic singular foliations.
\newblock {\em Bol. Soc. Brasil. Mat. (N.S.)}, 31(2):127--143, 2000.

\bibitem[MP04]{MP04}
J.~M\textsuperscript{c}Kernan and Y.~Prokhorov.
\newblock {T}hreefold {T}hresholds.
\newblock {\em Manuscripta Math.}, 114(3):281--304, 2004.

\bibitem[Nak87]{Nakayama87}
N.~Nakayama.
\newblock The lower semicontinuity of the plurigenera of complex varieties.
\newblock In {\em Algebraic geometry, {S}endai, 1985}, volume~10 of {\em Adv.
  Stud. Pure Math.}, pages 551--590. North-Holland, Amsterdam, 1987.

\bibitem[Per06]{Pereira06}
J.~V. Pereira.
\newblock Fibrations, divisors and transcendental leaves.
\newblock {\em J. Algebraic Geom.}, 15(1):87--110, 2006.
\newblock With an appendix by Laurent Meersseman.

\bibitem[PS09]{PS09}
Y.~G. Prokhorov and V.~V. Shokurov.
\newblock Towards the second main theorem on complements.
\newblock {\em J. Algebraic Geom.}, 18(1):151--199, 2009.

\bibitem[Ray70]{Raynaud70}
M.~Raynaud.
\newblock {\em Anneaux locaux hens\'eliens}.
\newblock Lecture Notes in Mathematics, Vol. 169. Springer-Verlag, Berlin,
  1970.

\bibitem[Sei68]{Seidenberg68}
A.~Seidenberg.
\newblock Reduction of singularities of the differential equation
  {$A\,dy=B\,dx$}.
\newblock {\em Amer. J. Math.}, 90:248--269, 1968.

\bibitem[Sho92]{Shokurov93}
V.~V. Shokurov.
\newblock Three-dimensional log perestroikas.
\newblock {\em Izv. Ross. Akad. Nauk Ser. Mat.}, 56(1):105--203, 1992.

\bibitem[Sho03]{Shokurov03}
V.~V. Shokurov.
\newblock Prelimiting flips.
\newblock {\em Proc. Steklov Inst. of Math.}, 240:82--219, 2003.

\bibitem[Spi20]{Spicer17}
C.~Spicer.
\newblock Higher dimensional foliated {M}ori theory.
\newblock {\em Compos. Math.}, 156(1):1--38, 2020.

\bibitem[Tem18]{temkin18}
M.~Temkin.
\newblock Functorial desingularization over {$\bf Q$}: boundaries and the
  embedded case.
\newblock {\em Israel J. Math.}, 224(1):455--504, 2018.

\bibitem[Tou16]{Touzet16}
F.~Touzet.
\newblock On the structure of codimension 1 foliations with pseudoeffective
  conormal bundle.
\newblock In {\em Foliation theory in algebraic geometry}, Simons Symp., pages
  157--216. Springer, Cham, 2016.

\end{thebibliography}
\bibliographystyle{alpha}
\end{document}